\newtheorem{thm}{Theorem}
\newtheorem{definition}[thm]{Definition}
\newtheorem{lem}[thm]{Lemma}
\newtheorem{prop}[thm]{Proposition}
\newtheorem{rmk}[thm]{Remark}
\newtheorem{cor}[thm]{Corollary}
\newcommand*{\ensembledenombres}{\mathbb}
\newcommand*{\R}{\ensembledenombres{R}}
\newcommand*{\C}{\ensembledenombres{C}}
\newcommand*{\N}{\ensembledenombres{N}}
\renewcommand*{\P}{\mathbb{P}}
\newcommand*{\esp}{\mathbb{E}}
\newcommand{\1}{1\!\!{\sf I}}
\DeclareMathOperator{\Var}{Var}
\DeclareMathOperator{\Tr}{Tr}
\renewcommand{\Im}{\mathcal{I}}
\renewcommand{\Re}{\mathcal{R}}
\begin{document}

\title[Fluctuations of the Stieltjes transform of ESD of selfadjoint polynomials]{Fluctuations of the Stieltjes transform of the empirical spectral distribution of selfadjoint polynomials in Wigner and deterministic diagonal matrices
}
\author{Serban Belinschi}
\address{Institut de Math\'ematiques de Toulouse: UMR5219; Universit\'e de Toulouse; CNRS; UPS, F-31062 Toulouse, France.}
\email{serban.belinschi@math.univ-toulouse.fr}

\author{Mireille Capitaine}
\address{Institut de Math\'ematiques de Toulouse: UMR5219; Universit\'e de Toulouse; CNRS; UPS, F-31062 Toulouse, France.}
\email{mireille.capitaine@math.univ-toulouse.fr}

\author{Sandrine Dallaporta}
\address{Laboratoire de Mathématiques et Applications, UMR7348, Université de Poitiers, Téléport 2-BP30179, Boulevard Marie et Pierre Curie, 86962 Chasseneuil, France.}
\email{sandrine.dallaporta@math.univ-poitiers.fr}

\author{Maxime Fevrier}
\address{Universit\'e Paris-Saclay, CNRS, Laboratoire de math\'ematiques d’Orsay, 91405, Orsay, France.}
\email{maxime.fevrier@universite-paris-saclay.fr}

\thanks{This research was supported in part by funding from the Simons Foundation and the Centre de Recherches Mathématiques, through the Simons-CRM scholar-in-residence program.}

\begin{abstract}
We investigate the fluctuations around the mean of the Stieltjes transform of the empirical spectral distribution of any selfadjoint  noncommutative polynomial in a Wigner matrix and a deterministic diagonal matrix. We obtain the convergence in distribution to a centred complex Gaussian process whose covariance is expressed in terms of operator-valued subordination functions.
\end{abstract}

\maketitle

\section{Introduction}

From the pioneering work of Wigner to the latest developments, properties of eigenvalues of Wigner matrices have been a major subject in Random Matrix Theory. The celebrated Wigner Theorem states that the empirical spectral distribution of a Wigner matrix -- which means a complex Hermitian or real symmetric matrix whose entries are centered, with variance $\sigma^2$, and independent up to the symmetry condition -- weakly converges in probability to the semicircle law $\mu_{\sigma^2}$ with density $(2\pi \sigma^2)^{-1}\sqrt{4\sigma^2-x^2}\mathbf{1}_{[-2\sigma,2\sigma]}(x)$. It is then straightforward to deduce the convergence of linear statistics $N^{-1}\sum_{i=1}^Nf(\lambda_i)$ of eigenvalues $\lambda_1,\ldots ,\lambda_N$ of $N\times N$ Wigner matrices associated to bounded continuous test functions $f:\mathbb{R}\to\mathbb{R}$ towards $\int_{\R}fd\mu_{\sigma^2}$.

Among the questions that have been addressed, fluctuations of linear statistics have attracted some attention.
Initiated in the mid-nineties by investigations 
on traces of resolvents of real Wigner matrices, central limit theorems for linear spectral statistics of Wigner matrices progressively emerged.
Sinai and Soshnikov \cite{SiSo98b}, by the method of moments, and Bai and Yao \cite{BaiYao05} (see also Bao and Xie \cite{BaoXie16}), by applying a central limit theorem for martingale differences to the trace of the resolvent, obtained the fluctuations of linear spectral statistics associated to analytic test functions. These central limit theorems have been progressively extended to functions with less regularity: by Pastur and Lytova \cite{LytPas09a,LytPas09b} using Fourier analysis, for functions with sufficiently fast decaying Fourier transform, by Bai, Wang and Zhou \cite{BaWaZh09} for $\mathcal{C}^4$ functions, making use of Bernstein polynomials, by Shcherbina \cite{Shcherbina11}, Sosoe and Wong \cite{SosWon13} for $\mathcal{H}^{s}$ functions by a density argument, and Kopel \cite{Kopel} by precise computations on complex Gaussian Wigner matrices. Recently, Bao and He \cite{BaHe21} provided a near optimal rate of convergence for these central limit theorems in Kolmogorov-Smirnov distance. 

Gaussian fluctuations with different scale, mean and variance also hold for linear spectral statistics when the entries of the Wigner matrix have an infinite fourth moment 
(\cite{BGMal16}; see also \cite{BGGuiMal14} for the case of non square integrable entries, in which case Wigner's Theorem fails to hold \cite{BAGui08}). 
When entries of the Wigner matrix are not identically distributed in such a way that 
their variances differ (these matrices are called band matrices or sometimes Wigner matrices with variance profile), 
fluctuations of linear spectral statistics have also been described (see \cite{AdhJanSah} and references therein). 
Fluctuations of linear spectral statistics were also investigated at the mesoscopic scale. In this type of study, the object of interest is $\sum_{i=1}^Nf\big(N^{\alpha}(E-\lambda_i)\big)$, where $\alpha \in (0,1)$ and $E \in (-2\sigma,2\sigma)$, see \cite{BoKh99,LoSi15,HeKn17} for Wigner matrices and \cite{LiXu21} for Wigner matrices with variance profile.

When a Wigner matrix is deformed by an additive (random or deterministic) perturbation, it is a natural question to characterize the effect of the perturbation on fluctuations of linear spectral statistics. This question was raised very early by Khorunzhy \cite{Khorunzhy94}, who proved in the case of deformed real Gaussian Wigner matrices that the fluctuations were still Gaussian, but without providing an explicit covariance kernel. After contributions by Dembo, Guionnet and Zeitouni \cite{DeGuZe03} still in the Gaussian case by a dynamical approach, and by Su \cite{Su13} in the case of a random diagonal perturbation on another scale, the topic has been recently revived. Motivated by the analysis of spherical Sherrington-Kirkpatrick model or by the problem of statistical detection of noisy signals, the case of a deterministic rank one perturbation has been considered by Baik and Lee \cite{BaikLee17}, Baik, Lee and Wu \cite{BaLeWu18}, Chung and Lee \cite{ChLe18}, Jung, Chung and Lee \cite{ChJuLe20,ChJuLe21}. Diagonal perturbations with general rank were further investigated by Ji and Lee \cite{JiLee}, Dallaporta and Fevrier \cite{DaFe19}. Recently, Li, Schnelli and Xu provided the fluctuations of the linear spectral statistics for deformed Wigner matrices at mesoscopic scale \cite{LiScXu19}.

These papers make naturally use of free probability theory. Indeed, in an influential paper, Voiculescu gave evidence that the noncommutative probability theory he had previously introduced, called free probability theory, was a convenient framework for dealing with the convergence of the process of traces of noncommutative polynomials in several complex Gaussian Wigner matrices \cite{Vo91}. Dykema proved then that polynomials in more general Wigner and deterministic matrices also fit in this framework \cite{Dy93}. See also the book \cite{MS} and the paper \cite{BC}. An analog of the free probability framework for dealing with fluctuations of the process of traces of noncommutative polynomials in Gaussian Wigner and deterministic matrices was designed in a series of papers \cite{MiSp06,MiSnSp07,CMSS07}, building on the work of Mingo and Nica \cite{MiNi04} (see also the dynamical approach to a close question by Cabanal-Duvillard and Guionnet \cite{Cabanal01,Guionnet02,CaGu01,DeGuZe03}). However, this so-called second order free probability theory does not seem to be the relevant framework to describe fluctuations of the process of traces of noncommutative polynomials in more general Wigner and deterministic matrices, as witnessed by the recent combinatorial analysis by Male, Mingo, Péché and Speicher \cite{MaMiPeSp20}.

In this paper, we tackle the slightly different question of fluctuations of the Stieltjes transform of the empirical spectral distribution of general polynomials in a Wigner matrix and a diagonal deterministic matrix. Less is known on this question beyond the case of linear polynomials, which is equivalent to that of deformed Wigner matrices. Using a well-known linearization trick to convert our initial general noncommutative polynomial with complex coefficients into a linear polynomial with matrix coefficients and then adapting the strategy used by Bai and Yao \cite{BaiYao05} for a single Wigner matrix and upgraded independently by Ji and Lee \cite{JiLee} and Dallaporta and Fevrier \cite{DaFe19} to deal with deformed Wigner matrices, we establish a central limit theorem for the analytic process of traces of the resolvent. Since complex coefficients are replaced by matrix coefficients,
one has to rely on the operator-valued version of free probability to express and analyze the limiting covariance kernel. The latter involves the logarithm of an operator and a highly non-trivial  first task is to prove that this logarithm is well defined, making use of the contractivity of analytic self-maps on hyperbolic domains.
Moreover, to adapt the strategy previously  used for deformed Wigner matrices  to general polynomials, many commutativity issues arise and require very technical preliminary results and new approaches. A typical example of these difficulties is the study of the second and third terms in the so-called  hook process (see Sections \ref{sec_2nd_term} and \ref{sec_3rd_term}): it requires  a  new trick consisting in writing these terms as the trace of the sum of images of matrix tensors by fit operators and in proving that 
each of these matrix tensors satisfies an approximated fixed point equation;  then, a non-trivial study of spectral radius of operators is still necessary.
 
Besides the Introduction, the paper is organized as follows. Section \ref{Model} introduces the random matrices considered in this work whereas Section \ref{mainresults} presents our main results. Section \ref{sec:preliminary_results} is devoted to basic background on linearization trick and operator-valued free probability theory that are central in our approach. In Section \ref{defobjetlimite}, we prove that the limiting covariance kernel involved in our central limit theorem is well defined and  Section \ref{prelimi}  gathers numerous preliminary results (bounds, concentration bounds, convergence results...) used in its proof.
The proof of the convergence of finite-dimensional distributions of the process of Stieltjes transforms is detailed in Section \ref{CLT}.  
In Section \ref{tightanalytic}, we establish the tightness of this sequence of random analytic functions.
Three appendices
conclude the paper: the first one gathers tools from elementary linear algebra, random quadratic forms, 
martingale theory and  complex analysis  used in the proofs; the second one establishes results on moments and norm of Wigner matrices that are used throughout the paper;
the last one details the truncation argument allowing to assume that entries of Wigner matrices considered in this paper are almost surely bounded by a sequence slowly converging to 0.

\section{Presentation of the model} \label{Model}

The complex algebra $\mathbb{C}\langle t_1,\ldots ,t_n\rangle$ of polynomials with complex coefficients in $n$ noncommuting indeterminates $t_1,\ldots ,t_n$ becomes a $*$-algebra by anti-linear extension of
$$(t_{i_1}t_{i_2}\cdots t_{i_l})^* = t_{i_l}\cdots t_{i_2}t_{i_1},\quad i_1, \ldots, i_l=1,\ldots,n,\, l\in \mathbb{N}.$$
We consider, on a probability space, a sequence of random matrices $$X_N:=P(W_N,D_N),\quad N\in \mathbb{N},$$ where : 
\begin{enumerate}
\item\label{hyp:polynomial} $P\in \mathbb{C}\langle t_1,t_2\rangle$ is a selfadjoint polynomial in two noncommuting indeterminates;
\item\label{hyp:indep} entries $\{W_{ij}\}_{1\leq i\leq j\leq N}$ of the $N\times N$ Hermitian matrix $W_N$ are independent random variables;
\item\label{hyp:offdiagonal} off-diagonal entries $\{W_{ij}\}_{1\leq i< j\leq N}$ of $W_N$ are identically distributed complex random variables such that, for some $\varepsilon >0$, $\mathbb{E}[|\sqrt{N}W_{ij}|^{6(1+\varepsilon)}]\leq C_6$. We assume that $\mathbb{E}[W_{ij}]=0$ and that
$$\sigma_N^2:=\mathbb{E}[|W_{ij}|^2]\geq 0,\quad \theta_N:=\mathbb{E}[W_{ij}^2]\in \mathbb{C},\quad \kappa_N:=\mathbb{E}[|W_{ij}|^4]-2\sigma_N^4-|\theta_N|^2\in \mathbb{R}.$$
satisfy 
$$\lim_{N\to +\infty}N\sigma_N^2=\sigma^2> 0,\quad \lim_{N\to +\infty}N\theta_N=\theta\in \mathbb{R},\quad \lim_{N\to +\infty}N^2\kappa_N=\kappa\in \mathbb{R}.$$
The assumption $\theta\in \mathbb{R}$ 
means that correlations between the real and imaginary parts of off-diagonal entries of $W_N$ are negligible.
\item\label{hyp:diagonal} diagonal entries $\{W_{ii}\}_{1\leq i\leq N}$ of $W_N$ are identically distributed real random variables such that, for some $\varepsilon >0$, $\mathbb{E}[|\sqrt{N}W_{ii}|^{4(1+\varepsilon)}]\leq C_4$. We assume that $\mathbb{E}[W_{ii}]=0$ and that
$\tilde{\sigma}_N^2:=\mathbb{E}[W_{ii}^2]\geq 0$ satisfies $\lim_{N\to +\infty}N\tilde{\sigma}_N^2=\tilde{\sigma}^2> 0$;
\item\label{hyp:deformation} $D_N$ is a $N\times N$ deterministic real diagonal matrix. We assume that $\sup_{N\in \mathbb{N}}\|D_N\|<\infty$ and, for some Borel probability measure $\nu$ on $\mathbb{R}$,
$$\nu_N:=\frac{1}{N}\sum_{\lambda \in \text{sp}(D_N)}\delta_{\lambda} \Rightarrow \nu.$$
\end{enumerate}
Here and below, we use the notation $\text{sp}(A)$ for the (multi)set of eigenvalues (counted with their algebraic multiplicity) of a square matrix $A$.
We will also assume that all entries of $W_N$ are almost surely bounded by $\delta_N$, 
where $(\delta_N)_{N\in \mathbb{N}}$ is a sequence of positive numbers slowly converging to $0$ 
(at rate less than $N^{-\epsilon}$ for any $\epsilon >0$); 
this may be assumed without loss of generality, as proved in Appendix \ref{Truncation}. 
We will use the notation $m_N:=\esp[|W_{ij}|^4]=\kappa_N+2\sigma_N^4+|\theta_N|^2$.
In Assumptions \eqref{hyp:offdiagonal} and \eqref{hyp:diagonal}, we ask the entries to be identically distributed. This assumption does not seem to be necessary for our main result to hold, but leads to a simplification of truncation-centering-homogeneization arguments. Therefore, for the readibility of the paper, we will not pursue the task to relax this assumption. 

We are interested in the empirical spectral measure $\mu_N$ of $X_N$, defined by: 
$$\mu_N:=\frac{1}{N}\sum_{\lambda \in \text{sp}(X_N)}\delta_{\lambda}.$$
More precisely, we study the fluctuations of the Stieltjes transform $\mathbb{C}\setminus \mathbb{R}\ni z\mapsto \int_{\mathbb{R}}(z-x)^{-1}\mu_N(dx)$ of $\mu_N$ around its mean.

\section{Main result} \label{mainresults}

Before stating our main result, we introduce the necessary material. 

\subsection{Definitions}

\subsubsection{Free probability}

Let $\mathcal{A}$ be a complex algebra with a unit $1_{\mathcal{A}}$ and $\varphi : \mathcal{A} \to \mathbb{C}$ be a linear functional satisfying $\varphi(1_{\mathcal{A}})=1$. One usually calls $(\mathcal{A},\varphi)$ a noncommutative probability space and its elements noncommutative random variables. 
	
We say that a noncommutative random variable $a\in (\mathcal{A},\varphi)$ is distributed according to a Borel probability measure $\mu$ on $\mathbb{R}$ when $\varphi(a^n)=\int_{\mathbb{R}}t^n\mu(dt)$ holds for all $n\in \mathbb{N}$. In particular, a semicircular element with mean $0$ and variance $\sigma^2$ is a noncommutative random variable distributed according to the absolutely continuous probability measure with density $(2\pi\sigma^2)^{-1}\sqrt{4\sigma^2-t^2}\mathbf{1}_{[-2\sigma;2\sigma]}(t)$ in some noncommutative probability space.

Two noncommutative random variables $s,d$ in a noncommutative probability space $(\mathcal{A},\varphi)$ are said to be freely independent if the following holds : for each $n\in \mathbb{N}$ and any polynomials with complex coefficients $P_1,\ldots ,P_n,Q_1,\ldots ,Q_n\in \mathbb{C}[t]$, 
\[\varphi((P_1(s)-\varphi(P_1(s))1_{\mathcal{A}})(Q_1(d)-\varphi(Q_1(d))1_{\mathcal{A}})\cdots (P_n(s)-\varphi(P_n(s))1_{\mathcal{A}})(Q_n(d)-\varphi(Q_n(d))1_{\mathcal{A}}))=0.\]

\subsubsection{Linearization}

A powerful tool to deal with noncommutative polynomials in random matrices or in operators is the so-called ``linearization trick'' that 
goes back to Haagerup and Thorbj{\o}rnsen \cite{HT05,HT06}  in the context of operator algebras and random matrices (see \cite{MS}). 
We use the procedure introduced in \cite[Proposition 3]{A}.\\

Given a polynomial $P\in\mathbb C\langle t_1,\ldots ,t_n\rangle$, we call {\it linearization} of $P$ any
$$L_P := \begin{pmatrix} 0 & u\\v & Q \end{pmatrix} \in M_m(\mathbb{C}) \otimes \mathbb{C} \langle t_1,\ldots ,t_n\rangle,$$
where
\begin{enumerate}
	\item $m \in \mathbb{N}$,
	\item $Q \in M_{m-1}(\mathbb{C})\otimes \mathbb{C} \langle t_1,\ldots ,t_n\rangle$ is invertible,
	\item $u$ is a row vector and $v$ is a column vector, both of size $m-1$ with
	entries in $\mathbb{C}\langle t_1,\ldots ,t_n\rangle$,
	\item the polynomial entries in $Q, u$ and $v$ all have degree $\leq 1$,
	\item $P=-uQ^{-1}v$.
\end{enumerate}

Given a selfadjoint polynomial $P\in\mathbb C\langle t_1,\ldots ,t_n\rangle$, it is described in Section 4 of \cite{BC}, from Anderson \cite{A} (see also \cite{MaiThesis}), how to build a particular selfadjoint linearization $L_P \in M_m(\mathbb{C}) \otimes \mathbb{C} \langle t_1,\ldots ,t_n\rangle$ of $P$. We call this particular linearization the {\it canonical} linearization of $P$.	

\subsection{Statement of results}

Let $P\in \mathbb{C}\langle t_1,t_2\rangle$ be the selfadjoint polynomial in two noncommuting indeterminates involved in the definition of $X_N$ (see (1) in Section \ref{Model}). Let  $L_P=\gamma_0\otimes 1+
\gamma_1\otimes t_1+\gamma_2\otimes t_2$ be the canonical linearization of $P$ ;  $\gamma_0$, $\gamma_1$, $\gamma_2$
are selfadjoint  matrices in ${ M}_m(\mathbb{C})$.

Let $s$ be a semicircular element with mean $0$ and variance $\sigma^2$ which is freely independent from a noncommutative variable $d$ distributed according to the probability measure $\nu$ (see (5) in Section \ref{Model}) in some noncommutative probability space $(\mathcal{ A}, \varphi)$. We denote $\mathrm{id}_m:M_m(\mathbb{C})\to M_m(\mathbb{C})$ the identity map and define a map $\omega$ by
$$\omega(b)=b-\sigma^2\gamma_1 (\mathrm{id}_m \otimes \varphi )\left[\left(b\otimes 1_{\mathcal{ A}} - \gamma_1 \otimes s-\gamma_2 \otimes d \right)^{-1}\right]\gamma_1,$$ 
for any $b\in M_m(\mathbb{C})$ such that $b\otimes 1_{\mathcal{ A}} - \gamma_1 \otimes s-\gamma_2 \otimes d$ is invertible in $M_m(\mathbb{C})\otimes \mathcal{A}$. 
As explained in Lemma \ref{inversible} below, $\omega$ is well-defined on $\{ze_{11} -\gamma_0,z\in \mathbb{C}\setminus \mathbb{R}\}$. 
Section \ref{opfree} will describe the occurence of this so-called subordination map.

We denote, for $\beta_1,\beta_2\in \{\beta\in M_m(\mathbb{C}), \omega(\beta)\otimes I_m-\gamma_2\otimes d \text{ is invertible}\}$, by $T_{\{\beta_1,\beta_2\}}$ the operator defined on $M_m(\mathbb{C})\otimes M_m(\mathbb{C})$ by  $$T_{\{\beta_1,\beta_2\}}(x)=\int_{\mathbb{R}}((\omega(\beta_1)-t\gamma_2)^{-1}\gamma_1\otimes I_m)x(I_m\otimes \gamma_1 (\omega(\beta_2)-t\gamma_2)^{-1})d\nu(t),\quad x\in M_m(\mathbb{C})\otimes M_m(\mathbb{C}).$$
The limiting distribution of our central limit theorem involves the logarithms of some operators of the form $${\rm id}_m\otimes{\rm id}_m-T:M_m(\mathbb{C})\otimes M_m(\mathbb{C})\to M_m(\mathbb{C}) \otimes M_m(\mathbb{C}),$$
with $T$ some scalar multiples of $T_{\{z_1e_{11}-\gamma_0,z_2e_{11}-\gamma_0\}}$. Thus, our first result consists in proving that these logarithms are well-defined. 
Since $\log\left({\rm id}_m\otimes{\rm id}_m-T\right)$ is well defined by the convergent series expansion
\begin{equation}\label{logseries}
\log\left[{\rm id}_m\otimes{\rm id}_m-T\right]=-\sum_{k=1}^\infty \frac{1}{k}{T^k}
\end{equation}
as soon as the spectral radius of $T$ is less than $1$ (see (6.5.11) in \cite{HJ}), one proves the following proposition.

\begin{prop}\label{invertibility2} 
	For any $z_1,z_2\in \C\setminus \R$, the spectrum of the operator $$T_{\{z_1e_{11}-\gamma_0,z_2e_{11}-\gamma_0\}}:M_m(\mathbb{C})\otimes M_m(\mathbb{C})\to M_m(\mathbb{C}) \otimes M_m(\mathbb{C})$$ 
	is included in the open disk of radius $\sigma^{-2}$.
\end{prop}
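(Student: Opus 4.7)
The strategy is to reduce the spectral-radius bound on $M_m(\mathbb{C}) \otimes M_m(\mathbb{C})$ to a Schwarz--Pick contraction estimate for the subordination self-map, via a block-matrix device that handles the asymmetric case $\beta_1 \neq \beta_2$. First, I vectorize: writing $A(t) = (\omega(\beta_1) - t\gamma_2)^{-1}\gamma_1$ and $B(t) = \gamma_1(\omega(\beta_2) - t\gamma_2)^{-1}$, the operator $T := T_{\{\beta_1,\beta_2\}}$ corresponds on $M_m(\mathbb{C})\otimes M_m(\mathbb{C}) \cong M_{m^2}(\mathbb{C})$ to the matrix $N = \int_{\mathbb{R}} B(t)^{T} \otimes A(t)\,d\nu(t) = PQ$, with $P = \int_{\mathbb{R}} (\omega(\beta_2) - t\gamma_2)^{-T} \otimes (\omega(\beta_1) - t\gamma_2)^{-1}\,d\nu(t)$ and $Q = \gamma_1^{T} \otimes \gamma_1$. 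The cyclic identity $\mathrm{sp}(PQ) = \mathrm{sp}(QP)$ combined with the standard $\mathrm{vec}$ correspondence identifies $\mathrm{spr}(T)$ with $\mathrm{spr}(\mathcal{K}_{\beta_1,\beta_2})$, where
\[\mathcal{K}_{\beta_1,\beta_2}(h) = \int_{\mathbb{R}} \gamma_1 (\omega(\beta_1) - t\gamma_2)^{-1}\, h\, (\omega(\beta_2) - t\gamma_2)^{-1} \gamma_1\, d\nu(t)\]
acts on $M_m(\mathbb{C})$.

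Next, I embed into a block of size $2m$. With $\hat{\gamma}_i := \gamma_i \oplus \gamma_i \in M_{2m}(\mathbb{C})$ for $i=0,1,2$ and $\hat{\beta} := \beta_1 \oplus \beta_2$, uniqueness of the operator-valued subordination together with block-diagonality yields $\hat{\omega}(\hat{\beta}) = \omega(\beta_1) \oplus \omega(\beta_2)$, and the analogous operator
\[\hat{\mathcal{K}}(\hat{h}) = \int_{\mathbb{R}} \hat{\gamma}_1 (\hat{\omega}(\hat{\beta}) - t\hat{\gamma}_2)^{-1}\, \hat{h}\, (\hat{\omega}(\hat{\beta}) - t\hat{\gamma}_2)^{-1} \hat{\gamma}_1\, d\nu(t)\]
on $M_{2m}(\mathbb{C})$ decouples along the $2 \times 2$ block structure: writing $\hat{h} = (h_{ij})_{i,j=1}^{2}$, a direct block computation gives $\hat{\mathcal{K}}(\hat{h})_{ij} = \mathcal{K}_{\beta_i,\beta_j}(h_{ij})$. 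Consequently $\mathrm{sp}(\hat{\mathcal{K}}) = \bigcup_{i,j=1}^{2} \mathrm{sp}(\mathcal{K}_{\beta_i,\beta_j})$, so in particular $\mathrm{spr}(\mathcal{K}_{\beta_1,\beta_2}) \leq \mathrm{spr}(\hat{\mathcal{K}})$.

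Finally, I invoke the operator-valued Schwarz--Pick lemma. A direct computation shows that $\sigma^{2}\hat{\mathcal{K}}$ is the Fr\'echet differential at the fixed point $\hat{\omega}(\hat{\beta})$ of the analytic self-map
\[F_{\hat{\beta}}(\hat{\omega}) = \hat{\beta} - \sigma^{2}\, \hat{\gamma}_1 \int_{\mathbb{R}} (\hat{\omega} - t\hat{\gamma}_2)^{-1}\, d\nu(t)\, \hat{\gamma}_1\]
of the matricial upper half-plane of $M_{2m}(\mathbb{C})$. Since $F_{\hat{\beta}}$ is not a M\"obius automorphism of this domain (it incorporates a non-trivial Cauchy transform of $\nu$ and the non-zero variance $\sigma^{2}$), the Schwarz--Pick lemma in its strict form at the fixed point forces $\mathrm{spr}(DF_{\hat{\beta}}(\hat{\omega}(\hat{\beta}))) < 1$; hence $\mathrm{spr}(\hat{\mathcal{K}}) < \sigma^{-2}$, and combining with the earlier reductions gives $\mathrm{spr}(T) \leq \mathrm{spr}(\hat{\mathcal{K}}) < \sigma^{-2}$.

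The main obstacle is the rigorous application of Schwarz--Pick: one must verify that $F_{\hat{\beta}}$ genuinely maps into the operator-valued hyperbolic domain despite $\mathrm{Im}\,\hat{\beta}$ being of rank only $2$, a point tied to the regularizing inequality $\mathrm{Im}\,\omega(\beta) \geq \mathrm{Im}\,\beta$ (with strict inequality in enough directions) provided by the semicircular variance and closely related to Lemma~\ref{inversible}; and one must convert the resulting Carath\'eodory-metric contraction into the desired Euclidean spectral-radius bound, ensuring the inequality is strict rather than just non-strict.
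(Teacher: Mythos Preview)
Your reduction of $T$ to the operator $\mathcal K_{\beta_1,\beta_2}$ on $M_m(\mathbb C)$ matches the paper's Section~\ref{equalityspectra}, and the $2m\times 2m$ block idea is close in spirit to the paper's upper-triangular device in Proposition~\ref{welldefined1}. But the Schwarz--Pick step has a genuine gap that you flag without resolving. The point $\hat\beta=\beta_1\oplus\beta_2$ with $\beta_j=z_je_{11}-\gamma_0$ has $\Im\hat\beta=(\Im z_1)e_{11}\oplus(\Im z_2)e_{11}$, which is of rank $2$ in $M_{2m}(\mathbb C)$; so $\hat\beta$ sits on the boundary of $\mathbb H^+(M_{2m}(\mathbb C))$, not in its interior. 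Worse, when $\Im z_1$ and $\Im z_2$ have opposite signs, $\Im\hat\beta$ is indefinite and $\hat\beta$ lies in the closure of no matricial half-plane at all. In neither case is $F_{\hat\beta}$ an analytic self-map of a hyperbolic domain with $\hat\omega(\hat\beta)$ as an interior fixed point, so Schwarz--Pick (or Earle--Hamilton) does not apply directly. Your proposed rescue via a regularizing inequality $\Im\omega(\beta)>\Im\beta$ also fails in general: since $\eta(b)=\sigma^2\gamma_1 b\gamma_1$, any imaginary-part gain is confined to $\mathrm{Ran}(\gamma_1)$, and for a linearization matrix $\gamma_1$ there is no reason that $\mathrm{Ran}(\gamma_1)+\mathbb Ce_{11}$ spans $\mathbb C^m$. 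Finally, the claim ``not a M\"obius automorphism $\Rightarrow$ strict contraction'' needs its own argument; the automorphism group of the matricial upper half-plane is larger than scalar M\"obius maps, and converting a Carath\'eodory-metric bound on the derivative into a Euclidean spectral-radius bound is not automatic.

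The paper fills exactly this gap by a two-step argument you are missing. First (Proposition~\ref{welldefined1}) it proves the bound when $\beta_1,\beta_2$ have \emph{strictly definite} imaginary parts (handling all four sign combinations via a carefully chosen upper-triangular domain $\mathcal H$), applying Earle--Hamilton to obtain an attracting fixed point and reading off $\|U^{n_0}\|\le\tfrac12$ from the explicit corner dynamics; this bypasses the derivative-bound-to-spectral-radius conversion entirely. Second (Section~\ref{secondstep}) it extends to the degenerate line $\{ze_{11}-\gamma_0:z\in\mathbb C\setminus\mathbb R\}$ by observing that $(z_1,z_2)\mapsto\rho(u_{z_1e_{11}-\gamma_0,\,z_2e_{11}-\gamma_0})$ is plurisubharmonic (Theorem~\ref{pluri}); by the maximum principle the bound $\rho\le1$, inherited by continuity from the interior, must be strict everywhere unless it is identically $1$, and the latter is ruled out by an explicit computation at a single large real point. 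This plurisubharmonicity-plus-maximum-principle step is the substantive idea your sketch lacks.
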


Denote by $\mathcal H(\C \setminus \R)$ the space of complex analytic
functions on $\C \setminus \R$, endowed with the uniform topology on compact sets. The space $\mathcal H(\C \setminus \R)$ is equipped with the (topological) Borel
$\sigma$-field $\mathcal B(\mathcal H(\C \setminus \R))$. \\We are now in position to state our central limit theorem.
\begin{thm}\label{cvfdim}
Let $X_N$ be the random matrix introduced in Section \ref{Model}. For any $z\in \C \setminus \R$, set 
$$\xi_N(z):=\Tr\left((zI_N-X_N)^{-1}\right) -\mathbb{E}\left[\Tr\left((zI_N-X_N)^{-1}\right)\right].$$ The sequence of $\mathcal H(\C \setminus \R)$-valued random variables $(\xi_N)_{N\in \mathbb{N}}$ converges in  distribution to a centred complex Gaussian process $\{\mathcal G(z), z\in \mathbb{C}\setminus\mathbb{R}\}$ determined by $\overline{\mathcal G(z)}=\mathcal G(\bar z)$ and 
\begin{align*}
\mathbb{E}\left( \mathcal G(z_1) \mathcal G(z_2)\right)=\Gamma(z_1,z_2) & := \frac{\partial^2}{\partial z_1\partial z_2}\gamma(z_1,z_2),\quad z_1,z_2\in \mathbb{C}\setminus \mathbb{R},
\end{align*}
\begin{align*}
\gamma(z_1,z_2)& =-\Tr\otimes\Tr \left\{ \log \left[\mathrm{id}_m\otimes \mathrm{id}_m-\sigma^2T_{\{z_1e_{11}-\gamma_0,z_2e_{11}-\gamma_0\}}\right]  (I_m\otimes I_m)\right\}\\
& \quad -\Tr\otimes\Tr \left\{ \log \left[\mathrm{id}_m\otimes \mathrm{id}_m -\theta T_{\{z_1e_{11}-\gamma_0,z_2e_{11}-\gamma_0\}}\right]  (I_m\otimes I_m)\right\}\\
& \quad +(\tilde{\sigma}^2-\sigma^2-\theta) \Tr\otimes \Tr\{T_{\{z_1e_{11}-\gamma_0,z_2e_{11}-\gamma_0\}}(I_m\otimes I_m)\}\\
&\quad +\kappa/2\Tr\otimes \Tr\{T_{\{z_1e_{11}-\gamma_0,z_2e_{11}-\gamma_0\}}^2(I_m\otimes I_m)\}\end{align*}
\end{thm}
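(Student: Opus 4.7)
The plan is to follow the standard two-step strategy for central limit theorems on analytic processes: first prove convergence of finite-dimensional distributions to the claimed centered complex Gaussian law, then prove tightness of $(\xi_N)$ in $\mathcal{H}(\C\setminus\R)$.

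For the finite-dimensional part, I would first exploit the canonical linearization $L_P = \gamma_0\otimes 1 + \gamma_1\otimes t_1 + \gamma_2\otimes t_2$ to rewrite the Stieltjes transform of $X_N$ as a partial trace of the matrix-valued resolvent
$$R_N(z) := \bigl((ze_{11}-\gamma_0)\otimes I_N - \gamma_1\otimes W_N - \gamma_2\otimes D_N\bigr)^{-1},$$
relying on the defining identity $P=-uQ^{-1}v$ and a Schur complement computation. This converts a polynomial in two random objects into a linear expression with matrix coefficients, at the cost of replacing the scalar trace by $\Tr\otimes\Tr$. Writing $\xi_N(z)$ as the centred version of a suitable scalar trace of $R_N(z)$ against $e_{11}\otimes I_N$, I would then decompose it as a sum of martingale differences $\sum_{k=1}^N(\esp_k-\esp_{k-1})[\,\cdot\,]$ relative to the filtration generated by the successive rows/columns of $W_N$, as in Bai--Yao \cite{BaiYao05} and the refinements of Ji--Lee \cite{JiLee} and Dallaporta--Fevrier \cite{DaFe19}.

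Next, I would apply the complex vector-valued martingale CLT to $(\xi_N(z_1),\ldots,\xi_N(z_p))$. This requires a Lindeberg-type bound on fourth conditional moments, granted by Assumptions \ref{hyp:offdiagonal}--\ref{hyp:diagonal} together with the truncation $|W_{ij}|\leq \delta_N$ of Appendix \ref{Truncation}, and the identification of the limits of the conditional covariances
$$\sum_{k=1}^N\esp_{k-1}\bigl[(\esp_k-\esp_{k-1})\xi_N(z_1)\cdot(\esp_k-\esp_{k-1})\xi_N(z_2)\bigr].$$
Schur complement and Sherman--Morrison type identities for $R_N(z)$ split each martingale increment into four pieces weighted respectively by $\sigma_N^2$, $\theta_N$, $\tilde{\sigma}_N^2$, and $\kappa_N$; in the limit these produce the four terms of $\gamma(z_1,z_2)$, up to differentiation in $(z_1,z_2)$.

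The key algebraic identification is that the off-diagonal second-moment contributions assemble, via iterated resolvent expansions anchored at the operator-valued subordination map $\omega$, into geometric series in $\sigma^2 T_{\{z_1e_{11}-\gamma_0,z_2e_{11}-\gamma_0\}}$ and $\theta T_{\{z_1e_{11}-\gamma_0,z_2e_{11}-\gamma_0\}}$ on $M_m(\C)\otimes M_m(\C)$; Proposition \ref{invertibility2} controls the relevant spectral radii, so the series sum, via \eqref{logseries}, to the two $\log$-traces in the theorem. Tightness in $\mathcal{H}(\C\setminus\R)$ would then follow from uniform-on-compacts $L^2$-bounds on $\xi_N(z)$, obtained via Cauchy's integral formula from the resolvent bound $\|R_N(z)\|\leq C/\mathrm{dist}(z,\R)$ on compacts of $\C\setminus\R$, combined with a standard equicontinuity criterion. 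The main obstacle, as flagged in the introduction, lies in the rigorous identification of the limiting covariance: the second and third terms of the so-called hook process must each be written as a trace of an image of a matrix tensor under a purpose-built operator on $M_m(\C)\otimes M_m(\C)$, the tensors must be shown to satisfy approximate fixed-point equations, and the spectral radii of the underlying operators must be controlled; it is precisely there that the noncommutativity of $\gamma_1,\gamma_2$ with the inverse resolvents blocks any direct transcription of the deformed-Wigner arguments and forces the new machinery developed in Sections \ref{sec_2nd_term}--\ref{sec_3rd_term}.
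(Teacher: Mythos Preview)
Your outline is correct and matches the paper's approach in all essentials: linearization to the matrix-valued resolvent, martingale decomposition along rows/columns, a martingale CLT with Lindeberg verification, identification of the hook process into the four pieces (weighted $\tilde\sigma_N^2,\sigma_N^4,|\theta_N|^2,\kappa_N$) via Lemma~\ref{quadratic forms}, the approximate fixed-point argument on $M_m(\C)\otimes M_m(\C)$ for the second and third pieces, and tightness via Lemma~\ref{Shirai}. One small correction: the uniform $L^2$-bound $\Var[\Tr(zI_N-X_N)^{-1}]=O(1)$ needed for tightness (Proposition~\ref{variancebound2}) does \emph{not} follow from the scalar resolvent bound alone---that only gives a trivial $O(N^2)$---but requires the same martingale-difference machinery and Schur expansions used in the finite-dimensional part.
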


\section{Review of background}\label{sec:preliminary_results}

In this section, we gather properties which will be used several times in the sequel.

\subsection{Generalized resolvent} \label{sectionresolvent}

Let $\mathcal{M}$ be a unital $C^*$-algebra and $\mathcal{B}$ be a unital  $C^*$-subalgebra of $\mathcal{M}$. 
For $A\in\mathcal M$, we denote by $\Re A=(A+A^*)/2$ and $\Im A=(A-A^*)/2i$ the real and imaginary parts of $A$, so  $A=\Re A+i\Im A$. 
For a selfadjoint operator $A\in\mathcal M$, we write $A\ge 0$ if the spectrum  of $A$ is contained in $[0,+\infty)$ and $A>0$ if the spectrum  of $A$ is contained in $(0,+\infty)$. The operator upper half-plane of $\mathcal B$ is the set $\mathbb H^+(\mathcal B)=\{b\in\mathcal B\colon\Im b>0\}$. 

The generalized resolvent of an element $A\in \mathcal{M}$ in this context is the analytic map $R$ defined on the open subset $\{b\in \mathcal{B}, b-A \text{ is invertible in } \mathcal{M}\}$ of $\mathcal{B}$ by $R(b):=(b-A)^{-1}$. Note that the generalized resolvent of a selfadjoint element is in particular defined on the operator upper half-plane $\mathbb{H}^+(\mathcal{B})$ and satisfies 
\begin{equation}\label{majim}
\|R(b)\|\leq \|(\mathcal{I}b)^{-1}\|,\quad b\in \mathbb{H}^+(\mathcal{B}).
\end{equation}
In particular, when $\mathcal{M}=M_n({\mathbb{C}})$ and $\mathcal{B}=\mathbb{C}I_n\simeq \mathbb{C}$, the resolvent $R:z\mapsto (zI_n-A)^{-1}$ of a $n\times n$ Hermitian matrix with complex entries $A\in M_n(\mathbb{C})$ is defined on $\mathbb{C}\setminus \mathbb{R}$ and satisfies 
\begin{equation}\label{resolventbound}
\| R(z)\| \leq |\mathcal{I}z|^{-1},\quad z\in \mathbb{C}\setminus \mathbb{R}.
\end{equation}
The following Lemma is elementary but useful. 
\begin{lem}[Resolvent identity]\label{lem_resolvent_identity}
Let $A_1$ and $A_2$ be elements of $\mathcal{M}$ and denote by $R_1$ and $R_2$ their respective resolvents. Then, for all $b_1,b_2$ in the respective domains of $R_1$ and $R_2$, 
\[R_1(b_1)-R_2(b_2)=R_1(b_1)\big(b_2-b_1+A_1-A_2\big)R_2(b_2).\]
\end{lem}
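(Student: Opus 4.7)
The plan is to reduce the claim to the one-line algebraic manipulation familiar from the scalar resolvent identity, being careful that $\mathcal{M}$ is noncommutative. The only ingredient is that, since $b_1$ and $b_2$ lie in the respective domains of $R_1$ and $R_2$, both $b_1 - A_1$ and $b_2 - A_2$ are invertible in $\mathcal{M}$, and their inverses are two-sided. Thus the identity element admits the two factorizations $1_{\mathcal{M}} = (b_2 - A_2)R_2(b_2) = R_1(b_1)(b_1 - A_1)$.

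Concretely, I would insert these identities between $R_1(b_1)$ and itself (respectively $R_2(b_2)$ and itself) to obtain the two equalities
\[
R_1(b_1) \;=\; R_1(b_1)\,(b_2 - A_2)\,R_2(b_2), \qquad R_2(b_2) \;=\; R_1(b_1)\,(b_1 - A_1)\,R_2(b_2),
\]
where the first follows from $(b_2 - A_2)R_2(b_2) = 1_{\mathcal{M}}$ and the second from $R_1(b_1)(b_1 - A_1) = 1_{\mathcal{M}}$. Subtracting the second from the first yields
\[
R_1(b_1) - R_2(b_2) \;=\; R_1(b_1)\,\bigl[(b_2 - A_2) - (b_1 - A_1)\bigr]\,R_2(b_2),
\]
and simplifying the bracket to $b_2 - b_1 + A_1 - A_2$ gives exactly the claimed formula.

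There is essentially no obstacle here; the proof is a two-line computation. The only point worth flagging is that no commutativity between $b_i$ and $A_i$ is ever invoked and the order of factors is preserved throughout, so the argument works verbatim in a general unital $C^*$-algebra $\mathcal{M}$.
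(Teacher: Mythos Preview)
Your proof is correct and is exactly the standard two-line algebraic manipulation; the paper itself omits the proof, simply calling the lemma ``elementary but useful.''
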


\subsection{Linearization}\label{sec:lin}

In this Section, we collect a few properties of linearizations of polynomials in noncommuting indeterminates introduced above.

\begin{lem}\label{inversible}
Let  $P=P^*\in\mathbb{C}\langle t_1,\ldots,t_n\rangle$ and let
$L_P \in M_m(\mathbb{C})\otimes \mathbb{C}\langle t_1,\ldots,t_n\rangle$
be a linearization of P with the properties outlined above. 
Let $y = (y_1,\ldots, y_n)$  be a n-tuple of selfadjoint operators in a unital $C^*$-algebra ${\mathcal A}$. Then, for any $z\in \mathbb{C}$,  $ze_{11}\otimes 1_{\mathcal A}-L_P(y)$ is invertible if and only if $z 1_{\mathcal A}-P(y)$ is invertible and  we have \begin{equation}\label{coin}
\left(ze_{11}\otimes 1_{{\mathcal A}}-L_P(y)\right)^{-1}=\begin{pmatrix}
\left(z1_{\mathcal A}-P(y)\right)^{-1} & \star\\
\star & \star \end{pmatrix}.
\end{equation}
\end{lem}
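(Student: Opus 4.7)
The proof is a direct application of the Schur complement (block LDU) formula, which is in fact precisely the mechanism that motivates the linearization definition in the first place. Starting from the block form $L_P = \begin{pmatrix} 0 & u \\ v & Q \end{pmatrix}$ prescribed by the definition of linearization, I would first observe that, since $ze_{11}\otimes 1_{\mathcal A}$ lives only in the top-left corner, one has
\[ze_{11}\otimes 1_{\mathcal A} - L_P(y) = \begin{pmatrix} z 1_{\mathcal A} & -u(y) \\ -v(y) & -Q(y) \end{pmatrix}.\]
The crucial hypothesis is condition (2) of the definition of linearization, namely that $Q(y)$ is invertible in $M_{m-1}(\mathbb{C})\otimes\mathcal{A}$; this must be verified (or simply invoked) in the $C^*$-algebraic, not merely formal-polynomial, sense.

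Given the invertibility of $Q(y)$, the strategy is to exhibit the block LDU factorization
\[\begin{pmatrix} z 1_{\mathcal A} & -u(y) \\ -v(y) & -Q(y)\end{pmatrix} = \begin{pmatrix} 1_{\mathcal A} & u(y)Q(y)^{-1} \\ 0 & I\end{pmatrix}\begin{pmatrix} S & 0 \\ 0 & -Q(y)\end{pmatrix}\begin{pmatrix} 1_{\mathcal A} & 0 \\ Q(y)^{-1}v(y) & I\end{pmatrix},\]
whose Schur complement in the top-left block evaluates, using the defining identity $P = -uQ^{-1}v$ of a linearization, to
\[S = z 1_{\mathcal A} - (-u(y))(-Q(y))^{-1}(-v(y)) = z 1_{\mathcal A} + u(y)Q(y)^{-1}v(y) = z 1_{\mathcal A} - P(y).\]
The two outer triangular factors are always invertible (their inverses being obtained by sign-flipping the off-diagonal blocks), so the big block matrix is invertible if and only if $S = z1_{\mathcal A} - P(y)$ is invertible, establishing the equivalence of invertibilities claimed in the statement.

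The explicit formula \eqref{coin} is then read off by inverting the three factors of the LDU decomposition and extracting the $(1,1)$-block of the product: both triangular factors contribute the identity in the top-left corner, so the $(1,1)$-entry of the inverse is precisely $S^{-1} = (z 1_{\mathcal A} - P(y))^{-1}$, as desired (the $\star$ entries being the irrelevant cross terms involving $u(y)$, $v(y)$, $Q(y)^{-1}$).

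I do not anticipate a genuine obstacle: this is the classical Schur trick and is at most a careful bookkeeping exercise. The only point that deserves explicit mention is that the invertibility of $Q(y)$ really must be an invertibility inside $M_{m-1}(\mathbb{C})\otimes\mathcal{A}$ rather than a formal one; this is folded into hypothesis (2) above and, in the applications considered in the paper, is verified at the level of the canonical linearization constructed in \cite{A, BC}.
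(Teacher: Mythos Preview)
Your proof is correct and is exactly the intended argument: the paper does not spell out a proof of this lemma, but the mechanism is precisely the Schur complement formula recorded as Proposition~\ref{Schur} in the appendix, applied with $I$ the index set of the lower-right $(m-1)\times(m-1)$ block, together with the defining identity $P=-uQ^{-1}v$. Your remark that the invertibility of $Q(y)$ must hold in $M_{m-1}(\mathcal A)$ (not merely formally) is the only point needing care, and for the canonical linearization this is handled in the cited references \cite{A,BC}.
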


Beyond the equivalence described above, we will use the following bound.

\begin{lem}\label{alta lemma}\cite{BBC} 
Let $z\in \mathbb C$ be such that $z1_{\mathcal A}-P(y)$ is invertible. There exist two polynomials $Q_1$ and $Q_2$ in $n$ commuting indeterminates, depending only on $L_P$, such that 
$$\left\|(ze_{11}\otimes 1_{\mathcal A}-L_P(y))^{-1}\right\| \leq Q_1\left(\|y_1\|,\dots,\|y_n\|\right) \left\|(z1_{\mathcal A}-P(y))^{-1}\right\|+ Q_2\left(\|y_1\|,\dots,\|y_n\|\right).$$
\end{lem}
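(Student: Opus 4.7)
The plan is to derive the estimate from the block-inversion formula for $(ze_{11}\otimes 1_{\mathcal{A}}-L_P(y))^{-1}$, controlling each block in terms of $\|(z1_{\mathcal{A}}-P(y))^{-1}\|$, $\|u(y)\|$, $\|v(y)\|$ and $\|Q(y)^{-1}\|$, and then observing that the last three quantities are bounded by polynomials in $\|y_1\|,\dots,\|y_n\|$ depending only on $L_P$.

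Writing $L_P(y)=\begin{pmatrix}0 & u(y)\\ v(y) & Q(y)\end{pmatrix}$, the matrix $ze_{11}\otimes 1_{\mathcal{A}}-L_P(y)=\begin{pmatrix}z1_{\mathcal{A}} & -u(y)\\ -v(y) & -Q(y)\end{pmatrix}$ has, with respect to its $(2,2)$-block, Schur complement $z1_{\mathcal{A}}-(-u(y))(-Q(y))^{-1}(-v(y))=z1_{\mathcal{A}}+u(y)Q(y)^{-1}v(y)=z1_{\mathcal{A}}-P(y)$, using the relation $P=-uQ^{-1}v$. The classical block-inversion formula then gives
$$(ze_{11}\otimes 1_{\mathcal{A}}-L_P(y))^{-1}=\begin{pmatrix}(z1_{\mathcal{A}}-P(y))^{-1} & -(z1_{\mathcal{A}}-P(y))^{-1}u(y)Q(y)^{-1}\\ -Q(y)^{-1}v(y)(z1_{\mathcal{A}}-P(y))^{-1} & Z(y)\end{pmatrix},$$
with $Z(y)=-Q(y)^{-1}+Q(y)^{-1}v(y)(z1_{\mathcal{A}}-P(y))^{-1}u(y)Q(y)^{-1}$. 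Bounding the operator norm of this $m\times m$ block matrix by a constant $C_m$ (depending only on $m$) times the sum of the norms of its blocks, I obtain
$$\|(ze_{11}\otimes 1_{\mathcal{A}}-L_P(y))^{-1}\|\leq C_m\bigl[(1+\|u(y)\|\|Q(y)^{-1}\|)(1+\|Q(y)^{-1}\|\|v(y)\|)\,\|(z1_{\mathcal{A}}-P(y))^{-1}\|+\|Q(y)^{-1}\|\bigr].$$

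Since $u$ and $v$ have entries of degree at most $1$, their norms $\|u(y)\|$ and $\|v(y)\|$ are dominated by explicit degree-$1$ polynomials in $\|y_1\|,\dots,\|y_n\|$ whose coefficients are read off from $L_P$. The remaining and main task is to bound $\|Q(y)^{-1}\|$ polynomially in $\|y_1\|,\dots,\|y_n\|$. For a general invertible $Q(y)$ no such bound need exist, so one must invoke the specific combinatorial shape of the Anderson-type linearization recalled in Section~\ref{sec:lin}: the canonical construction produces $Q$ for which $Q^{-1}$ lies in $M_{m-1}(\mathbb{C})\otimes\mathbb{C}\langle t_1,\dots,t_n\rangle$, that is, $Q^{-1}$ is itself a matrix of genuine noncommutative polynomials rather than a formal Neumann series. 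Concretely, $Q$ can be arranged so that, modulo a fixed invertible scalar matrix, its polynomial part is nilpotent, and $Q^{-1}$ is then given by a terminating Neumann expansion whose entries are polynomials in $t_1,\dots,t_n$ of bounded degree. Evaluating these polynomial entries at $y$ and majorizing each monomial factor by $\|y_1\|,\dots,\|y_n\|$ yields $\|Q(y)^{-1}\|\leq R(\|y_1\|,\dots,\|y_n\|)$ for some polynomial $R$ depending only on $L_P$. Substituting this into the previous display and regrouping the coefficient of $\|(z1_{\mathcal{A}}-P(y))^{-1}\|$ and the constant term produces the desired polynomials $Q_1$ and $Q_2$.

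The main obstacle is precisely the polynomial control of $\|Q(y)^{-1}\|$: mere invertibility of $Q$ in $M_{m-1}(\mathbb{C})\otimes\mathbb{C}\langle t_1,\dots,t_n\rangle$ does not bound the operator-norm growth of $Q(y)^{-1}$ in the $\|y_i\|$, and one must exploit the algebraic structure specific to the Anderson construction to convert a formal noncommutative inverse into an operator-norm estimate.
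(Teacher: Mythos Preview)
Your proof is correct and follows the standard approach from \cite{BBC}: the paper does not reprove this lemma but merely cites \cite{BBC}, and your Schur-complement block inversion together with the polynomial bound on $\|Q(y)^{-1}\|$ is exactly the argument there. The structural fact you invoke---that $Q^{-1}$ has polynomial entries because $Q$ is, up to permutation matrices, the identity plus a strictly triangular (hence nilpotent) matrix---is precisely \cite[Lemma~4.2]{BBC}, which the present paper also quotes explicitly later in the proof of Lemma~\ref{bound}.
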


\subsection{Operator-valued free probability theory}\label{opfree}

There exists an extension of free probability theory, operator-valued free probability theory, which still shares the basic properties of free probability but is much more powerful because of its wider domain of applicability. The concept of freeness with amalgamation and some of the relevant analytic transforms were introduced by Voiculescu in \cite{V1995}.

\begin{definition}
Let $\mathcal{M}$ be a unital complex algebra and $\mathcal B\subset \mathcal{M}$ be a unital subalgebra. A linear map $E: \mathcal{M} \rightarrow \mathcal B$ is a conditional expectation if $E(b)=b$ for all $b\in \mathcal B$ and $E(b_1Ab_2)=b_1E( A)b_2$ for all $A\in \mathcal{M}$ and $b_1,b_2\in \mathcal B$. Then $\left(\mathcal{M},E\right)$ is called a $\mathcal{B}$-valued probability space. If in addition $\mathcal{M}$ is a $C^*$-algebra (von Neumann algebra), $\mathcal B$ is a $C^*$-subalgebra (von Neumann subalgebra) of $\mathcal{M}$, then we have a $\mathcal{B}$-valued $C^*$-probability space ($\mathcal{B}$-valued $W^*$-probability space).
\end{definition}

In our applications, the algebra $\mathcal B$ is (isomorphic to) $M_m(\mathbb C)$ for some $m\in\mathbb N$. More precisely, let $\mathcal A$ be a von Neumann algebra endowed with a normal faithful tracial state $\tau$, and let $m\in\mathbb N$. Then $M_m(\mathbb C)$ can be identified with the subalgebra $M_m(\mathbb C)\otimes1_{\mathcal{A}}$ of $M_m({\mathcal A})=M_m(\mathbb C)\otimes \mathcal A$. Moreover, the von Neumann algebra $M_m({\mathcal A})$ is endowed with the normal faithful tracial state $
m^{-1}\Tr\otimes\tau$, and $\mathrm{id}_{m}\otimes\tau$ is the trace-preserving conditional expectation from $M_m({\mathcal A})$ to $M_m(\mathbb C)$. In other words, $(M_m({\mathcal A}),\mathrm{id}_{m}\otimes\tau)$ is a $M_m(\mathbb C)$-valued $W^*$-probability space.
As mentioned in Section 3 of \cite{BBC}, the distributional limits of random matrices as considered 
in our models are realized in II${}_1$-factors, with respect to their unique normal faithful tracial states, so that
there is no loss of generality in assuming $(\mathcal{A},\tau)$ to be a von Neumann algebra endowed with a normal faithful tracial state (also called $W^*$-probability space). 


\begin{definition}
Let $\left( {\mathcal M}, E\right)$ be a $\mathcal{B}$-valued probability space. The ${ \mathcal{B}}$-valued distribution of a noncommutative random variable $A\in {\mathcal M}$ is given by all ${ \mathcal{B}}$-valued moments $E(Ab_1Ab_2\cdots b_{n-1}A)$, $n \in \mathbb{N}, b_1, \ldots, b_{n-1}\in \mathcal{B}$.
\end{definition}

If $s$ is a (scalar-valued) semicircular element with mean $0$ and variance $\sigma^2$ in some $W^*$-probability space $(\mathcal A, \tau)$, then, for any Hermitian matrix $\gamma\in M_m(\C)$,  $\gamma\otimes s\in M_m({\mathcal A})$ is a $M_m(\mathbb C)$-valued semicircular element in the $M_m(\mathbb C)$-valued probability space $(M_m({\mathcal A}),\mathrm{id}_{m}\otimes\tau)$, in the sense of \cite{SMem}. The ${\mathcal{B}}$-valued distribution of a general centred $\mathcal{B}$-valued semicircular element $S\in \mathcal{M}$ is uniquely determined by its operator-valued variance $\eta\colon b\mapsto E(SbS)$ ; a characterization in terms of moments and cumulants via $\eta$ is provided by Speicher in \cite{SMem}. The operator-valued variance of the $M_m(\mathbb C)$-valued semicircular element $\gamma\otimes s$ is $\eta\colon b\mapsto \sigma^2 \gamma b\gamma$.

As in scalar-valued free probability, one defines \cite{V1995} {\em freeness with amalgamation}
over ${ \mathcal{B}}$ via an algebraic relation similar to {freeness}, but involving $E$ and noncommutative polynomials with coefficients in ${ \mathcal{B}}$.

\begin{definition}
Let $\left( {\mathcal M}, E\right)$ be a $\mathcal{B}$-valued probability space. Let $(\mathcal{A}_i)_{i\in I}$ be a family  of subalgebras with ${\mathcal B}\subset \mathcal{A}_i$ for all $i\in I$. The subalgebras $(\mathcal{A}_i)_{i\in I}$ are free with respect to E or free with amalgamation over ${\mathcal B}$ if $E(A_1\cdots A_n)=0$ whenever $A_j \in \mathcal{A}_{i_j}$, $i_j \in I$, $i_1 \neq i_2 \neq \cdots \neq i_n$ and $E(A_j)=0$, for all $j$.\\
Noncommutative random variables in $\mathcal M$ or subsets of $\mathcal M$ are free with amalgamation over $\mathcal{B}$ if the algebras generated by $\mathcal{B}$ and the variables or the algebras generated by $\mathcal{B}$ and the subsets, respectively, are so.
\end{definition}

The following result from \cite{NSS} explains why the  particular case  ${\mathcal B}=M_m(\mathbb C)$, $\mathcal M=M_m({\mathcal A})$, $E={\rm id}_{m}\otimes \tau$, where $(\mathcal A, \tau)$ is a $W^*$-probability space,
 is relevant in our work using linearizations of polynomials.

\begin{prop}\label{fr}
Let $(\mathcal{A},\varphi)$ be a noncommutative probability space, let $a_1,\ldots, a_n\in (\mathcal{A},\varphi)$ be freely independent noncommutative random variables and let $m\in \mathbb{N}$. Then the map ${\rm id}_m\otimes\varphi \colon M_m(\mathcal A)\to M_m(\mathbb C)$ is a conditional expectation, and $\alpha_1\otimes a_1, \ldots, \alpha_n \otimes a_n$  are free with amalgamation over $M_m(\mathbb C)$ for any $\alpha_1,\ldots ,\alpha_n\in M_m(\mathbb C)$.
\end{prop}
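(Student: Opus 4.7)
The statement has two parts; I would handle them in sequence, both by working with simple tensors and reducing to the underlying scalar structure on $\mathcal{A}$.

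For the conditional expectation claim, I need to verify three properties of $E := \mathrm{id}_m \otimes \varphi$. Linearity is automatic. The identity property on $M_m(\mathbb{C}) \otimes 1_{\mathcal{A}}$ follows from $E(b \otimes 1_{\mathcal{A}}) = b\, \varphi(1_{\mathcal{A}}) = b$. The module property $E(b_1 A b_2) = b_1\, E(A)\, b_2$ for $b_1, b_2 \in M_m(\mathbb{C}) \otimes 1_{\mathcal{A}}$ reduces by bilinearity to the case $A = c \otimes a$ and $b_i = b_i' \otimes 1_{\mathcal{A}}$, in which the multiplication in $M_m(\mathcal{A}) = M_m(\mathbb{C}) \otimes \mathcal{A}$ gives $b_1 A b_2 = (b_1' c b_2') \otimes a$, and applying $E$ yields $b_1' c b_2'\, \varphi(a) = b_1\, E(c \otimes a)\, b_2$.

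For the freeness claim, let $\mathcal{B}_j \subset M_m(\mathcal{A})$ be the unital subalgebra generated by $M_m(\mathbb{C}) \otimes 1_{\mathcal{A}}$ and $\alpha_j \otimes a_j$. The first key step is to describe $\mathcal{B}_j$ explicitly: a typical monomial $(b_0 \otimes 1_{\mathcal{A}})(\alpha_j \otimes a_j)(b_1 \otimes 1_{\mathcal{A}}) \cdots (\alpha_j \otimes a_j)(b_p \otimes 1_{\mathcal{A}})$ equals $(b_0 \alpha_j b_1 \alpha_j \cdots \alpha_j b_p) \otimes a_j^p$, because the scalar factors $1_{\mathcal{A}}$ commute through and only accumulate as powers of $a_j$. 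Hence every element of $\mathcal{B}_j$ has the form $\sum_{p \geq 0} c_p \otimes a_j^p$ with $c_p \in M_m(\mathbb{C})$.

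Given alternating indices $i_1 \neq i_2 \neq \cdots \neq i_k$ and $A_\ell \in \mathcal{B}_{i_\ell}$ with $E(A_\ell) = 0$, I would then center: writing $A_\ell = \sum_p c_{\ell,p} \otimes a_{i_\ell}^p$ and setting $\hat{a}^p := a^p - \varphi(a^p)\, 1_{\mathcal{A}}$, a direct rearrangement gives
\[
A_\ell = \sum_{p \geq 1} c_{\ell,p} \otimes \hat{a}_{i_\ell}^{\, p} + E(A_\ell) \otimes 1_{\mathcal{A}},
\]
so the hypothesis $E(A_\ell) = 0$ leaves $A_\ell = \sum_{p \geq 1} c_{\ell,p} \otimes \hat{a}_{i_\ell}^{\, p}$, with $\varphi(\hat{a}_{i_\ell}^{\, p}) = 0$ for every $p \geq 1$. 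Expanding the product and applying $E$ yields
\[
E(A_1 \cdots A_k) = \sum_{p_1, \ldots, p_k \geq 1} c_{1,p_1} \cdots c_{k,p_k}\, \varphi\bigl(\hat{a}_{i_1}^{\, p_1} \cdots \hat{a}_{i_k}^{\, p_k}\bigr),
\]
and every scalar factor on the right vanishes by the scalar free independence of $a_1, \ldots, a_n$, applied to the centered polynomials $\hat{a}_{i_\ell}^{\, p_\ell}$ under the consecutive-distinctness assumption $i_1 \neq i_2 \neq \cdots \neq i_k$. The only substantive step is the explicit description of $\mathcal{B}_j$ as $M_m(\mathbb{C})$-valued polynomials in $a_j$; once that is in hand, the centering identity decouples the matrix and operator sides and reduces everything to scalar freeness, so I do not anticipate any serious obstacle.
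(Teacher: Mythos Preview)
Your proof is correct. The paper does not actually prove this proposition; it is stated there as a known result from \cite{NSS} and simply quoted without argument. Your approach---reducing to simple tensors, observing that the subalgebra $\mathcal{B}_j$ generated by $M_m(\mathbb{C})\otimes 1_{\mathcal{A}}$ and $\alpha_j\otimes a_j$ sits inside $M_m(\mathbb{C})\otimes\mathbb{C}[a_j]$, centering, and then invoking scalar freeness termwise---is the standard elementary argument and is fully valid. One minor remark: you describe $\mathcal{B}_j$ as equal to the set of $M_m(\mathbb{C})$-valued polynomials in $a_j$, but in fact you only obtain (and only need) the inclusion $\mathcal{B}_j\subseteq M_m(\mathbb{C})\otimes\mathbb{C}[a_j]$, since the matrix coefficients arising are constrained to products involving $\alpha_j$; this does not affect the argument.
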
 

The analytic subordination phenomenon for free convolutions was first noted by Voiculescu and Biane in the scalar case and later approached from an abstract coalgebra point of view by Voiculescu in \cite{V2000} and this approach extends the results to the operator-valued case. In \cite{BMS}, Belinschi, Mai and Speicher  develop an analytic theory. 

\begin{prop}\label{subop}\cite{V2000},\cite{BMS}(see Theorem 5 p 259 \cite{MS}) 
Let $\left(\mathcal M,E\right)$ be a $\mathcal{B}$-valued $C^*$-probability space. Let $A_1,A_2\in\mathcal M$ be selfadjoint noncommutative random variables which are {free with amalgamation over ${\mathcal B}$}. \\
There exists a unique pair of Fr\'echet  analytic maps {$\omega_1, \omega_2\colon\mathbb H^+({\mathcal{B}})\to\mathbb H^+({\mathcal B})$}
such that, for all $b\in\mathbb H^+({\mathcal B})$,
\begin{enumerate}
\item $\Im \omega_{j}(b)\ge\Im  b,\ \omega_j(b^*)=\omega_j(b)^*, \; j=1,2$;
\item $E[(b-(A_1+A_2))^{-1}]=E[(\omega_1(b)-A_1)^{-1}]=E[(\omega_2(b)-A_2)^{-1}]$;
\item $E[(b-(A_1+A_2))^{-1}]^{-1}+b=\omega_1(b)+\omega_2(b)$.
\end{enumerate}
Moreover, if $b \in \mathbb H^+({ \mathcal{B}})$, then $ \omega_1(b)$ is the unique fixed point of the map $f_b:\mathbb H^+({\mathcal B}) \rightarrow \mathbb H^+({\mathcal B})$ defined by $f_b(\omega)=h_{A_2}(h_{A_1}(\omega)+b)+b$,
where $h_{A_i}(b)=E[(b-A_i)^{-1}]^{-1}-b$
and $\omega_1(b)= \lim_{k\rightarrow +\infty} f_b^{\circ k}(w)$, for any $\omega \in\mathbb H^+({\mathcal B})$.
\end{prop}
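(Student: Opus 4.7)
The plan is to reduce the spectral-radius bound from the two-sided tensor operator on $M_m(\mathbb{C})\otimes M_m(\mathbb{C})$ to a contractivity estimate for the subordination map $\omega$ on an operator upper half-plane. This proceeds in three steps: (i) a tensor-opposite reduction to a one-sided operator $T_1$ on $M_m(\mathbb{C})$; (ii) a block-diagonal doubling merging $\beta_1,\beta_2$ into a single $M_{2m}(\mathbb{C})$-valued parameter; (iii) an Earle--Hamilton/Schwarz--Pick argument on $\mathbb{H}^+(M_{2m}(\mathbb{C}))$.

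\textbf{Step 1 (tensor-opposite reduction).} Set $A(t):=(\omega(\beta_1)-t\gamma_2)^{-1}\gamma_1$ and $B(t):=\gamma_1(\omega(\beta_2)-t\gamma_2)^{-1}$, where $\beta_i:=z_ie_{11}-\gamma_0$. View $M_m(\mathbb{C})\otimes M_m(\mathbb{C})$ as the underlying vector space of the Banach algebra $M_m(\mathbb{C})\otimes M_m(\mathbb{C})^{\mathrm{op}}$. The identity $(a\otimes I_m)(c\otimes d)(I_m\otimes b)=(a\otimes b)\cdot_{\mathrm{op}}(c\otimes d)$ shows that $T:=T_{\{\beta_1,\beta_2\}}$ is exactly left multiplication by $C:=\int A(t)\otimes B(t)\,d\nu(t)$ in this algebra; since left multiplication preserves spectra, $\mathrm{sp}(T)=\mathrm{sp}(C)$. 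The canonical algebra isomorphism $M_m(\mathbb{C})\otimes M_m(\mathbb{C})^{\mathrm{op}}\cong\mathrm{End}(M_m(\mathbb{C}))$, $a\otimes b\mapsto(x\mapsto axb)$, sends $C$ to the one-sided operator $T_1(X)=\int A(t)XB(t)\,d\nu(t)$ on $M_m(\mathbb{C})$, so $\rho(T)=\rho(T_1)$.

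\textbf{Step 2 (doubling).} Set $\tilde\beta:=\mathrm{diag}(\beta_1,\beta_2)\in M_{2m}(\mathbb{C})$ and $\tilde\gamma_j:=\mathrm{diag}(\gamma_j,\gamma_j)$ for $j=0,1,2$. The $M_{2m}(\mathbb{C})$-valued subordination map $\tilde\omega$ associated to the pair $(\tilde\gamma_1\otimes s,\tilde\gamma_2\otimes d)$ is block-diagonal, so $\tilde\omega(\tilde\beta)=\mathrm{diag}(\omega(\beta_1),\omega(\beta_2))$. The corresponding \emph{diagonal} operator
\[
\tilde T_1(X)=\int(\tilde\omega(\tilde\beta)-t\tilde\gamma_2)^{-1}\tilde\gamma_1X\tilde\gamma_1(\tilde\omega(\tilde\beta)-t\tilde\gamma_2)^{-1}\,d\nu(t),\quad X\in M_{2m}(\mathbb{C}),
\]
leaves the off-diagonal-block subspace $\mathcal V=\bigl\{\bigl(\begin{smallmatrix}0&Y\\0&0\end{smallmatrix}\bigr):Y\in M_m(\mathbb{C})\bigr\}$ invariant, and the restriction agrees, via $X\leftrightarrow Y$, with $T_1$. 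Therefore $\rho(T_1)\leq\rho(\tilde T_1)$, reducing matters to the diagonal case.

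\textbf{Step 3 (contractivity bound).} Differentiating the subordination identity $\tilde\omega(\tilde\beta)=\tilde\beta-\sigma^2\tilde\gamma_1\tilde G(\tilde\beta)\tilde\gamma_1$, with $\tilde G(\tilde\beta)=\int(\tilde\omega(\tilde\beta)-t\tilde\gamma_2)^{-1}d\nu(t)$, gives $D\tilde\omega(\tilde\beta)=(\mathrm{id}-\tilde\Sigma)^{-1}$, where $\tilde\Sigma:=\sigma^2 L_{\tilde\gamma_1}R_{\tilde\gamma_1}\tilde S$ and $\tilde S(Y):=\int(\tilde\omega-t\tilde\gamma_2)^{-1}Y(\tilde\omega-t\tilde\gamma_2)^{-1}d\nu(t)$. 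The identity $\rho(PQ)=\rho(QP)$ on a finite-dimensional space yields $\rho(\tilde\Sigma)=\rho(\sigma^2\tilde T_1)$. Now $\tilde\omega$ is a non-degenerate analytic self-map of the hyperbolic domain $\mathbb H^+(M_{2m}(\mathbb{C}))$ (indeed $\Im\tilde\omega(\tilde\beta)>\Im\tilde\beta$ strictly whenever $\tilde\beta\in\mathbb H^+$, because the covariance $b\mapsto\sigma^2\tilde\gamma_1 b\tilde\gamma_1$ is non-trivial). The Earle--Hamilton/Harris--Schwarz--Pick lemma then gives a strict contractivity estimate on $D\tilde\omega$ which translates, via $\tilde\Sigma=\mathrm{id}-(D\tilde\omega)^{-1}$, into $\rho(\tilde\Sigma)<1$ at every interior point. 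For our boundary $\tilde\beta=\mathrm{diag}(z_ie_{11}-\gamma_0)\notin\mathbb H^+(M_{2m})$, Lemma~\ref{inversible} extends $\tilde\omega$ analytically to a neighbourhood of $\tilde\beta$ in the larger open set where $\tilde\beta\otimes 1-\tilde\gamma_1\otimes s-\tilde\gamma_2\otimes d$ is invertible; crucially $\tilde\omega(\tilde\beta)\in\mathbb H^+(M_{2m})$ strictly (since $z_i\notin\mathbb R$), so the contractivity estimate propagates.

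\textbf{Main obstacle.} The technical core is Step 3: converting the qualitative fact that $\tilde\omega$ is an analytic self-map of $\mathbb H^+$ into a genuinely strict bound $\rho(\tilde\Sigma)<1$ at the boundary point $\tilde\beta$ (rather than just the soft bound $\leq 1$ that follows from mere invertibility of $D\tilde\omega$). This is exactly the use of ``contractivity of analytic self-maps on hyperbolic domains'' alluded to in the introduction; the argument must exploit simultaneously the strict positivity of $\Im\tilde\omega(\tilde\beta)$ and the analytic continuation of $\tilde\omega$ across the non-compact boundary portion $\{\tilde\beta:\Im\tilde\beta\ngtr 0\text{ but }\tilde\beta\otimes 1-\tilde L_P(s,d)\text{ invertible}\}$ furnished by Lemma~\ref{inversible}.
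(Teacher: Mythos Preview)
Your proposal does not address the stated Proposition~\ref{subop} at all. Proposition~\ref{subop} is a \emph{cited} result (from Voiculescu \cite{V2000}, Belinschi--Mai--Speicher \cite{BMS}, and \cite{MS}) asserting the existence, uniqueness, and fixed-point characterization of the operator-valued subordination functions $\omega_1,\omega_2$ for a free additive convolution. The paper does not prove it; it is quoted as background and used as a tool.

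What you have written is instead a sketch of a proof of Proposition~\ref{invertibility2} (the spectral-radius bound $\rho(T_{\{z_1e_{11}-\gamma_0,\,z_2e_{11}-\gamma_0\}})<\sigma^{-2}$). Every step of your proposal---the tensor-opposite reduction, the block-diagonal doubling, the Earle--Hamilton argument, the discussion of the boundary point $\tilde\beta\notin\mathbb H^+$---is aimed at that spectral-radius statement, not at the existence of subordination maps. You have simply targeted the wrong proposition.

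If your intent was Proposition~\ref{invertibility2}, then your outline is in the right spirit and close to the paper's approach (Section~\ref{defobjetlimite}): the paper also reduces from $M_m\otimes M_m$ to $M_m$ (Section~\ref{equalityspectra}), doubles via upper-triangular $2\times 2$ blocks over $\mathcal B$ to handle mixed signs of $\Im\beta_j$ (Proposition~\ref{welldefined1}), and uses Earle--Hamilton contractivity. The main divergence is in your Step~3: you try to push the strict bound to the boundary point directly via analytic continuation and ``propagation of contractivity'', which is not quite a proof. The paper instead first gets $\rho\le 1$ at the boundary by continuity, then uses plurisubharmonicity of $\mathbf z\mapsto\rho(u_{\mathbf z})$ and the maximum principle (Theorem~\ref{pluri}, Remark~\ref{psh}) to rule out equality, checking a single real point via a derivative-of-$\omega$ argument. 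Your ``main obstacle'' paragraph correctly identifies the gap but does not close it.
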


Moreover, if $\mathcal M$ is a $W^*$-probability space and $\mathcal{B}\subset \mathcal{D}\subset \mathcal{M}$ are  
von Neumann subalgebras (hence unital by definition) such that $A_2\in \mathcal{D}$ and $\mathcal{D}$ is free with amalgamation over $\mathcal{B}$ from $A_1$ (with respect to the trace-preserving conditional expectation from $\mathcal M$ onto $\mathcal B$), then the following strengthened result holds:
\begin{equation*}
E_{\mathcal D}[\left(b-(A_1+A_2)\right)^{-1}]=\left(\omega_2(b)-A_2\right)^{-1},\quad b\in\mathbb H^+({\mathcal B}),
\end{equation*}
where $E_{\mathcal{D}}$ is the trace-preserving conditional expectation from $\mathcal M$ onto $\mathcal D$.

If, in Proposition \ref{subop}, $A_1$ is a centred $\mathcal{B}$-valued semicircular element with operator-valued variance $\eta$, the subordination function $\omega_2$ has a more explicit form (see \cite[Chapter 9]{MS} and the end of the proof of Theorem 8.3 in \cite{ABFN}): 
\begin{equation}\label{six}
\omega_2(b)=b-\eta(E[\left(b-(A_1+A_2)\right)^{-1}]),\quad b\in \mathbb{H}^+(\mathcal{B}).
\end{equation} 
It follows that $\omega_2$ may be analytically extended to the open subset $\{b\in \mathcal{B}, b-(A_1+A_2) \text{ is invertible in }\mathcal{A}\}$. Moreover, for $b$ in the connected component of $\{b\in \mathcal{B}, b-(A_1+A_2) \text{ is invertible in }\mathcal{A}\}$ containing $\mathbb H^+(\mathcal{B})$, 
\begin{equation}\label{subforte}
E_{\mathcal D}[\left(b-(A_1+A_2)\right)^{-1}]=\left(\omega_2(b)-A_2\right)^{-1}
\end{equation}
holds. Note also that $\omega_2(b)$ satisfies the fixed point equation
\begin{equation}\label{fixed}
\omega_2(b)=b-\eta(E[\left(\omega_2(b)-A_2\right)^{-1}]).
\end{equation}

\section{Definition of the limiting object: proof of Proposition \ref{invertibility2}}\label{defobjetlimite}

Our strategy of proof for Proposition \ref{invertibility2} is the following : in Section \ref{equalityspectra}, one proves that the operator $\sigma^2T_{\{z_1e_{11}-\gamma_0,z_2e_{11}-\gamma_0\}}:M_m(\mathbb C)\otimes M_m(\mathbb C)\to M_m(\mathbb C)\otimes M_m(\mathbb C)$ has the same eigenvalues as the operator $u_{z_1e_{11}-\gamma_0,z_2e_{11}-\gamma_0}$, where $u_{\beta_1,\beta_2}\colon M_m(\mathbb C)\to M_m(\mathbb C)$ is defined by 
$$b\mapsto\sigma^2({\rm id}_m\otimes\tau)\left[(\omega(\beta_1)\otimes1_{\mathcal{A}}-\gamma_2\otimes d )^{-1}((\gamma_1b\gamma_1)\otimes1_{\mathcal{A}})(\omega(\beta_2)\otimes1_{\mathcal{A}}-\gamma_2\otimes d)^{-1}\right].$$
The key point  of this section is then to prove the following Proposition \ref{welldefined3}.

\begin{prop}\label{welldefined3}
Let $s,d\in (\mathcal A,\tau)$ be freely independent selfadjoint noncommutative random variables in a $W^*$-probability space $(\mathcal A,\tau)$. Assume that $s$ is a semicircular element with mean $0$ and variance $\sigma^2$. Let also $\gamma_0,\gamma_1,\gamma_2$ be non-zero 
selfadjoint matrices in $M_{m}(\mathcal \C)$ such that the lower right $(m-1)\times(m-1)$ corner of $\gamma_0\otimes 1_{\mathcal{A}}+\gamma_1\otimes s+\gamma_2\otimes d$ is invertible in $M_{m-1}(\mathcal A)$. For any $(z_1, z_2)\in \mathbb C^2$ such that $(z_je_{11}-\gamma_0)\otimes 1_{\mathcal{A}}-\gamma_1\otimes s-\gamma_2\otimes d$ is invertible in $M_m(\mathcal A),j=1,2$, the spectrum of the linear operator $u_{z_1e_{11}-\gamma_0, z_2e_{11}-\gamma_0}\colon M_m(\mathbb C)\to M_m(\mathbb C)$ defined by 
$$b\mapsto\sigma^2({\rm id}_m\otimes\tau)\left[(\omega(z_1e_{11}-\gamma_0)\otimes1_{\mathcal{A}}-\gamma_2\otimes d )^{-1}((\gamma_1b\gamma_1)\otimes1_{\mathcal{A}})(\omega(z_2e_{11}-\gamma_0)\otimes1_{\mathcal{A}}-\gamma_2\otimes d)^{-1}\right]$$
is included in the open unit disk. Recall that $$\omega(\beta)=\beta-\sigma^2\gamma_1({\rm id}_m\otimes\tau)\left[\left(\beta\otimes1_{\mathcal{A}}-\gamma_1\otimes s-\gamma_2\otimes d\right)^{-1}\right]\gamma_1.$$
\end{prop}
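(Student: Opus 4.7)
The map $\omega$ is the $M_m(\mathbb{C})$-valued subordination function of Proposition~\ref{subop} associated with the freely independent pair $(\gamma_1\otimes s,\gamma_2\otimes d)$ in the $M_m(\mathbb{C})$-valued $W^*$-probability space $(M_m(\mathcal A),\mathrm{id}_m\otimes\tau)$. My plan is to first extract from the imaginary part of the fixed-point equation a completely positive map on $M_m(\mathbb{C})$ whose spectral radius is strictly less than one; then to transfer this bound to the non-CP diagonal operator $u_{\beta,\beta}$ and to the off-diagonal operator $u_{\beta_1,\beta_2}$ via an operator Cauchy--Schwarz argument; and finally to propagate the estimate from $\mathbb{H}^+(M_m(\mathbb{C}))$ to the analytically continued set $\{z\,e_{11}-\gamma_0:z\in\mathbb{C}\setminus\mathbb{R}\}$.

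\textbf{The CP bound.} For $\beta\in\mathbb{H}^+(M_m(\mathbb{C}))$, take the imaginary part of (\ref{fixed}); using the resolvent identity $\Im[(\omega\otimes 1_{\mathcal A}-\gamma_2\otimes d)^{-1}]=-R^*(\Im\omega\otimes 1_{\mathcal A})R$ with $R=(\omega(\beta)\otimes 1_{\mathcal A}-\gamma_2\otimes d)^{-1}$, one obtains
$$(\mathrm{id}_{M_m}-p_\beta)(\Im\omega(\beta))=\Im\beta,$$
where $p_\beta\colon b\mapsto\sigma^2\gamma_1(\mathrm{id}_m\otimes\tau)[R^*(b\otimes 1_{\mathcal A})R]\gamma_1$ is a completely positive map on $M_m(\mathbb{C})$. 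Since $\Im\omega(\beta)>0$ and $\Im\beta>0$, the matrix $\Im\omega(\beta)$ is a strict super-eigenvector of $p_\beta$ (i.e.\ $p_\beta(\Im\omega(\beta))\leq(1-\varepsilon)\Im\omega(\beta)$ for some $\varepsilon>0$), and a Perron--Frobenius argument for CP maps on $M_m(\mathbb{C})$ (iterate and use $I_m\leq c\,\Im\omega(\beta)$) gives $r(p_\beta)<1$.

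\textbf{Transfer to $u_{\beta_1,\beta_2}$.} Implicit differentiation of (\ref{fixed}) yields $(\mathrm{id}_{M_m}-M_{\gamma_1}L_{\beta,\beta})\omega'(\beta)=\mathrm{id}_{M_m}$ with $L_{\beta_1,\beta_2}(y)=\sigma^2(\mathrm{id}_m\otimes\tau)[R(\beta_1)(y\otimes 1_{\mathcal A})R(\beta_2)]$ and $M_{\gamma_1}(y)=\gamma_1 y\gamma_1$. Since $u_{\beta_1,\beta_2}=L_{\beta_1,\beta_2}\circ M_{\gamma_1}$ and $\mathrm{spec}(AB)\setminus\{0\}=\mathrm{spec}(BA)\setminus\{0\}$, it suffices to bound the spectral radius of $M_{\gamma_1}L_{\beta_1,\beta_2}$. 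An operator Cauchy--Schwarz (Kadison--Schwarz) inequality, combined with the identity $R-R^*=-2iR^*(\Im\omega\otimes 1_{\mathcal A})R$, compares $M_{\gamma_1}L_{\beta,\beta}$ to $p_\beta$ and delivers $r(M_{\gamma_1}L_{\beta,\beta})\leq r(p_\beta)<1$. For $\beta_1\neq\beta_2$ (still in $\mathbb{H}^+(M_m(\mathbb{C}))$), the analogous Kadison--Schwarz bound exploiting that $u_{\beta,\beta^*}$ is completely positive (by $R(\beta^*)=R(\beta)^*$, itself coming from $\omega(\beta^*)=\omega(\beta)^*$) gives, for instance, $r(u_{\beta_1,\beta_2})^2\leq r(u_{\beta_1,\beta_1^*})\,r(u_{\beta_2^*,\beta_2})<1$, each factor being bounded as in the diagonal case.

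\textbf{Analytic continuation and main obstacle.} By Lemma~\ref{inversible} combined with the lower-right corner hypothesis, the set $\mathcal O:=\{\beta\in M_m(\mathbb{C}):\beta\otimes 1_{\mathcal A}-\gamma_1\otimes s-\gamma_2\otimes d\text{ is invertible in }M_m(\mathcal A)\}$ is an open subset of $M_m(\mathbb C)$ containing both $\mathbb H^+(M_m(\mathbb C))$ and $\{ze_{11}-\gamma_0:z\in\mathbb C\setminus\mathbb R\}$; on the connected component $\mathcal O_0$ of $\mathcal O$ containing $\mathbb H^+(M_m(\mathbb C))$, both $\omega$ and $u_{\beta_1,\beta_2}$ extend analytically via (\ref{six}). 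Continuity of the spectrum of a holomorphic matrix function in finite dimension, applied to the jointly holomorphic map $(\lambda,\beta_1,\beta_2)\mapsto\det(\mathrm{id}_{M_m}-\lambda u_{\beta_1,\beta_2})$, combined with a connectedness argument on its zero-free set in $\overline{\mathbb D}\times\mathcal O_0^{\,2}$, propagates the strict bound from $\mathbb H^+(M_m(\mathbb C))^2$ to all of $\mathcal O_0^{\,2}$, in particular to the prescribed $\beta_j=z_je_{11}-\gamma_0$. The hard part will be the transfer step: $u_{\beta,\beta}$ is not completely positive (both copies of $R$ appear without conjugation), so Perron--Frobenius cannot be invoked directly, and identifying the precise operator Cauchy--Schwarz inequality that accommodates the specific sandwich of $R$ versus $R^*$ and the placement of the $\gamma_1$'s---yielding a genuine spectral radius comparison rather than a mere norm bound---is the technical heart of the proof.
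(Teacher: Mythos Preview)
Your outline has two genuine gaps, one acknowledged and one not.

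\emph{The transfer step.} You correctly extract from the imaginary part of the fixed-point equation that the completely positive map $p_\beta=M_{\gamma_1}\circ L_{\beta^*,\beta}$ has spectral radius strictly less than $1$; since $u_{\beta^*,\beta}=L_{\beta^*,\beta}\circ M_{\gamma_1}$, this immediately handles the mixed-sign case $(\beta_1,\beta_2)\in\mathbb H^-\times\mathbb H^+$. But $u_{\beta,\beta}$ with both arguments in $\mathbb H^+$ is \emph{not} CP, and the Kadison--Schwarz inequality you invoke is a pointwise operator inequality $\Phi(a)^*\Phi(a)\le\Phi(a^*a)$, not a spectral-radius inequality for linear maps; it is not clear that any such inequality yields $r(u_{\beta,\beta})\le r(p_\beta)$, nor that $r(u_{\beta_1,\beta_2})^2\le r(u_{\beta_1,\beta_1^*})\,r(u_{\beta_2^*,\beta_2})$. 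The paper avoids this issue entirely: it embeds the problem into a $2\times2$ upper-triangular domain $\mathcal H\subset M_2(\mathcal B)$, shows that the subordination-fixed-point map $f_{\boldsymbol\beta}$ sends a bounded subset of $\mathcal H$ strictly inside itself, and applies the Earle--Hamilton theorem (Theorem~\ref{Earle-Hamilton}). Since $f_{\boldsymbol\beta}$ is then a strict hyperbolic contraction with attracting fixed point $\mathrm{diag}(\omega(\beta_1),\omega(\beta_2))$, the action of its iterates on the off-diagonal entry---which is exactly iteration of $U=M_{\gamma_1}\circ L_{\beta_1,\beta_2}$---must shrink to zero, forcing $r(U)<1$. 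This handles all four sign combinations uniformly, with no Cauchy--Schwarz.

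\emph{The continuation step.} This is the more serious gap. Continuity of the eigenvalues of $u_{\beta_1,\beta_2}$ gives only $\rho\le1$ at the boundary points $\beta_j=z_je_{11}-\gamma_0$; it does \emph{not} give $\rho<1$. Your ``connectedness of the zero-free set of $\det(\mathrm{id}-\lambda u_{\beta_1,\beta_2})$ in $\overline{\mathbb D}\times\mathcal O_0^{\,2}$'' argument is invalid: that set is open in the $(\beta_1,\beta_2)$-variables but has no reason to be closed, so eigenvalues may drift onto the unit circle as $(\beta_1,\beta_2)$ leaves $\mathbb H^\pm\times\mathbb H^\pm$. The paper's remedy is that $(z_1,z_2)\mapsto\rho(u_{z_1e_{11}-\gamma_0,\,z_2e_{11}-\gamma_0})$ is \emph{plurisubharmonic} (Theorem~\ref{pluri}) on the connected open set where the linearization is invertible; the maximum principle then forces the dichotomy $\rho\equiv1$ or $\rho<1$ everywhere. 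Ruling out $\rho\equiv1$ still requires one explicit point with strict inequality, which the paper obtains at a large real $y$ via a separate argument (there $u_{ye_{11}-\gamma_0,\,ye_{11}-\gamma_0}$ is CP, and a computation with $\omega'$ shows that an eigenvalue $1$ would force the derivative $\omega'(ye_{11}-\gamma_0)$ to be simultaneously injective and non-injective). Without plurisubharmonicity and this extra point, the strict bound does not propagate.
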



Note that the conclusion of Proposition \ref{welldefined3} holds for any $(z_1, z_2)\in (\mathbb C\setminus \mathbb{R})^2$ as explained at the beginning of the proof.

The first step in the proof of Proposition \ref{welldefined3}, detailed in Section \ref{firststep}, consists in proving that the spectrum of $u_{\beta_1,\beta_2}$ is included in the open unit disk when $\beta_1, \beta_2\in M_m(\C)$ have positive-definite or negative-definite imaginary parts. Then,
in a second step in Section \ref{secondstep}, by using the maximum principle for plurisubharmonic functions, we deduce from the first step the result for $\beta_1=z_1e_{11}-\gamma_0$, $\beta_2=z_2e_{11}-\gamma_0$, as required.

\subsection{An equality of spectra}\label{equalityspectra}

The aim of this section is to prove that the operators $\sigma^2T_{\{z_1e_{11}-\gamma_0,z_2e_{11}-\gamma_0\}}$ and $u_{z_1e_{11}-\gamma_0,z_2e_{11}-\gamma_0}$ have the same eigenvalues (not counting multiplicities).

\begin{prop}
The operators $\sigma^2T_{\{z_1e_{11}-\gamma_0,z_2e_{11}-\gamma_0\}}:M_m(\mathbb C)\otimes M_m(\mathbb C)\to M_m(\mathbb C)\otimes M_m(\mathbb C)$ and $u_{z_1e_{11}-\gamma_0,z_2e_{11}-\gamma_0}:M_m(\mathbb C)\to M_m(\mathbb C)$ have the same eigenvalues.
\end{prop}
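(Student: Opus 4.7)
My plan is to construct an explicit linear isomorphism $\Phi : M_m(\mathbb{C})\otimes M_m(\mathbb{C}) \to \mathcal{L}(M_m(\mathbb{C}))$ (the space of linear endomorphisms of $M_m(\mathbb{C})$) that intertwines $\sigma^{2}T_{\{z_1e_{11}-\gamma_0,z_2e_{11}-\gamma_0\}}$ with the operator $L_u$ of left-composition with $u := u_{z_1e_{11}-\gamma_0,z_2e_{11}-\gamma_0}$, i.e.\ $L_u(f) := u\circ f$. Since similar operators are cospectral, it will then suffice to verify that $L_u$ and $u$ share the same set of eigenvalues, which is easy.

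The first step is to replace the $W^{*}$-algebraic definition of $u$ by a matrix-valued integral against $\nu$. Using the spectral resolution $d = \int t\,dE_t$ of $d\in\mathcal{A}$, bounded functional calculus gives
\[
\bigl(\omega(\beta_j)\otimes 1_{\mathcal{A}} - \gamma_2\otimes d\bigr)^{-1} = \int_{\mathbb{R}} (\omega(\beta_j)-t\gamma_2)^{-1}\otimes dE_t,
\]
provided $\omega(\beta_j) - t\gamma_2$ is invertible for every $t\in\text{supp}(\nu)$, which follows from the invertibility hypothesis on $\omega(\beta_j)\otimes 1_{\mathcal A} - \gamma_2\otimes d$ combined with spectral theory for the selfadjoint $d$. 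Multiplying the two resolvents, sandwiching $(\gamma_1 b\gamma_1)\otimes 1_{\mathcal{A}}$, and applying $\mathrm{id}_m\otimes\tau$ --- using the identity $\tau(f(d)g(d)) = \int fg\,d\nu$ to collapse the double spectral integral to a single one --- yields
\[
u_{\beta_1,\beta_2}(b) = \sigma^{2}\int_{\mathbb{R}} \alpha_t\, b\, \beta_t\,d\nu(t),
\quad \alpha_t := (\omega(\beta_1)-t\gamma_2)^{-1}\gamma_1,\ \beta_t := \gamma_1(\omega(\beta_2)-t\gamma_2)^{-1}.
\]

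Next, I define $\Phi(A\otimes B)(X) := AXB$ on elementary tensors and extend linearly. Expanding in the basis $\{e_{ij}\}_{1\le i,j\le m}$ shows $\Phi$ is injective; since $\dim\bigl(M_m(\mathbb{C})\otimes M_m(\mathbb{C})\bigr) = m^4 = \dim\mathcal{L}(M_m(\mathbb{C}))$, $\Phi$ is a linear isomorphism. Using the integral formulas just derived, for elementary tensors
\[
\Phi\bigl(\sigma^{2}T_{\{\beta_1,\beta_2\}}(A\otimes B)\bigr)(X) = \sigma^{2}\int \alpha_t A X B \beta_t\,d\nu(t) = u\bigl(\Phi(A\otimes B)(X)\bigr),
\]
so by linearity $\Phi\circ(\sigma^{2}T_{\{\beta_1,\beta_2\}}) = L_u\circ\Phi$. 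Hence $\sigma^{2}T_{\{\beta_1,\beta_2\}}$ and $L_u$ are similar and have identical spectra.

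Finally, $\text{sp}(L_u) = \text{sp}(u)$: if $u(v)=\lambda v$ with $v\neq 0$ and $\psi$ is any nonzero linear functional on $M_m(\mathbb{C})$, then $X\mapsto \psi(X)v$ is a $\lambda$-eigenvector of $L_u$; conversely, if $f$ is an eigenvector of $L_u$ for the eigenvalue $\lambda$ and $X$ is such that $f(X)\neq 0$, then $f(X)$ is a $\lambda$-eigenvector of $u$. The only subtlety I anticipate is in the first step: justifying rigorously the spectral-calculus collapse $\int\!\int \phi(s,t)\,dE_s\otimes dE_t \longrightarrow \int\phi(t,t)\,dE_t$ after applying $\mathrm{id}_m\otimes\tau$, but this is a routine consequence of functional calculus for the bounded selfadjoint element $d$.
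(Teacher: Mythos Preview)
Your proof is correct and follows essentially the same idea as the paper's: identify $M_m(\mathbb C)\otimes M_m(\mathbb C)$ with $\mathcal L(M_m(\mathbb C))$ so that $\sigma^2 T_{\{\beta_1,\beta_2\}}$ becomes (conjugate to) left composition by $u_{\beta_1,\beta_2}$, and then observe that $u$ and $L_u$ (equivalently, $u\otimes\mathrm{id}$) share the same eigenvalues. The minor differences are cosmetic: the paper uses the algebra isomorphism $A\otimes B\mapsto (X\mapsto AXB^{T})$, carries out the computation inside $M_m(\mathbb C)^{\otimes 4}$ with the flip automorphism $F$, and concludes via equality of minimal polynomials, whereas you use the linear isomorphism $A\otimes B\mapsto (X\mapsto AXB)$ and conclude directly by similarity; your spectral-calculus derivation of the integral form of $u$ is exactly what the paper uses implicitly when rewriting $u$ as $\sigma^{2}\int (\omega(\beta_1)-t\gamma_2)^{-1}\gamma_1\,(\cdot)\,\gamma_1(\omega(\beta_2)-t\gamma_2)^{-1}\,d\nu(t)$.
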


\begin{proof}
We will use the following identifications between algebras $M_n(\mathbb C)\otimes M_n(\mathbb C)$ and $\mathcal{L}(M_n(\mathbb C))$ : define an isomorphism of algebras  $M_n(\mathbb C)\otimes M_n(\mathbb C)\to \mathcal{L}(M_n(\mathbb C))$ by requiring that the image of $A\otimes B$ is the operator $X\mapsto AXB^T$ for any $A,B\in M_n(\mathbb C)$. 
Using these identifications, one may observe that the following equalities hold in $M_m(\mathbb{C})^{\otimes 4}$ : 
\begin{align*}
 \sigma^2 & T_{\{z_1e_{11}-\gamma_0,z_2e_{11}-\gamma_0\}}\\
&=\sigma^2\int_{\mathbb{R}}(\omega(z_1e_{11}-\gamma_0)-t\gamma_2)^{-1}\gamma_1\otimes I_m\otimes I_m\otimes  (\omega(z_2e_{11}-\gamma_0)-t\gamma_2)^{-1}\gamma_1 d\nu(t)\\
&=F\otimes F(I_m \otimes \sigma^2\int_{\mathbb{R}}(\omega(z_1e_{11}-\gamma_0)-t\gamma_2)^{-1}\gamma_1\otimes (\omega(z_2e_{11}-\gamma_0)-t\gamma_2)^{-1}\gamma_1d\nu(t)\otimes I_m)\\
&=F\otimes F(I_m \otimes u_{z_1e_{11}-\gamma_0,z_2e_{11}-\gamma_0}\otimes I_m),
\end{align*}
where $F:M_m(\mathbb{C})\otimes M_m(\mathbb{C})\to M_m(\mathbb{C})\otimes M_m(\mathbb{C})$ is the automorphism of the algebra $M_m(\mathbb{C})\otimes M_m(\mathbb{C})$ determined by $F(A\otimes B)=B\otimes A$. It follows that $\sigma^2T_{\{z_1e_{11}-\gamma_0,z_2e_{11}-\gamma_0\}}:M_m(\mathbb C)\otimes M_m(\mathbb C)\to M_m(\mathbb C)\otimes M_m(\mathbb C)$ on the one hand and $u_{z_1e_{11}-\gamma_0,z_2e_{11}-\gamma_0}:M_m(\mathbb C)\to M_m(\mathbb C)$ on the second hand have the same minimal polynomial, hence the same eigenvalues (but not with the same multiplicities though).
\end{proof}

\subsection{First step  in the proof of Proposition \ref{welldefined3}} \label{firststep}
Consider an arbitrary $C^*$-algebra $\mathcal B$, a completely positive map $\eta\colon\mathcal B\to\mathcal B$, and a centred $\mathcal B$-valued semicircular element $S$ with operator-valued variance $\eta$. {Recall that a completely positive map $\eta$ is automatically completely bounded: $\|\eta\|_{\rm cb}:=\sup_{m\in\mathbb N}\|{\rm id}_m
\otimes\eta\|<\infty$.}
Assume that $D=D^*$ is free from $S$ with amalgamation over $\mathcal B$ with respect to the conditional expectation $E$. As in \eqref{six}, we may write 
$$
\omega(b)=b-\eta\left(E\left[(b-S-D)^{-1}\right]\right)=b-\eta\left(E\left[(\omega(b)-D)^{-1}\right]\right).
$$
\begin{prop}\label{welldefined1}
For any $\beta_1, \beta_2$ in $\mathcal B$ such that either $\{\beta_1,\beta_2\in\mathbb H^+(\mathcal B)\}$, or $\{-\beta_1,-\beta_2\in\mathbb H^+(\mathcal B)\}$, or $\{\beta_1,-\beta_2\in\mathbb H^+(\mathcal B)\}$, or $\{-\beta_1,\beta_2\in\mathbb H^+(\mathcal B)\}$, 
the spectrum of the linear operator $u_{\beta_1,\beta_2}$ on $\mathcal B$ defined by 
$$
v\mapsto
E\left[(\omega(\beta_1)-D)^{-1}\eta(v)(\omega(\beta_2)-D)^{-1}\right]$$
is included in the open unit disk. 
\end{prop}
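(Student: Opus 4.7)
The plan is to reduce all four sign patterns to a single completely-positive spectral-radius estimate extracted from the subordination identity, then extend.

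\textbf{Reduction by symmetry.} Complete positivity of $\eta$ forces $\eta(v^*)=\eta(v)^*$, and combined with $\omega(\beta^*)=\omega(\beta)^*$ this gives $u_{\beta_1,\beta_2}(v^*)^*=u_{\beta_2^*,\beta_1^*}(v)$, so $\rho(u_{\beta_1,\beta_2})=\rho(u_{\beta_2^*,\beta_1^*})$. The four sign patterns thereby collapse to two: (i) $\beta_1\in\mathbb{H}^+(\mathcal B)$, $\beta_2\in-\mathbb{H}^+(\mathcal B)$; and (ii) $\beta_1,\beta_2\in\mathbb{H}^+(\mathcal B)$.

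\textbf{Diagonal c.p. estimate.} For $\beta\in\mathbb{H}^+(\mathcal B)$, set $Y=\omega(\beta)-D$ and consider the completely positive map $\Phi_\beta(X)=\eta(E[Y^{-1}X(Y^*)^{-1}])$ on $\mathcal B$. Writing $M_\beta(X)=Y^{-1}X(Y^*)^{-1}$, we have $\Phi_\beta=\eta\circ E\circ M_\beta$ while $u_{\beta,\beta^*}=E\circ M_\beta\circ\eta$; being cyclic permutations of the same three factors they share the same non-zero spectrum. Taking the imaginary part of $\omega(\beta)=\beta-\eta(E[Y^{-1}])$ and using $\mathcal{I}(Y^{-1})=-Y^{-1}\mathcal{I}\omega(\beta)(Y^*)^{-1}$ yields
\begin{equation*}
(\mathrm{id}-\Phi_\beta)\bigl(\mathcal{I}\omega(\beta)\bigr)=\mathcal{I}\beta.
\end{equation*}
Choosing $\delta>0$ with $\mathcal{I}\beta\ge\delta\cdot 1_{\mathcal B}$ and $C>0$ with $\mathcal{I}\omega(\beta)\le C\cdot 1_{\mathcal B}$, substitution produces $\Phi_\beta(\mathcal{I}\omega(\beta))\le(1-\delta/C)\mathcal{I}\omega(\beta)$. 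Iterating by positivity of $\Phi_\beta$ and decomposing an arbitrary norm-one element of $\mathcal B$ into four positive pieces each controlled by $\mathcal{I}\omega(\beta)/\delta$ gives $\rho(u_{\beta,\beta^*})=\rho(\Phi_\beta)\le 1-\delta/C<1$.

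\textbf{Mixed case via an $M_2$-lift.} For (i), lift everything to $M_2(\mathcal B)$ with $\widetilde\eta=\mathrm{id}_2\otimes\eta$, $\widetilde S=I_2\otimes S$ (an $M_2(\mathcal B)$-valued semicircular of variance $\widetilde\eta$), $\widetilde D=I_2\otimes D$, and $\widetilde E=\mathrm{id}_2\otimes E$; freeness with amalgamation lifts verbatim. Taking $\widetilde\beta=\mathrm{diag}(\beta_1,\beta_2^*)\in\mathbb{H}^+(M_2(\mathcal B))$, uniqueness of the operator-valued subordination map on block-diagonal arguments gives $\widetilde\omega(\widetilde\beta)=\mathrm{diag}(\omega(\beta_1),\omega(\beta_2)^*)$, and a direct block computation produces $\widetilde u_{\widetilde\beta,\widetilde\beta^*}(e_{12}\otimes v)=e_{12}\otimes u_{\beta_1,\beta_2}(v)$. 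Since $e_{12}\otimes\mathcal B$ is $\widetilde u_{\widetilde\beta,\widetilde\beta^*}$-invariant, the diagonal estimate applied in $M_2(\mathcal B)$ gives $\rho(u_{\beta_1,\beta_2})\le\rho(\widetilde u_{\widetilde\beta,\widetilde\beta^*})<1$.

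\textbf{Same-half-plane case: main obstacle.} Case (ii) resists the block trick --- no block-diagonal $\widetilde\beta\in\mathbb{H}^+(M_2(\mathcal B))$ yields two non-adjoint sandwich factors $Y_1^{-1},Y_2^{-1}$ inside a completely positive operator of the form $\widetilde u_{\widetilde\beta,\widetilde\beta^*}$ --- and is the technical heart of the proof. The intended route is to use that the subordination map $\omega$ is a Fréchet-holomorphic self-map of the hyperbolic domain $\mathbb{H}^+(\mathcal B)$, so that an operator-valued Schwarz--Pick inequality provides strict contractivity of $D\omega_\beta$ in a Carathéodory-type metric; implicit differentiation of the subordination identity expresses $D\omega_\beta$ as the resolvent of (a cyclic shift of) $u_{\beta,\beta}$ at $1$, translating this contractivity into the strict spectral-radius bound $\rho(u_{\beta,\beta})<1$. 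The cross case $\beta_1\ne\beta_2$ is then obtained by a Cauchy--Schwarz-type domination of $u_{\beta_1,\beta_2}$ by its completely positive companions $u_{\beta_1,\beta_1^*}$ and $u_{\beta_2^*,\beta_2}$, via the Kadison--Schwarz inequality for the conditional expectation $E$.
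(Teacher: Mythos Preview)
Your symmetry reduction, diagonal c.p.\ estimate, and $M_2$-lift for the mixed case (i) are all correct, and the last of these is a genuinely different --- and cleaner --- route than the paper's for that case: embedding $u_{\beta_1,\beta_2}$ as an off-diagonal block of the completely positive $\widetilde u_{\widetilde\beta,\widetilde\beta^*}$ with $\widetilde\beta=\mathrm{diag}(\beta_1,\beta_2^*)\in\mathbb H^+(M_2(\mathcal B))$ lets the imaginary-part identity $(\mathrm{id}-\Phi_{\widetilde\beta})(\Im\widetilde\omega(\widetilde\beta))=\Im\widetilde\beta$ do all the work. Curiously, the paper's emphasis is inverted: it cites the same-half-plane case as known and proves the mixed case in detail, via the Earle--Hamilton theorem on a domain $\mathcal H$ of upper-triangular $2\times2$ block matrices.

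But your case (ii) is a genuine gap, and the sketch does not work as written. First, $\omega$ has no fixed point in $\mathbb H^+(\mathcal B)$ (indeed $\Im\omega(\beta)\ge\Im\beta$), so Schwarz--Pick applied to $\omega$ only says that $D\omega_\beta=(\mathrm{id}-L)^{-1}$ is a Kobayashi contraction; this does not translate into $\rho(L)<1$. The relevant map is $f_\beta(w)=\beta-\eta(E[(w-D)^{-1}])$, whose fixed point is $\omega(\beta)$ and whose derivative there \emph{is} $L$; it is Earle--Hamilton for $f_\beta$ that gives $\rho(L)<1$. Second, the Kadison--Schwarz cross bound fails for the same structural reason your block trick did: any c.p.\ lift of the form $\mathrm{diag}(Y_1^{-1},Y_2^{-1})\,\cdot\,\mathrm{diag}(Y_1^{-1},Y_2^{-1})^*$ produces $u_{\beta_1,\beta_2^*}$ in the off-diagonal, not $u_{\beta_1,\beta_2}$, and there is no general inequality bounding $\rho(u_{\beta_1,\beta_2})$ by $\rho(u_{\beta_1,\beta_1^*})^{1/2}\rho(u_{\beta_2^*,\beta_2})^{1/2}$. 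The missing idea --- which the paper's method supplies uniformly for all four sign patterns --- is to run Earle--Hamilton not on $\mathbb H^+(\mathcal B)$ but on a $2\times2$ upper-triangular domain containing $\bigl[\begin{smallmatrix}\omega(\beta_1)&v\\0&\omega(\beta_2)\end{smallmatrix}\bigr]$, observe that iterates of $f_{\boldsymbol\beta}$ send $v\mapsto U^nv$ with $U$ the cyclic shift $\eta\circ E[(\omega(\beta_1)-D)^{-1}\cdot(\omega(\beta_2)-D)^{-1}]$ of $u_{\beta_1,\beta_2}$, and conclude $\rho(U)<1$ from the attracting fixed point.
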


The context that will be of interest in our paper corresponds to $\mathcal B=M_m(\mathbb C)$, $S=\gamma_1\otimes s, D=\gamma_2\otimes d$ for an arbitrary selfadjoint noncommutative random variable $d$, freely independent from the semicircular element $s$ with mean $0$ and variance $\sigma^2$ in some $W^*$-probability space $(\mathcal A,\tau)$.
In that case $\eta:b\mapsto \sigma^2\gamma_1b\gamma_1$ and $E={\rm id}_m\otimes\tau$. The proof of Proposition \ref{welldefined1} given above  would 
benefit little in terms of simplification from the assumption that $\mathcal B$ is finite dimensional, so there is no reason not to give it in full generality.
The idea
of the proof is to make use of the contractivity of analytic self-maps on hyperbolic domains.
\begin{proof}
The cases $\{\beta_1,\beta_2\in\mathbb H^+(\mathcal B)\}$ and $\{-\beta_1,-\beta_2\in\mathbb H^+(\mathcal B)\}$ are covered in \cite[Proposition 4.1]{CAOT}.
Thus, we will focus exclusively on the case $\{-\beta_1,\beta_2\in\mathbb H^+(\mathcal B)\}$, the case $\{\beta_1,-\beta_2\in\mathbb H^+(\mathcal B)\}$ being 
identical. However, the reader will find that the methods we employ in our proof below cover the other two cases with virtually no modification.

We assume without loss of generality that $\mathcal I\beta_1<0,\mathcal I\beta_2>0$. Consider the convex set 
$$
\mathcal H=\left\{\begin{bmatrix} w_1 & v \\ 0 & w_2 \end{bmatrix}\colon\begin{bmatrix} -w_1 & v \\ 0 & w_2 \end{bmatrix}\in\mathbb H^+(M_2(\mathcal B))\right\}.
$$
Note that for any  $a=a^*$, $b$ and $c=c^*$ in $\mathcal B$,
$$\begin{pmatrix} a&b\\b^*&c\end{pmatrix}=\begin{pmatrix} 1&bc^{-1}\\0&1\end{pmatrix}
\begin{pmatrix} (a-bc^{-1}b^*)&0\\0&c\end{pmatrix} \begin{pmatrix} 1&0\\c^{-1}b^*&1\end{pmatrix}$$
holds. Thus, $\begin{pmatrix} a&b\\b^*&c\end{pmatrix}>0$ if and only if $c>0$ and $a-bc^{-1}b^*>0$, that is if and only if $c>0$, $a>0$,  and $a^{-1/2}bc^{-1}b^*a^{-1/2}<1$.\\
It is interesting to note that $\Im\begin{bmatrix} w_1 & v \\ 0 & w_2 \end{bmatrix}=\begin{bmatrix}\Im w_1 &\frac{v}{2i}\\\left(\frac{v}{2i}\right)^*&\Im w_2 \end{bmatrix}$
is invertible in $M_2(\mathcal B)$. Indeed, the two diagonal entries are assumed to be invertible, and the Schur complement formula tells us that as long as we can guarantee
that $\Im w_1-\frac{v}{2i}(\Im w_2)^{-1}\left(\frac{v}{2i}\right)^*$ is invertible, we are done. But $\Im w_1<0$, so that $0>\Im w_1\ge\Im w_1-\frac14v(\Im w_2)^{-1}v^*$
makes $\Im w_1-\frac{v}{2i}(\Im w_2)^{-1}\left(\frac{v}{2i}\right)^*=\Im w_1-\frac14v(\Im w_2)^{-1}v^*$ invertible {\em regardless of the size of} $v$! Trivially, so is
any element $\mathfrak w\in\mathcal H$.
The maps
$$
({\rm id}_2\otimes E)\!\!\left[\!\left(\!\begin{bmatrix} w_1 & v \\ 0 & w_2 \end{bmatrix}\!-\!\begin{bmatrix} S\!+\!D & 0 \\ 0 & S\!+\!D \end{bmatrix}\!\right)^{-1}\!\right]\!=\!
\begin{bmatrix}\!E\left[(w_1\!-\!S\!-\!D)^{-1}\right] & \!\!- E\left[(w_1\!-\!S\!-\!D)^{-1}v(w_2\!-\!S\!-\!D)^{-1}\right] \\ 0 & E\left[(w_2\!-\!S\!-\!D)^{-1}\right] \end{bmatrix}
$$
and 
$$
({\rm id}_2\otimes E)\!\!\left[\!\left(\!\begin{bmatrix} w_1 & v \\ 0 & w_2 \end{bmatrix}\!-\!\begin{bmatrix} D & 0 \\ 0 & D \end{bmatrix}\!\right)^{-1}\!\right]\!=\!
\begin{bmatrix}\!E\left[(w_1\!-\!D)^{-1}\right] & \!- E\left[(w_1\!-\!D)^{-1}v(w_2\!-\!D)^{-1}\right] \\ 0 & E\left[(w_2\!-\!D)^{-1}\right] \end{bmatrix}
$$
are well-defined (this is a trivial observation) and moreover are the unique extensions through the set $\{b\in M_2(\mathcal B)\colon\|b^{-1}\|<\frac{1}{\|S\|+\|D\|}\}$
of the usual operator-valued Cauchy transforms defined on $\mathbb H^\pm(M_2(\mathcal B))$. Of course, $\mathcal H$ is not open in $M_2(\mathcal B)$. However, 
as the set of invertible bounded operators on a Hilbert space is open, it is clear that for each $\begin{bmatrix} w_1 & v \\ 0 & w_2 \end{bmatrix}\in\mathcal H$  one finds a
norm-neighborhood $V$ of this point in $M_2(\mathcal B)$ (and depending on this point) such that both $\mathfrak w-\begin{bmatrix} S\!+\!D & 0 \\ 0 & S\!+\!D \end{bmatrix}$
and $\mathfrak w-\begin{bmatrix} D & 0 \\ 0 & D \end{bmatrix}$ are invertible for all $\mathfrak w\in V$. Thus, the above defined extensions are indeed unique by the 
identity principle for analytic functions. 

While not open in $M_2(\mathcal B)$, the space $\mathcal H$ is nevertheless an analytic space, open in the complex algebra of upper triangular matrices in $M_2(\mathcal B)$,
so that we may define analytic functions on it and apply analytic function theory results to them. To begin with, observe that both 
$$
({\rm id}_2\otimes E)\!\!\left[\!\left(\!\begin{bmatrix} w_1 & v \\ 0 & w_2 \end{bmatrix}\!-\!\begin{bmatrix} S\!+\!D & 0 \\ 0 & S\!+\!D \end{bmatrix}\!\right)^{-1}\!\right],\quad
({\rm id}_2\otimes E)\!\!\left[\!\left(\!\begin{bmatrix} w_1 & v \\ 0 & w_2 \end{bmatrix}\!-\!\begin{bmatrix} D & 0 \\ 0 & D \end{bmatrix}\!\right)^{-1}\!\right]
$$
send $\mathcal H$ to $-\mathcal H$. Indeed, for any selfadjoint noncommutative random variable $Y$ 
\begin{align*}({\rm id}_2\otimes E)\!\!\left[\!\left(\!
\begin{bmatrix} w_1 & v \\ 0 & w_2 \end{bmatrix}\!-\!\begin{bmatrix} Y & 0 \\ 0 & Y \end{bmatrix}\!\right)^{-1}\!\right]&=
({\rm id}_2\otimes E)\!\!\begin{bmatrix} (w_1-Y)^{-1} & -(w_1-Y)^{-1}v(w_2-Y)^{-1} \\ 0 & (w_2-Y)^{-1} \end{bmatrix}\!\\&=
\begin{bmatrix} E[(w_1-Y)^{-1}] & -E[(w_1-Y)^{-1}v(w_2-Y)^{-1}] \\ 0 & E[(w_2-Y)^{-1}] \end{bmatrix}.
\end{align*} 
Now $\Im w_1<0\implies\Im E[(w_1-Y)^{-1}]>0,$ and $
\Im w_2>0\implies\Im E[(w_2-Y)^{-1}]<0$. On the other hand, 
\begin{align*}
\Im ({\rm id}_2\otimes E)\left[\left(\begin{bmatrix} -w_1 & v \\ 0 & w_2 \end{bmatrix}-\begin{bmatrix} -Y & 0 \\ 0 & Y \end{bmatrix}\right)^{-1}\!\right]&=\Im
({\rm id}_2\otimes E)\!\!\begin{bmatrix} -(w_1-Y)^{-1} & (w_1-Y)^{-1}v(w_2-Y)^{-1} \\ 0 & (w_2-Y)^{-1} \end{bmatrix}\!\\&=
\Im\begin{bmatrix} -E[(w_1-Y)^{-1}] & E[(w_1-Y)^{-1}v(w_2-Y)^{-1}] \\ 0 & E[(w_2-Y)^{-1}] \end{bmatrix}\\
& <0.
\end{align*}
This shows that 
$({\rm id}_2\otimes E)\!\!\left[\!\left(\!\begin{bmatrix} w_1 & v \\ 0 & w_2 \end{bmatrix}\!-\!\begin{bmatrix} Y & 0 \\ 0 & Y \end{bmatrix}\!\right)^{-1}\!\right]\in
-\mathcal H$. \\In addition, $\eta$ is completely positive, so $({\rm id}_2\otimes\eta)(\mathcal H)\subseteq\mathcal H$. Thus, the map
$$
f\colon\mathcal H\times\mathcal H\to\mathcal H,\quad
f_{\boldsymbol{\beta}}(\mathfrak w)={\boldsymbol{\beta}}
-({\rm id}_2\otimes \eta)\circ({\rm id}_2\otimes E)\left[\left(\mathfrak w-\begin{bmatrix} D & 0 \\ 0 & D \end{bmatrix}\!\right)^{-1}\right]
$$
is a well-defined map, and for each ${\boldsymbol{\beta}}\in\mathcal H,$ $f_{\boldsymbol{\beta}}$ is an analytic self-map of $\mathcal H$. 

Considering the level one relation from \eqref{six} for our given $\beta_1,\beta_2$, we automatically have
\begin{align*}
({\rm id}_2\otimes E)\!\!\left[\!\left(\!\begin{bmatrix} \beta_1 & 0 \\0 & \beta_2 \end{bmatrix}\!-\!\begin{bmatrix} S\!+\!D & 0 \\ 0 & S\!+\!D \end{bmatrix}\!\right)^{-1}\!\right]
&= \begin{bmatrix} E\left[(\beta_1-S-D)^{-1}\right] & 0 \\ 0 & E\left[(\beta_2-S-D)^{-1}\right] \end{bmatrix}\\
&=\begin{bmatrix} E\left[(\omega(\beta_1)-D)^{-1}\right] & 0 \\ 0 & E\left[(\omega(\beta_2)-D)^{-1}\right] \end{bmatrix}\\
&= ({\rm id}_2\otimes E)\!\!\left[\!\left(\!\begin{bmatrix}\omega(\beta_1)&0\\0&\omega(\beta_2)\end{bmatrix}\!-\!\begin{bmatrix} D & 0\\0 & D \end{bmatrix}\!\right)^{-1}\!\right].
\end{align*}
This guarantees that, for ${\boldsymbol{\beta}}=\begin{bmatrix} \beta_1 & 0 \\0 & \beta_2 \end{bmatrix}$, we have
\begin{align*}
f_{\boldsymbol{\beta}}\left(\!\begin{bmatrix}\omega(\beta_1)&0\\0&\omega(\beta_2)\end{bmatrix}\!\right)
&= {\boldsymbol{\beta}}-\begin{bmatrix}(\eta\circ E)\left[(\omega(\beta_1)-D)^{-1}\right] & 0 \\ 0 & (\eta\circ E)\left[(\omega(\beta_2)-D)^{-1}\right] \end{bmatrix}\\
&=\begin{bmatrix} \beta_1-(\eta\circ E)\left[(\omega(\beta_1)-D)^{-1}\right] & 0 \\ 0 & \beta_2-(\eta\circ E)\left[(\omega(\beta_2)-D)^{-1}\right] \end{bmatrix}\\
&= \begin{bmatrix}\omega(\beta_1)&0\\0&\omega(\beta_2)\end{bmatrix}, 
\end{align*}
so that $\begin{bmatrix}\omega(\beta_1)&0\\0&\omega(\beta_2)\end{bmatrix}$ is a fixed point of $f_{\boldsymbol{\beta}}$. 
Let us limit ourselves now to the subset 
$$
\mathcal H_{\beta_1,\beta_2,R}=\left\{\mathfrak w=\begin{bmatrix} w_1 & v \\ 0 & w_2 \end{bmatrix}\in\mathcal H\colon\mathcal I
\begin{bmatrix} -w_1 & v \\ 0 & w_2 \end{bmatrix}>\frac12\mathcal I\begin{bmatrix} -\beta_1 & 0 \\ 0 & \beta_2 \end{bmatrix},\|\mathfrak w\|<R\right\}.
$$
It is clear that for $R>0$ sufficiently large, $\mathcal H_{\beta_1,\beta_2,R}$ is a nonempty open connected subset of $\mathcal H$ (in fact convex). {According to 
what precedes}, we have
\begin{align*}
\mathcal I\left(\!\begin{bmatrix}-\beta_1&\!0\\0&\!\beta_2\end{bmatrix}\!-\!
\begin{bmatrix}-(\eta\!\circ\!E)\!\left[(w_1\!-\!D)^{-1}\right]&\!\!-(\eta\!\circ\!E)\!\left[(w_1\!-\!D)^{-1}v(w_2\!-\!D)^{-1}\right]\\0&\!(\eta\!\circ\!E)\!\left[(w_2\!-\!D)^{-1}\right]
\end{bmatrix}\!\right)
& \geq \mathcal I\!\begin{bmatrix}-\beta_1&\!0\\0&\!\beta_2\end{bmatrix}\!\\
& > \frac12\!\mathcal I\!\begin{bmatrix}-\beta_1&\!0\\0&\!\beta_2\end{bmatrix}
\end{align*}
and 
\begin{align*}
\|f_{\boldsymbol\beta}(\mathfrak w)\|&=\left\|\begin{bmatrix} \beta_1 & 0 \\0 & \beta_2 \end{bmatrix}-
\begin{bmatrix}(\eta\!\circ\!E)\!\left[(w_1\!-\!D)^{-1}\right]&\!\!-(\eta\!\circ\!E)\!\left[(w_1\!-\!D)^{-1}v(w_2\!-\!D)^{-1}\right]\\0&\!(\eta\!\circ\!E)\!\left[(w_2\!-\!D)^{-1}\right]
\end{bmatrix}\right\|\\
&\le\left\|\begin{bmatrix} \beta_1 & 0 \\0 & \beta_2 \end{bmatrix}\right\|+\|\eta\|_{\rm cb}\left\|
\begin{bmatrix}(w_1\!-\!D)^{-1}&\!\!-\!(w_1\!-\!D)^{-1}v(w_2\!-\!D)^{-1}\\0&\!(w_2\!-\!D)^{-1}
\end{bmatrix}\right\|\\
&=\left\|\begin{bmatrix} \beta_1 & 0 \\0 & \beta_2 \end{bmatrix}\right\|+
\|\eta\|_{\rm cb}\left\|\begin{bmatrix}w_1-D&\!v\\0&\!w_2-D\end{bmatrix}^{-1}\right\|\\
&=\left\|\begin{bmatrix} \beta_1 & 0 \\0 & \beta_2 \end{bmatrix}\right\|+
\|\eta\|_{\rm cb}\left\|\left(\begin{bmatrix}w_1-D&\!v\\0&\!w_2-D\end{bmatrix}\begin{bmatrix} -1 & 0 & \\ 0 & 1 \end{bmatrix}\right)^{-1}\right\|\\
&=\left\|\begin{bmatrix} \beta_1 & 0 \\0 & \beta_2 \end{bmatrix}\right\|+
\|\eta\|_{\rm cb}\left\|\begin{bmatrix}-w_1+D&\!v\\0&\!w_2-D\end{bmatrix}^{-1}\right\|\\
&=\left\|\begin{bmatrix} \beta_1 & 0 \\0 & \beta_2 \end{bmatrix}\right\|+
\|\eta\|_{\rm cb}\left\|\left(\begin{bmatrix}-w_1&\!v\\0&\!w_2\end{bmatrix}-\begin{bmatrix} -D & 0 \\ 0 & D \end{bmatrix}\right)^{-1}\right\|\\
&\le\|\beta_1\|+\|\beta_2\|+\|\eta\|_{\rm cb}\left\|\left(\Im\begin{bmatrix}- w_1 & v\\ 0 & w_2\end{bmatrix}\right)^{-1}\right\|\\
&<\|\beta_1\|+\|\beta_2\|+2\|\eta\|_{\rm cb}\!\left\|\begin{bmatrix}-\Im\beta_1 & 0\\ 0 &\!\Im\beta_2\end{bmatrix}^{-1}\right\|\!\le\!\|\beta_1\|+\|\beta_2\|+2\|\eta\|_{\rm cb}
\!\left[\|(\Im\beta_1)^{-1}\|\!+\!\|(\Im\beta_2)^{-1}\|\right]\!.
\end{align*}
(We have used \eqref{majim} and the hypothesis on elements in $\mathcal H_{\beta_1,\beta_2,R}$.) Observe that the majorization of $\|f_{\boldsymbol\beta}(\mathfrak w)\|$
is independent of $R$ and $\mathfrak w$. Thus, if we choose $R=2\|\beta_1\|+\|\beta_2\|+2\|\eta\|_{\rm cb}
\!\left[\|(\Im\beta_1)^{-1}\|\!+\!\|(\Im\beta_2)^{-1}\|\right]$, then we are guaranteed that $f_{\boldsymbol\beta}(\mathcal H_{\beta_1,\beta_2,R})$
is at positive norm-distance from $\mathcal H\setminus\mathcal H_{\beta_1,\beta_2,R}$. Theorem \ref{Earle-Hamilton} guarantees that $f_{\boldsymbol\beta}$ has a unique
{\em attracting} fixed point in $\mathcal H$, which, unsurprisingly, belongs to $\mathcal H_{\beta_1,\beta_2,R}$, and that $f_{\boldsymbol\beta}$ is a strict contraction
in the hyperbolic metric on $\mathcal H$. Since $f_{\boldsymbol\beta}$ maps $\mathcal H_{\beta_1,\beta_2,R}$ strictly inside itself, of course $f_{\boldsymbol\beta}$
is a strict contraction in the hyperbolic metric of $\mathcal H_{\beta_1,\beta_2,R}$ itself, hence any hyperbolic ball around the fixed point
$\begin{bmatrix}\omega(\beta_1)&0\\0&\omega(\beta_2)\end{bmatrix}$ is mapped strictly inside itself. In particular, given an arbitrary finite-radius hyperbolic ball 
$B\subsetneq\mathcal H_{\beta_1,\beta_2,R}$ around the fixed point, $f_{\boldsymbol\beta}^{\circ n}(\mathfrak w)\to
\begin{bmatrix}\omega(\beta_1)&0\\0&\omega(\beta_2)\end{bmatrix}$ as $n\to\infty$ for all $w\in B$, uniformly on $B$. Since on any subset at positive distance 
from the complement of $\mathcal H_{\beta_1,\beta_2,R}$ the norm topology and the hyperbolic topology are equivalent, there exists $r>0$ such that 
$\begin{bmatrix}\omega(\beta_1)&v\\0&\omega(\beta_2)\end{bmatrix}\in B$ for all $v\in\mathcal B,\|v\|\le r$. 

Now assume towards contradiction that the spectral radius of the linear completely bounded map $U\colon\mathcal B\ni v\mapsto(\eta\circ E)\left[(\omega(\beta_1)-D)^{-1}
v(\omega(\beta_2)-D)^{-1}\right]\in\mathcal B$ is greater than or equal to one. According to the spectral radius formula, this forces 
$\displaystyle\lim_{n\to\infty}\|U^n\|^\frac1n=\inf_{n\ge1}\|U^n\|^\frac1n\ge1$. However, by direct computation, with
${\boldsymbol{\beta}}=\begin{bmatrix} \beta_1 & 0 \\0 & \beta_2 \end{bmatrix}$, we obtain
\begin{align*}
f_{\boldsymbol\beta}^{\circ 2}\!\left(\begin{bmatrix}\omega(\beta_1)&\!\!v\\0&\!\!\omega(\beta_2)\end{bmatrix}\right) & = 
f_{\boldsymbol\beta}\left(f_{\boldsymbol\beta}\left(\begin{bmatrix}\omega(\beta_1)&v\\0&\omega(\beta_2)\end{bmatrix}\right)\right) \hspace{7.5cm}
\end{align*}
\begin{align*}
&=f_{\boldsymbol\beta}\left(\begin{bmatrix}\beta_1\!-(\eta\circ\!E)\left[(\omega(\beta_1)-D)^{-1}\right]&(\eta\circ\!E)\left[(\omega(\beta_1)-D)^{-1}v(\omega(\beta_2)-D)^{-1}
\right]\\0&\beta_2-(\eta\circ E)\left[(\omega(\beta_2)-D)^{-1}\right]\end{bmatrix}\!\right)\\
& =  f_{\boldsymbol\beta}\left(\begin{bmatrix}\omega(\beta_1)&\!\!Uv\\0&\!\!\omega(\beta_2)\end{bmatrix}\right)=\begin{bmatrix}\omega(\beta_1)&\!\!U^2v\\0&\!\!\omega(\beta_2)\end{bmatrix}, \quad\text{ so that }\\
f_{\boldsymbol\beta}^{\circ n}\!\left(\begin{bmatrix}\omega(\beta_1)&\!\!v\\0&\!\!\omega(\beta_2)\end{bmatrix}\right)&=
\begin{bmatrix}\omega(\beta_1)&\!\!U^nv\\0&\!\!\omega(\beta_2)\end{bmatrix},\quad n\in\mathbb N.
\end{align*}
Since, as mentioned above, the norm and hyperbolic topologies coincide locally and $f_{\boldsymbol\beta}$ is a strict hyperbolic contraction, there exists an $n_0\in\mathbb N$
such that $f_{\boldsymbol\beta}^{\circ n}\!\left(\begin{bmatrix}\omega(\beta_1)&\!\!v\\0&\!\!\omega(\beta_2)\end{bmatrix}\right)=
\begin{bmatrix}\omega(\beta_1)&\!\!U^nv\\0&\!\!\omega(\beta_2)\end{bmatrix}\in\left\{\begin{bmatrix}\omega(\beta_1)&\!\!\xi\\0&\!\!\omega(\beta_2)\end{bmatrix}\colon
\|\xi\|\le r/2\right\}$ for all $v,\|v\|\le r,n\ge n_0$. Thus, $\|U^{n_0}v\|\le\frac{r}{2}$ for all $v$, $\|v\|\le r$, so that $\|U^{n_0}\|\le\frac12$.
Of course, this means that $\displaystyle\lim_{n\to\infty}\|U^n\|^\frac1n=\lim_{m\to\infty}\|U^{mn_0}\|^\frac{1}{mn_0}\le\lim_{m\to\infty}\|U^{n_0}\|^\frac{m}{mn_0}=
\frac{1}{\sqrt[n_0]{2}}<1$, contradicting our hypothesis. Thus, the spectral radius of $U$ is strictly less than one. 

Since $U$ is the composition of $\eta$ with $E\left[(\omega(\beta_1)-D)^{-1}\ \cdot \ (\omega(\beta_2)-D)^{-1}
\right]$ and $u_{\beta_1,\beta_2}$ is the composition of $E\left[(\omega(\beta_1)-D)^{-1}\ \cdot \ (\omega(\beta_2)-D)^{-1}
\right]$ with $\eta$, the spectral radius of $U$ coincides with the spectral radius of $u_{\beta_1,\beta_2}$. 
This concludes the proof of our proposition. 
\end{proof}

\begin{rmk}
\begin{trivlist}
\item[\,(1)] The proof given above works with almost no modification for the case $\{\beta_1,\beta_2\in\mathbb H^+(\mathcal B)\}$, case already covered by
\cite[Proposition 4.1]{CAOT}.
\item[\ \,(2)] Although not directly useful in our paper, we mention that another side benefit of Proposition \ref{welldefined1} is that it allows the subordination function 
$\omega({\boldsymbol\beta})$ to be extended to a neighborhood of $\mathcal H$ {\em as a fixed point} of $f_{\boldsymbol\beta}(\cdot)$.

\end{trivlist}

\end{rmk}

\subsection{Second step  in the proof of Proposition \ref{welldefined3}} \label{secondstep}
The following theorem will be useful in order to deduce Proposition \ref{welldefined3} from  Proposition \ref{welldefined1}.

\begin{thm}\label{pluri}
Assume that $\Omega\subset\mathbb C^d$ is an open connected set, $\mathcal A$ is a Banach algebra, and $f\colon\Omega\to\mathcal A$ is analytic. Denote by $\rho(x)$
the spectral radius of the element $x\in\mathcal A$. The function $\Omega\ni\mathbf z\mapsto\rho(f(\mathbf z))\in[0,+\infty)$ is plurisubharmonic.
\end{thm}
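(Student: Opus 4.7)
The plan is to reduce the claim to showing that $\mathbf{z}\mapsto\log\rho(f(\mathbf{z}))$ is plurisubharmonic on $\Omega$ (with the convention $\log 0=-\infty$); once this is established, $\rho(f(\mathbf{z}))=\exp(\log\rho(f(\mathbf{z})))$ is plurisubharmonic as the composition of a plurisubharmonic function with the convex non-decreasing function $\exp$.

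The key auxiliary fact I would establish first is that, for any analytic map $g\colon\Omega\to\mathcal A$, the function $\mathbf{z}\mapsto\log\|g(\mathbf{z})\|$ is plurisubharmonic. For every continuous linear functional $\ell\in\mathcal A^*$, the composition $\ell\circ g$ is scalar holomorphic on $\Omega$ (by Dunford's theorem characterizing Banach-space-valued analyticity), so $\log|\ell\circ g|$ is plurisubharmonic by the standard fact for holomorphic functions of several complex variables. By Hahn-Banach,
\[
\|g(\mathbf{z})\|=\sup_{\ell\in\mathcal A^*,\,\|\ell\|\le 1}|\ell(g(\mathbf{z}))|,
\]
so $\log\|g(\mathbf{z})\|$ is the pointwise supremum of a family of plurisubharmonic functions. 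Because $\mathbf{z}\mapsto\|g(\mathbf{z})\|$ is continuous, this supremum is automatically upper semicontinuous, and an upper semicontinuous supremum of plurisubharmonic functions is plurisubharmonic with no further regularization needed.

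I would then apply this to $g_k(\mathbf{z}):=f(\mathbf{z})^{2^k}$ to conclude that
\[
u_k(\mathbf{z}):=\tfrac{1}{2^k}\log\|f(\mathbf{z})^{2^k}\|
\]
is plurisubharmonic for every $k\in\mathbb N$. Submultiplicativity of the Banach algebra norm gives $\|f(\mathbf{z})^{2^{k+1}}\|\le\|f(\mathbf{z})^{2^k}\|^2$, so the sequence $(u_k)_k$ is pointwise non-increasing, and by the Gelfand-Beurling spectral radius formula $u_k(\mathbf{z})\downarrow\log\rho(f(\mathbf{z}))$ pointwise as $k\to\infty$. The classical result that a decreasing limit of plurisubharmonic functions is plurisubharmonic (provided the limit is not identically $-\infty$ on a connected component of $\Omega$, the excluded case $\rho\circ f\equiv 0$ being trivial) then yields that $\log\rho(f(\cdot))$ is plurisubharmonic, completing the proof.

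The main obstacle is not conceptual but rather a matter of choosing the right representation of $\rho$: the raw Gelfand-Beurling sequence $\|x^n\|^{1/n}$ need not be monotone in $n$, so one cannot directly invoke a plurisubharmonic limit theorem along it. Restricting to the dyadic subsequence $n=2^k$ and exploiting submultiplicativity is precisely what converts Gelfand-Beurling into a \emph{decreasing} limit, which is the type of limit automatically compatible with plurisubharmonicity.
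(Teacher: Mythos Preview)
Your proof is correct and follows the same overall architecture as the paper's sketch: both use Hahn--Banach to exhibit (a function of) $\|g(\mathbf z)\|$ as an upper-semicontinuous supremum of plurisubharmonic functions, then invoke the Gelfand--Beurling formula to pass to $\rho$. The paper works directly with $\|f(\mathbf z)^n\|^{1/n}$, records that $\rho(f(\mathbf z))=\inf_{n\ge1}\|f(\mathbf z)^n\|^{1/n}$, and cites a result from Guedj--Zeriahi to conclude; you instead pass to $\log\|\cdot\|$ and restrict to the dyadic subsequence $n=2^k$, where submultiplicativity forces the sequence to decrease, so that the standard ``decreasing limit of psh is psh'' theorem applies with no external input. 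Your version is slightly more self-contained: the dyadic monotonicity you isolate is exactly the mechanism that makes the paper's infimum step legitimate, and you then recover $\rho$ itself via composition with the convex increasing map $\exp$, whereas the paper works with $\|\cdot\|^\alpha$ for all $\alpha>0$ from the outset.
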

The result is well-known (see for instance \cite[Section 4.1]{Ch}), but we provide a brief sketch of a proof. First, recall that the function 
$\mathcal A\ni x\mapsto\rho(x)\in[0,+\infty)$ is upper semicontinuous \cite[I.1,Theorem 31]{VM}. Obviously, $\Omega\ni\mathbf z\mapsto\|f(\mathbf z)\|\in[0,+\infty)$ is 
continuous. We claim it is also plurisubharmonic. Indeed, in any Banach space, the norm of an element is equal to the 
value at it of a certain norm-one linear functional defined on the Banach space. In particular, for any $x\in\mathcal A$, there exists a norm-one continuous linear 
functional $x^\star\colon\mathcal A\to\mathbb C$ such that $\|x\|=x^\star(x)$. Thus, for any given $\mathbf z_0\in\Omega$, one may find a norm-one linear 
functional $\phi_{\mathbf z_0}\colon\mathcal A\to\mathbb C$ such that $\phi_{\mathbf z_0}(f(\mathbf z_0))=\|f(\mathbf z_0)\|$. The function
$\Omega\ni\mathbf z\mapsto\phi_{\mathbf z_0}(f(\mathbf z))\in\mathbb C$ is analytic because of the continuity of the linear functional $\phi_{\mathbf z_0}$ and
the assumption of analyticity imposed on $f$. According to \cite[Proposition 1.29]{GZ}, the map $\Omega\ni\mathbf z\mapsto|\phi_{\mathbf z_0}(f(\mathbf z))|^\alpha\in[0,+\infty)$
becomes then a continuous plurisubharmonic function satisfying $|\phi_{\mathbf z_0}(f(\mathbf z_0))|^\alpha=\|f(\mathbf z_0)\|^\alpha,|\phi_{\mathbf z_0}(f(\mathbf z))|^\alpha\le
\|f(\mathbf z)\|^\alpha$, $\mathbf z\in\Omega,\alpha>0$. If we let $\Omega^\star=\{\phi_{\mathbf z}\colon\mathbf z\in\Omega\}\subseteq\{\phi\colon\mathcal A\to\mathbb C\colon
\phi\text{ linear, continuous, }\|\phi\|=1\}$, then for any $\alpha>0$,
$$
\|f(\mathbf z)\|^\alpha=\max\{|\phi_\mathbf w(f(\mathbf z))|^\alpha\colon\phi_\mathbf w\in\Omega^\star,\mathbf w\in\Omega\},\quad \mathbf z\in\Omega,
$$
so that, according to \cite[Proposition 1.40]{GZ}, it is indeed plurisubharmonic.

Finally, as already mentioned, the spectral radius formula is given as
$$
\rho(f(\mathbf z))=\lim_{n\to\infty}\|f(\mathbf z)^n\|^\frac1n=\inf_{n\ge1}\|f(\mathbf z)^n\|^\frac1n,
$$
that is, the upper semicontinuous function $\Omega\ni\mathbf z\mapsto\rho(f(\mathbf z))\in[0,+\infty)$ is the infimum of a family of plurisubharmonic functions, hence 
itself plurisubharmonic (see \cite[Proposition 1.28.(2)]{GZ}).

The property useful for our purposes of plurisubharmonic functions is that they satisfy a maximum principle \cite[Corollary 1.37]{GZ}: if $\mathbf z_0$ is a local maximum for
$\rho(f(\mathbf z))$, then $\mathbf z\mapsto\rho(f(\mathbf z))$ is constant on a neighborhood of $\mathbf z_0$ in $\Omega$.

\begin{rmk}\label{psh}
Consider an open connected set $\Omega\subseteq\mathbb C^d$ for some integer $d\ge1$, a Banach algebra $\mathcal A$, an analytic function $f\colon\Omega\to
\mathcal A$, and a number $M>0$. Assume that $\rho(f(\mathbf z))\leq M$ for all $\mathbf z\in\Omega$. Then either $\rho(f(\mathbf z))\equiv M$  for all $\mathbf z\in\Omega$,
or $\rho(f(\mathbf z))<M$  for all $\mathbf z\in\Omega$. In the second case, there exists a sequence $\mathbf z_n\in\Omega$ such that $\mathbf z_n\to\infty$ as $n\to\infty$, 
$\rho(f(\mathbf z_n))<\rho(f(\mathbf z_{n+1})),n\in\mathbb N$, and $M\ge\sup\{\rho(f(\mathbf z))\colon\mathbf z\in\Omega\}=
\lim_{n\to\infty}\rho(f(\mathbf z_n))$. By $\mathbf z_n\to\infty$ we mean that for any compact $K\subset\Omega$ there exists $n_K\in\mathbb N$ such that $\mathbf z_n
\not\in K$ for all $n\ge n_K$.
\end{rmk}

Observe that our boundedness hypothesis automatically excludes the possibility that $f$ is {\em not} constant and simultaneously $\Omega=\mathbb C^d$.

\begin{proof}
The statements are trivial: the first one is a consequence of the maximum principle for plurisubharmonic functions \cite[Corollary 1.37]{GZ} and Theorem \ref{pluri},
and the second of the very definition of supremum.
\end{proof}

\begin{proof}[Proof of Proposition \ref{welldefined3}]
First observe that $(ze_{11}-\gamma_0)\otimes1_{\mathcal{A}}-\gamma_1\otimes s-\gamma_2\otimes d$ is invertible for all $z\in \mathbb C^+\cup \mathbb C^-\cup(\mathbb R\setminus\sigma((\gamma_0)_{1,1}\cdot1_{\mathcal{A}}+(\gamma_1)_{1,1}s+(\gamma_2)_{1,1}d+u^*Q^{-1}u))$. Indeed, splitting this element in four blocks 
$$(ze_{11}-\gamma_0)\otimes1_{\mathcal{A}}-\gamma_1\otimes s-\gamma_2\otimes d= 
\begin{bmatrix}
((ze_{11}-\gamma_0)\otimes1_{\mathcal{A}}-\gamma_1\otimes s-\gamma_2\otimes d)_{1,1} & u^*\\
u & Q
\end{bmatrix}
$$
as in Section \ref{sec:lin}, where $Q\in M_{m-1}(\mathcal A)$ is assumed to be invertible. By our choice, $u$ and $Q$ do not depend on $z$, but only on $s,d$, and the $\gamma$'s.
The Schur complement formula guarantees that the above is invertible in $M_m(\mathcal A)$ whenever
$((ze_{11}-\gamma_0)\otimes1_{\mathcal{A}}-\gamma_1\otimes s-\gamma_2\otimes d)_{1,1}-u^*Q^{-1}u$
is invertible in $\mathcal A$. Since $(u^*Q^{-1}u)^*=u^*Q^{-1}u$ and $(\gamma_0\otimes1_{\mathcal{A}}+\gamma_1\otimes s+\gamma_2\otimes d)_{1,1}=(\gamma_0)_{1,1}\cdot1_{\mathcal{A}}+(\gamma_1)_{1,1}s+(\gamma_2)_{1,1}d$ is also selfadjoint in $\mathcal A$, it follows that for any $z\in\mathbb C^+\cup \mathbb C^-\cup(\mathbb R\setminus\sigma((\gamma_0)_{1,1}\cdot1_{\mathcal{A}}+(\gamma_1)_{1,1}s+(\gamma_2)_{1,1}d+u^*Q^{-1}u))$ -- a connected set
which is also a neighborhood of infinity -- the random variable $(ze_{11}-\gamma_0)\otimes1_{\mathcal{A}}-\gamma_1\otimes s-\gamma_2\otimes d$ is invertible, as claimed.

Direct computation shows that points $(ze_{11}-\gamma_0)$, $z\in\mathbb C$, belong to the topological closure of $\mathbb H^+(M_m(\mathbb C))\cup
\mathbb H^-(M_m(\mathbb C))$. Since the set of invertible elements in a Banach algebra is open in the norm topology, it follows immediately that if 
$z$ is such that $(ze_{11}-\gamma_0)\otimes1_{\mathcal{A}}-\gamma_1\otimes s-\gamma_2\otimes d$ is invertible in $M_m(\mathcal A)$, then 
there is a small enough neighborhood $V$ in $M_m(\mathbb C)$  such that $w\otimes1_{\mathcal{A}}-\gamma_1\otimes s-\gamma_2\otimes d$ is invertible
in $M_m(\mathcal A)$ for all $w\in V$ and, of course, $V\cap(\mathbb H^+(M_m(\mathbb C))\cup
\mathbb H^-(M_m(\mathbb C)))\neq\varnothing.$ 

This guarantees in particular that $
\omega(w)= w-\sigma^2\gamma_1({\rm id}_m\otimes\tau)\left[\left(w\otimes1_{\mathcal{A}}-\gamma_1\otimes s-\gamma_2\otimes d\right)^{-1}\right]\gamma_1
$ extends analytically to a neighborhood of $\{ze_{11}-\gamma_0\colon z\in\mathbb C^+\cup
\mathbb C^-\cup(\mathbb R\setminus\sigma((\gamma_0)_{1,1}\cdot1_{\mathcal{A}}+(\gamma_1)_{1,1} s+(\gamma_2)_{1,1} d+u^*Q^{-1}u))\}$ in 
$M_m(\mathbb C)$. 
{We use that the spectral radius of operators on $M_m(\C)$ is continuous} to conclude thanks to Proposition \ref{welldefined1} that for any $z_1,z_2\in\mathbb C^+\cup
\mathbb C^-\cup(\mathbb R\setminus\sigma((\gamma_0)_{1,1}\cdot1_{\mathcal{A}}+(\gamma_1)_{1,1}s+(\gamma_2)_{1,1}d+u^*Q^{-1}u))$, we may write 
$$
1\ge\lim_{\beta_j\to0,\beta_j\in\mathbb H^{\bullet_j}(M_m(\mathbb C))}
\rho\left(u_{z_1e_{11}-\gamma_0+\beta_1, z_2e_{11}-\gamma_0+\beta_2}\right)=\rho\left(u_{z_1e_{11}-\gamma_0, z_2e_{11}-\gamma_0}\right),
$$
where $\bullet_j$ is the sign of the imaginary part of $z_j$, $j=1,2$, and if one or both of $z_j$ are real, then we agree to make the choice $\bullet_j=+$.
Since the correspondence $(z_1,z_2)\mapsto u_{z_1e_{11}-\gamma_0, z_2e_{11}-\gamma_0}$ is an analytic map from the open subset $\{\mathbb C^+\cup
\mathbb C^-\cup(\mathbb R\setminus\sigma((\gamma_0)_{1,1}\cdot1_{\mathcal{A}}+(\gamma_1)_{1,1}s+(\gamma_2)_{1,1} d+u^*Q^{-1}u))\}^2$ of $\mathbb C^2$
into the Banach algebra of (bounded) linear self-maps of $M_m(\mathbb C)$, it follows that the correspondence $(z_1,z_2)\mapsto\rho\left(u_{z_1e_{11}-\gamma_0,
z_2e_{11}-\gamma_0}\right)\in[0,+\infty)$ is plurisubharmonic on the same set, according to Theorem \ref{pluri}. Since plurisubharmonic functions satisfy the maximum principle, 
it follows (see Remark \ref{psh}) that either $\rho\left(u_{z_1e_{11}-\gamma_0, z_2e_{11}-\gamma_0}\right)\equiv1$ for all pairs $(z_1,z_2)$ in the above-described domain of this 
function, or that $\rho\left(u_{z_1e_{11}-\gamma_0, z_2e_{11}-\gamma_0}\right)<1$ for all pairs $(z_1,z_2)$ in this domain. Thus, in order to show that the second 
case holds, it is enough to find a single such pair in which this spectral radius is strictly less than one. The pair we focus on will be of the form $(z_1,z_2)=(y,y)$ for $y\in \mathbb R$ sufficiently large. Note that, for such a pair, $u_{z_1e_{11}-\gamma_0, z_2e_{11}-\gamma_0}\colon M_m(\mathbb C)\to M_m(\mathbb C)$ is completely positive. We apply to it Theorem \ref{real}. Assume towards contradiction that the completely positive map $u_{ye_{11}-\gamma_0,ye_{11}-\gamma_0}$ has spectral radius equal to one. 
This map is the composition of two completely positive maps, namely $v\mapsto\sigma^2\gamma_1v\gamma_1$ and 
$v\mapsto ({\rm id}_m\otimes\tau)\left[(\omega(ye_{11}-\gamma_0)\otimes1_{\mathcal{A}}-\gamma_2\otimes d)^{-1}(v\otimes1_{\mathcal{A}}) 
(\omega(ye_{11}-\gamma_0)\otimes1_{\mathcal{A}}-\gamma_2\otimes d)^{-1}\right]$. Since the spectral radius of $AB$ coincides with the spectral radius of $BA$ 
for any linear maps $A,B$ on a Banach space, it follows that $\rho(u_{ye_{11}-\gamma_0,ye_{11}-\gamma_0})=1$ if and only if
\begin{equation}\label{spectral}
\rho\left(\sigma^2\gamma_1({\rm id}_m\otimes\tau)\left[(\omega(ye_{11}-\gamma_0)\otimes1_{\mathcal{A}}-\gamma_2\otimes d)^{-1}(\ \cdot\ \otimes1_{\mathcal{A}}) 
(\omega(ye_{11}-\gamma_0)\otimes1_{\mathcal{A}}-\gamma_2\otimes d)^{-1}\right]\gamma_1\right)=1.
\end{equation}
Thus, according to  Theorem \ref{real}, there exists a matrix $v_0\neq0$ such that 
$$
\sigma^2\gamma_1({\rm id}_m\otimes\tau)\left[(\omega(ye_{11}-\gamma_0)\otimes1_{\mathcal{A}}-\gamma_2\otimes d)^{-1}(v_0 \otimes1_{\mathcal{A}}) 
(\omega(ye_{11}-\gamma_0)\otimes1_{\mathcal{A}}-\gamma_2\otimes d)^{-1}\right]\gamma_1=v_0.
$$
Recalling that $\omega(w)=w-\sigma^2\gamma_1({\rm id}_m\otimes\tau)\left[\left(\omega(w)\otimes1_{\mathcal{A}}-\gamma_2\otimes d\right)^{-1}\right]\gamma_1,$ it follows
that 
$$
\omega'(w)(c)=c+\sigma^2\gamma_1({\rm id}_m\otimes\tau)\left[\left(\omega(w)\otimes1_{\mathcal{A}}-\gamma_2\otimes d\right)^{-1}\omega'(w)(c)
\left(\omega(w)\otimes1_{\mathcal{A}}-\gamma_2\otimes d\right)^{-1}\right]\gamma_1,
$$
for all $w\in\mathbb H^\pm(M_m(\mathbb C))$. Extending this to our point $ye_{11}-\gamma_0$, we have two options: either $\omega'(ye_{11}-\gamma_0)$ is bijective,
and then there exists a $c_y\in M_m(\mathbb C)$ such that $\omega'(ye_{11}-\gamma_0)(c_y)=v_0$, or there is no such $c_y$ and then $\omega'(ye_{11}-\gamma_0)$
is not bijective, which in finite dimensional spaces means it has a nontrivial kernel. In the first situation, we obtain
\begin{align*}
v_0 &=\omega'(ye_{11}-\gamma_0)(c_y)\\
&=c_y\!+\!\sigma^2\gamma_1({\rm id}_m\!\otimes\!\tau)\!\left[\left(\omega(ye_{11}\!-\!\gamma_0)\!\otimes\!1_{\mathcal{A}}\!-\!\gamma_2\!\otimes\!d\right)^{-1}\!
\omega'(ye_{11}\!-\!\gamma_0)(c_y)\left(\omega(ye_{11}\!-\!\gamma_0)\!\otimes\!1_{\mathcal{A}}\!-\!\gamma_2\!\otimes\!d\right)^{-1}\right]\!\gamma_1\\
&=c_y\!+\!\sigma^2\gamma_1({\rm id}_m\!\otimes\!\tau)\!\left[\left(\omega(ye_{11}\!-\!\gamma_0)\!\otimes\!1_{\mathcal{A}}\!-\!\gamma_2\!\otimes\!d\right)^{-1}\!
v_0\left(\omega(ye_{11}\!-\!\gamma_0)\!\otimes\!1_{\mathcal{A}}\!-\!\gamma_2\!\otimes\!d\right)^{-1}\right]\!\gamma_1\\
&=c_y\!+\!v_0,
\end{align*}
which implies $c_y=0$, so that $0=\omega'(ye_{11}\!-\!\gamma_0)(c_y)=v_0\neq0$, which implies that the first situation cannot occur. If $0\neq c\in\ker(
\omega'(ye_{11}-\gamma_0))$, then $0=\omega'(ye_{11}\!-\!\gamma_0)(c)=c+0=c\neq0$, again a contradiction. Thus, it is impossible that \eqref{spectral} takes place.
This concludes the proof of Proposition \ref{welldefined3}.
\end{proof}

Unlike for the case of Proposition \ref{welldefined1}, here there were several points where the finite dimensionality of $M_m(\mathbb C)$ was used. However, we cannot think 
at this moment of a situation in which  Proposition \ref{welldefined3} would have an appropriate formulation involving an infinite-dimensional algebra of scalars.

The reader might be concerned by one element in our proof, namely the fact that we have not hesitated to extend analytically $\omega$ around $(ye_{11}\!-\!\gamma_0)$.
This problem has been essentially addressed in \cite{BBC}. However, the reader can find a simple argument for this extension by recalling Voiculescu's result \cite{V2000}, namely
$$\left(\omega(w)\otimes1_{\mathcal{A}}-\gamma_2\otimes d\right)^{-1}=E_{M_m(\mathbb C\langle d\rangle)}\left[
\left(w\otimes1_{\mathcal{A}}-\gamma_1\otimes s-\gamma_2\otimes d\right)^{-1}\right].$$ As $y\in \mathbb R$ is taken so that $(ye_{11}-\gamma_0)\otimes1_{\mathcal{A}}-
\gamma_1\otimes s-\gamma_2\otimes d$ is invertible, we are guaranteed that it will remain invertible on a neighborhood of $ye_{11}-\gamma_0$ in $M_m(\mathbb C)$.
On the one hand this guarantees (via \eqref{six}) the existence and analyticity of $\omega$ on this neighborhood, and on the other, the boundedness of the conditional
expectation of $\left(w\otimes1_{\mathcal{A}}-\gamma_1\otimes s-\gamma_2\otimes d\right)^{-1}$ on this neighborhood guarantees that $
\omega(w)\otimes1_{\mathcal{A}}-\gamma_2\otimes d$ stays invertible on the same neighborhood. This shows that taking the derivative of $\omega$ at $w=ye_{11}-\gamma_0
\in M_m(\mathbb C)$ is permissible. 

\section{Preliminary results}\label{prelimi}

\subsection{Notations}

We start by fixing some notations. 
We consider the canonical linearization 
$$L_P(t_1,t_2)=\gamma_0\otimes 1+\gamma_1\otimes t_1+\gamma_2\otimes t_2 \in M_m(\mathbb{C})\otimes \mathbb{C}\langle t_1,t_2\rangle$$ 
of the selfadjoint polynomial $P$ with the properties outlined in Section \ref{mainresults}.
By \eqref{coin}, $\Tr(zI_N-X_N)^{-1}$ is related to the generalized resolvent 
$$R_N(\beta):=(\beta\otimes I_N-\gamma_1\otimes W_N-\gamma_2\otimes D_N)^{-1},$$
by 
$$\Tr(zI_N-X_N)^{-1}=(\Tr\otimes \Tr)((e_{11}\otimes I_N)R_N(ze_{11}-\gamma_0)),\quad z\in \mathbb{C}\setminus\mathbb{R}.$$
Denoting by $(E_{ij})_{1\leq i,j\leq N}$ the canonical basis of $M_N(\C)$, we define the matrices $R_{ij}(\beta)$, $1\leq i,j\leq N$, by $$R_N(\beta)=\sum_{i,j=1}^N R_{ij}(\beta)\otimes E_{ij}.$$
Schur inversion formula (Proposition \ref{Schur} applied to $A=\beta\otimes I_N-\gamma_1\otimes W_N-\gamma_2\otimes D_N$ and $I^c=\{k, k+N,\ldots, k+(N-1)m\}$) relates $R_N(\beta)$ to its Schur complements 
$$\beta-W_{kk}\gamma_1-D_{kk}\gamma_2-\gamma_1\otimes C_k^{(k)*}R^{(k)}(\beta)\gamma_1\otimes C_k^{(k)},\quad k=1,\ldots, N,$$
where 
$$R^{(k)}(\beta)=(\beta\otimes I_{N-1}-\gamma_1\otimes W_N^{(k)}-\gamma_2\otimes D_N^{(k)})^{-1}.$$
Here $W_N^{(k)}, D_N^{(k)}$ denote the $(N-1)\times(N-1)$ matrices obtained respectively from $W_N, D_N$ 
by deleting the $k$-th row/column and $C_k^{(k)}$ is the $(N-1)$-dimensional vector obtained from the $k$-th column of $W_N$ by deleting its $k$-th component. 

An immediate consequence of Lemma \ref{lem_resolvent_identity} is the following relation between $R_N$ and the generalized resolvent 
$$R^{(ab)}(\beta)=(\beta\otimes I_N-\gamma_1\otimes W_N^{(ab)}-\gamma_2\otimes D_N)^{-1},$$
where $W_N^{(ab)}$ is obtained from $W_N$ by replacing its $(a,b)$ and $(b,a)$ entries by $0$:
\begin{equation}\label{remove}
R^{(ab)}(\beta)-R_N(\beta)=-R^{(ab)}(\beta)(1-\frac{1}{2}\delta_{ab})(\gamma_1 \otimes W_{ab}E_{ab}+\gamma_1 \otimes \overline{W_{ab}}E_{ba})R_N(\beta).
\end{equation}
One deduces from \eqref{remove} the following bound:
\begin{equation*}
\|R^{(ab)}(\beta)-R_N(\beta)\|\leq 2\delta_N\|\gamma_1\|\|R^{(ab)}(\beta)\|\|R_N(\beta)\|.
\end{equation*}
Analogously, we denote by $R^{(kab)}$ the generalized resolvent 
$$R^{(kab)}(\beta)=(\beta\otimes I_N-\gamma_1\otimes W_N^{(kab)}-\gamma_2\otimes D_N^{(k)})^{-1},$$
where $W_N^{(kab)}$ is obtained from $W_N^{(k)}$ by replacing its $(a,b)$ and $(b,a)$ entries by $0$; and by $R^{(kabcd)}$ the generalized resolvent 
$$R^{(kabcd)}(\beta)=(\beta\otimes I_N-\gamma_1\otimes W_N^{(kabcd)}-\gamma_2\otimes D_N^{(k)})^{-1},$$
where $W_N^{(kabcd)}$ is obtained from $W_N^{(k)}$ by replacing its $(a,b),(b,a),(c,d)$ and $(d,c)$ entries by $0$.

Martingales appearing in this paper will be with respect to the filtration 
$$(\mathcal{F}_k:=\sigma(W_{ij},1\leq i\leq j\leq k))_{k\geq 1};$$
$\mathbb{E}_{\leq k}$ denotes the conditional expectation on the sigma-field $\mathcal{F}_k$ 
and $\mathbb{E}_{k}$ the expectation with respect to the $k$-th column $\{W_{ik},1\leq i\leq N\}$ of $W_N$.

We will consider, for each $N\in \mathbb{N}$, a $W^*$-probability space $(\mathcal{A}_N, \tau_N)$, a semicircular element $s_N\in \mathcal{A}_N$ of mean $0$ and variance $N\sigma_N^2$ and a von Neumann subalgebra $\mathcal{D}_N\subset \mathcal{A}_N$ isomorphic to the algebra of $N\times N$ diagonal matrices with complex entries and freely independent from $s_N$. As we have seen, $\gamma_1\otimes s_N\in M_m(\mathcal{A}_N)$ is a centered $M_m(\mathbb{C})$-valued semicircular element of variance $\eta_N: b\mapsto N\sigma_N^2 \gamma_1 b\gamma_1$ which is free with amalgamation over $M_m(\mathbb{C})$ from $M_m(\mathcal{D}_N)$ in the $M_m(\mathbb{C})$-valued $W^*$-probability space $(M_m(\mathcal{A}_N),{\rm id}_m\otimes \tau_N)$. For $D_N\in \mathcal{D}_N\simeq \mathcal{D}_N(\mathbb{C})$, the generalized resolvent
$$r_N(\beta):=(\beta\otimes1_{\mathcal{A}_N}-\gamma_1\otimes s_N-\gamma_2\otimes D_N)^{-1}$$ 
and the subordination map 
$$\omega_N(\beta):=\beta-\eta_N\big(({\rm id}_m\otimes \tau_N)[r_N(\beta)]\big)$$
are related according to \eqref{subforte} by 
\begin{equation}\label{defhatR}
\hat{R}(\beta):=E_{M_m(\mathcal D_N)}[r_N(\beta)]=\left(\omega_N(\beta)\otimes 1_{\mathcal{A}_N}-\gamma_2\otimes D_N\right)^{-1}.\end{equation}
By analogy, one defines 
\begin{equation}\label{defomegatilde}
\Omega_N(\beta):=\beta-\eta_N\big(\mathbb{E}[({\rm id}_m\otimes N^{-1}\Tr)(R_N(\beta))]\big).\end{equation}

In the sequel, we will use the notation $O(v_N)$ when a quantity depending on $N\in \mathbb{N}$, $z\in \C \setminus \R$, and sometimes on $k\in \{1,\ldots ,N\}$, satisfies the following : for any compact subset $K$ of $\mathbb{C}\setminus \mathbb{R}$, there exists $N_K\in \mathbb{N}$ and $C_K>0$ such that for any $N\geq N_K$, for any $z\in K$, (for any $k\in \{1,\ldots ,N\}$,) this quantity is bounded by $C_Kv_N$.
	

Throughout the paper, $C$, $c$ denote some positive constants and $Q$ denotes some deterministic  polynomial in one or several commuting indeterminates 
; they can depend on $m$, $\gamma_0, \gamma_1, \gamma_2$ and they may vary from line to line.

\subsection{Free probability bounds and convergences}

For $\beta \in \mathbb{H}^+(M_m(\mathbb{C}))$ and $k=1,\ldots ,N$, as observed in Section \ref{sectionresolvent}, 
\begin{equation}\label{majrN}
\Vert r_N(\beta)\Vert \leq \Vert(\Im \beta)^{-1}\Vert
\end{equation}
and
$$\Vert (\omega_N(\beta)\otimes 1_{\mathcal{A}_N}-\gamma_2\otimes D_N)^{-1}\Vert \leq \Vert (\Im\omega_N(\beta))^{-1}\Vert\leq \Vert (\Im \beta)^{-1}\Vert .$$
It is a consequence of Lemma \ref{inversible} that $r_N$ is also defined on $\{ze_{11}-\gamma_0, z\in \mathbb{C}\setminus \mathbb{R}\}$ and it follows from Lemma \ref{alta lemma} with $y=(s_N,D_N)$, Assumptions \ref{hyp:offdiagonal}, \ref{hyp:deformation} and \eqref{resolventbound} that, for $z\in \mathbb{C}\setminus \mathbb{R}$, 
\begin{equation}\label{petitrN}
\Vert r_N(ze_{11}-\gamma_0)\Vert \leq Q_1(2N^{1/2}\sigma_N,\Vert D_N\Vert)\Vert(zI_N-P(s_N,D_N))^{-1}\Vert+Q_2(2N^{1/2}\sigma_N,\Vert D_N\Vert)=O(1).
\end{equation}

\begin{lem}\label{Qbound} 
\begin{align*}
 \Vert (\omega_N(ze_{11}-\gamma_0)\otimes 1_{\mathcal{A}_N}-\gamma_2\otimes D_N)^{-1}\Vert  & = O(1);\\
 \left\Vert \frac{\partial}{\partial z}(\omega_N(ze_{11}-\gamma_0)\otimes 1_{\mathcal{A}_N}-\gamma_2\otimes D_N)^{-1}\right\Vert & = O(1).
\end{align*}
\end{lem}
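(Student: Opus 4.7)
The plan is to identify $(\omega_N(ze_{11}-\gamma_0)\otimes 1_{\mathcal{A}_N}-\gamma_2\otimes D_N)^{-1}$ with the $M_m(\mathcal{D}_N)$-valued conditional expectation of $r_N(ze_{11}-\gamma_0)$ via identity \eqref{subforte}, and then simply use the contractivity of the conditional expectation combined with the already-established bound \eqref{petitrN}. Specifically, \eqref{subforte} gives
$$(\omega_N(\beta)\otimes 1_{\mathcal{A}_N}-\gamma_2\otimes D_N)^{-1}=E_{M_m(\mathcal{D}_N)}[r_N(\beta)]$$
for $\beta$ in the connected component of $\{b\in M_m(\mathbb{C})\colon b\otimes 1_{\mathcal{A}_N}-\gamma_1\otimes s_N-\gamma_2\otimes D_N \text{ invertible}\}$ containing $\mathbb{H}^+(M_m(\mathbb{C}))$. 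The argument given at the very beginning of the proof of Proposition \ref{welldefined3} shows that $\{ze_{11}-\gamma_0\colon z\in\mathbb{C}\setminus\mathbb{R}\}$ lies in this component (which is in fact a neighborhood of infinity), so the identity extends analytically to our points of interest.

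For the first bound, I would simply write
$$\Vert(\omega_N(ze_{11}-\gamma_0)\otimes 1_{\mathcal{A}_N}-\gamma_2\otimes D_N)^{-1}\Vert=\Vert E_{M_m(\mathcal{D}_N)}[r_N(ze_{11}-\gamma_0)]\Vert\leq\Vert r_N(ze_{11}-\gamma_0)\Vert,$$
where the inequality uses the contractivity of conditional expectations on a $C^*$-algebra. The right-hand side is $O(1)$ by \eqref{petitrN}, noting that $2N^{1/2}\sigma_N\to 2\sigma$, that $\sup_N\Vert D_N\Vert<\infty$ by Assumption \eqref{hyp:deformation}, and that $\Vert(zI_N-P(s_N,D_N))^{-1}\Vert\leq 1/|\Im z|$ is uniformly bounded on compact subsets of $\mathbb{C}\setminus\mathbb{R}$.

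For the derivative bound, since $\beta\mapsto r_N(\beta)$ is analytic on its domain with values in the Banach space $M_m(\mathcal{A}_N)$, and since $E_{M_m(\mathcal{D}_N)}$ is a bounded linear map, differentiation commutes with the conditional expectation. The resolvent identity of Lemma \ref{lem_resolvent_identity} yields
$$\frac{\partial}{\partial z}r_N(ze_{11}-\gamma_0)=-r_N(ze_{11}-\gamma_0)\,(e_{11}\otimes 1_{\mathcal{A}_N})\,r_N(ze_{11}-\gamma_0),$$
so by contractivity of the conditional expectation and submultiplicativity of the $C^*$-norm,
$$\left\Vert\frac{\partial}{\partial z}(\omega_N(ze_{11}-\gamma_0)\otimes 1_{\mathcal{A}_N}-\gamma_2\otimes D_N)^{-1}\right\Vert\leq\Vert e_{11}\Vert\,\Vert r_N(ze_{11}-\gamma_0)\Vert^2,$$
which is $O(1)$ by another application of \eqref{petitrN}.

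The argument is largely mechanical once \eqref{subforte} is in place; the only subtlety worth emphasizing is the justification of the subordination identity at the boundary point $ze_{11}-\gamma_0$ (which is neither in $\mathbb{H}^+(M_m(\mathbb{C}))$ nor in $\mathbb{H}^-(M_m(\mathbb{C}))$), but this analytic continuation has already been carried out in Section \ref{secondstep}.
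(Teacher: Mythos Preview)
Your proof is correct and follows essentially the same approach as the paper: both invoke the subordination identity \eqref{subforte} to rewrite $(\omega_N(ze_{11}-\gamma_0)\otimes 1_{\mathcal{A}_N}-\gamma_2\otimes D_N)^{-1}$ as a conditional expectation of $r_N$, then apply contractivity together with Lemma \ref{alta lemma} (equivalently \eqref{petitrN}). The only minor difference is in the derivative bound: the paper differentiates the left-hand side via the chain rule through $\partial_z\omega_N$, whereas you differentiate under the conditional expectation directly and bound $\partial_z r_N$ by the resolvent identity---your route is slightly cleaner and avoids the extra factor $(1+N\sigma_N^2\Vert\gamma_1\Vert^2\Vert r_N\Vert^2)$.
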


\begin{proof}
For $z\in \mathbb{C}\setminus\mathbb{R}$, it follows from \eqref{subforte} that 
$$E_{M_m(\mathcal{D}_N)}\left[\left((ze_{11}-\gamma_0)\otimes 1_{\mathcal{A}_N}-\gamma_1\otimes s_N-\gamma_2\otimes D_N\right)^{-1}\right]=\left(\omega_N(ze_{11}-\gamma_0)\otimes 1_{\mathcal{A}_N}-\gamma_2\otimes D_N\right)^{-1}$$
holds and therefore 
$$\Vert (\omega_N(ze_{11}-\gamma_0)\otimes 1_{\mathcal{A}_N}-\gamma_2\otimes D_N)^{-1}\Vert \leq \Vert (ze_{11}\otimes 1_{\mathcal{A}_N}-L_P(s_N,D_N))^{-1}\Vert .$$
Since moreover $$\left\Vert e_{11}+\eta_N\left(({\rm id}_m\otimes \tau_N)\left[\left(ze_{11}\otimes 1_{\mathcal{A}_N}-L_P(s_N,D_N)\right)^{-1}(e_{11}\otimes1_{\mathcal{A}_N})\left(ze_{11}\otimes 1_{\mathcal{A}_N}-L_P(s_N,D_N)\right)^{-1}\right]\right)\right\Vert$$
$$\leq 1+N\sigma_N^2 \Vert \gamma_1\Vert^2\Vert\left(ze_{11}\otimes 1_{\mathcal{A}_N}-L_P(s_N,D_N)\right)^{-1}\Vert^2,$$
it follows that 
\begin{align*}
\Vert\frac{\partial}{\partial z} & (\omega_N(ze_{11}-\gamma_0)\otimes1_{\mathcal{A}_N}-\gamma_2\otimes D_N)^{-1}\Vert\\
&\leq\Vert (ze_{11}\otimes 1_{\mathcal{A}_N}-L_P(s_N,D_N))^{-1}\Vert^2(1+N\sigma_N^2 \Vert \gamma_1\Vert^2\Vert\left(ze_{11}\otimes 1_{\mathcal{A}_N}-L_P(s_N,D_N)\right)^{-1}\Vert^2).
\end{align*}
Using Lemma \ref{alta lemma} with $y=(s_N,D_N)$,
$$\Vert (ze_{11}\otimes 1_{\mathcal{A}_N}-L_P(s_N,D_N))^{-1}\Vert \leq Q_1(N^{1/2}\sigma_N,\Vert D_N\Vert)\vert \Im z\vert^{-1}+Q_2(N^{1/2}\sigma_N,\Vert D_N\Vert)$$
and we are done.
\end{proof}

For $\beta\in M_m(\mathbb{C})$ such that $\beta=ze_{11}-\gamma_0$ with $z\in\mathbb{C}\setminus\mathbb R$, define
\begin{equation}\label{defhatRk}
\hat R_k(\beta)=(\beta-N\sigma_N^2\gamma_1({\rm id}_m\otimes\tau)(r_N(\beta))\gamma_1-D_{kk}\gamma_2)^{-1}=(\omega_N(\beta)-D_{kk}\gamma_2)^{-1}.
\end{equation} 
It readily follows from Lemma \ref{Qbound} that 
\begin{equation}\label{majorationunifhatRk}
\Vert \hat R_k(ze_{11}-\gamma_0)\Vert 
=O(1),\end{equation}
\begin{equation}\label{majhatdR}
\left\Vert \frac{\partial}{\partial z}\hat R_k(ze_{11}-\gamma_0)\right\Vert 
=O(1),\end{equation}
and that
\begin{equation}\label{bornemanq} 
\sup_{ t \in \text{supp}(\nu_N)} \|  (\omega_N(ze_{11}-\gamma_0)-t\gamma_2 )^{-1}\|=O(1).\end{equation}

\begin{lem}\label{bound} The map $\mathbb C^\pm\times\mathbb R\ni(z,t)\mapsto(\omega(ze_{11}-\gamma_0)-t\gamma_2)^{-1}$ is well-defined 
and analytic. Thus, it is bounded on any compact subset of $\mathbb C^\pm\times\mathbb R$.
\end{lem}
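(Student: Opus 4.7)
My plan proceeds in three stages.

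\emph{First, extension of $\omega$.} For $z \in \mathbb C^\pm$, the element $P(s,d)-z$ is invertible in $\mathcal A$ by selfadjointness of $P(s,d)$, so by Lemma \ref{inversible}, $(ze_{11}-\gamma_0)\otimes 1_{\mathcal A} - \gamma_1\otimes s - \gamma_2\otimes d$ is invertible in $M_m(\mathcal A)$. Openness of the invertibles in a Banach algebra then lets the defining formula of $\omega$ remain meaningful and analytic on an open neighborhood $\mathcal O$ of $\{ze_{11}-\gamma_0:z\in\mathbb C^\pm\}$ in $M_m(\mathbb C)$, and the strong subordination identity \eqref{subforte} propagates to $\mathcal O$ by analytic continuation (this point is already made in the proof of Proposition \ref{welldefined3}).

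\emph{Second, invertibility for $t\in\operatorname{supp}\nu$.} Applying \eqref{subforte} at $b=ze_{11}-\gamma_0$ gives
\begin{equation*}
(\omega(ze_{11}-\gamma_0)\otimes 1_{\mathcal A}-\gamma_2\otimes d)^{-1} = E_{M_m(W^*(d))}\bigl[((ze_{11}-\gamma_0)\otimes 1_{\mathcal A}-\gamma_1\otimes s-\gamma_2\otimes d)^{-1}\bigr],
\end{equation*}
so $\omega(ze_{11}-\gamma_0)\otimes 1-\gamma_2\otimes d$ is invertible in $M_m(\mathcal A)$. Since this element already sits in the $C^*$-subalgebra $M_m(C^*(d))$, spectral permanence for $C^*$-algebras yields its invertibility inside $M_m(C^*(d))\cong M_m(C(\operatorname{supp}\nu))$. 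Under the Gelfand isomorphism, this translates into pointwise invertibility of $\omega(ze_{11}-\gamma_0)-t\gamma_2$ for every $t\in\operatorname{supp}\nu$, with inverse depending continuously on $t$.

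\emph{Third, extension to all $t\in\mathbb R$.} The plan is to prove the stronger statement that $\Im\omega(ze_{11}-\gamma_0)$ is strictly positive-definite for $z\in\mathbb C^+$ (and strictly negative-definite for $z\in\mathbb C^-$), which will immediately make $\omega(ze_{11}-\gamma_0)-t\gamma_2$ invertible for every real $t$, because $t\gamma_2$ is selfadjoint. Differentiating \eqref{fixed} and taking imaginary parts on $\mathcal O\cap\mathbb H^+(M_m(\mathbb C))$ yields
\begin{equation*}
(\mathrm{id}_m-T_b)(\Im\omega(b)) = \Im b,\qquad T_b(X) = \sigma^2\gamma_1\!\!\int_{\mathbb R}\!(\omega(b)-t\gamma_2)^{-*}X(\omega(b)-t\gamma_2)^{-1}\,d\nu(t)\,\gamma_1,
\end{equation*}
where $T_b$ is completely positive and, by Proposition \ref{invertibility2}, has spectral radius strictly less than one. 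The Neumann series $\Im\omega(b)=\sum_{k\ge 0}T_b^k(\Im b)$ then persists at $b=ze_{11}-\gamma_0$ by analytic continuation, giving $\Im\omega(ze_{11}-\gamma_0)\ge(\Im z)e_{11}\ge 0$. Strict positivity should be obtained from a non-degeneracy argument: a null vector $\xi\in\mathbb C^m$ for the sum must satisfy $\xi_1=0$ together with an $\mathcal A$-linear relation on the first column of $L_P(z)^{-1}$ (via faithfulness of the trace), and the block form $L_P=\bigl(\begin{smallmatrix} 0 & u\\ v & Q\end{smallmatrix}\bigr)$ of the canonical linearization, with $Q$ invertible, rules this out. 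Once $\Im\omega(ze_{11}-\gamma_0)>0$ is secured, analyticity of $(z,t)\mapsto(\omega(ze_{11}-\gamma_0)-t\gamma_2)^{-1}$ on $\mathbb C^\pm\times\mathbb R$ follows from the analyticity of $\omega$ and of matrix inversion on invertibles, and boundedness on compact subsets is then a standard continuity consequence.

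The main obstacle will be this final non-degeneracy step: one has to propagate the rank-one positivity of $(\Im z)e_{11}$ through the completely positive map $T_b$ across all of $M_m(\mathbb C)$ using only the structural constraints imposed by the canonical linearization. Making that argument uniform in a general selfadjoint $P$ may require a careful spelling out of the block structure, but the other two stages are essentially routine combinations of the subordination identity and $C^*$-algebraic spectral permanence.
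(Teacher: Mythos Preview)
Your first two steps are correct and self-contained: the extension of $\omega$ and the invertibility of $\omega(ze_{11}-\gamma_0)-t\gamma_2$ for $t\in\operatorname{supp}\nu$ via \eqref{subforte} and spectral permanence are exactly right. The derivation of the identity $(\mathrm{id}_m-T_b)(\Im\omega(b))=\Im b$ from the fixed-point equation is also fine, and invoking Proposition~\ref{welldefined3} for the spectral radius bound is not circular.

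The genuine gap is the non-degeneracy step, and it is more serious than you indicate. Your Neumann series gives $\Im\omega(b)=\sum_{k\ge0}T_b^k((\Im z)e_{11})$ with each term positive semidefinite. Since $T_b(X)=\sigma^2\gamma_1(\cdots)\gamma_1$, any $\xi$ with $\xi_1=0$ and $\gamma_1\xi=0$ annihilates \emph{every} term of the series, so $\Im\omega(ze_{11}-\gamma_0)\xi=0$. Thus your strict positivity claim fails whenever $\ker\gamma_1\cap e_1^\perp\neq\{0\}$. This is not a pathological situation: for instance, the natural selfadjoint linearization
\[
L_P=\begin{pmatrix}0&t_2&0\\ t_2&0&-1\\ 0&-1&t_1\end{pmatrix}
\quad\text{of}\quad P=t_2t_1t_2
\]
has $\gamma_1=e_{33}$, so $\xi=e_2$ kills $\Im\omega$. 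Your sketched ``block form of $L_P$ rules this out'' does not engage with this; any rescue would have to invoke very specific combinatorial features of \emph{the} canonical linearization, and even then the claim $\Im\omega>0$ may simply be false --- the lemma does not require it, since $\Im\omega\ge0$ together with the right structure of $\gamma_2$ can still give invertibility for all real~$t$.

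The paper takes a genuinely different route that avoids this issue. It fixes $t$ and varies $z$: first, using the explicit triangular/nilpotent structure of the canonical linearization (permutation matrices $T_1,T_2$ and the nilpotent $N$ from \cite[Lemma~4.2]{BBC}), it computes $\omega(ze_{11}-\gamma_0)-t\gamma_2$ in block form and shows via a Schur complement that it is invertible for $|z|$ large, uniformly in $t$ on compacts. Then, using only $\Im\omega(ze_{11}-\gamma_0)\ge0$ (hence $\Im(\omega-t\gamma_2)^{-1}\le0$ wherever defined), it invokes a Herglotz-type extension argument from \cite[Lemma~5.5]{BBC}: a matrix-valued function on $\mathbb C^+$ with nonpositive imaginary part cannot acquire poles in the interior, so the inverse, analytic near $\infty$, extends to all of $\mathbb C^+$. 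This uses $\Im\omega\ge0$ rather than $>0$, which is why it succeeds where your approach stalls.
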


\begin{proof}
Fix $M>\|d\|$ and a compact $K\subset\mathbb C^+$.
We use the equality $\omega(ze_{11}-\gamma_0)=ze_{11}-\gamma_0-\gamma_1G(ze_{11}-\gamma_0)\gamma_1,$ where $G$ denotes the generalized Cauchy transform of
$\gamma_1\otimes s+\gamma_2\otimes d$ (i.e. $G(b)=(\textrm{id}_m\otimes\tau)(r(b))$), together with the structure of the linearization and normal families.
The boundedness statement is obvious if $t\in\sigma(d)$, by Equation \eqref{subforte} and the boundedness of evaluation maps.
We show that, given $M>0$ (which we assume for convenience sufficiently large so that $\sigma(d)\subset[-M,M]$), there exists an $\mathfrak r>0$ (depending on it)
so that $\omega_N(ze_{11}-\gamma_0)-t\gamma_2=ze_{11}-\gamma_0-\gamma_1G(ze_{11}-\gamma_0)\gamma_1-t\gamma_2$ is invertible for all $|z|>\mathfrak r$.
We recall the shape of $G(ze_{11}-\gamma_0)\in M_m(\mathbb C)$:
\begin{align*}
G(& ze_{11}-\gamma_0)\\
& = \begin{bmatrix} \tau\left((z-u^*Q^{-1}u)^{-1}\right) & -({\rm id}_{1\times(m-1)}\otimes\tau)\left((z-u^*Q^{-1}u)^{-1}u^*Q^{-1}\right)\\ 
-({\rm id}_{(m-1)\times1}\otimes\tau)\left(Q^{-1}u(z-u^*Q^{-1}u)^{-1}\right) & ({\rm id}_{m-1}\otimes\tau)\left(Q^{-1}+Q^{-1}u(z-u^*Q^{-1}u)^{-1}u^*Q^{-1}\right)\end{bmatrix},
\end{align*}
where $u,Q,u^*$ are the (obvous size) constituents of the linearization of $P$, evaluated in $(s,d)$. It is useful to note that the above matrix can be re-written as
\begin{align*}
G( ze_{11}&-\gamma_0)\\
&=\begin{bmatrix} 0 & 0\\ 
0 & ({\rm id}_{m-1}\otimes\tau)\left(Q^{-1}\right)\end{bmatrix}+({\rm id}_{m}\otimes\tau)\left[\begin{bmatrix} 1 \\ -Q^{-1}u\end{bmatrix}\!\!\begin{array}{l}
\left(z-u^*Q^{-1}u\right)^{-1}\begin{bmatrix} 1 & -u^*Q^{-1}\end{bmatrix}\\ 
\ 
\end{array}\right].
\end{align*}

It is shown in \cite[Lemma 4.2]{BBC} that there exist permutation matrices
$T_1,T_2\in M_{m-1}(\mathbb C)$ and a strictly lower triangular matrix $N\in M_{m-1}(\mathbb C\langle s,d\rangle)$ such that $Q^{-1}=-T_1(I_{m-1}+N)T_2$.
We multiply $G(ze_{11}-\gamma_0)$ left with $\begin{bmatrix} 1 & 0 \\ 0 & T_1^{-1} \end{bmatrix}$ and right with $\begin{bmatrix} 1 & 0 \\ 0 & T_2^{-1} \end{bmatrix}$
to get 
\begin{align*}
\begin{bmatrix} 0 & 0\\ 
0 & -I_{m-1}-({\rm id}_{m-1}\otimes\tau)(N)\end{bmatrix}& +\\
& ({\rm id}_{m}\otimes\tau)
\left[\begin{bmatrix} 1 \\ (I_{m-1}+N)T_2u\end{bmatrix}\!\!\begin{array}{l}
\left(z-u^*Q^{-1}u\right)^{-1}\begin{bmatrix} 1 & u^*T_1(I_{m-1}+N)\end{bmatrix}\\ 
\ 
\end{array}\right]\\
& \hspace{2cm} =\begin{bmatrix} 0 & 0\\ 
0 & -I_{m-1}-({\rm id}_{m-1}\otimes\tau)\left(N\right)\end{bmatrix}+{\rm O}\left(\frac1z\right).
\end{align*}
(The ${\rm O}\left(\frac1z\right)$ part can be easily made precise: it is $\frac1z({\rm id}_{m}\otimes\tau)\left[\begin{bmatrix} 1 \\ (I_{m-1}+N)T_2u\end{bmatrix}
\begin{bmatrix} 1 & u^*T_1(I_{m-1}+N)\end{bmatrix}\right]+{\rm O}\left(\frac{1}{z^2}\right)$.)
We recall that $Q=P^*_{m-1}(\gamma_0-\gamma_1-\gamma_2)P_{m-1}$, where we have denoted by $P_{m-1}$ the operator that embeds
$\mathbb C^{m-1}\hookrightarrow(0,\mathbb C^{m-1})\subset \mathbb C^{m}$. It is known that $T_2P^*_{m-1}\gamma_0P_{m-1}T_1=-I_{m-1},$ while $T_1P^*_{m-1}
(\gamma_1+\gamma_2)P_{m-1}T_1$ is strictly lower triangular (nilpotent). Moreover, the Schur product of $\gamma_1$ and $\gamma_2$ is known to have all but possibly
the $(1,1)$ entry equal to zero, and $T_1P^*_{m-1}\gamma_1P_{m-1}T_1$, $T_1P^*_{m-1}\gamma_2P_{m-1}T_1$ are both strictly lower triangular. In particular,
neither the structure of $N$ nor the presence of $I_{m-1}$ is affected if one replaces $\gamma_2$ with $t\gamma_2$ for an arbitrary $t\in\mathbb R$.
Thus,
$$
\begin{bmatrix} 1 & 0 \\ 0 & T_2 \end{bmatrix}(ze_{11}-\gamma_0-\gamma_1-t\gamma_2)\begin{bmatrix} 1 & 0 \\ 0 & T_1 \end{bmatrix}
=\begin{bmatrix} z+\text{const}_t & \mathfrak x_t^*T_1 \\ T_2\mathfrak x_t & -I_{m-1}+\tilde{N}_t \end{bmatrix},
$$
where $\tilde{N}_t$ is a strictly lower triangular matrix, some of whose entries might depend linearly of $t\in\mathbb R$, and the same holds for the column vector $\mathfrak x_t$.
The constant $\text{const}_t$ might be affine in $t$. It is trivial that the above matrix is invertible for all $z\not\in\mathbb R$, and equally obvious that $-I_{m-1}+\tilde{N}_t $
is invertible. Writing the Schur complement wrt the ($1,1$) entry, we obtain that $z+\text{const}_t-\mathfrak x_t^*T_1(\tilde{N}_t-I_{m-1})^{-1}T_2\mathfrak x_t$ is invertible.
We study how adding $\gamma_1-\gamma_1G(ze_{11}-\gamma_0)\gamma_1$ to $ze_{11}-\gamma_0-\gamma_1-t\gamma_2$ influences this invertibility. We write
\begin{align*}
\lefteqn{\begin{bmatrix} 1 & 0 \\ 0 & T_2 \end{bmatrix}(\gamma_1-\gamma_1G(ze_{11}-\gamma_0)\gamma_1)\begin{bmatrix} 1 & 0 \\ 0 & T_1 \end{bmatrix}}\\
& = \begin{bmatrix} 1 & 0 \\ 0 & T_2 \end{bmatrix}\gamma_1\begin{bmatrix} 1 & 0 \\ 0 & T_1 \end{bmatrix}
-\begin{bmatrix} 1 & 0 \\ 0 & T_2 \end{bmatrix}\gamma_1\begin{bmatrix} 1 & 0 \\ 0 & T_1 \end{bmatrix}\begin{bmatrix} 1 & 0 \\ 0 & T_1^{-1} \end{bmatrix}
G(ze_{11}-\gamma_0)\begin{bmatrix} 1 & 0 \\ 0 & T_2^{-1}\end{bmatrix}\begin{bmatrix} 1 & 0 \\ 0 & T_2 \end{bmatrix}\gamma_1\begin{bmatrix} 1 & 0 \\ 0 & T_1 \end{bmatrix}.
\end{align*}
As seen above, the lower right $(m-1)\times(m-1)$ corner of $\begin{bmatrix} 1 & 0 \\ 0 & T_2 \end{bmatrix}\gamma_1\begin{bmatrix} 1 & 0 \\ 0 & T_1 \end{bmatrix}$
is a sub-matrix of $\tilde{N}_t$, namely the constant entries; the same holds for the vectors above and to the left of this corner, and the $(1,1)$ entry is the constant
part of the affine map $t\mapsto$const${}_t$, all those with a minus in front.
It was seen above that $\begin{bmatrix} 1 & 0 \\ 0 & T_1^{-1} \end{bmatrix}
G(ze_{11}-\gamma_0)\begin{bmatrix} 1 & 0 \\ 0 & T_2^{-1}\end{bmatrix}=\begin{bmatrix} 0 & 0\\ 
0 & -I_{m-1}-({\rm id}_{m-1}\otimes\tau)\left(N\right)\end{bmatrix}+{\rm O}\left(\frac1z\right)$. It follows that 
\begin{align*}
\lefteqn{\begin{bmatrix} 1 & 0 \\ 0 & T_2 \end{bmatrix}\gamma_1\begin{bmatrix} 1 & 0 \\ 0 & T_1 \end{bmatrix}\begin{bmatrix} 1 & 0 \\ 0 & T_1^{-1} \end{bmatrix}
G(ze_{11}-\gamma_0)\begin{bmatrix} 1 & 0 \\ 0 & T_2^{-1}\end{bmatrix}\begin{bmatrix} 1 & 0 \\ 0 & T_2 \end{bmatrix}\gamma_1\begin{bmatrix} 1 & 0 \\ 0 & T_1 \end{bmatrix}}\\
& = \begin{bmatrix} \star & \star \\ \star & \mathfrak n \end{bmatrix}
\left(\begin{bmatrix} 0 & 0\\ 
0 & -I_{m-1}-({\rm id}_{m-1}\otimes\tau)(N)\end{bmatrix}\right.\\
& \quad +({\rm id}_{m}\otimes\tau)\left.
\left[\begin{bmatrix} 1 \\ (I_{m-1}+N)T_2u\end{bmatrix}\!\!\begin{array}{l}
\left(z-u^*Q^{-1}u\right)^{-1}\begin{bmatrix} 1 & u^*T_1(I_{m-1}+N)\end{bmatrix}\\ 
\ 
\end{array}\right]\right)\begin{bmatrix} \star & \star \\ \star & \mathfrak n \end{bmatrix}.
\end{align*}
As already mentioned, $\mathfrak n$ in the above is a constant sub-matrix of $\tilde{N}_t$, hence strictly lower triangular, and the stars stand for constant/constant vectors, whose
precise identity is irrelevant for our purposes. The second summand in the above is still O$\left(\frac1z\right)$, and the lower right corner of the first summand is 
$-\mathfrak n(I_{m-1}+N)\mathfrak n=-\mathfrak n^2-\mathfrak nN\mathfrak n$, a sum of products of strictly lower triangular matrices, is strictly lower triangular (in fact guaranteed 
to have also all entries $(i,i-1)$ equal to zero as well). It follows immediately that
\begin{eqnarray*}
\begin{bmatrix} 1 & 0 \\ 0 & T_2 \end{bmatrix}
(\omega(ze_{11}-\gamma_0)-t\gamma_2)\begin{bmatrix} 1 & 0 \\ 0 & T_1\end{bmatrix}
& = & \begin{bmatrix} z+\text{const}_t+\star & \mathfrak x_t^*T_1+\star \\ T_2\mathfrak x_t+\star & -I_{m-1}+\tilde{N}_t-\mathfrak n^2-\mathfrak nN\mathfrak n \end{bmatrix}
+{\rm O}\left(\frac1z\right).
\end{eqnarray*}
Since the lower right corner is invertible for all $z,t$, and the other $t$-dependent terms only depend affinely of it, it follows by the Schur complement that for $|z|$ sufficiently 
large the first term in the right-hand side is invertible. Since the second is ${\rm O}\left(\frac1z\right)$, the same Schur complement guarantees that, by slightly increasing
$|z$| if necessary, the invertibility statement remains valid. As the invertibility of $\omega(ze_{11}-\gamma_0)-t\gamma_2$ is equivalent to the invertibility of 
$\begin{bmatrix} 1 & 0 \\ 0 & T_2 \end{bmatrix}
(\omega(ze_{11}-\gamma_0)-t\gamma_2)\begin{bmatrix} 1 & 0 \\ 0 & T_1\end{bmatrix}$, we have established the invertibilty of $\omega(ze_{11}-\gamma_0)-t\gamma_2$ for
$|z$| sufficiently large. 

Since $\Im\omega(ze_{11}-\gamma_0)\ge0$ for all $z\in\mathbb C^+$ and $\gamma_2=\gamma_2^*$, it follows that $\Im(\omega(ze_{11}-\gamma_0)-t\gamma_2)^{-1}\le0$
whenever $\omega(ze_{11}-\gamma_0)-t\gamma_2$ is invertible. The argument employed in \cite[Lemma 5.5]{BBC} guarantees that $z\mapsto 
(\omega(ze_{11}-\gamma_0)-t\gamma_2)^{-1}$ extends analytically to $\mathbb C^+\cup\mathbb C^-$, in addition to the neighborhood of infinity on which we have
already shown it is well-defined. Since $M>\|d\|$ is arbitrary, the map $\mathbb C^\pm\times\mathbb R\ni(z,t)\mapsto(\omega(ze_{11}-\gamma_0)-t\gamma_2)^{-1}$ is well-defined 
and analytic. Thus, it is bounded on any compact subset of $\mathbb C^\pm\times\mathbb R$, and in particular on $K\times[-M,M]$.
\end{proof}

\begin{lem}\label{cvunif}
For any compact subset $K$ of $\C \setminus \R$,
$$\lim_{N\to +\infty}\sup_{z\in K}\sup_{t\in \text{supp}(\nu_N)}\|(\omega_N(ze_{11}-\gamma_0)-t\gamma_2)^{-1}-(\omega(ze_{11}-\gamma_0)-t\gamma_2)^{-1}\|=0.$$
\end{lem}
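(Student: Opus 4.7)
The plan is to use the first-order resolvent identity
\[
(\omega_N(ze_{11}-\gamma_0)-t\gamma_2)^{-1}-(\omega(ze_{11}-\gamma_0)-t\gamma_2)^{-1}=A_N(z,t)\bigl[\omega(ze_{11}-\gamma_0)-\omega_N(ze_{11}-\gamma_0)\bigr]B(z,t),
\]
where $A_N(z,t)=(\omega_N(ze_{11}-\gamma_0)-t\gamma_2)^{-1}$ and $B(z,t)=(\omega(ze_{11}-\gamma_0)-t\gamma_2)^{-1}$, and then to control the three factors separately. Since $\sup_N\|D_N\|<\infty$, there exists $M>0$ such that $\mathrm{supp}(\nu_N)\cup\mathrm{supp}(\nu)\subset[-M,M]$ for all $N$. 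The factor $A_N(z,t)$ is bounded uniformly in $(z,t)\in K\times[-M,M]$ and in $N$ by \eqref{bornemanq}, while the factor $B(z,t)$ is bounded uniformly on the compact $K\times[-M,M]\subset(\mathbb C\setminus\mathbb R)\times\mathbb R$ by Lemma~\ref{bound}. The middle factor does not depend on $t$, so everything reduces to showing
\[
\sup_{z\in K}\bigl\|\omega_N(ze_{11}-\gamma_0)-\omega(ze_{11}-\gamma_0)\bigr\|\;\CV{N}\;0.
\]

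To establish this, recall that
\[
\omega_N(ze_{11}-\gamma_0)=ze_{11}-\gamma_0-N\sigma_N^2\,\gamma_1 G_N(ze_{11}-\gamma_0)\gamma_1,\qquad
\omega(ze_{11}-\gamma_0)=ze_{11}-\gamma_0-\sigma^2\,\gamma_1 G(ze_{11}-\gamma_0)\gamma_1,
\]
with $G_N(b)=({\rm id}_m\otimes\tau_N)(r_N(b))$ and $G(b)=({\rm id}_m\otimes\varphi)((b\otimes1-\gamma_1\otimes s-\gamma_2\otimes d)^{-1})$. Since $N\sigma_N^2\to\sigma^2$, it suffices to prove that $G_N(ze_{11}-\gamma_0)\to G(ze_{11}-\gamma_0)$ uniformly on $K$. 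The hypothesis $\nu_N\Rightarrow\nu$ together with the uniform bound $\sup_N\|D_N\|<\infty$ yields convergence of all tracial $*$-moments of $D_N$ to those of $d$; combined with free independence and $N\sigma_N^2\to\sigma^2$, this gives convergence of the joint $*$-distribution of $(s_N,D_N)$ to that of $(s,d)$. Expanding the resolvent in a Neumann series for $|z|$ sufficiently large (so that $\|L_P(s_N,D_N)/z\|<1$ uniformly in $N$) and using linearity of $\mathrm{id}_m\otimes\tau_N$, this gives the pointwise convergence $G_N(ze_{11}-\gamma_0)\to G(ze_{11}-\gamma_0)$ on a neighbourhood of infinity in $\mathbb C\setminus\mathbb R$.

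To promote pointwise convergence to uniform convergence on arbitrary compact $K\subset\mathbb C\setminus\mathbb R$, I would invoke a Vitali–Montel argument: the family $\{z\mapsto G_N(ze_{11}-\gamma_0)\}_N$ consists of $M_m(\mathbb C)$-valued analytic functions on $\mathbb C\setminus\mathbb R$ which, by \eqref{petitrN} and the contractivity of $\mathrm{id}_m\otimes\tau_N$, is uniformly bounded on compact subsets of $\mathbb C\setminus\mathbb R$. Hence it is a normal family, and any subsequential limit is analytic on $\mathbb C\setminus\mathbb R$; since all such limits coincide with $G(ze_{11}-\gamma_0)$ on the set where we already have pointwise convergence, they coincide everywhere by the identity principle, and the whole sequence converges uniformly on~$K$.

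The main (minor) obstacle is pinning down the uniform boundedness of the family $\{G_N(ze_{11}-\gamma_0)\}_N$ on compacts of $\mathbb C\setminus\mathbb R$: direct use of \eqref{majrN} only gives bounds on $\mathbb H^\pm(M_m(\mathbb C))$, but the bound \eqref{petitrN}, which comes from Lemma~\ref{alta lemma} and the fact that $P(s_N,D_N)$ is selfadjoint with uniformly bounded spectrum, is exactly what is needed. Once this normality is in hand, the plan above closes cleanly, and the three uniform bounds assemble to give the claim.
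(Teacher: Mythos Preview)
Your proof is correct and follows essentially the same route as the paper: resolvent identity plus the three uniform bounds from \eqref{bornemanq}, Lemma~\ref{bound}, and the uniform convergence $\omega_N\to\omega$ on compacts (which the paper packages as Remark~\ref{cvuniformeomegaN}, obtained exactly via your moments-plus-Vitali argument). One small wrinkle: the Neumann expansion you invoke should be for $(z-P(s_N,D_N))^{-1}$ rather than for $(ze_{11}-L_P(s_N,D_N))^{-1}$, since $ze_{11}$ is not invertible; the paper handles this by first using the Schur complement structure of the linearization (as in the proof of Lemma~\ref{asymptoticfreenessL1}) to write each entry of $G_N(ze_{11}-\gamma_0)$ in the form $\tau_N\bigl((z-P(s_N,D_N))^{-1}H_1(s_N,D_N)+H_2(s_N,D_N)\bigr)$ before expanding.
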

\begin{proof}
Let  $K\subset\mathbb C^+$ be a compact set. Let  $M>0$ be such that for all large $N$, $\mbox{support}(\nu_N)\subset[-M,M]$.
\begin{align}
\lefteqn{ \sup_{z\in K} \sup_{ t \in \text{supp}(\nu_N)}\|(\omega_N(ze_{11}-\gamma_0)-t\gamma_2)^{-1}-(\omega(ze_{11}-\gamma_0)-t\gamma_2)^{-1}\|}\label{uno}\\
&\leq \sup_{z\in K,t \in \text{supp}(\nu_N)}
\|(\omega_N(ze_{11}-\gamma_0)-t\gamma_2)^{-1}\|\cdot\sup_{z\in K,|t|\le M}\|(\omega(ze_{11}-\gamma_0)-t\gamma_2)^{-1}\|\nonumber\\
& \quad  \times \sup_{z\in K}\left\|\omega_N(ze_{11}-\gamma_0)-\omega(ze_{11}-\gamma_0)\right\|=0\nonumber.
\end{align}
Thus, Lemma \ref{cvunif} readily follows from Lemma \ref{bound}, \eqref{bornemanq}  and the uniform convergence of $\omega_N$ to $\omega$ on compact subsets of $\C^{\pm} e_{11} -\gamma_0$ (see Remark \ref{cvuniformeomegaN})
\end{proof}

\subsection{Concentration bounds on quadratic forms}

Applying Proposition \ref{Schur} to $\beta\otimes I_N-\gamma_1\otimes W_N-\gamma_2\otimes D_N$ 
leads to expressions involving random quadratic maps
$\gamma_1\otimes C_k^{(k)*}R^{(k)}(\beta)\gamma_1\otimes C_k^{(k)}$. 
It is easy to compute the expectation of such quadratic maps. 
(Recall that $\esp_k$ denotes the expectation with respect to $\{W_{ik},1 \leq i \leq N\}$.)
\[\mathbb{E}\Big[\gamma_1\otimes C_k^{(k)*}R^{(k)}(\beta)\gamma_1\otimes C_k^{(k)}\Big]=\mathbb{E}\Big[\mathbb{E}_k[\gamma_1\otimes C_k^{(k)*}R^{(k)}(\beta )\gamma_1\otimes C_k^{(k)}]\Big]=\eta_N(\mathbb{E}[({\rm id_m}\otimes N^{-1}\Tr)(R^{(k)}(\beta))]\big).\]
Their variance may be deduced from the following Lemma:

\begin{lem}\label{quadratic forms}
For $m(N-1)\times m(N-1)$ random matrices $A'=\sum_{i,j\neq k}\alpha_{ij}'\otimes E_{ij}, 
A''=\sum_{i,j\neq k}\alpha_{ij}''\otimes E_{ij}$, $m\times m$ random matrices $\beta', \beta'', \gamma',\gamma''$, 
all independent of $\{W_{ik},1\leq i\leq N\}$, and $h\in \{1,\ldots ,N\}$,
\begin{align*}
 \mathbb{E}_k \Big[\mathbb{E}_{\leq h}[\Tr\big(&(\gamma' \otimes C_k^{(k)*} A'\gamma' \otimes C_k^{(k)}-\gamma'({\rm id}_d\otimes \sigma_N^2\Tr)(A')\gamma')\beta'\big)] \\
  & \hspace{2cm} \times \mathbb{E}_{\leq h}[\Tr\big((\gamma'' \otimes C_k^{(k)*}A''\gamma'' \otimes C_k^{(k)}-\gamma'' ({\rm id}_d \otimes \sigma_N^2\Tr)(A'')\gamma'')\beta''\big)]\Big]\\
 & =\sigma_N^4\sum_{i,j\neq k\leq h}\mathbb{E}_{\leq h}[\Tr(\gamma' \alpha_{ij}'\gamma' \beta')]\mathbb{E}_{\leq h}[\Tr(\gamma'' \alpha_{ji}''\gamma'' \beta'')]\\
 & +|\theta_N|^2\sum_{i,j\neq k\leq h} \mathbb{E}_{\leq h}[\Tr(\gamma' \alpha_{ij}'\gamma' \beta')]\mathbb{E}_{\leq h}[\Tr(\gamma'' \alpha_{ij}''\gamma'' \beta'')]\\
 & +\kappa_N\sum_{i\neq k\leq h}\mathbb{E}_{\leq h}[\Tr(\gamma' \alpha_{ii}'\gamma' \beta')]\mathbb{E}_{\leq h}[\Tr(\gamma'' \alpha_{ii}''\gamma'' \beta'')] +\varepsilon_k,
\end{align*}
where
\[\varepsilon_k= \sum_{i\neq j\neq k\leq h}\Big(\mathbb{E}[\overline{W_{ki}}^2]\mathbb{E}[W_{kj}^2]-|\theta_N|^2\Big)\mathbb{E}_{\leq h}[\Tr(\gamma' \alpha_{ij}' \gamma' \beta')]\mathbb{E}_{\leq h}[\Tr(\gamma'' \alpha_{ij}''\gamma'' \beta'')].\]
Moreover, if $A'$ and $A''$ are bounded in $L^2$ and $\beta', \beta'', \gamma',\gamma''$ are bounded deterministic, then
\[\sum\limits_{k=1}^N\varepsilon_k \overset{\P}{\underset{N \to +\infty}{\longrightarrow}} 0.\]
\end{lem}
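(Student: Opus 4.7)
The argument is a direct moment computation. I would first expand
\[
\Tr\big((\gamma'\otimes C_k^{(k)*}A'\gamma'\otimes C_k^{(k)}-\sigma_N^2\gamma'({\rm id}_m\otimes\Tr)(A')\gamma')\beta'\big)=\!\!\sum_{\substack{i\ne j\\ i,j\ne k}}\!\!W_{ki}\overline{W_{kj}}\,\Tr(\gamma'\alpha_{ij}'\gamma'\beta')+\sum_{i\ne k}(|W_{ki}|^2-\sigma_N^2)\Tr(\gamma'\alpha_{ii}'\gamma'\beta')
\]
and similarly for the doubly-primed version. Since $A',\beta',\gamma'$ are independent of column $k$ of $W_N$ and $W_{ki}\in\mathcal F_h$ if and only if both $k,i\le h$, applying $\esp_{\le h}$ in the case $k\le h$ kills every term whose summation index exceeds $h$ (in the off-diagonal sum by factorizing out a mean-zero $W_{kl}$, in the diagonal sum because $\esp_{\le h}[|W_{ki}|^2-\sigma_N^2]=0$ for $i>h$); the upshot is that both sums are restricted to $i,j\le h$ and each trace is replaced by its $\mathcal F_h$-conditional expectation. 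When $k>h$, the centered traces have $\esp_{\le h}$-conditional expectation identically zero and the identity holds trivially. By independence of $A',\beta',\gamma'$ from column $k$, the conditional-expectation factors commute with $\esp_k$ in the next step.

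Next I would compute $\esp_k$ of the product by splitting it as off-diag$\times$off-diag plus the two cross terms plus diag$\times$diag. The cross terms vanish by independence since each moment $\esp[W_{ki}\overline{W_{kj}}(|W_{ki'}|^2-\sigma_N^2)]$ with $i\ne j$ factorizes with a mean-zero factor. For the off-diag$\times$off-diag piece the cumulant/Wick expansion gives, for $i\ne j$ and $i'\ne j'$,
\[
\esp[W_{ki}\overline{W_{kj}}W_{ki'}\overline{W_{kj'}}]=\sigma_N^4\,\delta_{ij'}\delta_{ji'}+\esp[W_{ki}^2]\,\esp[\overline{W_{kj}}^2]\,\delta_{ii'}\delta_{jj'},
\]
the $\delta_{ij}\delta_{i'j'}$-pairing being ruled out by $i\ne j$ and the fourth cumulant vanishing because the four factors cannot all coincide under $i\ne j$. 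For the diag$\times$diag piece, $\esp_k[(|W_{ki}|^2-\sigma_N^2)(|W_{ki'}|^2-\sigma_N^2)]=\delta_{ii'}(m_N-\sigma_N^4)=\delta_{ii'}(\kappa_N+\sigma_N^4+|\theta_N|^2)$. The $\sigma_N^4+|\theta_N|^2$ part matches exactly what the two off-diagonal pairings would yield at $i=j$ (where $\esp[W_{ki}^2]\esp[\overline{W_{ki}}^2]=|\theta_N|^2$ automatically), so it can be absorbed into those pairings by extending the summation to $i=j$. This yields the three sums over $i,j\in\{1,\dots,h\}\setminus\{k\}$ with coefficients $\sigma_N^4$, $|\theta_N|^2$, $\kappa_N$.

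The error $\varepsilon_k$ collects the discrepancy in the $\delta_{ii'}\delta_{jj'}$-pairing, namely $\esp[\overline{W_{ki}}^2]\esp[W_{kj}^2]-|\theta_N|^2$, which appears because by Hermitian symmetry $\esp[W_{ki}^2]=\theta_N$ if $k<i$ and $\overline{\theta_N}$ if $k>i$. Consequently the discrepancy vanishes unless $k$ lies strictly between $i$ and $j$, in which case it equals $\theta_N^2-|\theta_N|^2$ or $\overline{\theta_N}^2-|\theta_N|^2$. Under the assumption $N\theta_N\to\theta\in\R$ one has $\theta_N=\theta/N+o(1/N)$ with real leading term, so $\theta_N^2-|\theta_N|^2=o(1/N^2)$. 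Combining this with $|a_{ij}'|\le C\|\alpha_{ij}'\|_{\rm HS}$ for $a_{ij}':=\Tr(\gamma'\alpha_{ij}'\gamma'\beta')$ (using that $\beta',\gamma'$ are bounded deterministic) together with Cauchy--Schwarz and $\sum_{i,j}\|\alpha_{ij}'\|_{\rm HS}^2=\|A'\|_{\rm HS}^2\le m(N-1)\|A'\|^2$ yields $|\varepsilon_k|=o(N^{-1})\|A'\|\|A''\|$ uniformly in $k$; summing over $k\in\{1,\dots,N\}$ and using the $L^2$-boundedness of $A'$ and $A''$ gives $\esp[\sum_k|\varepsilon_k|]=o(1)$, hence $\sum_k\varepsilon_k\to 0$ in probability.

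The main obstacle is quantitative: one must use the \emph{real-valuedness} of the limit $\theta$ to upgrade the naive $|\theta_N^2-|\theta_N|^2|=O(N^{-2})$ into a genuine $o(N^{-2})$, so that after multiplication by the $O(N)$ arising from summation in $k$ and by the Hilbert--Schmidt (rather than entrywise) control on $A',A''$, the contribution $\sum_k\varepsilon_k$ is indeed $o(1)$. The remainder of the argument is standard bookkeeping of Wick-type pairings together with the identification of the $\kappa_N$ coefficient as the residual fourth cumulant of the diagonal combinations $|W_{ki}|^2-\sigma_N^2$.
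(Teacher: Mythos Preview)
Your proof is correct and follows essentially the same route as the paper's. Both arguments expand the quadratic form into an off-diagonal sum $\sum_{i\ne j}W_{ki}\overline{W_{kj}}\,\esp_{\le h}[\Tr(\gamma'\alpha'_{ij}\gamma'\beta')]$ plus a centered diagonal sum, compute the $\esp_k$ of the product via the three Wick-type pairings, and identify the residual $\varepsilon_k$ as coming from the fact that $\esp[W_{ki}^2]$ equals $\theta_N$ or $\overline{\theta_N}$ according to the sign of $i-k$; the key quantitative input $\theta_N^2-|\theta_N|^2=o(N^{-2})$ (from $\lim N\theta_N\in\R$) is exactly what the paper uses as well. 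Your absorption of the $\sigma_N^4+|\theta_N|^2$ part of $m_N-\sigma_N^4$ into the $i=j$ terms of the two off-diagonal sums is a clean way to recover the three displayed sums; the paper does the same bookkeeping, just after first reducing by linearity to the case $A'=\alpha'\otimes M'$. For the final $L^1$ bound on $\varepsilon_k$ you use Cauchy--Schwarz and $\sum_{i,j}\|\alpha'_{ij}\|_{HS}^2=\|A'\|_{HS}^2\le m(N-1)\|A'\|^2$, while the paper uses AM--GM and the equivalent estimate $\sum_{i,j}\|\alpha'_{ij}\|^2\le m(N-1)\|A'\|^2$ (their Lemma~\ref{prelim}); both give $\|\varepsilon_k\|_{L^1}=o(N^{-1})$ uniformly in $k$.
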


\begin{proof}[Proof of Lemma \ref{quadratic forms}]
It is sufficient to prove the result for $A'=\alpha'\otimes M', A''=\alpha''\otimes M''$.
Since
\begin{align*}
 \mathbb{E}_{\leq h}& \Big[\Tr\big((\gamma' \otimes C_k^{(k)*}\alpha'\otimes M'\gamma' \otimes C_k^{(k)}-\gamma'({\rm id}_d\otimes \sigma_N^2\Tr)(\alpha'\otimes M')\gamma' )\beta'\big)\Big]\\
& =\mathbb{E}_{\leq h}[\Tr\big((C_k^{(k)*}M'C_k^{(k)}-\sigma_N^2\Tr(M'))\gamma' \alpha' \gamma' \beta'\big)]\\
& =\sum_{i'\neq j'\neq k\leq h}\overline{W_{ki'}}W_{kj'}\mathbb{E}_{\leq h}[\Tr(\gamma' M_{i'j'}'\alpha' \gamma' \beta')]+\sum_{i'\neq k\leq h}(|W_{ki'}|^2-\sigma_N^2)\mathbb{E}_{\leq h}[\Tr(\gamma' M_{i'i'}'\alpha' \gamma' \beta')]\\
& =\sum_{i'\neq j'\neq k\leq h}\overline{W_{ki'}}W_{kj'}\mathbb{E}_{\leq h}[\Tr(\gamma' \alpha_{i'j'}' \gamma' \beta')]+\sum_{i'\neq k\leq h}(|W_{ki'}|^2-\sigma_N^2)\mathbb{E}_{\leq h}[\Tr(\gamma' \alpha_{i'i'}' \gamma' \beta')]
\end{align*}
and
\begin{align*}
 \mathbb{E}_{\leq h}& \Big[\Tr\big((\gamma'' \otimes C_k^{(k)*}\alpha''\otimes M''\gamma'' \otimes C_k^{(k)}-\gamma'' (id\otimes \sigma_N^2\Tr)(\alpha''\otimes M'')\gamma'')\beta''\big)\Big]\\
 & =\sum_{i''\neq j''\neq k\leq h}\overline{W_{ki''}}W_{kj''}\mathbb{E}_{\leq h}[\Tr(\gamma'' \alpha_{i''j''}'' \gamma'' \beta'')]+\sum_{i''\neq k\leq h}(|W_{ki''}|^2-\sigma_N^2)\mathbb{E}_{\leq h}[\Tr(\gamma'' \alpha_{i''i''}'' \gamma'' \beta'')],
\end{align*}
it follows that 
\begin{align*}
 \mathbb{E}_k\Big[\mathbb{E}_{\leq h}[\Tr\big(& (\gamma' \otimes C_k^{(k)*}A'\gamma' \otimes C_k^{(k)}-\gamma'({\rm id}_d\otimes \sigma_N^2\Tr)(A')\gamma')\beta'\big)]\\
 & \hspace{2cm} \times \mathbb{E}_{\leq h}[\Tr\big((\gamma'' \otimes C_k^{(k)*}A''\gamma'' \otimes C_k^{(k)}-\gamma'' ({\rm id}_d\otimes \sigma_N^2\Tr)(A'')\gamma'')\beta''\big)]\Big]\\
& =\sum_{i\neq j\neq k\leq h}\mathbb{E}[|W_{ki}|^2]\mathbb{E}[|W_{kj}|^2]\mathbb{E}_{\leq h}[\Tr(\gamma' \alpha_{ij}' \gamma' \beta')]\mathbb{E}_{\leq h}[\Tr(\gamma'' \alpha_{ji}''\gamma'' \beta'')]\\
& +\sum_{i\neq j\neq k\leq h}\mathbb{E}[\overline{W_{ki}}^2]\mathbb{E}[W_{kj}^2]\mathbb{E}_{\leq h}[\Tr(\gamma' \alpha_{ij}' \gamma' \beta')]\mathbb{E}_{\leq h}[\Tr(\gamma'' \alpha_{ij}''\gamma'' \beta'')]\\
& +\sum_{i\neq k\leq h}\mathbb{E}\big[(|W_{ki}|^2-\sigma_N^2)^2\big]\mathbb{E}_{\leq h}[\Tr(\gamma' \alpha_{ii}' \gamma' \beta')]\mathbb{E}_{\leq h}[\Tr(\gamma'' \alpha_{ii}''\gamma'' \beta'')].
\end{align*}
Note that, for $i \neq j$, $\mathbb{E}[\overline{W_{ki}}^2]\mathbb{E}[W_{kj}^2]=|\theta_N|^2$ if $i$ and $j$ are both smaller than $k$ or greater than $k$. If it is not the case, this term equals $\theta_N^2$ or $\bar{\theta}_N^2$, according to whether $i<j$ or $i>j$. By Assumption \ref{hyp:offdiagonal} and the fact that $\lim N\theta_N \in \R$, $\Im \theta_N=o(N^{-1})$ and $\theta_N^2=|\theta_N|^2+o(N^{-2})$ (and so does $\bar{\theta}_N^2$). Therefore
\begin{align*}
\mathbb{E}_k\big[\mathbb{E}_{\leq h}[\Tr \big((\gamma' \otimes C_k^{(k)*}A'\gamma' & \otimes C_k^{(k)} -\gamma'(id\otimes \sigma_N^2\Tr)(A')\gamma')\beta'\big)]\\
& \times \mathbb{E}_{\leq h}[\Tr\big((\gamma'' \otimes C_k^{(k)*}A''\gamma'' \otimes C_k^{(k)}-\gamma'' (id\otimes \sigma_N^2\Tr)(A'')\gamma'')\beta''\big)]\big]\\
& \hspace{1cm} = \sigma_N^4\sum_{i,j\neq k\leq h}\mathbb{E}_{\leq h}[\Tr(\gamma' \alpha_{ij}' \gamma' \beta')]\mathbb{E}_{\leq h}[\Tr(\gamma'' \alpha_{ji}''\gamma'' \beta'')]\\
& \hspace{1cm} + |\theta_N|^2\sum_{i,j\neq k\leq h}\mathbb{E}_{\leq h}[\Tr(\gamma' \alpha_{ij}' \gamma' \beta')]\mathbb{E}_{\leq h}[\Tr(\gamma'' \alpha_{ij}''\gamma'' \beta'')]\\
& \hspace{1cm} + \kappa_N\sum_{i\neq k\leq h}\mathbb{E}_{\leq h}[\Tr(\gamma' \alpha_{ii}' \gamma' \beta')]\mathbb{E}_{\leq h}[\Tr(\gamma'' \alpha_{ii}''\gamma'' \beta'')] + \varepsilon_k,
\end{align*}
where $\varepsilon_k$ has the expected expression.
		
In order to show that $\sum_{k=1}^N\varepsilon_k \to 0$ in probability, we consider the $L^1$ norm $\|\varepsilon_k\|_1$. We denote by $\varepsilon_{ijk}$ the quantity $\mathbb{E}[\overline{W_{ki}}^2]\mathbb{E}[W_{kj}^2]-|\theta_N|^2$.
\begin{align*}
\|\varepsilon_k\|_1
& \leq d^2\|\gamma'\|^2\|\gamma''\|^2\|\beta'\|\|\beta''\|\sup_{i\neq j\neq k\leq h}|\varepsilon_{ijk}|\sum_{i\neq j\neq k\leq h}\mathbb{E}\big[\|\alpha'_{ij}\|\|\alpha''_{ij}\|\big]\\
& \leq \frac{d^2}{2}\|\gamma'\|^2\|\gamma''\|^2\|\beta'\|\|\beta''\|\sup_{i\neq j\neq k\leq h}|\varepsilon_{ijk}|\sum_{i\neq j\neq k\leq h}\mathbb{E}\big[\|\alpha'_{ij}\|^2+\|\alpha''_{ij}\|^2\big]\\
& \leq \frac{d^3}{2}\|\gamma'\|^2\|\gamma''\|^2\|\beta'\|\|\beta''\|\sup_{i\neq j\neq k\leq h}|\varepsilon_{ijk}|(N-1)\mathbb{E}\big[\|A'\|^2+\|A''\|^2\big]\\
\end{align*}
using Lemma \ref{prelim}. Recall that $\|\gamma'\|$, $\|\gamma''\|$, $\|\beta'\|$, $\|\beta''\|$, $\mathbb{E}[\|A'\|^2]$ and $\mathbb{E}[\|A''\|^2]$ are bounded. Together with $\sup_{i\neq j\neq k\leq h}|\varepsilon_{ijk}|=o(N^{-2})$, it leads to $\|\varepsilon_k\|_1 =o(N^{-1})$, uniformly in $k$. Therefore, as claimed,
\[\sum_{k=1}^N\varepsilon_k \overset{\P}{\underset{N \to +\infty}{\longrightarrow}} 0.\]
\end{proof}

\subsection{Bounds on $R_N$ and $R^{(k)}$}

For $\beta \in \mathbb{H}^+(M_m(\mathbb{C}))$ and $k=1,\ldots ,N$, by a direct application of \eqref{majim}, 
\begin{equation}\label{majRNRk}
\Vert R_N(\beta)\Vert \leq \Vert(\Im \beta)^{-1}\Vert, \quad \Vert R^{(k)}(\beta)\Vert \leq \Vert(\Im \beta)^{-1}\Vert.
\end{equation}
It is a consequence of Lemma \ref{inversible} that $R_N$ and $R^{(k)}$ are also defined on $\{ze_{11}-\gamma_0, z\in \mathbb{C}\setminus \mathbb{R}\}$.

\begin{lem}\label{majRNRkLp}
For 
$p\geq 1$, $\mathbb{E}[\Vert R_N(ze_{11}-\gamma_0)\Vert^p]=O(1).$
\end{lem}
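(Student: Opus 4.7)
The plan is to reduce the $L^p$ control of the generalized resolvent to the trivial resolvent bound for $X_N$ and to a moment bound on $\|W_N\|$, using the linearization machinery.

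First, I would apply Lemma~\ref{alta lemma} with $y=(W_N,D_N)$ and the canonical linearization $L_P$ of $P$. Since $X_N=P(W_N,D_N)$ is selfadjoint, $zI_N-X_N$ is invertible for every $z\in\mathbb C\setminus\mathbb R$, and Lemma~\ref{inversible} guarantees that the same holds for $ze_{11}\otimes I_N - L_P(W_N,D_N)$. Lemma~\ref{alta lemma} then yields, pointwise in $\omega$ and in $z$,
\[
\|R_N(ze_{11}-\gamma_0)\|\le Q_1(\|W_N\|,\|D_N\|)\,\|(zI_N-X_N)^{-1}\|+Q_2(\|W_N\|,\|D_N\|),
\]
for two polynomials $Q_1,Q_2$ depending only on $L_P$.

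Second, the usual bound \eqref{resolventbound} gives $\|(zI_N-X_N)^{-1}\|\le|\Im z|^{-1}$, which is bounded uniformly on any compact subset $K\subset \mathbb C\setminus\mathbb R$. By Assumption~\ref{hyp:deformation}, $\sup_N\|D_N\|<\infty$, so $Q_1(\|W_N\|,\|D_N\|)$ and $Q_2(\|W_N\|,\|D_N\|)$ are bounded above by a polynomial in $\|W_N\|$ with coefficients independent of $N$.

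Third, I invoke the moment bound $\mathbb{E}[\|W_N\|^q]=O(1)$ for any fixed $q\ge1$, which is established in Appendix~B under Assumptions~\ref{hyp:offdiagonal}--\ref{hyp:diagonal} (and is anyway available thanks to the truncation at $\delta_N$ justified in Appendix~\ref{Truncation}, which leaves all moments of $\|W_N\|$ finite and bounded independently of $N$). Combining this with the previous two steps and Minkowski's inequality, one deduces, uniformly in $z\in K$,
\[
\mathbb{E}\!\left[\|R_N(ze_{11}-\gamma_0)\|^p\right]^{1/p}\le C_K\big(1+\mathbb{E}[\|W_N\|^{pd}]^{1/p}\big)=O(1),
\]
where $d$ is the total degree of $Q_1+Q_2$, which proves the claim. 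The only non-cosmetic ingredient is the uniform moment control of $\|W_N\|$, and this is precisely what Appendix~B supplies; everything else is a direct application of the linearization estimate of Lemma~\ref{alta lemma} combined with \eqref{resolventbound}.
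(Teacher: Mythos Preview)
Your proof is correct and follows essentially the same route as the paper: apply Lemma~\ref{alta lemma} with $y=(W_N,D_N)$, bound $\|(zI_N-X_N)^{-1}\|$ by $|\Im z|^{-1}$ via \eqref{resolventbound}, use $\sup_N\|D_N\|<\infty$ to reduce to a polynomial in $\|W_N\|$, and conclude by the moment bound of Proposition~\ref{boundnorm}. The only cosmetic difference is that the paper packages the resulting bound as $(1+|\Im z|^{-1})Q(\|W_N\|)$ rather than invoking Minkowski explicitly.
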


\begin{proof}
Using Lemma \ref{alta lemma} with $y=(W_N,D_N)$, one gets 
\[\Vert R_N(ze_{11}-\gamma_0)\Vert\leq Q_1(\Vert W_N\Vert, \Vert D_N\Vert)\Vert (zI_N-X_N)^{-1}\Vert+Q_2(\Vert W_N\Vert, \Vert D_N\Vert).\]
Then, using Assumption \ref{hyp:deformation} and the bound \eqref{resolventbound}, there is a polynomial $Q$ such that 
\[\Vert R_N(ze_{11}-\gamma_0)\Vert\leq (1+\vert \Im z\vert^{-1})Q(\Vert W_N\Vert).\]
It follows from Proposition \ref{boundnorm} that $Q(\Vert W_N\Vert)$
is bounded in all $L^p,\, p\geq 1$.
\end{proof}

\begin{rmk}\label{remmajRNRkLp}
The same argument with $y=(W_N^{(k)},D_N^{(k)})$ (and the observation that $\|W_N^{(k)}\|\leq \|W_N\|$ and $\|D_N^{(k)}\|\leq \|D_N\|$) proves that 
$\mathbb{E}[\Vert R^{(k)}(ze_{11}-\gamma_0)\Vert^p]=O(1).$
With $y=(W_N^{(kab)},D_N^{(k)})$ (and the observation that $\|W_N^{(kab)}\|\leq \|W_N\|+2\delta_N$ and $\|D_N^{(k)}\|\leq \|D_N\|$), we get that 
$\mathbb{E}[\Vert R^{(kab)}(ze_{11}-\gamma_0)\Vert^p]=O(1)$, uniformly in $a,b\neq k$.
\end{rmk}

By Proposition \ref{Schur}, for $z\in \mathbb{C}\setminus \mathbb{R}$, $ze_{11}-\gamma_0-W_{kk}\gamma_1-D_{kk}\gamma_2-\gamma_1\otimes C_k^{(k)*}R^{(k)}(ze_{11}-\gamma_0)\gamma_1\otimes C_k^{(k)}$ is invertible, 
$$(ze_{11}-\gamma_0-W_{kk}\gamma_1-D_{kk}\gamma_2-\gamma_1\otimes C_k^{(k)*}R^{(k)}(ze_{11}-\gamma_0)\gamma_1\otimes C_k^{(k)})^{-1}=R_{kk}(ze_{11}-\gamma_0)$$
and therefore 
\begin{equation}\label{InverseSchurComplementBound}
\Vert (ze_{11}-\gamma_0-W_{kk}\gamma_1-D_{kk}\gamma_2-\gamma_1\otimes C_k^{(k)*}R^{(k)}(ze_{11}-\gamma_0)\gamma_1\otimes C_k^{(k)})^{-1}\Vert \leq \Vert R_N(ze_{11}-\gamma_0)\Vert.
\end{equation}
Note that the same argument implies the bound
$$\Vert (\beta-W_{kk}\gamma_1-D_{kk}\gamma_2-\gamma_1\otimes C_k^{(k)*}R^{(k)}(\beta)\gamma_1\otimes C_k^{(k)})^{-1}\Vert \leq \Vert R_N(\beta)\Vert \leq \|(\Im \beta)^{-1}\|$$
for $\beta \in \mathbb{H}^+(M_m(\mathbb{C}))$.

\begin{lem}\label{estimfq}
For 
$p\in [2,4(1+\varepsilon)]$, 
\[\mathbb{E}[\Vert \gamma_1\otimes C_k^{(k)*}R^{(k)}(ze_{11}-\gamma_0)\gamma_1\otimes C_k^{(k)}-\sigma_N^2\gamma_1(\mathrm{id}_m\otimes \Tr R^{(k)}(ze_{11}-\gamma_0))\gamma_1]\Vert^p]=O(N^{-p/2}).\]
\end{lem}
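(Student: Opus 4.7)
The plan is to exploit the independence between $R^{(k)}(ze_{11}-\gamma_0)$ and the $k$-th column $C_k^{(k)}$ of $W_N$, so that conditionally on $R^{(k)}$ the difference under study becomes a centered quadratic form in the independent random variables $\{W_{ik}\}_{i\neq k}$; a Bai--Silverstein type $L^p$ concentration bound then does the job. Writing $R^{(k)}=R^{(k)}(ze_{11}-\gamma_0)=\sum_{i,j\neq k}R_{ij}^{(k)}\otimes E_{ij}$, direct expansion gives
\[
\mathcal{D}:=\gamma_1\otimes C_k^{(k)*}R^{(k)}\gamma_1\otimes C_k^{(k)}-\sigma_N^2\gamma_1(\mathrm{id}_m\otimes\Tr R^{(k)})\gamma_1=\sum_{i,j\neq k}\gamma_1 R_{ij}^{(k)}\gamma_1\bigl(\overline{W_{ik}}W_{jk}-\sigma_N^2\delta_{ij}\bigr),
\]
and since $m$ is fixed, $\|\mathcal{D}\|$ is equivalent to $\max_{a,b}|\mathcal{D}_{ab}|$, with $\mathcal{D}_{ab}=\sum_{i,j\neq k}A_{ij}^{(a,b)}(\overline{W_{ik}}W_{jk}-\sigma_N^2\delta_{ij})$ and $A_{ij}^{(a,b)}:=(\gamma_1 R_{ij}^{(k)}\gamma_1)_{ab}$. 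So it suffices to bound $\mathbb{E}|\mathcal{D}_{ab}|^p$ for each fixed pair $(a,b)$.

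Fixing $(a,b)$ and conditioning on $\mathcal{F}^{(k)}:=\sigma\bigl(W_{ij}:\{i,j\}\cap\{k\}=\varnothing\bigr)$, the coefficients $A_{ij}^{(a,b)}$ become deterministic while $(W_{ik})_{i\neq k}$ remain independent centered with variance $\sigma_N^2$. I would then invoke the standard Bai--Silverstein moment inequality (of the form collected in the appendix devoted to random quadratic forms) to obtain, for $p\geq 2$,
\[
\mathbb{E}\bigl[|\mathcal{D}_{ab}|^p\,\big|\,\mathcal{F}^{(k)}\bigr]\leq C_p\Bigl[\sigma_N^{2p}\|A^{(a,b)}\|_F^p+\mathbb{E}|W_{1k}|^{2p}\,\|A^{(a,b)}\|_{S_p}^p\Bigr].
\]
Because $A^{(a,b)}=(e_a^T\gamma_1\otimes I_{N-1})R^{(k)}(\gamma_1 e_b\otimes I_{N-1})$, its operator norm is at most $\|\gamma_1\|^2\|R^{(k)}\|$, hence $\|A^{(a,b)}\|_F^2\leq m(N-1)\|\gamma_1\|^4\|R^{(k)}\|^2$ and $\|A^{(a,b)}\|_{S_p}^p\leq(N-1)\|\gamma_1\|^{2p}\|R^{(k)}\|^p$. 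With $\sigma_N^2=O(N^{-1})$, the leading Frobenius term becomes $O(N^{-p/2})\|R^{(k)}\|^p$, which is the target order. Taking expectations and using $\mathbb{E}\|R^{(k)}\|^p=O(1)$ from Remark \ref{remmajRNRkLp} then finishes the argument.

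The main technical obstacle I anticipate lies in controlling the Schatten remainder term at the upper end of the admissible range $p\leq 4(1+\varepsilon)$. For $p\leq 3(1+\varepsilon)$, Assumption \ref{hyp:offdiagonal} directly yields $\mathbb{E}|W_{1k}|^{2p}=O(N^{-p})$, which renders that remainder $O(N^{1-p})\|R^{(k)}\|^p=o(N^{-p/2})$. For $3(1+\varepsilon)<p\leq 4(1+\varepsilon)$, one must interpolate through the almost sure bound $|W_{ik}|\leq\delta_N$ together with the $6(1+\varepsilon)$-moment assumption, which gives $\mathbb{E}|W_{1k}|^{2p}\leq C\delta_N^{2p-6(1+\varepsilon)}N^{-3(1+\varepsilon)}$. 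A short computation then shows that the Schatten contribution still stays $o(N^{-p/2})$ as long as $\delta_N^{2p-6(1+\varepsilon)}=o(N^{3(1+\varepsilon)-1-p/2})$; since the exponent on the right remains nonnegative throughout the range $p\leq 4(1+\varepsilon)$, this reduces to the trivial condition $\delta_N\to 0$. Beyond this bookkeeping, every remaining step is routine, and the structural point -- that contracting $R^{(k)}$ against the two matrices $\gamma_1$ produces a Frobenius norm of order $\sqrt{N}$ rather than $N$ -- is precisely what yields the $N^{-p/2}$ decay.
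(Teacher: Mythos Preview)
Your approach is correct and coincides with the paper's: the paper's one-line proof invokes precisely the three ingredients you use, namely the Bai--Silverstein-type bound of Lemma~\ref{lem_moment_quadratic_forms}, the moment estimate $\mathbb{E}|W_{ij}|^{2p}=O(N^{-p/2-1})$ of Lemma~\ref{puissance} (which is exactly your bookkeeping for the Schatten remainder), and $\mathbb{E}\|R^{(k)}\|^p=O(1)$ from Remark~\ref{remmajRNRkLp}. A tiny inaccuracy: the leading term in the quadratic-form bound carries $(\mathbb{E}|W_{1k}|^4)^{p/2}$ rather than $\sigma_N^{2p}$, but since both are $O(N^{-p})$ this does not affect the outcome.
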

\begin{proof}
The result easily follows from Lemmas \ref{lem_moment_quadratic_forms}, \ref{puissance} and Remark \ref{remmajRNRkLp}.
\end{proof}

\begin{lem}\label{compkLp}
For 
$p\geq 1$, $$\mathbb{E}\left[\|\mathrm{id}_m\otimes \Tr R^{(k)}(ze_{11}-\gamma_0)-\mathrm{id}_m\otimes \Tr R_N (ze_{11}-\gamma_0)\|^p\right]=O(1),$$
\begin{equation}\label{derive}\mathbb{E}\left[\|\frac{\partial}{\partial z}(\mathrm{id}_m\otimes \Tr R^{(k)}(ze_{11}-\gamma_0)-\mathrm{id}_m\otimes \Tr R_N (ze_{11}-\gamma_0))\|^p\right]=O(1).\end{equation}
\end{lem}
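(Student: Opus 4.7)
The plan is to exploit the Schur complement formula, which already appears in the paper, together with the crucial observation that removing one row/column of $W_N$ produces a \emph{rank-$m$} perturbation of the resolvent, so the partial trace of the difference is $O(1)$ (rather than $O(N)$).

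\textbf{Decomposition.} First I would write, with $\beta = ze_{11}-\gamma_0$,
$$
\mathrm{id}_m\otimes\Tr R_N(\beta) - \mathrm{id}_m\otimes\Tr R^{(k)}(\beta) = R_{kk}(\beta) + \sum_{i\neq k}\bigl(R_{ii}(\beta) - [R^{(k)}(\beta)]_{ii}\bigr).
$$
The first term satisfies $\|R_{kk}(\beta)\|\leq\|R_N(\beta)\|$ because it is a block entry, and Lemma \ref{majRNRkLp} delivers the required $L^p$-bound. Everything reduces to controlling the second term.

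\textbf{Schur complement.} Permuting the index $k$ of the $N$-part to the first position, I apply the block inversion formula (equivalently, Proposition \ref{Schur}) and read off the lower-right $(N-1)\times(N-1)$ block of $R_N(\beta)$ as
$$
R^{(k)}(\beta) + R^{(k)}(\beta)\,(\gamma_1\otimes C_k^{(k)})\,R_{kk}(\beta)\,(\gamma_1\otimes C_k^{(k)*})\,R^{(k)}(\beta),
$$
so that
$$
\sum_{i\neq k}\bigl(R_{ii}(\beta) - [R^{(k)}(\beta)]_{ii}\bigr) = (\mathrm{id}_m\otimes\Tr_{N-1})\bigl[R^{(k)}(\beta)\,T\,R^{(k)}(\beta)\bigr],
$$
where $T := (\gamma_1\otimes C_k^{(k)})\,R_{kk}(\beta)\,(\gamma_1\otimes C_k^{(k)*})$.

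\textbf{Rank-partial-trace bound and Hölder.} The matrix $T$ has rank at most $m$ because it factorizes through the $m$-dimensional space via $\gamma_1\otimes C_k^{(k)}$; hence $R^{(k)}TR^{(k)}$ has rank at most $m$ as well. For any rank-$r$ matrix $M$ one has $\|(\mathrm{id}_m\otimes\Tr)(M)\|\leq\|M\|_1\leq r\|M\|$, so
$$
\Bigl\|\sum_{i\neq k}\bigl(R_{ii}(\beta)-[R^{(k)}(\beta)]_{ii}\bigr)\Bigr\| \leq m\,\|\gamma_1\|^2\,\|R_{kk}(\beta)\|\,\|R^{(k)}(\beta)\|^2\,\|C_k^{(k)}\|^2.
$$
I then take $L^p$ expectation and apply Hölder's inequality. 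Lemma \ref{majRNRkLp} and Remark \ref{remmajRNRkLp} bound all moments of $\|R_N(\beta)\|$, $\|R^{(k)}(\beta)\|$ and $\|R_{kk}(\beta)\|\leq\|R_N(\beta)\|$; the moments of $\|C_k^{(k)}\|^2=\sum_{j\neq k}|W_{jk}|^2$ are uniformly bounded because $\mathbb E[\|C_k^{(k)}\|^2]=(N-1)\sigma_N^2=O(1)$ and, after the truncation $|W_{jk}|\leq\delta_N$, a standard Rosenthal-type estimate shows that the centred moments are even small.

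\textbf{The derivative statement.} I would deduce it from the function statement via Cauchy's integral formula: for any compact $K\subset\mathbb C\setminus\mathbb R$, choose $r>0$ so that $K+\overline{B(0,r)}\subset\mathbb C\setminus\mathbb R$; then
$$
\Bigl\|\tfrac{\partial}{\partial z}f(z)\Bigr\|\leq\frac{1}{2\pi r}\int_0^{2\pi}\|f(z+re^{i\theta})\|\,d\theta,
$$
and by Jensen and Fubini,
$$
\mathbb E\Bigl[\Bigl\|\tfrac{\partial}{\partial z}f(z)\Bigr\|^p\Bigr]\leq \frac{1}{r^p}\cdot\frac{1}{2\pi}\int_0^{2\pi}\mathbb E[\|f(z+re^{i\theta})\|^p]\,d\theta,
$$
uniform in $z\in K$. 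Applying this to $f(z)=\mathrm{id}_m\otimes\Tr R^{(k)}(ze_{11}-\gamma_0)-\mathrm{id}_m\otimes\Tr R_N(ze_{11}-\gamma_0)$ yields \eqref{derive}.

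\textbf{Main obstacle.} The only non-mechanical point is the rank-$m$ observation in the Schur-complement expansion, and then the matching bound $\|(\mathrm{id}_m\otimes\Tr)(M)\|\leq\mathrm{rank}(M)\cdot\|M\|$; once this is noticed, the proof is a direct application of Hölder's inequality and the already established bounds. No concentration or martingale argument is needed here.
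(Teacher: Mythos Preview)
Your argument is correct and follows essentially the same route as the paper: both apply the Schur complement (Proposition~\ref{Schur}) to write the difference as $s_k^{-1}$ plus the rank-$m$ term $(\mathrm{id}_m\otimes\Tr)[R^{(k)}(\gamma_1\otimes C_k^{(k)})s_k^{-1}(\gamma_1\otimes C_k^{(k)*})R^{(k)}]$, then bound this pointwise by $\|R_N\|\cdot(1+c\|\gamma_1\|^2\|R^{(k)}\|^2\|C_k^{(k)}\|^2)$ and finish with H\"older, Lemma~\ref{majRNRkLp}, Remark~\ref{remmajRNRkLp} and Proposition~\ref{boundnorm} (the paper bounds $\|C_k^{(k)}\|\leq\|W_N\|$ directly rather than via Rosenthal). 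The only real difference is in~\eqref{derive}: the paper differentiates the Schur-complement identity and repeats the estimate, whereas your Cauchy-integral argument deduces the derivative bound from the function bound in one stroke---a cleaner and more portable device, though both are routine here.
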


\begin{proof}
For $z\in \mathbb{C}\setminus\mathbb{R}$, by Proposition \ref{Schur}, since $\mathrm{id}_m\otimes \Tr R=\mathrm{id}_m\otimes\Tr(R_I + R_{I^c})$ with $I^c=\{k, k+N,\ldots, k+(N-1)m\}$,\\
$\mathrm{id}_m\otimes \Tr R_N (ze_{11}-\gamma_0)-\mathrm{id}_m\otimes \Tr R^{(k)}(ze_{11}-\gamma_0)$ $$=s_k^{-1}+\mathrm{id}_m\otimes \Tr (R^{(k)}(ze_{11}-\gamma_0)\gamma_1\otimes C_k^{(k)}s_k^{-1}\gamma_1\otimes C_k^{(k)*}R^{(k)}(ze_{11}-\gamma_0)),$$
where $s_k=ze_{11}-\gamma_0-W_{kk}\gamma_1-D_{kk}\gamma_2-\gamma_1\otimes C_k^{(k)*}R^{(k)}(ze_{11}-\gamma_0)\gamma_1\otimes C_k^{(k)}$.
Observe that 
\[\Vert\mathrm{id}_m\otimes \Tr (R^{(k)}(ze_{11}-\gamma_0)\gamma_1\otimes C_k^{(k)}s_k^{-1}\gamma_1\otimes C_k^{(k)*}R^{(k)}(ze_{11}-\gamma_0))\Vert \leq m^4\Vert \gamma_1\Vert^2\Vert R^{(k)}(ze_{11}-\gamma_0)\Vert^2\Vert C_k^{(k)}\Vert^2\Vert s_k^{-1}\Vert,\]
so that 
\begin{align*}
\|\mathrm{id}_m\otimes \Tr R^{(k)}(ze_{11}-\gamma_0) - \mathrm{id}_m & \otimes \Tr R_N (ze_{11}-\gamma_0)\|\\
&\leq \Vert s_k^{-1}\Vert(1+m^4\Vert \gamma_1\Vert^2\Vert R^{(k)}(ze_{11}-\gamma_0)\Vert^2\Vert C_k^{(k)}\Vert^2)\\
&\leq \Vert R_N(ze_{11}-\gamma_0)\Vert\Vert(1+m^4\Vert \gamma_1\Vert^2\Vert R^{(k)}(ze_{11}-\gamma_0)\Vert^2\Vert W_N\Vert^2)
\end{align*}
is bounded in all $L^p,\, p\geq 1$, by Lemma \ref{majRNRkLp}, Remark \ref{remmajRNRkLp} and Proposition \ref{boundnorm}. Similarly one can prove \eqref{derive}.
\end{proof}

\begin{lem}\label{crudevariancebound}
For 
any even integer $p\in \mathbb{N}$, 
\[\mathbb{E}[\| \mathrm{id}_m\otimes \Tr R_N (ze_{11}-\gamma_0)-\mathbb{E}[\mathrm{id}_m\otimes \Tr R_N (ze_{11}-\gamma_0)]\|^{p}]=O(N^{p-1}),\]
\[\mathbb{E}[\|\frac{\partial}{\partial z}\left[ \mathrm{id}_m\otimes \Tr R_N (ze_{11}-\gamma_0)-\mathbb{E}(\mathrm{id}_m\otimes \Tr R_N (ze_{11}-\gamma_0))\right]\|^{p}]=O(N^{p-1}).\]
\end{lem}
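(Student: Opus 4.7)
The plan is to use a martingale difference decomposition with respect to the filtration $(\mathcal{F}_k)_{k\geq 0}$ (with $\mathcal{F}_0$ trivial), writing
\[\mathrm{id}_m\otimes \Tr R_N(ze_{11}-\gamma_0)-\mathbb{E}[\mathrm{id}_m\otimes \Tr R_N(ze_{11}-\gamma_0)]=\sum_{k=1}^N\Delta_k,\]
where $\Delta_k:=(\mathbb{E}_{\leq k}-\mathbb{E}_{\leq k-1})[\mathrm{id}_m\otimes \Tr R_N(ze_{11}-\gamma_0)]$ forms a matrix-valued martingale difference sequence. The key observation is that $R^{(k)}(ze_{11}-\gamma_0)$ is independent of the $k$-th row and column of $W_N$, so $\mathbb{E}_{\leq k}[\mathrm{id}_m\otimes \Tr R^{(k)}]=\mathbb{E}_{\leq k-1}[\mathrm{id}_m\otimes \Tr R^{(k)}]$, and one may therefore replace $R_N$ by $R_N-R^{(k)}$ inside each $\Delta_k$.

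By Lemma \ref{compkLp}, $\mathbb{E}[\|\mathrm{id}_m\otimes \Tr(R_N-R^{(k)})(ze_{11}-\gamma_0)\|^p]=O(1)$ uniformly in $k$; by $L^p$-contractivity of conditional expectation, so is $\mathbb{E}[\|\Delta_k\|^p]$. I then apply Burkholder's inequality (from the first appendix) entry-wise to each scalar martingale $((\Delta_k)_{ij})_k$ and conclude using the equivalence of norms on $M_m(\mathbb{C})$. Combined with Jensen's inequality (the power mean inequality applied to $x\mapsto x^{p/2}$), this yields
\[\mathbb{E}\left[\left\|\sum_{k=1}^N\Delta_k\right\|^p\right]\leq C\,\mathbb{E}\left[\left(\sum_{k=1}^N\|\Delta_k\|^2\right)^{p/2}\right]\leq C N^{p/2-1}\sum_{k=1}^N\mathbb{E}[\|\Delta_k\|^p]=O(N^{p/2}),\]
which is stronger than the claimed $O(N^{p-1})$ since $p/2\leq p-1$ for $p\geq 2$.

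The derivative bound follows from applying the exact same argument to $\frac{\partial}{\partial z}[\mathrm{id}_m\otimes \Tr R_N(ze_{11}-\gamma_0)]$, using the derivative estimate \eqref{derive} in Lemma \ref{compkLp} in place of its scalar counterpart. Differentiation commutes with (conditional) expectation because $z\mapsto R_N(ze_{11}-\gamma_0)$ is analytic on $\mathbb{C}\setminus\mathbb{R}$ with derivative uniformly dominated in $L^p$ on compact subsets, as may be seen via Lemma \ref{majRNRkLp} and the Cauchy integral formula applied to a small circle at positive distance from $\mathbb{R}$. The only nontrivial step in this proof is the martingale reduction $\mathbb{E}_{\leq k}[\mathrm{id}_m\otimes \Tr R^{(k)}]=\mathbb{E}_{\leq k-1}[\mathrm{id}_m\otimes \Tr R^{(k)}]$; once this is in hand and Lemma \ref{compkLp} is available, the rest is a routine application of the martingale moment inequality.
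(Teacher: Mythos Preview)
Your argument is essentially the paper's: the same martingale decomposition with the same key cancellation $(\mathbb{E}_{\leq k}-\mathbb{E}_{\leq k-1})[\mathrm{id}_m\otimes \Tr R^{(k)}]=0$, the same appeal to Lemma \ref{compkLp} for the uniform $O(1)$ bound on increments, and the same treatment of the derivative. The only difference is the final martingale inequality: the paper applies its Lemma \ref{martingalelp} (a direct combinatorial bound yielding $O(N^{p-1})$), whereas you invoke Burkholder plus the power-mean inequality to reach the sharper $O(N^{p/2})$. Note, however, that the appendix does \emph{not} contain Burkholder's inequality --- it contains only Lemma \ref{martingalelp} --- so your parenthetical attribution is off; Burkholder is of course standard, so the argument remains valid, but either cite it externally or simply use Lemma \ref{martingalelp} as the paper does, since the weaker $O(N^{p-1})$ is all that is needed downstream.
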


\begin{proof}
Observe that $M_k:=\mathbb{E}_{\leq k}[\mathrm{id}_m\otimes \Tr R_N (ze_{11}-\gamma_0)]$, $k\geq 0$, satisfies
\begin{eqnarray*}
M_k-M_{k-1}
&=&(\mathbb{E}_{\leq k}-\mathbb{E}_{\leq k-1})[\mathrm{id}_m\otimes \Tr R_N (ze_{11}-\gamma_0)]\\
&=&(\mathbb{E}_{\leq k}-\mathbb{E}_{\leq k-1})[\mathrm{id}_m\otimes \Tr R_N (ze_{11}-\gamma_0)-\mathrm{id}_m\otimes \Tr R^{(k)}(ze_{11}-\gamma_0)]
\end{eqnarray*} 
so that, using Jensen's inequality, 
\[\mathbb{E}[\Vert M_k-M_{k-1}\Vert^{p}]\leq 2^{p}\mathbb{E}[\Vert \mathrm{id}_m\otimes \Tr R_N (ze_{11}-\gamma_0)-\mathrm{id}_m\otimes \Tr R^{(k)}(ze_{11}-\gamma_0)\Vert^{p}]=O(1)\]
by Lemma \ref{compkLp}. 
Apply then Lemma \ref{martingalelp} to the martingale $(M_k=\mathbb{E}_{\leq k}[\mathrm{id}_m\otimes \Tr R_N (ze_{11}-\gamma_0)])_{k\geq 0}$ to obtain the first statement. Similarly, one can obtain the second statement by considering $M_k:=\mathbb{E}_{\leq k}[\frac{\partial}{\partial z}\mathrm{id}_m\otimes \Tr R_N (ze_{11}-\gamma_0)]$.
\end{proof}

\begin{rmk}\label{crudevarianceboundk}
Using Lemma \ref{compkLp}, one may deduce from the Lemma \ref{crudevariancebound} that, for 
any even integer $p\in \mathbb{N}$, 
\[\mathbb{E}[\| \mathrm{id}_m\otimes \Tr R^{(k)} (ze_{11}-\gamma_0)-\mathbb{E}[\mathrm{id}_m\otimes \Tr R^{(k)} (ze_{11}-\gamma_0)]\|^{p}]=O(N^{p-1}).\]
\end{rmk}

\subsection{Qualitative asymptotic freeness}

We first prove the asymptotic freeness of $W_N$ and $D_N$. \begin{lem}\label{asymptoticfreeness}
For any polynomial in two noncommuting indeterminates $H\in \mathbb{C}\langle t_1,t_2\rangle$, 
\begin{equation}\label{Cvpol}
\mathbb{E}[N^{-1}\Tr\left(H\left(W_N,D_{N}\right) \right)]\longrightarrow_{N \rightarrow \infty} \tau (H(s,d)). \end{equation}
\end{lem}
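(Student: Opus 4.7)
The strategy is the classical moment method of Wigner--Dykema \cite{Dy93}, adapted to the present moment assumptions and exploiting the truncation $|W_{ij}|\le\delta_N$ provided by Appendix \ref{Truncation}. By linearity of both sides of \eqref{Cvpol}, it suffices to treat the case of a monomial $H=t_{i_1}\cdots t_{i_n}$. Grouping consecutive identical letters, we may write $H(W_N,D_N)=D_N^{p_0}W_N^{q_1}D_N^{p_1}\cdots W_N^{q_r}D_N^{p_r}$ with $q_j\ge 1$, and the diagonal structure of $D_N$ yields
\[
\mathbb{E}\bigl[N^{-1}\Tr H(W_N,D_N)\bigr]=N^{-1}\!\sum_{k_0,\ldots,k_r}\Bigl(\prod_{j=0}^{r}(D_N^{p_j})_{k_jk_j}\Bigr)\mathbb{E}\Bigl[\prod_{j=1}^{r}(W_N^{q_j})_{k_{j-1}k_j}\Bigr].
\]
Expanding each factor $(W_N^{q_j})_{k_{j-1}k_j}$ as a sum over $q_j-1$ intermediate indices produces a sum over closed walks of total $W$-length $q:=q_1+\cdots+q_r$, decorated by diagonal entries of powers of $D_N$.

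Independence and centering of the $W$-entries force every $W$-edge in a surviving configuration to be visited at least twice. The standard power-counting argument, using the moment bounds $\mathbb{E}[|W_{ij}|^{2p}]=O(N^{-p})$ that follow from Assumption \ref{hyp:offdiagonal} combined with the truncation $|W_{ij}|\le\delta_N$, shows that contributions from any partition with a block of size at least three are $O(N^{-1})$, and similarly that pair partitions with crossings are subleading. The dominant contribution therefore comes from non-crossing perfect pair partitions $\pi$ of the $q$ $W$-edges. For such a pair $\{\ell,m\}\in\pi$, Hermitianity of $W_N$ forces the two corresponding edges to be reverses of each other, producing a factor $\mathbb{E}[|W_{ab}|^2]=\sigma_N^2\sim\sigma^2/N$; the alternative, which would yield $\theta_N$, corresponds only to crossing configurations and is hence subleading in the mean trace. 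The non-crossing structure forces the remaining free indices to parameterize $D_N$-diagonal entries which, together with the $N^{-1}$ normalization and Assumption \ref{hyp:deformation}, converge to moments $\int t^k\,d\nu(t)=\tau(d^k)$.

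The combinatorial limit so obtained is exactly the free moment-cumulant formula for $\tau(H(s,d))$: since $s$ is semicircular with variance $\sigma^2$, only the pair free cumulants $\kappa_2(s,s)=\sigma^2$ survive, and freeness of $s$ from $d$ makes all mixed free cumulants vanish, so $\tau(H(s,d))$ equals the sum over non-crossing pair partitions of the $s$-spots of $\sigma^{2|\pi|}$ times the product of $\tau(d^k)$-factors associated to the $d$-intervals carved out by $\pi$. Matching term by term with the previous step yields \eqref{Cvpol}. The main technical obstacle is the control of the subleading terms under the weak moment hypotheses of Section \ref{Model}; this is handled by the truncation of Appendix \ref{Truncation} together with the norm estimates of Proposition \ref{boundnorm}, after which one may equivalently invoke Dykema's asymptotic freeness theorem \cite{Dy93} for Wigner matrices with bounded entries.
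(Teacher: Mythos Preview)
Your argument is correct and follows the classical Wigner--Dykema moment-method route, whereas the paper proceeds differently: it first replaces $W_N$ by a matrix $A_N$ obtained by rescaling the diagonal entries by $\sigma_N/\tilde\sigma_N$ so that \emph{all} entries share the common variance $\sigma_N^2$, observes that $\|W_N-A_N\|\le|1-\sigma_N\tilde\sigma_N^{-1}|\,\delta_N\to0$ (hence the expected normalized traces of any fixed polynomial differ by $o(1)$, via Proposition~\ref{boundnorm}), and then simply invokes Theorem~1 of \cite{Ry98} for $A_N$. The detour through $A_N$ is needed only because the cited reference assumes a uniform variance profile; in your direct combinatorial argument this issue never arises, since diagonal loops are automatically subleading and $\tilde\sigma^2$ does not enter the limiting moments. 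Your route is thus more self-contained but remains a sketch, while the paper's is a two-line reduction to an external black box. One small imprecision worth flagging: the estimate $\mathbb{E}[|W_{ij}|^{2p}]=O(N^{-p})$ you invoke does not literally hold for arbitrarily large $p$ under a sixth-moment hypothesis plus truncation by $\delta_N$; however the weaker bounds that do hold (see Lemma~\ref{puissance} and its proof) are amply sufficient for the power counting, so this does not create a gap.
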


\begin{proof}
One may assume, without loss of generality, that $\sigma_N^2=\sigma^2N^{-1}$. Define a $N\times N$ random matrix $A_N$ by $A_{ij}=W_{ij}$ when $i\neq j$ and $A_{ii}=\sigma_N\tilde{\sigma}_N^{-1}W_{ii}$.
It is straightforward that
$$\|W_N-A_N\|\leq |1-\sigma_N\tilde{\sigma}_N^{-1}|\delta_N\rightarrow_{N\rightarrow +\infty} 0.$$
Since $\sup_N \|D_N\| <+\infty$ and, by Proposition \ref{boundnorm}, $(\|W_N\|)_N$ and $(\|A_N\|)_N$ are bounded in all $L^p,\, p\geq 1$, it follows that 
$$|\mathbb{E}[N^{-1}\Tr\left(H\left(W_N,D_{N}\right) \right)]-\mathbb{E}[N^{-1}\Tr\left(H\left(A_N,D_{N}\right) \right)]|$$
$$\leq \mathbb{E}[\|H(W_N,D_{N})-H(A_N,D_N)\| ]\longrightarrow_{N \rightarrow +\infty} 0.$$
Since $\mathbb{E}[A_{ij}]=0$,  
$\mathbb{E}[|A_{ij}|^2]=\sigma^2N^{-1}$ and, for any $m>2$,
$\sup_{i,j\leq N} \mathbb{E}[|A_{ij}|^m]=o(N^{-1})$,
one can apply Theorem 1 of \cite{Ry98}, and obtain that 
$\mathbb{E}[N^{-1}\Tr H(A_N,D_N)]$ converges towards $\tau(H(s,d))$.
\end{proof}

The following result is a consequence of the above asymptotic freeness.

\begin{lem}\label{asymptoticfreenessL1}For $z\in \mathbb{C}\setminus \mathbb{R}$, 
$$ \mathbb{E}[\left(\mathrm{id}_m \otimes N^{-1}\Tr\right)(R_{N}(ze_{11}-\gamma_0))]-\left(\mathrm{id}_m \otimes \tau_N\right)(r_N(ze_{11}-\gamma_0))\longrightarrow_{N \rightarrow +\infty} 0.$$
$$\frac{\partial}{\partial z}\left( \mathbb{E}[\left(\mathrm{id}_m \otimes N^{-1}\Tr\right)(R_{N}(ze_{11}-\gamma_0))]-\left(\mathrm{id}_m \otimes \tau_N\right)(r_N(ze_{11}-\gamma_0))\right)\longrightarrow_{N \rightarrow +\infty} 0.$$
\end{lem}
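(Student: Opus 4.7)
The plan is to deduce Lemma \ref{asymptoticfreenessL1} from the qualitative polynomial-level asymptotic freeness of Lemma \ref{asymptoticfreeness}. First I would use the block-Schur inversion formula (Lemma \ref{inversible} extended to all four blocks, as in the proof of \eqref{coin}) to express each entry of $R_N(ze_{11}-\gamma_0)=(ze_{11}\otimes I_N-L_P(W_N,D_N))^{-1}$ as a finite sum of terms of the form $\alpha\otimes I_N+\rho(W_N,D_N)(zI_N-X_N)^{-1}\tilde\rho(W_N,D_N)$, where $X_N=P(W_N,D_N)$ and $\rho,\tilde\rho$ are fixed (matrix-valued) polynomials built from the linearization data $u$, $v$, $Q$. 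The same formulas applied inside $\mathcal A_N$ decompose $r_N(ze_{11}-\gamma_0)$ in terms of $P(s_N,D_N)$. By cyclicity of $\Tr$ and $\tau_N$, it therefore suffices to show, for every fixed noncommutative polynomial $p$ with matrix coefficients,
\[
\mathbb E\!\left[N^{-1}\Tr\!\left(p(W_N,D_N)(zI_N-X_N)^{-1}\right)\right]-\tau_N\!\left(p(s_N,D_N)(z\cdot1-P(s_N,D_N))^{-1}\right)\longrightarrow 0.
\]

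Next I would approximate the resolvent by a polynomial on a bounded spectral window. Fix $z\in\mathbb C\setminus\mathbb R$ and $\epsilon>0$. By Proposition \ref{boundnorm} and $\sup_N\|D_N\|<\infty$, there is $M>0$ so that $\mathbb P(\|X_N\|>M)\to 0$; on the free side, $\|s_N\|=2\sqrt{N\sigma_N^2}$ is uniformly bounded, so $\|P(s_N,D_N)\|\le M$ for all $N$ large. Weierstrass then provides a polynomial $q_\epsilon$ (depending on $z,\epsilon,M$) with $\sup_{|x|\le M}|(z-x)^{-1}-q_\epsilon(x)|<\epsilon$, which yields $\|(zI_N-X_N)^{-1}-q_\epsilon(X_N)\|\le\epsilon$ on the event $E_M=\{\|X_N\|\le M\}$, and the deterministic analogue on the free side. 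Writing the difference on $E_M^c$ and applying Cauchy--Schwarz together with the $L^2$ bound $\mathbb E[\|R_N\|^2]=O(1)$ from Lemma \ref{majRNRkLp} controls the exceptional contribution. One then has
\[
\mathbb E\!\left[N^{-1}\Tr\!\left(p(W_N,D_N)(zI_N-X_N)^{-1}\right)\right]=\mathbb E\!\left[N^{-1}\Tr\!\left(p(W_N,D_N)q_\epsilon(X_N)\right)\right]+O(\epsilon)+o(1),
\]
and similarly on the free side up to $O(\epsilon)$. The right-hand side is the normalized expected trace of a fixed polynomial in $(W_N,D_N)$, to which Lemma \ref{asymptoticfreeness} (applied entrywise and extended by linearity to matrix coefficients) directly applies, giving convergence to $\tau(p(s,d)q_\epsilon(P(s,d)))$. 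On the free side, the same limit is reached because $\nu_N\Rightarrow\nu$ and $N\sigma_N^2\to\sigma^2$ imply $\tau_N(h(s_N,D_N))\to\tau(h(s,d))$ for every polynomial $h$ (freeness being preserved in the limit). Letting $\epsilon\to 0$ establishes the first assertion.

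For the derivative, I would argue by analyticity. Both $z\mapsto\mathbb E[(\mathrm{id}_m\otimes N^{-1}\Tr)(R_N(ze_{11}-\gamma_0))]$ and $z\mapsto(\mathrm{id}_m\otimes\tau_N)(r_N(ze_{11}-\gamma_0))$ are holomorphic on $\mathbb C\setminus\mathbb R$ and, by Lemma \ref{Qbound}, Lemma \ref{majRNRkLp}, and \eqref{majrN}, locally uniformly bounded in $N$. Vitali's theorem then upgrades the pointwise convergence just proved to uniform convergence on compact subsets of $\mathbb C\setminus\mathbb R$, which entails convergence of derivatives and hence the second assertion. The main obstacle is the simultaneous control of the polynomial approximation error (uniform on a fixed spectral window $[-M,M]$) and the probabilistic tail on $\{\|X_N\|>M\}$: once the norm bounds of Proposition \ref{boundnorm} and Lemma \ref{majRNRkLp} are in place, the rest is a Weierstrass-plus-asymptotic-freeness argument combined with Vitali.
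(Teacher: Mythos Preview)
Your argument is correct and follows essentially the same strategy as the paper: reduce via the block-Schur formula for the linearization to traces of the form $N^{-1}\Tr\big((z-P)^{-1}H_1+H_2\big)$, approximate the scalar resolvent by polynomials on a bounded spectral window, invoke the polynomial asymptotic freeness of Lemma~\ref{asymptoticfreeness} together with the convergence in $*$-distribution of $(s_N,D_N)$ to $(s,d)$, and finish with Vitali. The one genuine difference is the choice of polynomial approximation: the paper uses the truncated Neumann expansion $(z-P)^{-1}=\sum_{k<r}z^{-1}(z^{-1}P)^k+(z-P)^{-1}(z^{-1}P)^r$, which is only controlled on a region $\mathcal O_K=\{\Im z>K\}$, and then uses Vitali to propagate the convergence from $\mathcal O_K$ to all of $\mathbb C^+$; you instead use Weierstrass approximation of $x\mapsto(z-x)^{-1}$ on $[-M,M]$, which works directly at every fixed $z\in\mathbb C\setminus\mathbb R$. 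Your route avoids the intermediate analytic-continuation step for the first assertion (Vitali is then only needed for the derivative), at the cost of a slightly less explicit approximant; both are perfectly valid.
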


\begin{proof} 
Let $L_P = \begin{pmatrix} 0 & u\\v & Q \end{pmatrix}$ be the canonical linearization of $P$. Remember that the entries of  the row vector $u$, the column vector $v$ and the matrix $Q^{-1}$  are all polynomials and that $P=-uQ^{-1}v$.
It readily follows from Proposition \ref{Schur} applied to 
$z-L_P$
, that for each $p,q\in \{1,\ldots,m\}$, there exist two polynomials $H_1^{(p,q)}$ and $H_2^{(p,q)}$ such that, for any $z\in \mathbb{C} \setminus \mathbb{R}$,
for any $N$, the entry $(p,q)$ of the $m\times m$ matrix $\left(\mathrm{id}_m \otimes \tau_N\right) \left[r_N(ze_{11} - \gamma_0)  \right]$
(respectively of the $m\times m$ matrix $\left( \mathrm{id}_m \otimes N^{-1}\Tr\right) \left[R_N(ze_{11} - \gamma_0)  \right]$)
is equal to $\tau_N\left((z1_{\mathcal{A}_N}-P(s_N,D_N))^{-1} H_1^{(p,q)}(s_N,D_N) + H_2^{(p,q)}(s_N,D_N)\right)$ (respectively of the $m\times m$ matrix
$N^{-1}\Tr\left((zI_N-P(W_N,D_N))^{-1} H_1^{(p,q)}(W_N,D_N) + H_2^{(p,q)}(W_N,D_N)\right)$). 
		
We have for any selfadjoint operators $y_1$ and $y_2$, for any $z\in \mathbb{C} \setminus \mathbb{R}$, for any nonzero integer $r$, 
\begin{equation} \label{appol} (z-P(y_1,y_2))^{-1}= \sum_{k=0}^{r-1} z^{-1} ( z^{-1} P(y_1,y_2) )^k +    \left(z-P(y_1,y_2) \right)^{-1} (z^{-1} P(y_1,y_2))^r.\end{equation}
For any $K>0$, define $$\mathcal{O}_K = \{ z\in \mathbb{C} \setminus \mathbb{R}, \Im(z) > K \}.$$
Let $0<c<1$. For any $\kappa >0$, there exists  $K=K(\kappa, P,c)>0$  such that if $z \in {\mathcal O}_K$, for any $y_1$ and $y_2$ such that $\Vert y_1\Vert \leq \kappa$ and $\Vert y_2 \Vert \leq \kappa $ then  \begin{equation}\label{defK} \Vert  (z^{-1} P(y_1,y_2))\Vert \leq c,  \end{equation} 
so that
\begin{equation}\label{limr}\sup_{z \in {\mathcal O}_K} \left\|   \left(z-P(y_1,y_2) \right)^{-1} (z^{-1} P(y_1,y_2))^r H_1^{(p,q)}(y_1,y_2)\right\| \leq c'\frac{c^r}{K} \rightarrow_{r \rightarrow +\infty} 0.\end{equation}
\noindent Fix $K>0$ such that \eqref{defK} holds for  
$(y_1,y_2)=(s_N,D_N)$ and  $(y_1,y_2)=(W_N,D_{N})$ on $ E_{N}=\{\Vert W_N\Vert \leq C\}$ where $C$ is defined in Proposition \ref{boundnorm}. 
Using Proposition \ref{boundnorm}, it readily follows from \eqref{Cvpol} that 
for any polynomial $H$ in two noncommuting indeterminates, 
 \begin{equation}\label{Cvpolindicatrice}
\mathbb{E}[\1_{ E_{N}}N^{-1}\Tr\left(H\left(W_N,D_{N}\right) \right)]\longrightarrow_{N \rightarrow +\infty}\tau(H(s,d)). \end{equation}
By the convergence in noncommutative distribution of $(s_N,D_N)$ to $(s,d)$, we have that, for any polynomial $H$ in two noncommuting indeterminates,  \begin{equation}\label{Cvpol2}
\tau_N\left(H(s_N, d_{N})\right)\longrightarrow_{N \rightarrow +\infty}\tau\left(H(s,d)\right). \end{equation}
Using \eqref{appol}, \eqref{limr}, \eqref{Cvpolindicatrice},   \eqref{Cvpol2}, letting $N$ and then $r$ go to infinity, we obtain that for any $z\in  {\mathcal O}_K$, $$\mathbb{E}[\1_{ E_{N}}\left(\mathrm{id}_m \otimes N^{-1}\Tr \right)(R_{N}(ze_{11}-\gamma_0))]-\left(\mathrm{id}_m \otimes \tau_N\right)(r_N(ze_{11}-\gamma_0))\longrightarrow_{N \rightarrow +\infty} 0,$$
and then, using Lemma \ref{majRNRkLp}  and  $\mathbb{P}\left( E_N^c\right) \rightarrow_{N\rightarrow +\infty} 0,$ we readily deduce that, for any $z\in  {\mathcal O}_K$, $$
\mathbb{E}[\left(\mathrm{id}_m \otimes N^{-1}\Tr\right)(R_{N}(ze_{11}-\gamma_0))]-\left(\mathrm{id}_m \otimes \tau_N\right)(r_N(ze_{11}-\gamma_0))\longrightarrow_{N \rightarrow \infty} 0.$$
Functions $\Phi_N(z)=\mathbb{E}[\left(\mathrm{id}_m \otimes N^{-1}\Tr\right)(R_{N}(ze_{11}-\gamma_0))]-\left(\mathrm{id}_m\otimes \tau_N\right)(r_N(ze_{11}-\gamma_0)),\, N\in \mathbb{N},$ are holomorphic on $\mathbb{C}^+$. Moreover, using Lemma \ref{alta lemma} and Proposition \ref{boundnorm}, there exists a polynomial $Q$ 
such that, \begin{equation}\label{cvdomine} \; 
|| \Phi_N(z)|| \leq Q((\Im z)^{-1}),\quad z\in \mathbb{C}^+.\end{equation}
It readily follows that $(\Phi_N)_{N\in \mathbb{N}}$ is a bounded sequence in the set of analytic functions on $\mathbb{C}^+$ endowed with the uniform convergence on compact subsets. 
We can apply Vitali's theorem to conclude that the convergences of $(\Phi_N)_{N\in \mathbb{N}}$ and $(\partial_z\Phi_N)_{N\in \mathbb{N}}$ to $0$ hold on $\mathbb{C}^+$. Of course, this convergence similarly holds on $\mathbb{C}^-$. 
The proof of Lemma \ref{asymptoticfreenessL1} is complete.
\end{proof}

\begin{rmk}\label{cvuniformeomegaN}
Note that, following   the strategy of the proof of Lemma \ref{asymptoticfreenessL1}, we obtain the uniform convergence of $\omega_N$
to $\omega$ on compact subsets of $\C^{\pm} e_{11} -\gamma_0$.
\end{rmk}

As a consequence of Lemma \ref{crudevariancebound}  and  Lemma \ref{asymptoticfreenessL1}, we obtain the following
\begin{lem}\label{asymptoticfreenessLp}
For $z\in \mathbb{C}\setminus \mathbb{R}$ and $p\geq 1$, 
\begin{equation} \label{CVLpaf} \mathbb{E}[\|\mathrm{id}_m\otimes N^{-1}\Tr (R_N(ze_{11}-\gamma_0))-\mathrm{id}_m\otimes \tau_N(r_N(ze_{11}-\gamma_0))\|^p]\underset{N\to +\infty}{\longrightarrow}0,\end{equation}
\begin{equation} \label{CVLpaf2}\mathbb{E}[\|\frac{\partial}{\partial z} \left[ \mathrm{id}_m\otimes N^{-1}\Tr (R_N(ze_{11}-\gamma_0))-\mathrm{id}_m\otimes \tau_N(r_N(ze_{11}-\gamma_0))\right]\|^p]\underset{N\to +\infty}{\longrightarrow}0.\end{equation}
\end{lem}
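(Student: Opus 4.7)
My plan is to decompose the quantity of interest into a centered random part and a deterministic bias part, and handle each separately using Lemmas \ref{crudevariancebound} and \ref{asymptoticfreenessL1}. Write
\[
\mathrm{id}_m\otimes N^{-1}\Tr(R_N(ze_{11}-\gamma_0))-\mathrm{id}_m\otimes \tau_N(r_N(ze_{11}-\gamma_0))= A_N(z)+B_N(z),
\]
where
\begin{align*}
A_N(z)&=\mathrm{id}_m\otimes N^{-1}\Tr(R_N(ze_{11}-\gamma_0))-\mathbb{E}[\mathrm{id}_m\otimes N^{-1}\Tr(R_N(ze_{11}-\gamma_0))],\\
B_N(z)&=\mathbb{E}[\mathrm{id}_m\otimes N^{-1}\Tr(R_N(ze_{11}-\gamma_0))]-\mathrm{id}_m\otimes \tau_N(r_N(ze_{11}-\gamma_0)),
\end{align*}
and use $\|A_N+B_N\|^p\le 2^{p-1}(\|A_N\|^p+\|B_N\|^p)$ to reduce to controlling each term.

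For the random part $A_N(z)$, I would apply Lemma \ref{crudevariancebound}: for any even integer $q\in\mathbb N$, dividing by $N^q$ gives
\[
\mathbb{E}[\|A_N(z)\|^q]=N^{-q}\,O(N^{q-1})=O(N^{-1})\underset{N\to+\infty}{\longrightarrow}0.
\]
For a general real $p\ge1$, choose an even integer $q\ge p$ and apply Jensen's (monotonicity of $L^p$ norms) to obtain $\mathbb{E}[\|A_N(z)\|^p]\le \mathbb{E}[\|A_N(z)\|^q]^{p/q}\to0$.

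For the deterministic bias $B_N(z)$, Lemma \ref{asymptoticfreenessL1} yields $\|B_N(z)\|\to0$, hence $\mathbb{E}[\|B_N(z)\|^p]=\|B_N(z)\|^p\to0$. Combining, we obtain \eqref{CVLpaf}.

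The derivative statement \eqref{CVLpaf2} is proved by exactly the same decomposition applied to $\frac{\partial}{\partial z}\mathrm{id}_m\otimes N^{-1}\Tr(R_N(ze_{11}-\gamma_0))-\frac{\partial}{\partial z}\mathrm{id}_m\otimes \tau_N(r_N(ze_{11}-\gamma_0))$, using the second (derivative) inequality in Lemma \ref{crudevariancebound} to bound the centered random part in $L^q$ for even $q$, and the second convergence in Lemma \ref{asymptoticfreenessL1} to handle the deterministic part. I do not expect a serious obstacle: both ingredients are already available and the argument is essentially a bias/variance split combined with Jensen to pass from even integer exponents to general $p\ge1$.
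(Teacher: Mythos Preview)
Your proposal is correct and is exactly the approach the paper intends: the paper simply states that the lemma is ``a consequence of Lemma \ref{crudevariancebound} and Lemma \ref{asymptoticfreenessL1}'' without spelling out the details, and your bias/variance decomposition together with the Jensen step for general $p\ge 1$ is precisely how those two lemmas combine to give the result.
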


\begin{lem}\label{boundomegatilde}
\begin{equation*}
\Vert (\Omega_N(ze_{11}-\gamma_0)\otimes I_N-\gamma_2\otimes D_N)^{-1}\Vert 
=O(1)\end{equation*}
\end{lem}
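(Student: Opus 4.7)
The plan is to deduce the $O(1)$ bound by perturbing off the corresponding bound for $\omega_N$ given in Lemma \ref{Qbound}. The key observation is that, directly from the definitions \eqref{defhatR} and \eqref{defomegatilde},
$$\Omega_N(ze_{11}-\gamma_0)-\omega_N(ze_{11}-\gamma_0)=\eta_N\bigl(\mathbb{E}[(\mathrm{id}_m\otimes N^{-1}\Tr)(R_N(ze_{11}-\gamma_0))]-(\mathrm{id}_m\otimes \tau_N)(r_N(ze_{11}-\gamma_0))\bigr).$$
Since $\|\eta_N\|\le N\sigma_N^2\|\gamma_1\|^2=O(1)$ and the quantity inside $\eta_N$ tends to $0$ by Lemma \ref{asymptoticfreenessL1}, we obtain $\|\Omega_N(ze_{11}-\gamma_0)-\omega_N(ze_{11}-\gamma_0)\|\to 0$. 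Uniformity on compact subsets of $\mathbb{C}\setminus\mathbb{R}$ follows from the Vitali normal-family argument already used in the proof of Lemma \ref{asymptoticfreenessL1}, both maps being analytic and locally uniformly bounded.

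Next I would exploit the fact that $D_N$ is diagonal. Writing $D_N=\mathrm{diag}(D_{11},\ldots,D_{NN})$, under the identification of $M_m(\mathbb{C})\otimes \mathcal{D}_N(\mathbb{C})$ with a direct sum of $N$ copies of $M_m(\mathbb{C})$, the operator $\Omega_N(ze_{11}-\gamma_0)\otimes I_N-\gamma_2\otimes D_N$ decomposes as the block-diagonal direct sum of the $N$ matrices $\Omega_N(ze_{11}-\gamma_0)-D_{kk}\gamma_2$, $k=1,\ldots,N$. Hence the norm of its inverse equals $\max_{1\le k\le N}\|(\Omega_N(ze_{11}-\gamma_0)-D_{kk}\gamma_2)^{-1}\|$, and it suffices to bound each block uniformly in $k$.

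For this, set $B_k=\omega_N(ze_{11}-\gamma_0)-D_{kk}\gamma_2=\hat R_k(ze_{11}-\gamma_0)^{-1}$ and $A_k=\Omega_N(ze_{11}-\gamma_0)-D_{kk}\gamma_2$, and factor $A_k=B_k\bigl(I_m+B_k^{-1}(\Omega_N(ze_{11}-\gamma_0)-\omega_N(ze_{11}-\gamma_0))\bigr)$. The bound \eqref{majorationunifhatRk} gives $\sup_k\|B_k^{-1}\|=\sup_k\|\hat R_k(ze_{11}-\gamma_0)\|=O(1)$, and combined with the first step this yields $\sup_k\|B_k^{-1}(\Omega_N-\omega_N)\|\to 0$ uniformly on compact subsets of $\mathbb{C}\setminus\mathbb{R}$. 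For $N$ large enough this quantity is at most $1/2$, so a Neumann series inversion yields invertibility of $A_k$ together with $\|A_k^{-1}\|\le 2\|B_k^{-1}\|=O(1)$, uniformly in $k$, which proves the lemma.

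There is no serious obstacle in this argument; the only point requiring a bit of care is the simultaneous uniformity in $k$ and in $z$ over compact subsets of $\mathbb{C}\setminus\mathbb{R}$ of the preliminary estimates, which is however already built into the preliminary results invoked (notably \eqref{majorationunifhatRk} and the Vitali argument underlying Lemma \ref{asymptoticfreenessL1}).
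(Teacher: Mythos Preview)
Your proposal is correct and follows essentially the same route as the paper: both perturb off the bound for $\omega_N$ from Lemma \ref{Qbound}, use Lemma \ref{asymptoticfreenessL1} to get $\|\Omega_N-\omega_N\|\to 0$ uniformly on compacta, and conclude by a Neumann-series argument (which the paper packages as Lemma \ref{inversionsomme}). Your block-diagonal reduction to the individual $\hat R_k$'s is a minor presentational variant of the paper's direct appeal to Lemma \ref{Qbound} on the full tensor; note only a harmless sign slip in your displayed formula for $\Omega_N-\omega_N$.
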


\begin{proof}
It follows from Lemma \ref{Qbound} that $\omega_N(ze_{11}-\gamma_0)\otimes I_N-\gamma_2\otimes D_N$ is invertible for every $z\in \mathbb{C}\setminus \mathbb{R}$ and
\begin{equation*}
\Vert (\omega_N(ze_{11}-\gamma_0)\otimes I_N-\gamma_2\otimes D_N)^{-1}\Vert =O(1).\end{equation*}
One deduces from Lemma 
\ref{asymptoticfreenessL1} that 
\[\Vert(\Omega_N(ze_{11}-\gamma_0)\otimes I_N-\gamma_2\otimes D_N)-(\omega_N(ze_{11}-\gamma_0)\otimes I_N-\gamma_2\otimes D_N)\Vert \underset{N\to +\infty}{\longrightarrow}0.\]
Note that this convergence is uniform on compact subsets of $\mathbb{C}\setminus\mathbb{R}$. Hence, it follows from Lemma \ref{inversionsomme} that, for any compact subset $K$ of $\mathbb{C}\setminus\mathbb{R}$, $\Omega_N(ze_{11}-\gamma_0)\otimes I_N-\gamma_2\otimes D_N,\, z\in K,$ are all invertible for large enough $N$ and  $$\Vert (\Omega_N(ze_{11}-\gamma_0)\otimes I_N-\gamma_2\otimes D_N)^{-1}\Vert=O(1).$$
\end{proof}

\begin{rmk}\label{remboundomegatilde}
By a similar argument based on Lemma \ref{inversionsomme}, one may deduce from Lemma \ref{boundomegatilde} and Lemma \ref{compkLp} 
that \[\Big\| \left( (ze_{11}-\gamma_0-\eta_N(\mathbb{E}[(\mathrm{id}_m\otimes N^{-1}\Tr)(R^{(k)}(ze_{11}-\gamma_0))])\otimes I_N-\gamma_2\otimes D_N\right)^{-1} \Big\|=O(1).\]
\end{rmk}

\begin{lem}\label{maxtendzero}
For $z\in \mathbb{C}\setminus\mathbb{R}$ and $p\geq 1$, 
\[\max_{k=1,\ldots ,N}\mathbb{E}[\| R_{kk}(ze_{11}-\gamma_0)-\hat{R}_k(ze_{11}-\gamma_0)\|^p]\underset{N\to +\infty}{\longrightarrow}0,\]
where $\hat{R}_k$ is defined by \eqref{defhatRk}.
\end{lem}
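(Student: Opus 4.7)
The plan is to invoke the resolvent identity (Lemma \ref{lem_resolvent_identity}) to factor the difference $R_{kk} - \hat{R}_k$ as a triple product and then control each factor separately. Starting from the Schur identity
\[
R_{kk}(ze_{11}-\gamma_0) = \bigl(\omega_N(ze_{11}-\gamma_0) - D_{kk}\gamma_2 - \Delta_k\bigr)^{-1},\qquad
\hat R_k(ze_{11}-\gamma_0) = \bigl(\omega_N(ze_{11}-\gamma_0) - D_{kk}\gamma_2\bigr)^{-1},
\]
where, after cancelling $ze_{11}-\gamma_0$ and using the definition of $\omega_N$,
\[
\Delta_k = W_{kk}\gamma_1 + \Bigl(\gamma_1\otimes C_k^{(k)*}R^{(k)}\gamma_1\otimes C_k^{(k)} - \sigma_N^2\gamma_1(\mathrm{id}_m\otimes\Tr R^{(k)})\gamma_1\Bigr) + N\sigma_N^2\gamma_1\Bigl[(\mathrm{id}_m\otimes N^{-1}\Tr)(R^{(k)}) - (\mathrm{id}_m\otimes\tau_N)(r_N)\Bigr]\gamma_1,
\]
the resolvent identity yields $R_{kk}-\hat R_k = R_{kk}\,\Delta_k\,\hat R_k$.

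The second step is to show that $\Delta_k$ is small in every $L^p$, uniformly in $k$. The first summand satisfies $\|W_{kk}\gamma_1\|\le \delta_N\|\gamma_1\|\to 0$ almost surely, hence also in $L^p$. The middle summand is exactly the quadratic-form fluctuation controlled by Lemma \ref{estimfq}, giving $O(N^{-1/2})$ in $L^p$ uniformly in $k$. For the third summand, since $N\sigma_N^2=O(1)$, we write the bracket as the sum of $(\mathrm{id}_m\otimes N^{-1}\Tr)(R^{(k)}-R_N)$ and $(\mathrm{id}_m\otimes N^{-1}\Tr)(R_N) - (\mathrm{id}_m\otimes\tau_N)(r_N)$: the first contributes $O(N^{-1})$ in $L^p$ by Lemma \ref{compkLp}, and the second tends to $0$ in every $L^p$ by Lemma \ref{asymptoticfreenessLp}. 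Altogether, $\mathbb{E}[\|\Delta_k\|^q]\to 0$ as $N\to\infty$ for every $q\ge 1$, uniformly in $k$.

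The last step is to combine these estimates with uniform bounds on the two outer factors. The factor $\hat R_k$ is deterministic and bounded uniformly in $k$ by \eqref{majorationunifhatRk}. The factor $R_{kk}$ satisfies $\|R_{kk}\|\le \|R_N\|$ with $\mathbb{E}[\|R_N\|^q]=O(1)$ for all $q\ge 1$ by Lemma \ref{majRNRkLp}. Applying H\"older's inequality with exponents $3p$ and $3p/2$ (together with any choice compatible with the integrability available from Lemma \ref{estimfq}, namely $p\le 4(1+\varepsilon)$), we obtain
\[
\mathbb{E}\bigl[\|R_{kk}-\hat R_k\|^p\bigr] \le \mathbb{E}\bigl[\|R_{kk}\|^{3p}\bigr]^{1/3}\,\mathbb{E}\bigl[\|\Delta_k\|^{3p}\bigr]^{1/3}\,\|\hat R_k\|^p = O(1)\cdot o(1)\cdot O(1),
\]
with the $o(1)$ uniform in $k$. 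Taking the maximum over $k$ yields the conclusion.

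\textbf{Main obstacle.} No single step is deep; the care required is bookkeeping of integrability: one must arrange the H\"older exponents so that the $L^{p'}$ moment bound on $\Delta_k$ provided by Lemma \ref{estimfq} (which holds only for $p'\le 4(1+\varepsilon)$) is compatible with the required $L^p$ conclusion. Since Lemma \ref{majRNRkLp} supplies arbitrary moments of $\|R_N\|$, this balancing is always possible for any fixed $p$, at the cost of possibly restricting to $p$ not too large; the claim for all $p\ge 1$ then follows by monotonicity of $L^p$-norms or by iterating the argument.
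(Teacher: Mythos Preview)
Your proof is correct and follows the same route as the paper: write $R_{kk}-\hat R_k=\hat R_k\,\Delta_k\,R_{kk}$ (the paper puts $\hat R_k$ on the left, you on the right; immaterial), split $\Delta_k$ into the diagonal-entry term, the quadratic-form fluctuation, and the asymptotic-freeness remainder, and conclude via H\"older using \eqref{majorationunifhatRk} and Lemma~\ref{majRNRkLp}.

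Two small remarks. First, your displayed H\"older step is miswritten: with conjugate exponents $(3,3/2)$ applied to $\|R_{kk}\|^p\|\Delta_k\|^p$ one gets $\mathbb{E}[\|R_{kk}\|^{3p}]^{1/3}\,\mathbb{E}[\|\Delta_k\|^{3p/2}]^{2/3}$, not $\mathbb{E}[\|\Delta_k\|^{3p}]^{1/3}$; either fix the exponents or, as the paper does, simply take any $q,r>1$ with $q^{-1}+r^{-1}=1$ and $pq\ge 2$. Second, your worry about the $4(1+\varepsilon)$ ceiling in Lemma~\ref{estimfq} is unnecessary here: since the entries are truncated at $\delta_N$, all moments exist and Lemma~\ref{lem_moment_quadratic_forms} applies for every exponent $n\ge 2$, giving $\mathbb{E}[\|\Delta_k\|^n]\to 0$ directly (this is exactly what the paper invokes). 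No monotonicity or iteration argument is needed.
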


\begin{proof}
With the notation $\beta =ze_{11}-\gamma_0$,
and 
\begin{equation}\label{hatR}
\hat R_k(\beta)=(\omega_N(\beta) -D_{kk}\gamma_2)^{-1},
\end{equation} 
by Proposition \ref{Schur}, since for any $k \in \{1,\ldots,N\}$, $R_{kk}(\beta)$ is the submatrix of $R_N(\beta)$ corresponding to rows and columns indexed by $\{k, k+m, \ldots, k+m(N-1)\}$,
\begin{align*}
R_{kk}(\beta)
& = (\beta-W_{kk}\gamma_1-D_{kk}\gamma_2-\gamma_1\otimes C_k^{(k)*}R^{(k)}(\beta)\gamma_1\otimes C_k^{(k)})^{-1}\\
& = \hat R_{k}(\beta)+\hat R_{k}(\beta)(W_{kk}\gamma_1+\gamma_1\otimes C_k^{(k)*}R^{(k)}(\beta)\gamma_1\otimes C_k^{(k)}-\eta_N((\mathrm{id}_m\otimes \tau_N)(r_N(\beta))))R_{kk}(\beta).
\end{align*}
Observe that, for $n\geq 2$,
\begin{align*}
  3^{-n+1}\mathbb{E}& [\Vert W_{kk}\gamma_1+\gamma_1\otimes C_k^{(k)*}R^{(k)}(\beta)\gamma_1\otimes C_k^{(k)}-\eta_N((\mathrm{id}_m\otimes \tau_N)(r_N(\beta)))\Vert^{n}]\\
 & \leq \Vert \gamma_1\Vert^{n}\mathbb{E}[\vert W_{kk}\vert^{n}]+\mathbb{E}[\Vert \gamma_1\otimes C_k^{(k)*}R^{(k)}(\beta)\gamma_1\otimes C_k^{(k)}-\eta_N((\mathrm{id}_m\otimes N^{-1}\Tr)(R^{(k)}(\beta)))\Vert^{n}]\\
& +\mathbb{E}[\Vert \eta_N((\mathrm{id}_m\otimes N^{-1}\Tr)(R^{(k)}(\beta)))-\eta_N((\mathrm{id}_m\otimes \tau_N)(r_N(\beta)))\Vert^{n}].
\end{align*}
The first term is bounded by $\Vert \gamma_1\Vert^{n}\delta_N^{n-1}\tilde{\sigma}_N$ by assumption; the second term asymptotically vanishes by Lemma \ref{lem_moment_quadratic_forms}; the third term asymptotically vanishes by Lemma \ref{asymptoticfreenessLp} and Lemma \ref{compkLp}.
Then, choosing $q,r\geq1$ such that $\frac{1}{q}+\frac{1}{r}=1$ and $pq\geq 2$,
\begin{align*}
 & \mathbb{E}[\Vert R_{kk}(\beta)-\hat R_k(\beta)\Vert ^p]\\
 & \leq \Vert \hat R(\beta)\Vert ^p\mathbb{E}\big[\Vert W_{kk}\gamma_1+\gamma_1\otimes C_k^{(k)*}R^{(k)}(\beta)\gamma_1\otimes C_k^{(k)}-\eta_N((\mathrm{id}_m\otimes \tau_N)(r_N(\beta)))\Vert^{pq}\big]^{1/q}\mathbb{E}\big[\Vert R_N(\beta)\Vert ^{pr}\big]^{1/r}
\end{align*}
vanishes uniformly in $k$ by using Lemma \ref{majRNRkLp} and Lemma \ref{Qbound}. 
\end{proof}

\begin{rmk}\label{elemdiag}
Similarly, $\mathbb{E}[\Vert R_{ll}^{(k)}(\beta)-\hat R_l(\beta)\Vert ^p]$ vanishes uniformly in $1\leq k\neq l\leq N$. Indeed, using in this Remark only the notation $R^{(kl)}(\beta)=(\beta\otimes I_{N-2}-\gamma_1\otimes W_N^{(kl)}-\gamma_2\otimes D_N^{(kl)})^{-1}$, where $W_N^{(kl)}, D_N^{(kl)}$ denote the $(N-2)\times(N-2)$ matrices obtained respectively from $W_N, D_N$ by deleting the $k$-th and $l$-th rows/columns,
\begin{align*}
R_{ll}^{(k)}(\beta)
& = (\beta-W_{ll}\gamma_1-D_{ll}\gamma_2-\gamma_1\otimes C_l^{(kl)*}R^{(kl)}(\beta)\gamma_1\otimes C_l^{(kl)})^{-1}\\
& = \hat R_{l}(\beta)+\hat R_{l}(\beta)(W_{ll}\gamma_1+\gamma_1\otimes C_l^{(kl)*}R^{(kl)}(\beta)\gamma_1\otimes C_l^{(kl)}-\eta_N((\mathrm{id}_m\otimes \tau_N)(r_N^{(k)}(\beta))))R_{ll}^{(k)}(\beta).
\end{align*}
Observe that, for $n\geq 2$,
\begin{align*}
3^{-n+1}\mathbb{E}& \big[\Vert W_{ll}\gamma_1+\gamma_1\otimes C_l^{(kl)*}R^{(kl)}(\beta)\gamma_1\otimes C_l^{(kl)}-\eta_N((\mathrm{id}_m\otimes \tau_N)(r_N(\beta)))\Vert^{n}\big]\\
& \leq \Vert \gamma_1\Vert^{n}\mathbb{E}\big[\vert W_{ll}\vert^{n}\big]+\mathbb{E}\big[\Vert \gamma_1\otimes C_l^{(kl)*}R^{(kl)}(\beta)\gamma_1\otimes C_l^{(kl)}-\eta_N((\mathrm{id}_m\otimes N^{-1}\Tr)(R^{(kl)}(\beta)))\Vert^{n}\big]\\
& +\mathbb{E}\big[\Vert \eta_N((\mathrm{id}_m\otimes N^{-1}\Tr)(R^{(kl)}(\beta)))-\eta_N((\mathrm{id}_m\otimes \tau_N)(r_N(\beta)))\Vert^{n}\big]. 
\end{align*}
The first term is bounded by $\Vert \gamma_1\Vert^{n}\delta_N^{n-1}\tilde{\sigma}_N$ by assumption; the second term asymptotically vanishes by Lemma \ref{lem_moment_quadratic_forms}; the third term asymptotically vanishes by Lemma \ref{asymptoticfreenessLp} and Lemma \ref{compkLp}.
Then, choosing $q,r\geq1$ such that $\frac{1}{q}+\frac{1}{r}=1$ and $pq\geq 2$,
\begin{align*}
 & \mathbb{E}\big[\Vert R_{ll}^{(k)}(\beta)-\hat R_l(\beta)\Vert ^p\big]\\
 & \leq \Vert \hat R(\beta)\Vert ^p\mathbb{E}\big[\Vert W_{ll}\gamma_1+\gamma_1\otimes C_l^{(kl)*}R^{(kl)}(\beta)\gamma_1\otimes C_l^{(kl)}-\eta_N((\mathrm{id}_m\otimes \tau_N)(r_N(\beta)))\Vert^{pq}\big]^{1/q}\mathbb{E}\big[\Vert R^{(k)}(\beta)\Vert ^{pr}\big]^{1/r}
\end{align*}
vanishes uniformly in $l\neq k$ by using Remark \ref{remmajRNRkLp} and Lemma \ref{Qbound}.
\end{rmk}

\begin{rmk}\label{kilhat}
One may also prove that $\mathbb{E}[\Vert R_{ll}^{(kil)}(\beta)-\hat R_l(\beta)\Vert ^p]$ vanishes uniformly in $1\leq i\neq k\neq l\leq N$, using $\mathbb{E}[\Vert R_{ll}^{(k)}(\beta)-R_{ll}^{(kil)}(\beta)\Vert^p]=o(1)$.
\end{rmk}

\subsection{Concentration bounds on $R_N$ and $R^{(k)}$}

\begin{lem}\label{lem-varianceboundtrace}
For 
any $c>0$, 
\[\mathbb{E}[\| \mathrm{id}_m\otimes\Tr R_N(ze_{11}-\gamma_0)-\mathbb{E}[\mathrm{id}_m\otimes \Tr R_N (ze_{11}-\gamma_0)]\|^4]=O(N^{2+c}).\]
\end{lem}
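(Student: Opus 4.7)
Setting $\beta = ze_{11} - \gamma_0$ and $M_k := \mathbb{E}_{\leq k}[\mathrm{id}_m \otimes \Tr R_N(\beta)]$, I would proceed via exactly the same Doob martingale decomposition as in the proof of Lemma \ref{crudevariancebound}. Writing $D_k = M_k - M_{k-1}$ and using that $R^{(k)}(\beta)$ is independent of the $k$-th column of $W_N$, one has
\begin{equation*}
D_k = (\mathbb{E}_{\leq k} - \mathbb{E}_{\leq k-1})\bigl[\mathrm{id}_m \otimes \Tr R_N(\beta) - \mathrm{id}_m \otimes \Tr R^{(k)}(\beta)\bigr].
\end{equation*}
By Lemma \ref{compkLp} combined with Jensen's inequality, $\mathbb{E}[\|D_k\|^p] = O(1)$ uniformly in $k$ for every $p \geq 1$.

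The improvement over the $O(N^{p-1})$ bound of Lemma \ref{crudevariancebound} is obtained by invoking the scalar Burkholder--Davis--Gundy inequality in its square-function form, entry by entry, rather than the coarser maximal-type inequality encoded in Lemma \ref{martingalelp}. Fix indices $r,s \in \{1,\dots,m\}$; the sequence $([D_k]_{rs})_{k\ge 1}$ is a scalar martingale difference sequence adapted to $(\mathcal{F}_k)$, and BDG at $L^4$ yields
\begin{equation*}
\mathbb{E}\Bigl[\Bigl|\sum_{k=1}^N [D_k]_{rs}\Bigr|^4\Bigr] \le C\, \mathbb{E}\Bigl[\Bigl(\sum_{k=1}^N |[D_k]_{rs}|^2\Bigr)^{\!2}\Bigr] \le C\, \mathbb{E}\Bigl[\Bigl(\sum_{k=1}^N \|D_k\|^2\Bigr)^{\!2}\Bigr].
\end{equation*}
The elementary inequality $\bigl(\sum_k \|D_k\|^2\bigr)^2 \le N \sum_k \|D_k\|^4$ together with the uniform $L^4$ bound on $\|D_k\|$ from the first step then gives
\begin{equation*}
\mathbb{E}\Bigl[\Bigl(\sum_{k=1}^N \|D_k\|^2\Bigr)^{\!2}\Bigr] \le N \sum_{k=1}^N \mathbb{E}\bigl[\|D_k\|^4\bigr] = O(N^2).
\end{equation*}
Summing over the $m^2$ entries $(r,s)$ and using the equivalence $\|A\|^4 \le m^2 \sum_{r,s}|A_{rs}|^4$ for $A \in M_m(\mathbb{C})$ yields $\mathbb{E}[\|M_N - M_0\|^4] = O(N^2)$, which is absorbed in $O(N^{2+c})$ for every $c > 0$.

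I do not anticipate an analytic obstacle: the entire argument reduces to replacing the martingale inequality of Lemma \ref{martingalelp} by BDG applied entry-wise, and all required probabilistic input (Lemma \ref{compkLp}) is already in place. The key conceptual point is that BDG applied to the \emph{square function} of the martingale differences trades the factor $N^{p-1}$ of Lemma \ref{crudevariancebound} for the sharper $N^{p/2}$, which for $p=4$ is precisely the required $O(N^2)$ (the slack $N^c$ in the stated bound is immaterial for this proof and simply reflects the flexibility needed in later applications).
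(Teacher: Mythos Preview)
Your argument is correct and in fact yields the clean bound $O(N^2)$, absorbing the slack $N^c$. The route, however, differs substantially from the paper's.

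The paper keeps its own martingale inequality (Lemma~\ref{martingalelp}), which for $p=4$ produces a factor $\binom{N+2}{3}\sim N^3$ in front of $\max_k\mathbb{E}[\|D_k\|^4]$. To compensate, the authors work much harder on the individual increments: they expand $\mathrm{id}_m\otimes\Tr R_N-\mathrm{id}_m\otimes\Tr R^{(k)}$ via the Schur complement $s_k$, replace $s_k^{-1}$ by $\mathbb{E}[s_k]^{-1}$, and bootstrap through the crude Lemma~\ref{crudevariancebound} (Remark~\ref{crudevarianceboundk}) and quadratic-form concentration to show $\mathbb{E}[\|D_k\|^4]=O(N^{-1+c})$; then $N^3\cdot N^{-1+c}=N^{2+c}$. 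You bypass this entire bootstrap by trading Lemma~\ref{martingalelp} for Burkholder's square-function inequality applied entrywise, which already gives the factor $N^2$ and therefore needs only the soft bound $\mathbb{E}[\|D_k\|^4]=O(1)$ from Lemma~\ref{compkLp}. Your approach is shorter and relies only on a standard martingale inequality not otherwise invoked in the paper; the paper's approach stays within its own toolbox and, as a by-product, establishes the finer increment estimate $\mathbb{E}[\|D_k\|^4]=O(N^{-1+c})$, though that estimate does not appear to be reused elsewhere.
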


\begin{proof}
Observe as in the proof of Lemma \ref{crudevariancebound} that $M_k:=\mathbb{E}_{\leq k}[\mathrm{id}_m\otimes \Tr R_N (ze_{11}-\gamma_0)]$, $k\geq 0$, satisfies
\begin{align*}
M_k-M_{k-1}
&=(\mathbb{E}_{\leq k}-\mathbb{E}_{\leq k-1})[\mathrm{id}_m\otimes \Tr R_N (ze_{11}-\gamma_0)]\\
&=(\mathbb{E}_{\leq k}-\mathbb{E}_{\leq k-1})[\mathrm{id}_m\otimes \Tr R_N (ze_{11}-\gamma_0)-\mathrm{id}_m\otimes \Tr R^{(k)}(ze_{11}-\gamma_0)]
\end{align*} and
write as in the proof of Lemma \ref{compkLp}: for $z\in\mathbb{C}\setminus\mathbb{R}$,
\begin{align*}
\mathrm{id}_m\otimes \Tr R_N(ze_{11} -\gamma_0) & -\mathrm{id}_m \otimes \Tr  R^{(k)}(ze_{11}-\gamma_0)\\
& = s_k^{-1}+\mathrm{id}_m\otimes \Tr(R^{(k)}(ze_{11}-\gamma_0)\gamma_1\otimes C_k^{(k)}s_k^{-1}\gamma_1\otimes C_k^{(k)*}R^{(k)}(ze_{11}-\gamma_0)),
\end{align*}
where $s_k=ze_{11}-\gamma_0-W_{kk}\gamma_1-D_{kk}\gamma_2-\gamma_1\otimes C_k^{(k)*}R^{(k)}(ze_{11}-\gamma_0)\gamma_1\otimes C_k^{(k)}$.
It follows from Remark \ref{remboundomegatilde} that for any compact subset $K$ of $\mathbb{C}\setminus \mathbb{R}$, $\mathbb{E}[s_k],\, z\in K,$ are all invertible for $N$ large enough and $\Vert \mathbb{E}[s_k]^{-1}\Vert=O(1)$.
Observe that, for such $N$,  
\[(\mathbb{E}_{\leq k}-\mathbb{E}_{\leq k-1})[\mathbb{E}[s_k]^{-1}+\mathbb{E}_k[\mathrm{id}_m\otimes \Tr(R^{(k)}(ze_{11}-\gamma_0)\gamma_1\otimes C_k^{(k)}\mathbb{E}[s_k]^{-1}\gamma_1\otimes C_k^{(k)*}R^{(k)}(ze_{11}-\gamma_0))]]=0\]
so that 
\begin{align}\label{dvpt}
 -(\mathbb{E}_{\leq k}&-\mathbb{E}_{\leq k-1}) [\mathrm{id}_m\otimes \Tr R^{(k)}(ze_{11}-\gamma_0)- \mathrm{id}_m\otimes \Tr R_N (ze_{11}-\gamma_0)]\nonumber\\
& =(\mathbb{E}_{\leq k}-\mathbb{E}_{\leq k-1})[s_k^{-1}-\mathbb{E}[s_k]^{-1}]\nonumber\\
&+(\mathbb{E}_{\leq k}-\mathbb{E}_{\leq k-1})[\mathrm{id}_m\otimes \Tr(R^{(k)}(ze_{11}-\gamma_0)\gamma_1\otimes C_k^{(k)}(s_k^{-1}-\mathbb{E}[s_k]^{-1})\gamma_1\otimes C_k^{(k)*}R^{(k)}(ze_{11}-\gamma_0))]\nonumber \\
&+(\mathbb{E}_{\leq k}-\mathbb{E}_{\leq k-1})[\mathrm{id}_m\otimes \Tr(R^{(k)}(ze_{11}-\gamma_0)\gamma_1\otimes C_k^{(k)}\mathbb{E}[s_k]^{-1}\gamma_1\otimes C_k^{(k)*}R^{(k)}(ze_{11}-\gamma_0))\nonumber \\
& -\mathbb{E}_k[\mathrm{id}_m\otimes \Tr(R^{(k)}(ze_{11}-\gamma_0)\gamma_1\otimes C_k^{(k)}\mathbb{E}[s_k]^{-1}\gamma_1\otimes C_k^{(k)*}R^{(k)}(ze_{11}-\gamma_0))]].
\end{align}
From
\begin{align*}
 & s_k^{-1}-\mathbb{E}[s_k]^{-1}\\
 & =s_k^{-1}(W_{kk}\gamma_1+\gamma_1\otimes C_k^{(k)*}R^{(k)}(ze_{11}-\gamma_0)\gamma_1\otimes C_k^{(k)}-\mathbb{E}[\gamma_1\otimes C_k^{(k)*}R^{(k)}(ze_{11}-\gamma_0)\gamma_1\otimes C_k^{(k)}])\mathbb{E}[s_k]^{-1},
\end{align*}  
H\"older's inequality, together with \eqref{InverseSchurComplementBound}, Lemmas \ref{alta lemma}, \ref{puissance}, \ref{estimfq}, \ref{majRNRkLp}, Remarks \ref{remmajRNRkLp} and \ref{remboundomegatilde}, deduce that, for large $N$, for any $k$, for small enough $t>0$,
\begin{align}
 \mathbb{E}[ \Vert & s_k^{-1} -\mathbb{E}[s_k]^{-1}\Vert^{4(1+t)}]\nonumber\\
&\leq \mathbb{E}[\Vert s_k^{-1}\Vert^{4(1+t)}\Vert W_{kk}\gamma_1+\gamma_1\otimes C_k^{(k)*}R^{(k)}(ze_{11}-\gamma_0)\gamma_1\otimes C_k^{(k)}\nonumber\\
 & \hspace{5cm }-\mathbb{E}[\gamma_1\otimes C_k^{(k)*}R^{(k)}(ze_{11}-\gamma_0)\gamma_1\otimes C_k^{(k)}]\Vert^{4(1+t)}]\Vert \mathbb{E}[s_k]^{-1}\Vert^{4(1+t)}\nonumber \\
&\leq O(1)\mathbb{E}[\Vert s_k^{-1}\Vert^{4(1+t)(2+1/t)}]^{t/(1+2t)}\Big\{(\| \gamma_1 \|\sigma_N)^{8(1+2t)}\nonumber\\
& \hspace{4cm} \times \mathbb{E}\Big[ \big\| \mathrm{id}_m \otimes \Tr R^{(k)}(ze_{11}-\gamma_0)-\mathbb{E} [\mathrm{id}_m \otimes \Tr R^{(k)}(ze_{11}-\gamma_0)] \big\|^{4(1+2t)}\Big]\nonumber\\
& \hspace{10.5cm}+O(N^{-2(1+2t)}))\Big\}^{(1+t)/(1+2t)}\nonumber\\
&\leq O(1)\mathbb{E}[\Vert R_N(ze_{11}-\gamma_0)\Vert^{4(1+t)(2+1/t)}]^{t/(1+2/t)}\Big\{(\| \gamma_1 \|\sigma_N)^{8(1+2t)} \nonumber\\
& \hspace{1cm} \times \mathbb{E}\Big[(2N\Vert R^{(k)}(ze_{11}-\gamma_0)\Vert)^{9t}
\big\| \mathrm{id}_m \otimes \Tr R^{(k)}(ze_{11}-\gamma_0)-\mathbb{E}\big( \mathrm{id}_m \otimes \Tr R^{(k)}(ze_{11}-\gamma_0)\big) \big\|^{4-t}\Big] \nonumber\\
 &\hspace{10.5cm} +O(N^{-2(1+2t)})\Big\}^{(1+t)/(1+2t)}\nonumber\\
&\leq O(1)\Big\{ N^{9t -4(1+2t)} \mathbb{E} \big(
 \big\| \mathrm{id}_m \otimes \Tr R^{(k)}(ze_{11}-\gamma_0)-\mathbb{E}\big( \mathrm{id}_m \otimes \Tr R^{(k)}(ze_{11}-\gamma_0)\big) \big\|^{4}\big)^{1-t/4}\nonumber\\
 & \hspace{10.5cm} +O(N^{-2(1+2t)})\Big\}^{(1+t)/(1+2t)}
\nonumber\\
&= \Big\{O(N^{-1 +t/4}) +O(N^{-2(1+2t)})\Big\}^{(1+t)/(1+2t)}\nonumber\\
&= O(N^{(-1 +t/4)(1+t)/(1+2t)}) \label{O}
\end{align}
where we use Remark \ref{crudevarianceboundk} in the last line.

It follows that 
\begin{align}
\mathbb{E}[\Vert s_k^{-1}-\mathbb{E}[s_k]^{-1}\Vert^4]
&\leq \mathbb{E}[\Vert s_k^{-1}-\mathbb{E}[s_k]^{-1}\Vert^{4(1+t)}]^{1/(1+t)}\nonumber\\
&=O(N^{(t/4-1)/(1+2t)})\label{dvpt2}
\end{align}
and 
\begin{align}\label{dvpt3}
 \mathbb{E}[\Vert \mathrm{id}_m &\otimes \Tr(R^{(k)}(ze_{11}-\gamma_0)\gamma_1\otimes C_k^{(k)}(s_k^{-1}-\mathbb{E}[s_k]^{-1})\gamma_1\otimes C_k^{(k)*}R^{(k)}(ze_{11}-\gamma_0))\Vert^4]\nonumber\\
& \leq m^{16}\Vert \gamma_1\Vert^8\mathbb{E}[\Vert R^{(k)}(ze_{11}-\gamma_0)\Vert^8\Vert C_k^{(k)}\Vert^8\Vert s_k^{-1}-\mathbb{E}[s_k]^{-1}\Vert^4]\nonumber\\
& \leq m^{16}\Vert \gamma_1\Vert^8\mathbb{E}[\Vert R^{(k)}(ze_{11}-\gamma_0)\Vert^{8(1+1/t)}\Vert C_k^{(k)}\Vert^{8(1+1/t)}]^{t/(1+t)}\mathbb{E}[\Vert s_k^{-1}-\mathbb{E}[s_k]^{-1}\Vert^{4(1+t)}]^{1/(1+t)}\nonumber\\
& =O(N^{(t/4-1)/(1+2t)}),
\end{align}
by using \eqref{O}, Remark \ref{remmajRNRkLp} and Proposition \ref{boundnorm}.\\
Finally, if $R^{(k)}=\sum\limits_{i,j\neq k}R^{(k)}_{ij}\otimes E_{ij}$ and  $R^{(k)pT}=\sum\limits_{i,j\neq k}R^{(k)}_{ij}\otimes E_{ji}$, then 
\begin{align*}
\mathrm{id}_m\otimes \Tr(R^{(k)}(ze_{11}-\gamma_0)\gamma_1\otimes C_k^{(k)}\mathbb{E}[s_k]^{-1}\gamma_1\otimes  C_k^{(k)*}R^{(k)} & (ze_{11}-\gamma_0))\\
&= \sum_{a,b,c\neq k}R^{(k)}_{ab}\gamma_1\mathbb{E}[s_k]^{-1}\gamma_1R^{(k)}_{ca}W_{bk}\overline{W}_{ck}\\
&= I_m\otimes \overline{C_k^{(k)}}^*\Sigma I_m\otimes \overline{C_k^{(k)}},
\end{align*}
where the fourth moment of the norm of 
\[\Sigma=\sum_{a,b,c\neq k}R^{(k)}_{ab}\gamma_1\mathbb{E}[s_k]^{-1}\gamma_1R^{(k)}_{ca}\otimes E_{bc}=R^{(k)}(ze_{11}-\gamma_0)^{pT}(\gamma_1\mathbb{E}[s_k]^{-1}\gamma_1\otimes I_{N-1})R^{(k)}(ze_{11}-\gamma_0)^{pT}\]
is $O(1)$ by using Proposition \ref{partialtranspose}, Remarks \ref{remmajRNRkLp} and \ref{remboundomegatilde}.
It follows from Lemma \ref{lem_moment_quadratic_forms} that 
\begin{align}\label{dvpt4}
\mathbb{E} [\Vert \mathrm{id}_m  \otimes\Tr & (R^{(k)}(ze_{11}-\gamma_0)\gamma_1\otimes C_k^{(k)}\mathbb{E}[s_k]^{-1}\gamma_1\otimes C_k^{(k)*}R^{(k)}(ze_{11}-\gamma_0))\nonumber\\
& \hspace{2cm} -\mathbb{E}_k[\mathrm{id}_m\otimes\Tr(R^{(k)}(ze_{11}-\gamma_0)\gamma_1\otimes C_k^{(k)}\mathbb{E}[s_k]^{-1}\gamma_1\otimes C_k^{(k)*}R^{(k)}(ze_{11}-\gamma_0))]\Vert^4]\nonumber\\
& \leq K_{m,4}\mathbb{E}[\Vert \Sigma\Vert^4]((Nm_N)^2+O(N^{-3})N)
=O(N^{-2}),
\end{align}
where we use Lemma \ref{puissance} in the last line.
\eqref{dvpt}, \eqref{dvpt2}, \eqref{dvpt3} and \eqref{dvpt4} yield
\[\mathbb{E}[\Vert(\mathbb{E}_{\leq k}-\mathbb{E}_{\leq k-1})[\mathrm{id}_m\otimes\Tr R^{(k)}(ze_{11}-\gamma_0)-\mathrm{id}_m\otimes\Tr R_N(ze_{11}-\gamma_0)]\Vert^4]=O(N^{-1+c}),\]
by choosing $t>0$ small enough. 
We conclude by Lemma \ref{martingalelp}.
\end{proof}

\begin{rmk}\label{rmk-varianceboundtrace}
Using Lemma \ref{compkLp}, one may deduce from Lemma \ref{lem-varianceboundtrace} that, for 
$c>0$, 
\[\mathbb{E}[\|\mathrm{id}_m\otimes \Tr R^{(k)} (ze_{11}-\gamma_0)-\mathbb{E}[\mathrm{id}_m\otimes\Tr R^{(k)} (ze_{11}-\gamma_0)]\|^4]=O(N^{2+c}).\]
\end{rmk}

\begin{lem}\label{concentrationboundentry}
For 
$p\in [1,4)$ and $c>0$ 
\[\mathbb{E}[\| R_{kk}(ze_{11}-\gamma_0)-(\Omega_N(ze_{11}-\gamma_0)\otimes I_N-\gamma_2\otimes D_N)_{kk}^{-1}]\|^{p}]=O(N^{(c-2)p/4}).\]
\end{lem}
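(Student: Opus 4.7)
The plan is to reduce this to an application of the resolvent identity and collect already-established $L^p$ bounds on the relevant error terms, with the key quantitative input being the $L^4$ trace-concentration bound of Lemma \ref{lem-varianceboundtrace}.

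First I would apply the Schur complement formula (Proposition \ref{Schur}) to express
\[
R_{kk}(\beta)^{-1}=\beta-W_{kk}\gamma_1-D_{kk}\gamma_2-\gamma_1\otimes C_k^{(k)*}R^{(k)}(\beta)\gamma_1\otimes C_k^{(k)},
\]
with $\beta=ze_{11}-\gamma_0$, while by definition
\[
\bigl((\Omega_N(\beta)\otimes I_N-\gamma_2\otimes D_N)^{-1}\bigr)_{kk}=(\Omega_N(\beta)-D_{kk}\gamma_2)^{-1},
\]
whose inverse equals $\beta-D_{kk}\gamma_2-\eta_N(\mathbb{E}[(\mathrm{id}_m\otimes N^{-1}\Tr)R_N(\beta)])$. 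The resolvent identity then gives
\[
R_{kk}(\beta)-(\Omega_N(\beta)-D_{kk}\gamma_2)^{-1}=R_{kk}(\beta)\,\Delta_k\,(\Omega_N(\beta)-D_{kk}\gamma_2)^{-1},
\]
with
\[
\Delta_k=W_{kk}\gamma_1+\gamma_1\otimes C_k^{(k)*}R^{(k)}(\beta)\gamma_1\otimes C_k^{(k)}-\eta_N\bigl(\mathbb{E}[(\mathrm{id}_m\otimes N^{-1}\Tr)R_N(\beta)]\bigr).
\]

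Next I would decompose $\Delta_k$ into four pieces and estimate each in $L^4$. Writing $\eta_N(b)=N\sigma_N^2\gamma_1 b\gamma_1$ and inserting $\pm\sigma_N^2\gamma_1(\mathrm{id}_m\otimes\Tr)R^{(k)}(\beta)\gamma_1$ and $\pm\sigma_N^2\gamma_1(\mathrm{id}_m\otimes\Tr)R_N(\beta)\gamma_1$, the four pieces are: (i) $W_{kk}\gamma_1$, of $L^4$-norm $O(N^{-1/2})$ by Assumption \ref{hyp:diagonal}; (ii) the centred quadratic form, of $L^4$-norm $O(N^{-1/2})$ by Lemma \ref{estimfq}; (iii) $\sigma_N^2\gamma_1(\mathrm{id}_m\otimes\Tr)[R^{(k)}(\beta)-R_N(\beta)]\gamma_1$, of $L^4$-norm $O(N^{-1})$ by the $\sigma_N^2=O(N^{-1})$ prefactor together with Lemma \ref{compkLp}; and (iv) the deterministic-equivalent fluctuation $\sigma_N^2\gamma_1[(\mathrm{id}_m\otimes\Tr)R_N(\beta)-\mathbb{E}(\mathrm{id}_m\otimes\Tr)R_N(\beta)]\gamma_1$, which by Lemma \ref{lem-varianceboundtrace} has $L^4$-norm $O(N^{-1}\cdot N^{(2+c)/4})=O(N^{(c-2)/4})$. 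The fourth contribution dominates, so $\mathbb{E}[\|\Delta_k\|^4]^{1/4}=O(N^{(c-2)/4})$ for every $c>0$.

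To conclude for $p\in[1,4)$, I would apply Hölder's inequality with conjugate exponents $q'=4/p$ and $q=4/(4-p)$:
\[
\mathbb{E}\bigl[\|R_{kk}(\beta)\Delta_k(\Omega_N(\beta)-D_{kk}\gamma_2)^{-1}\|^{p}\bigr]\le\mathbb{E}\bigl[\|R_{kk}(\beta)\|^{4p/(4-p)}\bigr]^{(4-p)/4}\mathbb{E}\bigl[\|\Delta_k\|^{4}\bigr]^{p/4}\|(\Omega_N(\beta)-D_{kk}\gamma_2)^{-1}\|^{p}.
\]
The first factor is $O(1)$ by Lemma \ref{majRNRkLp} (since $\|R_{kk}(\beta)\|\le\|R_N(\beta)\|$), the second is $O(N^{(c-2)p/4})$, and the third is $O(1)$ by Lemma \ref{boundomegatilde}, yielding the claimed bound.

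The only nontrivial step is the control of contribution (iv), but this has already been carried out in Lemma \ref{lem-varianceboundtrace}; everything else is a careful bookkeeping of the four sources of error. The slight technical subtlety is the choice of exponents: one needs $p<4$ strictly in order to apply Hölder and transfer the $L^4$ bound on $\Delta_k$, which is precisely the range asserted in the statement.
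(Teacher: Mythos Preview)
Your proof is correct and follows essentially the same approach as the paper's: both use the Schur complement formula and the resolvent identity to write $R_{kk}(\beta)-(\Omega_N(\beta)-D_{kk}\gamma_2)^{-1}=R_{kk}(\beta)\,\Delta_k\,(\Omega_N(\beta)-D_{kk}\gamma_2)^{-1}$, bound $\|\Delta_k\|$ in $L^4$ via the same four ingredients (Lemmas \ref{puissance}/Assumption \ref{hyp:diagonal}, \ref{estimfq}, \ref{compkLp}, \ref{lem-varianceboundtrace}), and conclude by H\"older with exponent $4p/(4-p)$ together with Lemmas \ref{majRNRkLp} and \ref{boundomegatilde}. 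Your write-up is just slightly more explicit about the four-term decomposition of $\Delta_k$.
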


\begin{proof} 
Recall that $\Omega_N$ was defined in \eqref{defomegatilde} by 
$$\Omega_N(\beta)=\beta-\eta_N\big(\mathbb{E}[({\rm id}_m\otimes N^{-1}\Tr)(R_N(\beta))]\big).$$
According to Lemma \ref{boundomegatilde} and its proof, for any compact subset $K$ of $\mathbb{C}\setminus \mathbb{R}$, for large enough $N$, $\Omega_N(ze_{11}-\gamma_0)\otimes I_N-\gamma_2\otimes D_N,\, z\in K,$ are all invertible with uniformly bounded inverses. For such $N$ and any  $k=1,\ldots ,N$,
\begin{align*}
R_{kk}(ze_{11}- & \gamma_0)-(\Omega_N(ze_{11}-\gamma_0)-D_{kk}\gamma_2)^{-1}\\
& = R_{kk}(ze_{11}-\gamma_0)\Big(W_{kk}\gamma_1+\gamma_1\otimes C_k^{(k)*}R^{(k)}(ze_{11}-\gamma_0)\gamma_1\otimes C_k^{(k)}\\
& \hspace{2.5cm} -\eta_N\big(\mathbb{E}\big[(\mathrm{id}_m\otimes N^{-1}\Tr)(R_N(ze_{11}-\gamma_0))\big]\big)\Big)\big(\Omega_N(ze_{11}-\gamma_0)-D_{kk}\gamma_2\big)^{-1}.
\end{align*}
Then, using Lemmas \ref{puissance}, \ref{estimfq}, \ref{compkLp} and \ref{lem-varianceboundtrace}, for $p<4$,
\begin{align*}
 \mathbb{E}\big[\Vert R_{kk} & (ze_{11}-\gamma_0)-(\Omega_N(ze_{11}-\gamma_0)-D_{kk}\gamma_2)^{-1}\Vert^p\big]\\
& \leq 
\mathbb{E}\big[\Vert R_{kk}(ze_{11}-\gamma_0)\Vert^{4p/(4-p)}\big]^{1-p/4}\big(O(N^{-2})+O(N^{c-2})\big)^{p/4}O(1)\\
& =O(N^{(c-2)p/4}).
\end{align*}
\end{proof}

\subsection{Quantitative asymptotic freeness}


\begin{lem}\label{sprN}
The family of operators defined on $M_m(\mathbb{C})$ by $$u_N(z_1,z_2):b\mapsto  \sigma_N^{2}\sum_{i=1}^N\hat{R}_i(z_1e_{11}-\gamma_0)\gamma_1b \gamma_1\hat{R}_i(z_2e_{11}-\gamma_0),\quad z_1, z_2\in \C\setminus \R,\, N\in \mathbb{N},$$ satisfies the following: for any compact subset $K$ of $\C\setminus \R$, $\limsup_{N\to +\infty}\sup_{z_1,z_2\in K}\rho(u_N(z_1,z_2))<1$ and $\limsup_{N\to +\infty}\sup_{z_1,z_2\in K}\Vert (\mathrm{id}_m-u_N(z_1,z_2))^{-1} \Vert<+\infty$.
\end{lem}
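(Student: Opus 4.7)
The plan is to identify $u_N(z_1,z_2)$ as a Riemann-sum approximation of the limiting operator of Proposition \ref{welldefined3} and then invoke upper semicontinuity of the spectrum in the finite-dimensional Banach algebra $\mathcal{L}(M_m(\mathbb{C}))$. Since $D_N$ is diagonal with empirical spectral distribution $\nu_N$, one rewrites
\begin{equation*}
u_N(z_1,z_2)(b)=N\sigma_N^2\int_{\mathbb{R}}\bigl(\omega_N(z_1e_{11}-\gamma_0)-t\gamma_2\bigr)^{-1}\gamma_1 b\gamma_1\bigl(\omega_N(z_2e_{11}-\gamma_0)-t\gamma_2\bigr)^{-1}d\nu_N(t),
\end{equation*}
which should be compared with
\begin{equation*}
u_\infty(z_1,z_2)(b):=\sigma^2\int_{\mathbb{R}}\bigl(\omega(z_1e_{11}-\gamma_0)-t\gamma_2\bigr)^{-1}\gamma_1 b\gamma_1\bigl(\omega(z_2e_{11}-\gamma_0)-t\gamma_2\bigr)^{-1}d\nu(t);
\end{equation*}
this $u_\infty(z_1,z_2)$ is exactly the operator $u_{z_1e_{11}-\gamma_0,\,z_2e_{11}-\gamma_0}$ covered by Proposition \ref{welldefined3}.

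The first step is to prove that $u_N\to u_\infty$ in the operator norm on $\mathcal{L}(M_m(\mathbb{C}))$, uniformly for $(z_1,z_2)$ in any compact subset $K\times K$ of $(\mathbb{C}\setminus\mathbb{R})^2$. Splitting $u_N-u_\infty$ into three pieces, one exploits respectively: (a) the scalar limit $N\sigma_N^2\to\sigma^2$ together with the uniform bound \eqref{bornemanq}; (b) the uniform convergence of the $t$-dependent integrand on $\text{supp}(\nu_N)$ provided by Lemma \ref{cvunif}; and (c) the weak convergence $\nu_N\Rightarrow\nu$ tested against the limiting integrand, which is continuous and bounded in $t$ on the common compact support $[-M,M]$ (where $M\geq\sup_N\|D_N\|$) by Lemma \ref{bound}. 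Pointwise convergence for each fixed $(z_1,z_2)$ upgrades to uniform convergence over $K\times K$ by equicontinuity of the integrand in $(z_1,z_2)$, which itself follows from analyticity together with compactness.

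By Proposition \ref{welldefined3}, $\rho(u_\infty(z_1,z_2))<1$ at every point of $(\mathbb{C}\setminus\mathbb{R})^2$. Norm-continuity of $(z_1,z_2)\mapsto u_\infty(z_1,z_2)$ combined with upper semicontinuity of the spectral radius on $\mathcal{L}(M_m(\mathbb{C}))$ makes $(z_1,z_2)\mapsto\rho(u_\infty(z_1,z_2))$ upper semicontinuous on the compact set $K\times K$; such a function attains its maximum, so $\sup_{K\times K}\rho(u_\infty)=:1-2\varepsilon$ for some $\varepsilon>0$. Since in the finite-dimensional algebra $\mathcal{L}(M_m(\mathbb{C}))$ the spectrum varies upper-semicontinuously under norm convergence, the uniform convergence $u_N\to u_\infty$ forces $\rho(u_N(z_1,z_2))\le 1-\varepsilon$ uniformly on $K\times K$ for all $N$ sufficiently large, which gives the first conclusion.

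For the second conclusion, the map $(z_1,z_2)\mapsto(\mathrm{id}_m-u_\infty(z_1,z_2))^{-1}$ is continuous into $\mathcal{L}(M_m(\mathbb{C}))$ (as $1$ is not an eigenvalue of $u_\infty$ anywhere on $K\times K$), hence has uniformly bounded norm $C<\infty$ on the compact $K\times K$. From the resolvent identity
\begin{equation*}
(\mathrm{id}_m-u_N)^{-1}-(\mathrm{id}_m-u_\infty)^{-1}=(\mathrm{id}_m-u_N)^{-1}(u_N-u_\infty)(\mathrm{id}_m-u_\infty)^{-1},
\end{equation*}
one deduces $\|(\mathrm{id}_m-u_N)^{-1}\|\le 2C$ as soon as $C\|u_N-u_\infty\|<1/2$, which holds uniformly on $K\times K$ for $N$ large. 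The main technical obstacle is the uniform norm convergence $u_N\to u_\infty$: it demands careful combination of Lemma \ref{cvunif}, Lemma \ref{bound} and the weak convergence $\nu_N\Rightarrow\nu$, together with equicontinuity in $(z_1,z_2)$ of the $t$-integrand on compact supports.
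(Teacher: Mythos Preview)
Your proof is correct and follows essentially the same route as the paper: rewrite $u_N(z_1,z_2)$ as $N\sigma_N^2$ times an integral against $\nu_N$, show uniform norm convergence on $K\times K$ to the limiting operator $u_{z_1e_{11}-\gamma_0,z_2e_{11}-\gamma_0}$ of Proposition~\ref{welldefined3} via Lemmas~\ref{Qbound}, \ref{bound}, \ref{cvunif} and the weak convergence $\nu_N\Rightarrow\nu$, and then transfer the spectral-radius bound and the invertibility of $\mathrm{id}_m-u_\infty$ back to $u_N$ by continuity in finite dimension. The only cosmetic differences are that the paper invokes continuity (rather than mere upper semicontinuity) of the spectral radius on $M_{m^2}(\mathbb{C})$ and cites continuity of inversion on $\mathbb{G}L_{m^2}(\mathbb{C})$ directly instead of writing out the resolvent-identity estimate.
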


\begin{proof}
By Proposition \ref{welldefined3}, we know that the supremum over $z_1,z_2\in K$ of the spectral radii of operators $u_{z_1e_{11}-\gamma_0,z_2e_{11}-\gamma_0}:M_m(\mathbb{C})\to M_m(\mathbb{C})$ defined by 
$$b\mapsto \sigma^2\mathrm{id}_m\otimes \tau \left((\omega(z_1e_{11}-\gamma_0)-\gamma_2\otimes d)^{-1}(\gamma_1b\gamma_1)\otimes 1_\mathcal{A} (\omega(z_2 e_{11}-\gamma_0)-\gamma_2\otimes d)^{-1}\right),\quad z_1,z_2\in \C\setminus \R,$$
is strictly smaller than 1. Since $(D_N)_{N\in \mathbb{N}}$ converges in $*$-moments towards $d$, $\lim_{N\to +\infty}N\sigma_N^2=\sigma^2$, we can easily deduce (using Lemmas \ref{Qbound}, \ref{bound} and Lemma \ref{cvunif}) that the family of operators defined for all $N\in \N$ and for all $z_1,z_2$ in $\C \setminus \R$,  by 
\begin{align*}
u_N & (z_1,z_2) :\\
 & b\mapsto \sigma_N^2\mathrm{id}_m\otimes \tau_N \left((\omega_N(z_1e_{11}-\gamma_0)-\gamma_2\otimes D_N)^{-1}(\gamma_1b\gamma_1)\otimes 1_\mathcal{A} (\omega_N(z_2 e_{11}-\gamma_0)-\gamma_2\otimes D_N)^{-1}\right), 
\end{align*}
satisfies $\sup_{z_1,z_2\in K}\left\|u_N(z_1,z_2)-u_{z_1 e_{11} -\gamma_0,z_2 e_{11} -\gamma_0}\right\| \longrightarrow_{N \rightarrow +\infty}0$. The first assertion of the lemma readily follows by continuity of the spectral radius in finite dimension and the second one follows from the continuity of $X  \mapsto X^{-1}$ and of the norm on $\mathbb{G}L_{m^2}(\C)$.
\end{proof}

\begin{cor}\label{sprNsanstilde}
For any $z_1, z_2$ in $\C\setminus \R$, the sequence of operators defined on $M_m(\mathbb{C})\otimes M_m(\mathbb{C})$ by $$T_N(z_1,z_2):b_1\otimes b_2\mapsto  N^{-1}\sum_{i=1}^N\hat{R}_i(z_1e_{11}-\gamma_0)\gamma_1b_1\otimes b_2\gamma_1\hat{R}_i(z_2e_{11}-\gamma_0),\quad N\in \mathbb{N},$$ satisfies $\limsup_{N\to +\infty}\rho(T_N(z_1,z_2))<\sigma^{-2}$.
\end{cor}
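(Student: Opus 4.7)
The plan is to reduce Corollary \ref{sprNsanstilde} directly to Lemma \ref{sprN} by replaying, in a finite-$N$ setting, the identification used in Section \ref{equalityspectra} to compare $\sigma^2 T_{\{z_1e_{11}-\gamma_0,z_2e_{11}-\gamma_0\}}$ with $u_{z_1e_{11}-\gamma_0,z_2e_{11}-\gamma_0}$. The key point is that $T_N(z_1,z_2)$ is nothing but the empirical version of $T_{\{\cdot,\cdot\}}$: the integral against $\nu$ is replaced by $N^{-1}\sum_{i=1}^N\delta_{D_{ii}}=\nu_N$ (which shows up through $\hat{R}_i(z_j e_{11}-\gamma_0)=(\omega_N(z_j e_{11}-\gamma_0)-D_{ii}\gamma_2)^{-1}$), and $\omega$ is replaced by $\omega_N$. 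Similarly, $u_N(z_1,z_2)$ is exactly the analogous empirical version of $u_{\beta_1,\beta_2}$, up to the global scalar $N\sigma_N^2$ instead of $\sigma^2$.

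First I would rescale: set $\tilde{T}_N(z_1,z_2):=N\sigma_N^2\, T_N(z_1,z_2)$, so that
\[
\tilde{T}_N(z_1,z_2)(b_1\otimes b_2)=\sigma_N^2\sum_{i=1}^N \hat{R}_i(z_1e_{11}-\gamma_0)\gamma_1 b_1\otimes b_2\gamma_1\hat{R}_i(z_2e_{11}-\gamma_0).
\]
Repeating verbatim the algebraic manipulation of Section \ref{equalityspectra}, under the isomorphism $M_m(\mathbb{C})\otimes M_m(\mathbb{C})\simeq\mathcal{L}(M_m(\mathbb{C}))$ given by $A\otimes B\mapsto(X\mapsto AXB^T)$ (and the resulting $M_m(\mathbb{C})^{\otimes 4}\simeq\mathcal{L}(M_m(\mathbb{C})\otimes M_m(\mathbb{C}))$), one obtains
\[
\tilde{T}_N(z_1,z_2)=F\otimes F\bigl(I_m\otimes u_N(z_1,z_2)\otimes I_m\bigr),
\]
where $F$ is the swap automorphism $A\otimes B\mapsto B\otimes A$.

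Since $F$ is an algebra automorphism, $F\otimes F$ conjugates the two operators, so they share the same minimal polynomial; and $I_m\otimes u_N(z_1,z_2)\otimes I_m$ trivially has the same minimal polynomial as $u_N(z_1,z_2)$. Therefore $\rho(\tilde{T}_N(z_1,z_2))=\rho(u_N(z_1,z_2))$, hence
\[
\rho(T_N(z_1,z_2))=\frac{\rho(u_N(z_1,z_2))}{N\sigma_N^2}.
\]
Lemma \ref{sprN} gives $\limsup_{N\to+\infty}\rho(u_N(z_1,z_2))<1$, and since $N\sigma_N^2\to\sigma^2>0$, one concludes $\limsup_{N\to+\infty}\rho(T_N(z_1,z_2))<\sigma^{-2}$, as required.

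There is no real obstacle: the only thing to watch for is the prefactor $N\sigma_N^2$ (which tends to $\sigma^2$), arising because in $u_N$ one sums $\sigma_N^2\sum_{i=1}^N$ while in $T_N$ one sums $N^{-1}\sum_{i=1}^N$; once this bookkeeping is in place, the algebraic step and the spectral identification are identical to those already established in the proof of the equality-of-spectra statement preceding Proposition \ref{welldefined3}.
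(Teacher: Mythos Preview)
Your proof is correct and is essentially the argument the paper has in mind: Corollary \ref{sprNsanstilde} is stated without proof immediately after Lemma \ref{sprN} precisely because the spectral identification $\rho(N\sigma_N^2 T_N(z_1,z_2))=\rho(u_N(z_1,z_2))$ is the exact finite-$N$ analogue of the computation in Section \ref{equalityspectra}. Combined with $N\sigma_N^2\to\sigma^2$, this yields the claim, just as you wrote.
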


\begin{lem}\label{cvRinfiniLp}
For 
$q\in [2,4(1+\varepsilon))$,
\[\mathbb{E}[\|\mathrm{id}_m\otimes N^{-1}\Tr (R_N(ze_{11}-\gamma_0))-\mathrm{id}_m\otimes \tau(r_N(ze_{11}-\gamma_0))\|^{q}]=O(N^{-\min(2-\varepsilon,q/2)}).\]
\end{lem}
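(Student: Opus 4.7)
Set $G_N(\beta) := \mathrm{id}_m\otimes N^{-1}\Tr R_N(\beta)$ and $g_N(\beta) := \mathrm{id}_m\otimes\tau_N r_N(\beta) = N^{-1}\sum_k \hat R_k(\beta)$, with $\beta = ze_{11}-\gamma_0$. The strategy is to derive a perturbed linear equation for $G_N-g_N$ from the Schur complement (Proposition~\ref{Schur}) and invert it using the uniform spectral bound of Lemma~\ref{sprN}. For each $k$, writing $R_{kk}-\hat R_k = \hat R_k(B_k-A_k)R_{kk}$ with $A_k$ the Schur complement of the $k$-th block and $B_k = \omega_N(\beta)-D_{kk}\gamma_2$, and expanding $B_k-A_k$ using $\omega_N(\beta) = \beta - N\sigma_N^2\gamma_1 g_N\gamma_1$, yields
\begin{equation*}
R_{kk}-\hat R_k = \hat R_k\!\left[W_{kk}\gamma_1+\beta_k+\gamma_k+N\sigma_N^2\gamma_1(G_N-g_N)\gamma_1\right]\!R_{kk},
\end{equation*}
with $\beta_k := \gamma_1\otimes C_k^{(k)*}R^{(k)}\gamma_1\otimes C_k^{(k)}-\sigma_N^2\gamma_1(\mathrm{id}_m\otimes\Tr R^{(k)})\gamma_1$ the conditionally centered quadratic form (Lemma~\ref{quadratic forms}) and $\gamma_k := \sigma_N^2\gamma_1(\mathrm{id}_m\otimes\Tr(R^{(k)}-R_N))\gamma_1$. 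Averaging over $k$, using $G_N-g_N = N^{-1}\sum_k(R_{kk}-\hat R_k)$, and splitting $R_{kk} = \hat R_k+(R_{kk}-\hat R_k)$ in the term linear in $G_N-g_N$ gives
\begin{equation*}
(\mathrm{id}_m-u_N(z,z))(G_N-g_N) = \mathcal{E}_N^{(1)}+\mathcal{E}_N^{(2)}+\mathcal{E}_N^{(3)}+\mathcal{E}_N^{(4)},
\end{equation*}
where $u_N$ is the operator of Lemma~\ref{sprN}, $\mathcal{E}_N^{(j)} = N^{-1}\sum_k\hat R_k X_k^{(j)}R_{kk}$ for $X_k^{(1)}=W_{kk}\gamma_1$, $X_k^{(2)}=\beta_k$, $X_k^{(3)}=\gamma_k$, and $\mathcal{E}_N^{(4)} = \sigma_N^2\sum_k\hat R_k\gamma_1(G_N-g_N)\gamma_1(R_{kk}-\hat R_k)$.

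By Lemma~\ref{sprN}, $(\mathrm{id}_m-u_N(z,z))^{-1}$ is uniformly bounded on compact subsets of $\mathbb C\setminus\mathbb R$, so it suffices to bound each $\mathcal{E}_N^{(j)}$ in $L^q$. The terms $\mathcal{E}_N^{(1)},\mathcal{E}_N^{(3)}$ are handled by the independence and centering of $W_{kk}$ (with $\|W_{kk}\|_p=O(N^{-1/2})$ for $p\le 4(1+\varepsilon)$) and by the $O(N^{-1})$ deterministic size of $\gamma_k$ (Lemma~\ref{compkLp}), together with Burkholder-type bounds (Lemma~\ref{martingalelp}) and the a priori norms from Lemmas~\ref{Qbound} and~\ref{majRNRkLp}; each contributes at most $O(N^{-q/2})$ to $\mathbb E\|G_N-g_N\|^q$. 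The main term $\mathcal{E}_N^{(2)}$ is handled by the split $R_{kk} = \hat R_k+(R_{kk}-\hat R_k)$: the leading piece $N^{-1}\sum_k\hat R_k\beta_k\hat R_k$ admits a martingale-difference decomposition along the filtration $(\mathcal F_k)$ thanks to the conditional centering of $\beta_k$, and Burkholder combined with the $L^p$-bound $\|\beta_k\|_p=O(N^{-1/2})$ of Lemma~\ref{estimfq} yields $O(N^{-q/2})$; the remainder $N^{-1}\sum_k\hat R_k\beta_k(R_{kk}-\hat R_k)$ is controlled by H\"older using Lemmas~\ref{estimfq} and~\ref{concentrationboundentry}.

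For $\mathcal{E}_N^{(4)}$, since $\sigma_N^2 N=O(1)$ and $\|\hat R_k\|=O(1)$, one has $\|\mathcal{E}_N^{(4)}\|\le C\|G_N-g_N\|\cdot\max_k\|R_{kk}-\hat R_k\|$. Choosing H\"older exponents $r,r'$ with $qr<4(1+\varepsilon)$, the a priori bound $\|G_N-g_N\|_{L^{qr}}=O(1)$ (Lemmas~\ref{majRNRkLp},~\ref{Qbound}) together with the quantitative concentration $\|R_{kk}-\hat R_k\|_{L^{qr'}}=O(N^{-(2-c)/4})$ of Lemma~\ref{concentrationboundentry} (and $\|\max_k(\cdot)\|_{L^{qr'}}\le N^{1/(qr')}\max_k\|\cdot\|_{L^{qr'}}$) give $\|\mathcal{E}_N^{(4)}\|_{L^q}=O(N^{1/(qr')-(2-c)/4})$. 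Optimizing in $r'$ produces a contribution of order $N^{-q/2}$ when $q\le 4-2\varepsilon$ and of order $N^{-(2-\varepsilon)}$ when $q$ is close to $4(1+\varepsilon)$, which is precisely the saturation in the stated rate. Combining all pieces yields $\mathbb E\|G_N-g_N\|^q=O(N^{-\min(2-\varepsilon,q/2)})$.

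\emph{Main obstacle.} The hardest step is the $L^q$-control of $\mathcal{E}_N^{(2)}$ and $\mathcal{E}_N^{(4)}$. Since $\beta_k$ and $R_{kk}$ both involve the $k$-th column of $W_N$, a naive product bound is insufficient; the split $R_{kk}=\hat R_k+(R_{kk}-\hat R_k)$ combined with a martingale-difference argument along $(\mathcal F_k)$ is essential, and the uniform-in-$k$ control of $R_{kk}-\hat R_k$ from Lemmas~\ref{maxtendzero} and~\ref{concentrationboundentry} is what closes the loop. The $2-\varepsilon$ saturation arises precisely because Lemma~\ref{concentrationboundentry} only yields $L^p$-concentration of order $N^{-(2-c)/4}$ for $p<4$, reflecting the limited moment assumption on the matrix entries. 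The overall structural enabler is the spectral-radius bound of Lemma~\ref{sprN}, itself a consequence of the nontrivial Proposition~\ref{welldefined3}.
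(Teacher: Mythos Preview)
Your overall architecture---derive an approximate fixed-point equation for $\Xi_N:=G_N-g_N$ from the Schur identity and invert $\mathrm{id}_m-u_N(z,z)$ via Lemma~\ref{sprN}---is exactly the paper's. The linear error terms $\mathcal E_N^{(1)},\mathcal E_N^{(2)},\mathcal E_N^{(3)}$ also match the paper's $\Delta_i$; note however that no martingale argument is needed (or correct) for $\mathcal E_N^{(2)}$: $\beta_k$ depends on the full matrix $W_N^{(k)}$ and on the entire $k$-th column, so $\hat R_k\beta_k\hat R_k$ is \emph{not} a martingale difference along $(\mathcal F_k)$. A plain Minkowski/H\"older bound $\|N^{-1}\sum_k\hat R_k\beta_kR_{kk}\|_{L^q}\le C\max_k\|\beta_k\|_{L^{qs}}\|R_{kk}\|_{L^{qs'}}=O(N^{-1/2})$ (Lemma~\ref{estimfq}) suffices and is what the paper does.

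The genuine gap is your treatment of the feedback term $\mathcal E_N^{(4)}$. Bounding it directly in $L^q$ with the crude a~priori control on $G_N-g_N$ does not yield the stated rate. Using $\|\mathcal E_N^{(4)}\|\le C\|\Xi_N\|\cdot N^{-1}\sum_k\|R_{kk}-\hat R_k\|$ and H\"older with $1/r+1/r'=1$, Lemma~\ref{concentrationboundentry} requires $qr'<4$; even after the convexity step (averaging instead of $\max_k$) one gets only $\|\mathcal E_N^{(4)}\|_{L^q}=O(N^{-(2-c)/4})$ for arbitrary $c>0$, hence $\mathbb E\|\mathcal E_N^{(4)}\|^q=O(N^{-q/2+cq/4})$, strictly worse than $O(N^{-q/2})$. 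Your claimed optimization ``produces $N^{-q/2}$ when $q\le 4-2\varepsilon$'' is therefore incorrect. Moreover for $q\ge 4$ the constraint $qr'<4$ with $r'>1$ is vacuous, so the argument collapses entirely in the upper range $q\in[4,4(1+\varepsilon))$.

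The paper handles this by a \emph{pathwise absorption}: on the event $E^c=\{\sigma_N^2\sum_k\|R_{kk}-\hat R_k\|\le (2C)^{-1}\}$ the term $\mathcal E_N^{(4)}$ is moved to the left-hand side, giving $\|\Xi_N\|\le 2C\|R_N\|\|\hat R\|\,N^{-1}\sum_k\|\Delta_k\|$ deterministically; then Jensen/convexity at exponent $4(1+\varepsilon)$ (Lemmas~\ref{estimfq}, \ref{compkLp}, \ref{puissance}) yields $\mathbb E[\|\Xi_N\|^q\mathbf 1_{E^c}]=O(N^{-q/2})$. On $E$, one uses only the crude boundedness of $\Xi_N$ together with $\mathbb P(E)=O(N^{-2+\varepsilon'})$ (Markov plus Lemma~\ref{concentrationboundentry} with $p\uparrow 4$), which after choosing $p=(2-\varepsilon')/(2-\varepsilon)$ gives the $O(N^{-(2-\varepsilon)})$ saturation. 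This event-splitting step is the missing idea in your proposal.
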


\begin{proof}
Define $\hat G=\sum_{i=1}^N E_{ii} \otimes \hat R_i(ze_{11}-\gamma_0),$
and $G=\sum_{i=1}^N E_{ii} \otimes R_{ii}(ze_{11}-\gamma_0).$
Recall from \eqref{defhatRk} that $\hat R_i(ze_{11}-\gamma_0)= \left( \omega_N(ze_{11}-\gamma_0) -D_{ii}\gamma_2\right)^{-1}$.
We have by \eqref{fixed}
\begin{align*} 
\omega_N(ze_{11}-\gamma_0)-D_{ii}\gamma_2 
&=ze_{11}-\gamma_0-N\sigma_N^2 \gamma_1 N^{-1}\sum_{k=1}^N 
\left( \omega_N(ze_{11}-\gamma_0) -D_{kk}\gamma_2\right)^{-1}\gamma_1-D_{ii}\gamma_2\\
&= ze_{11}-\gamma_0-N\sigma_N^2 \gamma_1 N^{-1}\sum_{k=1}^N \hat R_k(ze_{11}-\gamma_0) \gamma_1 -D_{ii}\gamma_2.
\end{align*}
Thus,
\begin{equation}\label{equationtilde}
\hat G^{-1}=I_N \otimes \left[ze_{11}-\gamma_0 - N \sigma_N^2 \gamma_1 N^{-1}\Tr\otimes \mathrm{id}_m (\hat G)\gamma_1\right]-D_N\otimes \gamma_2.\end{equation}
On the other hand, by Schur formula,
\begin{align*} 
\left( R_{ii}(ze_{11}-\gamma_0) \right)^{-1}&=  ze_{11}-\gamma_0 -W_{ii}\gamma_1-D_{ii} \gamma_2-  \gamma_1\otimes C_i^{(i)*} R^{(i)}(ze_{11}-\gamma_0 ) \gamma_1 \otimes C_i^{(i)} 
\\&= ze_{11}-\gamma_0  -D_{ii} \gamma_2 -N \sigma_N^2 \gamma_1 N^{-1}\Tr\otimes \mathrm{id}_m\left( G\right)\gamma_1
+ \Delta_i,
\end{align*} 
where 
\begin{align*}
\Delta_i&= -W_{ii}\gamma_1 -  \gamma_1\otimes C_i^{(i) *}  R^{(i)}(ze_{11}-\gamma_0 ) \gamma_1 \otimes C_i^{(i)} + N \sigma_N^2 \gamma_1 N^{-1}\Tr\otimes \mathrm{id}_m \left( G\right)\gamma_1\\
&=
- W_{ii}\gamma_1-\gamma_1\otimes C_i^{(i) *}R^{(i)}(ze_{11}-\gamma_0)\gamma_1\otimes C_i^{(i)}+N\sigma_N^2\gamma_1(\mathrm{id}_m\otimes N^{-1}\Tr) (R_N(ze_{11}-\gamma_0))\gamma_1.
\end{align*}
Define $\Delta=\sum\limits_{i=1}^N E_{ii} \otimes \Delta_i.$
Thus,
\begin{equation}\label{equation}G^{-1}=I_N\otimes\left[ze_{11}-\gamma_0-N\sigma_N^2 \gamma_1N^{-1}\Tr\otimes \mathrm{id}_m \left(G\right)\gamma_1\right]-D_N\otimes \gamma_2+\Delta.\end{equation}
Set $$\Xi_N=N^{-1}\Tr\otimes \mathrm{id}_m\left(\hat G-G\right)=\mathrm{id}_m\otimes\tau_N(r_N(ze_{11}-\gamma_0))-\mathrm{id}_m\otimes N^{-1}\Tr(R_N (ze_{11}-\gamma_0)).$$
Note that, 
by Lemma \ref{alta lemma}, 
all moments of $\Vert\Xi_N\Vert$ are $O(1)$.
By substracting \eqref{equationtilde} from \eqref{equation}, 
\begin{align*}
\hat G -G &=G\left( I_N \otimes N \sigma_N^2 \gamma_1 \Xi_N\gamma_1\right)\hat G +G\Delta \hat G \nonumber\\
&=
\hat G\left(I_N\otimes N\sigma_N^2\gamma_1 \Xi_N\gamma_1 \right)\hat G + \left( G-\hat G\right)\left( I_N \otimes N \sigma_N^2 \gamma_1 \Xi_N\gamma_1\right)\hat G+G\Delta \hat G.\label{dif} 
\end{align*}
Hence
\begin{align*}
\Xi_N & =
N^{-1}\sum_{k=1}^N \hat R_{k}(ze_{11}-\gamma_0 )N\sigma_N^2\gamma_1 \Xi_N\gamma_1\hat R_{k}(ze_{11}-\gamma_0 )\\
& \quad + N^{-1}\Tr\otimes \mathrm{id}_m\left[\left(G-\hat G\right)\left(I_N\otimes N\sigma_N^2\gamma_1 \Xi_N\gamma_1\right)\hat G+G\Delta\hat G\right]\\
& =u_N(z,z)\Xi_N+N^{-1}\Tr\otimes \mathrm{id}_m\left[\left(G-\hat G\right)\big(I_N\otimes N\sigma_N^2\gamma_1 \Xi_N\gamma_1\big)\hat G+G\Delta\hat G\right]
\end{align*}
According to Lemma \ref{sprN}, for large enough N, the operators  $\mathrm{id}_m-u_N(z,z)$ on $M_m(\mathbb{C})$ are invertible with uniformly bounded inverses.
Therefore 
\[\Xi_N=(\mathrm{id}_m-u_N(z,z))^{-1}\left\{N^{-1}\Tr\otimes \mathrm{id}_m\left[\left(G-\hat G\right)\left(I_N\otimes N\sigma_N^2\gamma_1\Xi_N\gamma_1\right)\hat G+G\Delta \hat G\right]\right\}\]
and 
\begin{equation}\label{xi}
\left\|\Xi_N\right\|\leq \|(\mathrm{id}_m-u_N(z,z))^{-1}\|\left\|N^{-1}\Tr\otimes \mathrm{id}_m\left[\left(G-\hat G\right)\left(I_N\otimes N\sigma_N^2 \gamma_1\Xi_N\gamma_1\right)\hat G+G\Delta \hat G\right]\right\|.
\end{equation}
We have 
\begin{align*}
 N^{-1}\Tr\otimes \mathrm{id}_m\Big[\left(G-\hat G\right) & \left(I_N\otimes N\sigma_N^2\gamma_1\Xi_N\gamma_1\right)\hat G\Big]\\
  & = N^{-1}\sum_{i=1}^N (R_{ii}(ze_{11}-\gamma_0)-\hat R_i(ze_{11}-\gamma_0)) N \sigma_N^2 \gamma_1 \Xi_N \gamma_1 \hat R_i(ze_{11}-\gamma_0).
\end{align*}
Thus \eqref{xi} yields
\begin{align*}
\Vert \Xi_N\Vert 
&\leq   \|(\mathrm{id}_m-u_N(z,z))^{-1}\|{\Big[}\Vert \gamma_1 \Vert^2\big\|\hat G\big\| {\sigma_N^2}\Big\{\sum_{i=1}^N\big\|R_{ii}(ze_{11}-\gamma_0)-\hat R_i(ze_{11}-\gamma_0)\big\|\Big\}\Vert \Xi_N\Vert\\
& \hspace{2cm} +\big\| N^{-1}\Tr\otimes \mathrm{id}_m(G\Delta \hat G)\big\|{\Big]}.
\end{align*}
Therefore, if  $\sigma_N^2\big\{\sum_{i=1}^N  \big\|R_{ii}(ze_{11}-\gamma_0)-\hat R_i(ze_{11}-\gamma_0)\big\|\big\}\leq (2\Vert \gamma_1 \Vert^2\big\|\hat G\big\|\|(\mathrm{id}_m-u_N(z,z))^{-1}\|)^{-1}$, then by using Lemma \ref{alta lemma},
\begin{align*}
\Vert\Xi_N\Vert & \leq 2\|(\mathrm{id}_m-u_N(z,z))^{-1}\|\|N^{-1}\Tr\otimes \mathrm{id}_m(G\Delta\tilde G)\|\\ 
& \leq 2\|(\mathrm{id}_m-u_N(z,z))^{-1}\|\|N^{-1}\sum_{i=1}^N R_{ii}(ze_{11}-\gamma_0)\Delta_i \hat R_i(ze_{11}-\gamma_0)\|\\
& \leq 2\|(\mathrm{id}_m-u_N(z,z))^{-1}\|\|R_N(ze_{11}-\gamma_0)\|\|\hat{R}(ze_{11}-\gamma_0)\|\{N^{-1}\sum_{i=1}^N \left\|\Delta_i \right\|\}. 
\end{align*}
Set $E= \left\{\sigma_N^2\sum\limits_{i=1}^N  \big\|R_{ii}(ze_{11}-\gamma_0)-\hat R_i(ze_{11}-\gamma_0)\big\|> (2\Vert \gamma_1 \Vert^2\big\|\hat G\big\|\|(\mathrm{id}_m-u_N(z,z))^{-1}\|)^{-1}\right\}$.
For $q\in (0,4(1+\varepsilon))$, using Lemmas \ref{majRNRkLp} and \ref{sprN}, we have 
\begin{align*}
\mathbb{E}\left(\Vert \Xi_N \Vert^{q} \right)&=\mathbb{E}\left(\Vert \Xi_N \Vert^{q}\1_{E} \right)+\mathbb{E}\left(\Vert \Xi_N \Vert^{q} \1_{E^c}\right)\\
&\leq 
O(1)\mathbb{P}\left(E\right)^{1/p} +   
O(1)\mathbb{E}\Big(\Big\{N^{-1}\sum_{i=1}^N \big\|\Delta_i \big\|\Big\}^{4(1+\varepsilon)}\Big)^{q/4(1+\varepsilon)}
\end{align*}
where $p\geq 1$
will be choosen later on.\\
Note that, for $q\in [2,4(1+\varepsilon))$,
\begin{align*}
\mathbb{E}\Big(\Big\{N^{-1}\sum_{i=1}^N \big\|\Delta_i \big\|\Big\}^{4(1+\varepsilon)}\Big)^{q/4(1+\varepsilon)} & \leq \max_{i=1,\ldots,N}
\mathbb{E}\Big(\big\|\Delta_i \big\|^{4(1+\varepsilon)}\Big)^{q/4(1+\varepsilon)}\\
& =O\Big(N^{-2(1+\varepsilon)}\Big)^{q/4(1+\varepsilon)}=O\Big(N^{-q/2}\Big),
\end{align*}
where we use the convexity of $x\mapsto x^{4(1+\varepsilon)}$
and Lemmas \ref{estimfq}, \ref{compkLp} and \ref{puissance}.
Now, according to Lemma \ref{asymptoticfreenessL1} and Lemma \ref{inversionsomme},
\[\Vert \hat R(ze_{11}-\gamma_0)-(\Omega_N(ze_{11}-\gamma_0)\otimes I_N-\gamma_2\otimes D_N)^{-1}\Vert \underset{N\to +\infty}{\longrightarrow} 0.\]
For those $N$ for which $$N\sigma_N^2\Vert \hat R(ze_{11}-\gamma_0)-(\Omega_N(ze_{11}-\gamma_0)\otimes I_N-\gamma_2\otimes D_N)^{-1}\Vert \leq (4\Vert \gamma_1 \Vert^2\big\|\hat G\big\|\|(\mathrm{id}_m-u_N(z,z))^{-1}\|)^{-1},$$ 
the probability of the event $E$ satisfies, for $\varepsilon'\leq \varepsilon$:
\begin{align*}
 \mathbb{P}(E)
&\leq \mathbb{P}\Big(\sigma_N^2\sum_{i=1}^N  \big\|R_{ii}(ze_{11}-\gamma_0)-(\Omega_N(ze_{11}-\gamma_0)\otimes I_N-\gamma_2\otimes D_N)_{ii}^{-1}\big\|\\
& \hspace{7cm}> (4\Vert \gamma_1 \Vert^2\big\|\hat G\big\|\|(\mathrm{id}_m-u_N(z,z))^{-1}\|)^{-1}\Big)\\
&\leq \inf_{x\in[1,4)}(4\Vert \gamma_1 \Vert^2\big\|\hat G\big\|\|(\mathrm{id}_m-u_N(z,z))^{-1}\|N\sigma_N^2)^{x}
\end{align*}
\begin{align*}
& \hspace{4cm} \times \max_{i=1,\ldots ,N}\mathbb{E}\Big[\big\|R_{ii}(ze_{11}-\gamma_0)-(\Omega_N(ze_{11}-\gamma_0)\otimes I_N-\gamma_2\otimes D_N)_{ii}^{-1}\big\|^{x}\Big]\\
&=O(N^{-2+\varepsilon'})
\end{align*}
by Lemma \ref{concentrationboundentry}.

Finally choosing $p=\frac{2-\varepsilon'}{2-\varepsilon}$, we obtain \[\mathbb{E}\left(\Vert \Xi_N \Vert^{q} \right)=O(N^{-(2-\varepsilon)})+O(N^{-q/2})=O(N^{-\min(2-\varepsilon;q/2)}).\]
\end{proof}

\section{Proof of convergence in finite-dimensional distributions in Theorem \ref{cvfdim}} \label{CLT}
In this section, we will give a proof of the convergence in finite-dimensional distributions of the complex process $( \xi_N(z)=\Tr(zI_N-X_N)^{-1} -\mathbb{E}\left[\Tr(zI_N-X_N)^{-1}\right], \, z\in \mathbb{C}\setminus\mathbb{R})$ to the centred complex Gaussian process $\mathcal G$, based on Theorem \ref{thm_CLT_martingale}.


\subsection{Reduction of the problem} \label{subsectionreduction}
 
One has to prove that any linear combination of $\xi_N(z),\, z\in \mathbb{C}\setminus \mathbb{R},$ converges in distribution to a complex Gaussian variable $Z\sim \mathcal{N}_{\mathbb{C}}(0,V,W)$. In the following, we prove the convergence of $(\xi_N(z))_{N\in \N}$. The case of a general linear combination does not need any additional argument and is left to the reader.
Notice that 
\begin{equation}\label{decomposition}
\xi_N(z)=\sum_{k=1}^N (\mathbb{E}_{\leq k}-\mathbb{E}_{\leq k-1})\big[\Tr(zI_N-X_N)^{-1}\big].
\end{equation}
For each $N\in \N$, the random variable $\Tr(zI_N-X_N)^{-1}$ being bounded, $(\mathbb{E}_{\leq k}[\Tr(zI_N-X_N)^{-1}])_{k\geq 1}$ is a square integrable complex martingale, hence $(\mathbb{E}_{\leq k}[\Tr(zI_N-X_N)^{-1}]-\mathbb{E}_{\leq k-1}[\Tr(zI_N-X_N)^{-1}])_{k\geq 1}$ is a martingale difference. Our strategy is to apply the central limit theorem for sums of martingale differences. More precisely, we will decompose $(\mathbb{E}_{\leq k}-\mathbb{E}_{\leq k-1})[\Tr(zI_N-X_N)^{-1}]$ in two parts and apply Theorem \ref{thm_CLT_martingale} to the first part.

\begin{prop}
	For $z\in \mathbb{C}\setminus \mathbb{R}$ and $k\in \{1,\ldots ,N\}$, 
	\[
	(\mathbb{E}_{\leq k}-\mathbb{E}_{\leq k-1})\big[\Tr(zI_N-X_N)^{-1}\big]=\Delta_k^{(N)}+\varepsilon_k^{(N)},
	\]
	where \[\Delta_k^{(N)}=\esp_{\leq k}\Big[-\frac{\partial}{\partial z}\Tr\big((W_{kk}\gamma_1+\Phi_k(ze_{11}-\gamma_0))\hat{R}_k(ze_{11}-\gamma_0)\big)\Big],\]
	with
	\[ \Phi_k(ze_{11}-\gamma_0):=\gamma_1\otimes C_k^{(k)*}R^{(k)}(ze_{11}-\gamma_0)\gamma_1\otimes C_k^{(k)}-\gamma_1 (\mathrm{id}_m\otimes \sigma_N^2\Tr)\big(R^{(k)}(ze_{11}-\gamma_0)\big)\gamma_1,\]
	and $\sum\limits_{k=1}^N\varepsilon_k^{(N)} \underset{N \to +\infty}{\longrightarrow} 0$ in probability.
\end{prop}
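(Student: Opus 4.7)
My plan is to exploit the block-determinant factorization that naturally produces the derivative $\partial_z$. Applying the Schur complement formula to $\beta\otimes I_N-\gamma_1\otimes W_N-\gamma_2\otimes D_N$, with $\beta=ze_{11}-\gamma_0$, along the $m$ rows and columns indexed by the $k$-th slot of $I_N$, yields
$$\det\big(\beta\otimes I_N-\gamma_1\otimes W_N-\gamma_2\otimes D_N\big)=\det\big(\beta\otimes I_{N-1}-\gamma_1\otimes W_N^{(k)}-\gamma_2\otimes D_N^{(k)}\big)\cdot\det s_k(\beta),$$
with $s_k(\beta)=\beta-W_{kk}\gamma_1-D_{kk}\gamma_2-\gamma_1\otimes C_k^{(k)*}R^{(k)}(\beta)\gamma_1\otimes C_k^{(k)}$. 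Taking $\partial_z\log$ on both sides and identifying, via \eqref{coin}, $\partial_z\log\det$ of each large matrix with the Stieltjes trace of the corresponding selfadjoint matrix, one obtains
$$\Tr(zI_N-X_N)^{-1}-\Tr\bigl(zI_{N-1}-P(W_N^{(k)},D_N^{(k)})\bigr)^{-1}=\partial_z\log\det s_k(ze_{11}-\gamma_0).$$
The second trace depends only on entries $W_{ij}$ with $i,j\ne k$, while the only information carried by $\mathcal F_k\setminus\mathcal F_{k-1}$ is the $k$-th column of $W_N$; it is therefore annihilated by $\mathbb E_{\leq k}-\mathbb E_{\leq k-1}$, and the increment reduces to $(\mathbb E_{\leq k}-\mathbb E_{\leq k-1})[\partial_z\log\det s_k(ze_{11}-\gamma_0)]$.

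I then write $s_k=\hat R_k^{-1}+B_k$ with $\hat R_k(\beta)=(\omega_N(\beta)-D_{kk}\gamma_2)^{-1}$ deterministic and
$$B_k=-W_{kk}\gamma_1-\Phi_k(ze_{11}-\gamma_0)+\xi_k,\qquad \xi_k:=\sigma_N^2\gamma_1\bigl(\mathrm{id}_m\otimes[N\tau_N(r_N)-\Tr(R^{(k)})]\bigr)\gamma_1,$$
so that $\log\det s_k=\log\det\hat R_k^{-1}+\log\det(I+\hat R_k B_k)$, the first summand being deterministic. I expand the second through the path-integral identity
$$\log\det(I+\hat R_k B_k)=\Tr(\hat R_k B_k)+\mathcal R_k,\qquad \mathcal R_k=-\int_0^1\! t\,\Tr\bigl((\hat R_k B_k)^2(I+t\hat R_k B_k)^{-1}\bigr)\,dt.$$
Conditioning first on $\sigma(W_N^{(k)})\supset\mathcal F_{k-1}$ gives $\mathbb E_{\leq k-1}[W_{kk}]=0$ and $\mathbb E_{\leq k-1}[\Phi_k]=\mathbb E_{\leq k-1}[\partial_z\Phi_k]=0$ from the very definition of $\Phi_k$ as the column-$k$ centering of the relevant quadratic form. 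Crucially, $\xi_k$ is $\sigma(W_N^{(k)})$-measurable, so $\mathbb E_{\leq k}[\xi_k]=\mathbb E_{\leq k-1}[\xi_k]$ and its leading-order contribution $(\mathbb E_{\leq k}-\mathbb E_{\leq k-1})[\partial_z\Tr(\hat R_k\xi_k)]$ vanishes exactly. Hence the first-order term contributes precisely
$$(\mathbb E_{\leq k}-\mathbb E_{\leq k-1})[\partial_z\Tr(\hat R_k B_k)]=-\mathbb E_{\leq k}\bigl[\partial_z\Tr((W_{kk}\gamma_1+\Phi_k)\hat R_k)\bigr]=\Delta_k^{(N)}.$$

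It remains to show that $\varepsilon_k^{(N)}:=(\mathbb E_{\leq k}-\mathbb E_{\leq k-1})[\partial_z\mathcal R_k]$, a martingale difference in $k$, satisfies $\sum_{k=1}^N\varepsilon_k^{(N)}\to 0$ in probability. By $L^2$-orthogonality,
$$\mathbb E\Bigl|\sum_{k=1}^N\varepsilon_k^{(N)}\Bigr|^2=\sum_{k=1}^N\mathbb E|\varepsilon_k^{(N)}|^2\le 4\sum_{k=1}^N\mathbb E|\partial_z\mathcal R_k|^2,$$
so the task reduces to $\mathbb E|\partial_z\mathcal R_k|^2=o(N^{-1})$ uniformly in $k$. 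Differentiating the integral representation of $\mathcal R_k$, using the uniform bounds $\|\hat R_k\|,\|\partial_z\hat R_k\|=O(1)$ from Lemma \ref{Qbound}, and estimating the three constituents of $B_k$ and $\partial_z B_k$ in $L^4$ via Assumption \eqref{hyp:diagonal} (for $W_{kk}$), Lemma \ref{estimfq} (for $\Phi_k$), and Lemma \ref{cvRinfiniLp} combined with Lemma \ref{compkLp} (for $\xi_k$), H\"older's inequality yields $\mathbb E|\partial_z\mathcal R_k|^2=O(N^{-2+\varepsilon'})$ for some $\varepsilon'>0$ depending on $\varepsilon$. Summing over $k$ gives $O(N^{-1+\varepsilon'})\to 0$, so $\sum_k\varepsilon_k^{(N)}\to 0$ in $L^2$ and a fortiori in probability.

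The main technical obstacle is to make the integral representation of $\mathcal R_k$ rigorous, which requires uniform control of $(I+t\hat R_k B_k)^{-1}$ for $t\in[0,1]$. The plan is to split on the event $\mathcal G_k:=\{\|\hat R_k B_k\|<1/2\}$, on which the bound above applies directly via a Neumann series, and to handle its complement by reverting to the exact relation $s_k^{-1}=R_{kk}$ controlled through Lemmas \ref{maxtendzero}, \ref{concentrationboundentry} and \ref{majRNRkLp}; Markov's inequality together with the $L^p$ bounds on $\|B_k\|$ shows that $\mathcal G_k^c$ contributes negligibly to $\sum_k\mathbb E|\varepsilon_k^{(N)}|^2$.
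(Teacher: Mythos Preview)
Your approach is correct and genuinely different from the paper's. The paper does not use $\log\det$ at all: it writes $\Tr(zI_N-X_N)^{-1}$ directly via Schur complement as $(\Tr\otimes\Tr)((e_{11}\otimes I_{N-1})R^{(k)})+\Tr[(e_{11}+\gamma_1\otimes C_k^{(k)*}R^{(k,1)}\gamma_1\otimes C_k^{(k)})s_k^{-1}]$, then expands $s_k^{-1}$ by a \emph{two-step} resolvent identity $s_k^{-1}=\hat R_k+\hat R_k(W_{kk}\gamma_1+\Psi_k)\hat R_k+\hat R_k(W_{kk}\gamma_1+\Psi_k)\hat R_k(W_{kk}\gamma_1+\Psi_k)s_k^{-1}$, producing seven explicit terms and five error pieces $\varepsilon_{k,1}^{(N)},\dots,\varepsilon_{k,5}^{(N)}$ bounded one by one. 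Your $\partial_z\log\det s_k$ route has the real advantage that the $\partial_z$ structure in $\Delta_k^{(N)}$ appears automatically, whereas the paper must recombine terms by hand to recognize it. Conversely, the paper's expansion terminates with $s_k^{-1}=R_{kk}$, which is always well-defined and bounded in every $L^p$ by Lemma~\ref{majRNRkLp}, so no event-splitting on $\mathcal G_k$ is ever needed; your path-integral device, while elegant, creates this extra complication.

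Two small points deserve attention. First, for $\partial_z\Phi_k$ you should cite Lemma~\ref{bornepdphik} rather than Lemma~\ref{estimfq}. Second, and more substantively, Lemma~\ref{cvRinfiniLp} controls $\xi_k$ but not $\partial_z\xi_k$; the paper never needs a quantitative rate on $\partial_z\xi_k$ because in its decomposition this factor always sits next to $W_{kk}\gamma_1+\Phi_k$ (see $\varepsilon_{k,4}^{(N)}$), for which the qualitative $o(1)$ of Lemma~\ref{asymptoticfreenessLp} suffices. In your lumped remainder $\mathcal R_k$ the term $\|B_k\|^2\|\partial_z B_k\|^2$ can produce a $\|\xi_k\|^2\|\partial_z\xi_k\|^2$-type contribution where this pairing is lost. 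You can still close the argument: either transfer the $L^4$ bound of Lemma~\ref{cvRinfiniLp} to $\partial_z\xi_k$ via Cauchy's formula and Lemma~\ref{Shirai} (using that the paper's $O$-bounds are uniform on compact subsets of $\mathbb C\setminus\mathbb R$), or use H\"older with exponents $(p,q)$, $p\in(1,2-\varepsilon]$, so that $\mathbb E[\|B_k\|^{2p}]^{1/p}=O(N^{-1})$ by Lemma~\ref{normeqpsik} while $\mathbb E[\|\partial_z\xi_k\|^{2q}]^{1/q}=o(1)$ by Lemma~\ref{asymptoticfreenessLp} combined with Lemma~\ref{compkLp}. Either way one gets $\sum_k\mathbb E|\varepsilon_k^{(N)}|^2=o(1)$, though only $o(N^{-1})$ per term rather than the $O(N^{-2+\varepsilon'})$ you claim.
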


\begin{proof}
By \eqref{coin} and then Proposition \ref{Schur}, with the notation $\beta=ze_{11}-\gamma_0$,
\begin{align*}
\Tr(zI_N-X_N)^{-1}
&=(\Tr\otimes \Tr)\big((e_{11}\otimes I_N)R_N(\beta)\big)\\
&=(\Tr\otimes \Tr)\big((e_{11}\otimes I_{N-1})R^{(k)}(\beta)\big)\\
&+\Tr\Big(e_{11}\big(\beta-W_{kk}\gamma_1-D_{kk}\gamma_2-(\gamma_1\otimes C_k^{(k)*})R^{(k)}(\beta)(\gamma_1\otimes C_k^{(k)})\big)^{-1}\Big)\\
&+(\Tr\otimes \Tr)\Big((e_{11}\otimes I_{N-1})R^{(k)}(\beta)(\gamma_1\otimes C_k^{(k)})^{\ }\\
&\times\big(\beta-W_{kk}\gamma_1-D_{kk}\gamma_2-\gamma_1\otimes C_k^{(k)*}R^{(k)}(\beta)\gamma_1\otimes C_k^{(k)}\big)^{-1}(\gamma_1\otimes C_k^{(k)*})R^{(k)}(\beta)\Big).
\end{align*}
By traciality, the third term of the right-hand side rewrites
\[\Tr(\gamma_1\otimes C_k^{(k)*}R^{(k,1)}(\beta)\gamma_1\otimes C_k^{(k)}(\beta-W_{kk}\gamma_1-D_{kk}\gamma_2-\gamma_1\otimes C_k^{(k)*}R^{(k)}(\beta)\gamma_1\otimes C_k^{(k)})^{-1}),\]
where $R^{(k,1)}(\beta):=R^{(k)}(\beta)(e_{11}\otimes I_{N-1})R^{(k)}(\beta)=-\frac{\partial}{\partial z}R^{(k)}(\beta)$
and combines with the second term to get
\begin{align*}
\Tr & (zI_N-X_N)^{-1}\\
&=(\Tr\otimes \Tr)((e_{11}\otimes I_{N-1})R^{(k)}(\beta))\\
&+\Tr\big((e_{11}+\gamma_1\otimes C_k^{(k)*}R^{(k,1)}(\beta)\gamma_1\otimes C_k^{(k)})(\beta-W_{kk}\gamma_1-D_{kk}\gamma_2-\gamma_1\otimes C_k^{(k)*}R^{(k)}(\beta)\gamma_1\otimes C_k^{(k)})^{-1}\big).
\end{align*}
In the second term of the right-hand side, decompose 
\[e_{11}+\gamma_1\otimes C_k^{(k)*}R^{(k,1)}(\beta)\gamma_1\otimes C_k^{(k)}=(e_{11}+\gamma_1 (\mathrm{id}_m\otimes \sigma_N^2\Tr)(R^{(k,1)}(\beta))\gamma_1)-\frac{\partial}{\partial z}\Phi_k(\beta)\]
	and
\begin{align*}
  (\beta-W_{kk}\gamma_1 -D_{kk}\gamma_2 - & \gamma_1\otimes C_k^{(k)*} R^{(k)}(\beta)\gamma_1\otimes C_k^{(k)})^{-1}\\
 & = \hat{R}_k(\beta)+\hat{R}_k(\beta)\big(W_{kk}\gamma_1+\Psi_k(\beta)\big)\hat{R}_k(\beta)\\
 & + \hat{R}_k(\beta)\big(W_{kk}\gamma_1+\Psi_k(\beta)\big)\hat{R}_k(\beta)\big(W_{kk}\gamma_1+\Psi_k(\beta)\big)\\
 & \hspace{3cm} \times \Big(\beta-W_{kk}\gamma_1-D_{kk}\gamma_2-\gamma_1\otimes C_k^{(k)*}R^{(k)}(\beta)\gamma_1\otimes C_k^{(k)}\Big)^{-1},
\end{align*}
where $\Psi_k(\beta)=\Phi_k(\beta)+\gamma_1((\mathrm{id}_m\otimes\sigma_N^2\Tr)(R^{(k)}(\beta))-N\sigma_N^2(\mathrm{id}_m\otimes \tau)(r_N(\beta)))\gamma_1$
so that $\Tr(zI_N-X_N)^{-1}$ is the sum of seven terms. 
Observe that the first two terms satisfy
\begin{align*}
(\mathbb{E}_{\leq k}-\mathbb{E}_{\leq k-1})\Big[(\Tr\otimes \Tr) & \big((e_{11}\otimes I_{N-1}) R^{(k)}(\beta)\big)\\
& +\Tr\big((e_{11}+\gamma_1 (\mathrm{id}_m\otimes \sigma_N^2\Tr)(R^{(k,1)}(ze_{11}-\gamma_0))\gamma_1)\hat{R}_k(\beta)\big)\Big]=0
\end{align*}
and that the following two terms combine to get
\begin{align*}
 \Tr\big((e_{11}+\gamma_1 (\mathrm{id}_m\otimes &  \sigma_N^2\Tr)(R^{(k,1)}(\beta))\gamma_1)\hat{R}_k(\beta)(W_{kk}\gamma_1+\Psi_k(\beta))\hat{R}_k(\beta)-\frac{\partial}{\partial z}\Phi_k(\beta)\hat{R}_k(\beta)\big)\\
 & =-\frac{\partial}{\partial z}\Tr((W_{kk}\gamma_1+\Phi_k(\beta))\hat{R}_k(\beta))+\varepsilon_{k,4}^{(N)}+\varepsilon_{k,5}^{(N)},
\end{align*}
where
\[\varepsilon_{k,4}^{(N)}:=-\Tr(\gamma_1\frac{\partial}{\partial z}((\mathrm{id}_m\otimes \sigma_N^2\Tr)(R^{(k)}(\beta))-N\sigma_N^2(\mathrm{id}_m\otimes \tau)(r_N(\beta)))\gamma_1\hat{R}_k(\beta)(W_{kk}\gamma_1+\Phi_k(\beta))\hat{R}_k(\beta)),\]
and
\begin{align*}
  \varepsilon_{k,5}^{(N)} & :=\Tr\Big(\big(e_{11}+  \gamma_1 (\mathrm{id}_m\otimes \sigma_N^2\Tr)(R^{(k,1)}(\beta))\gamma_1\big)\\
  &  \hspace{3cm} \times\hat{R}_k(\beta)\big(\gamma_1((\mathrm{id}_m\otimes\sigma_N^2\Tr)(R^{(k)}(\beta))-N\sigma_N^2(\mathrm{id}_m\otimes \tau)(r_N(\beta)))\gamma_1\big)\hat{R}_k(\beta)\Big).
\end{align*}
Note that  \[\esp_{\leq k-1}\Big[-\frac{\partial}{\partial z}\Tr\big((W_{kk}\gamma_1+\Phi_k(\beta))\hat{R}_k(\beta)\big)\Big]=0.\] 
It remains to prove that the last three terms 
	\begin{align*}
\varepsilon_{k,1}^{(N)} &  := \Tr\Big((e_{11}+\gamma_1 (\mathrm{id}_m\otimes\sigma_N^2\Tr)(R^{(k,1)}(\beta))\gamma_1)\hat{R}_k(\beta)(W_{kk}\gamma_1+\Psi_k(\beta))\\
& \hspace{2cm} \times 
	\hat{R}_k(\beta)(W_{kk}\gamma_1+\Psi_k(\beta))(\beta-W_{kk}\gamma_1-D_{kk}\gamma_2-\gamma_1\otimes C_k^{(k)*}R^{(k)}(\beta)\gamma_1\otimes C_k^{(k)})^{-1}\Big),\\
	\varepsilon_{k,2}^{(N)} & :=\Tr(
	-\frac{\partial}{\partial z}\Phi_k(\beta)\hat{R}_k(\beta)(W_{kk}\gamma_1+\Psi_k(\beta))\hat{R}_k(\beta)),\\
\varepsilon_{k,3}^{(N)} & :=\Tr\Big(
	-\frac{\partial}{\partial z}\Phi_k(\beta)\hat{R}_k(\beta)(W_{kk}\gamma_1+\Psi_k(\beta))
	\hat{R}_k(\beta)(W_{kk}\gamma_1+\Psi_k(\beta))\\
	& \hspace{5cm} \times (\beta-W_{kk}\gamma_1-D_{kk}\gamma_2-\gamma_1\otimes C_k^{(k)*}R^{(k)}(\beta)\gamma_1\otimes C_k^{(k)})^{-1}\Big),
\end{align*}
are such that 
	\[\varepsilon_k^{(N)}=\left( \mathbb{E}_{\leq k}-\mathbb{E}_{\leq k-1}\right)\Big[\varepsilon_{k,1}^{(N)}+\varepsilon_{k,2}^{(N)}+\varepsilon_{k,3}^{(N)}+\varepsilon_{k,4}^{(N)}+\varepsilon_{k,5}^{(N)}\Big]\]
	satisfies $\sum_{k=1}^N\varepsilon_k^{(N)} \underset{N \to +\infty}{\longrightarrow} 0$ in probability. This is the object of Lemma \ref{epsilonN} below (assuming that the entries of $W_N$ are bounded by $\delta_N$).
\end{proof}

\begin{lem}\label{bornepdphik}
For all $p\geq 2$, 
\[\mathbb{E}\Big( \Big\| \frac{\partial}{\partial z} \Phi_k(\beta)\Big\|^{p}\Big) =O\big( N^{-\min(p/2, 2+3\varepsilon)}\big).\;\]
\end{lem}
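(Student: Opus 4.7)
The plan is to re-express $\frac{\partial}{\partial z}\Phi_k(\beta)$ as a centred quadratic form in the column $C_k^{(k)}$ (which is independent of everything else) and then apply the Hanson--Wright / Bai--Silverstein type moment estimate provided by Lemma \ref{lem_moment_quadratic_forms}, in essentially the same spirit as the proof of Lemma \ref{estimfq}. The only additional ingredient is to push the exponent slightly past $4(1+\varepsilon)$ in order to obtain the sharp saturation rate $2+3\varepsilon$, which is done by exploiting the almost-sure truncation $|W_{ij}|\le\delta_N$.

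Differentiating $R^{(k)}(\beta)=(\beta\otimes I_{N-1}-\gamma_1\otimes W_N^{(k)}-\gamma_2\otimes D_N^{(k)})^{-1}$ gives $\partial_z R^{(k)}(\beta)=-R^{(k)}(\beta)(e_{11}\otimes I_{N-1})R^{(k)}(\beta)=:A_k$, so
\[
\frac{\partial\Phi_k}{\partial z}(\beta)=\gamma_1\otimes C_k^{(k)*}\,A_k\,\gamma_1\otimes C_k^{(k)}-\gamma_1(\mathrm{id}_m\otimes\sigma_N^2\Tr)(A_k)\gamma_1,
\]
which has exactly the structure of $\Phi_k$ with $R^{(k)}(\beta)$ replaced by $A_k$. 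Since $A_k$ is measurable with respect to $W_N^{(k)}$ and $D_N^{(k)}$, it is independent of $C_k^{(k)}$. The bound $\|A_k\|\le\|R^{(k)}(\beta)\|^2$ and Remark \ref{remmajRNRkLp} yield $\mathbb{E}[\|A_k\|^q]=O(1)$ for every $q\ge 1$, so that, via $\|A_k\|_F^p\le(m(N-1))^{p/2}\|A_k\|^p$ and $\Tr((A_kA_k^*)^{p/2})\le m(N-1)\|A_k\|^p$, one has $\mathbb{E}[\|A_k\|_F^p]=O(N^{p/2})$ and $\mathbb{E}[\Tr((A_kA_k^*)^{p/2})]=O(N)$.

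Applying Lemma \ref{lem_moment_quadratic_forms} conditionally on $A_k$ and integrating then furnishes, for every $p\ge 2$,
\[
\mathbb{E}\bigl[\|\partial_z\Phi_k(\beta)\|^p\bigr]\le C_p\|\gamma_1\|^{2p}\bigl(\sigma_N^{2p}\,\mathbb{E}[\|A_k\|_F^p]+\nu_{2p}\,\mathbb{E}[\Tr((A_kA_k^*)^{p/2})]\bigr)=O(N^{-p/2})+O(N\nu_{2p}),
\]
where $\nu_{2p}=\mathbb{E}[|W_{1k}|^{2p}]$ and we have used $\sigma_N^{2p}=O(N^{-p})$. It remains to dissect $\nu_{2p}$ in two regimes (this is the role of Lemma \ref{puissance}). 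For $2\le p\le 3(1+\varepsilon)$, assumption \eqref{hyp:offdiagonal} directly gives $\nu_{2p}\le CN^{-p}$, hence $N\nu_{2p}=O(N^{1-p})\le O(N^{-p/2})$, and the total bound is $O(N^{-p/2})$. For $p>3(1+\varepsilon)$, the truncation $|W_{ij}|\le\delta_N\le 1$ (for $N$ large) yields $|W_{ij}|^{2p}\le\delta_N^{2p-6(1+\varepsilon)}|W_{ij}|^{6(1+\varepsilon)}\le|W_{ij}|^{6(1+\varepsilon)}$, so $\nu_{2p}\le\nu_{6(1+\varepsilon)}\le CN^{-3(1+\varepsilon)}$ by assumption \eqref{hyp:offdiagonal}, and the second contribution becomes $O(N^{1-3(1+\varepsilon)})=O(N^{-(2+3\varepsilon)})$. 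Combining the two regimes gives
\[
\mathbb{E}[\|\partial_z\Phi_k(\beta)\|^p]=O(N^{-p/2})+O(N^{-(2+3\varepsilon)})=O\!\bigl(N^{-\min(p/2,\,2+3\varepsilon)}\bigr),
\]
as claimed.

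The only delicate point is the careful use of the truncation to extend the quadratic-form moment estimate past $p=3(1+\varepsilon)$ and recover the precise saturated exponent $2+3\varepsilon$, rather than the $2+2\varepsilon$ that would result from a blind application of Lemma \ref{estimfq} alone; this sharpening is what enables the sum of $N$ such terms to be controlled in the subsequent martingale CLT argument.
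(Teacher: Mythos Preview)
Your proof is correct and follows essentially the same approach as the paper: recognize $\partial_z\Phi_k$ as a centred quadratic form in $C_k^{(k)}$ with kernel $A_k=-R^{(k)}(\beta)(e_{11}\otimes I_{N-1})R^{(k)}(\beta)$, apply Lemma~\ref{lem_moment_quadratic_forms}, and split into the two regimes $p\lessgtr 3(1+\varepsilon)$, using the truncation $|W_{ij}|\le\delta_N$ in the large-$p$ regime to get the saturated exponent $2+3\varepsilon$. One cosmetic point: your first term is written as $\sigma_N^{2p}\mathbb{E}[\|A_k\|_F^p]$, whereas the lemma (after undoing the standardisation of $Y$) actually produces $(\mathbb{E}[|W_{12}|^4]\max_{i,j}\Tr(A_{ij}A_{ij}^*))^{p/2}$ with the block-wise quantities; since $\mathbb{E}[|W_{12}|^4]=O(N^{-2})=\sigma_N^4\cdot O(1)$ and both trace quantities are $O(N)$, the orders coincide and your bound is unaffected.
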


\begin{proof}
For $\beta= ze_{11}-\gamma_0$, $z\in \C \setminus \R$, 
by Lemma \ref{lem_moment_quadratic_forms}, for $p \geq 2$,
\[
\mathbb{E}\Big( \Big\| \frac{\partial}{\partial z} \Phi_k(\beta)\Big\|^{p}\Big)\leq C \Big[\Big( 
\mathbb{E}[|W_{12}|^{4}] N \mathbb{E}\Big(\big\| R^{(k)}(\beta)\big\|^4\Big) \Big)^{p/2}
+N \mathbb{E}\Big(\big\| R^{(k)}(\beta)\big\|^{2p} \Big) \mathbb{E}[|W_{12}|^{2p}]\Big].\]
As $\mathbb{E}[|W_{12}|^{4}]=O(N^{-2})$, 
\[\Big( \mathbb{E}[|W_{12}|^{4}] N \mathbb{E}\Big(\big\| R^{(k)}(\beta)\big\|^4\Big) \Big)^{p/2}=O(N^{-p/2}).\]
Now, if $p\geq 3(1+\varepsilon)$,
\[\mathbb{E}[|W_{12}|^{2p}]=O( \delta_N^{2p-6(1+\varepsilon)}N^{-3(1+\varepsilon)})=O\big(N^{-3(1+\varepsilon )}\big).\]
Therefore,
\[N \mathbb{E}\Big(\big\| R^{(k)}(\beta)\big\|^{2p} \Big) \mathbb{E}[|W_{12}|^{2p}]=O\big(N^{-(2+3\varepsilon)}\big).\]
As a consequence, for $p\geq 3(1+\varepsilon)$,
\[\mathbb{E}\Big( \Big\| \frac{\partial}{\partial z} \Phi_k(\beta)\Big\|^{p}\Big)=O\big(N^{-\min(p/2,2+3\varepsilon)}\big).\]
If $2\leq p < 3(1+\varepsilon)$, $\mathbb{E}[|W_{12}|^{2p}]=O(N^{-p})$ and 
\[\mathbb{E}\Big( \Big\| \frac{\partial}{\partial z} \Phi_k(\beta)\Big\|^{p}\Big)=O\big(N^{-p/2}\big).\]
Note that in this case, $\min(p/2,2+3\varepsilon)=p/2$, which concludes the proof.
\end{proof}

\begin{lem}\label{normeqpsik}
$\forall q \in [2,4(1+\varepsilon))$,
\[\mathbb{E} \Big( \big\| \Psi_k(\beta)\big\|^{q} \Big)=O\big(N^{-\min(2-\varepsilon,q/2)}\big).\]
\end{lem}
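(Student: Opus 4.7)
The decomposition $\Psi_k(\beta) = \Phi_k(\beta) + \Xi_k$, with
$$
\Xi_k := \gamma_1\big((\mathrm{id}_m\otimes\sigma_N^2\Tr)(R^{(k)}(\beta)) - N\sigma_N^2(\mathrm{id}_m\otimes\tau)(r_N(\beta))\big)\gamma_1,
$$
makes the strategy transparent: bound the two summands separately in $L^q$ and combine by the triangle inequality (up to a factor $2^{q-1}$). The first term is already under control, since Lemma \ref{estimfq} gives $\mathbb{E}[\|\Phi_k(\beta)\|^q]=O(N^{-q/2})$ for any $q\in[2,4(1+\varepsilon)]$, which is dominated by $N^{-\min(2-\varepsilon,q/2)}$.

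The plan for $\Xi_k$ is to rewrite it as $N\sigma_N^2$ times a fluctuation of a normalized trace:
$$
\Xi_k = \gamma_1\cdot N\sigma_N^2\big((\mathrm{id}_m\otimes N^{-1}\Tr)(R^{(k)}(\beta)) - (\mathrm{id}_m\otimes\tau)(r_N(\beta))\big)\gamma_1,
$$
and then to use $N\sigma_N^2=O(1)$ (Assumption \eqref{hyp:offdiagonal}) together with $\|\gamma_1\|\le C$ to absorb those constants. The key step is to control the $L^q$ norm of the bracket. First pass from $R^{(k)}$ to $R_N$: Lemma \ref{compkLp} yields
$$
\mathbb{E}\big[\|(\mathrm{id}_m\otimes N^{-1}\Tr)(R^{(k)}(\beta)) - (\mathrm{id}_m\otimes N^{-1}\Tr)(R_N(\beta))\|^q\big] = O(N^{-q}),
$$
which is negligible. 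Then Lemma \ref{cvRinfiniLp} provides exactly the required estimate
$$
\mathbb{E}\big[\|(\mathrm{id}_m\otimes N^{-1}\Tr)(R_N(\beta)) - (\mathrm{id}_m\otimes\tau)(r_N(\beta))\|^q\big] = O(N^{-\min(2-\varepsilon,q/2)}).
$$

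Combining these two estimates via the triangle inequality gives $\mathbb{E}[\|\Xi_k\|^q]=O(N^{-\min(2-\varepsilon,q/2)})$, and adding the $\Phi_k$ bound yields the claimed estimate on $\|\Psi_k(\beta)\|^q$. There is no genuine obstacle here: the lemma is a straightforward bookkeeping consequence of Lemmas \ref{estimfq}, \ref{compkLp}, and \ref{cvRinfiniLp}; the only mildly delicate point is keeping track of the $N$-scalings (the factor $N\sigma_N^2$ converting $\sigma_N^2\Tr$ into $N^{-1}\Tr$ times an $O(1)$ constant) so that one applies the quantitative asymptotic freeness estimate \ref{cvRinfiniLp} to the right object.
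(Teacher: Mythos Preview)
Your proof is correct and follows essentially the same route as the paper: decompose $\Psi_k$ into $\Phi_k$ and the remaining trace term, invoke Lemma \ref{estimfq} for the first, and handle the second by passing from $R^{(k)}$ to $R_N$ via Lemma \ref{compkLp} and then applying Lemma \ref{cvRinfiniLp}. The only cosmetic difference is that the paper works directly with $L^q$-norms via the triangle inequality rather than with $q$-th moments and the convexity factor $2^{q-1}$.
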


\begin{proof} For $q\geq 1$, 
\begin{align*}
\|\Psi_k(\beta)\|_{L^{q}}&\leq \|\Phi_k(\beta)\|_{L^{q}}+\|\gamma_1\|^2\|(\mathrm{id}_m\otimes\sigma_N^2\Tr)(R^{(k)}(\beta))-\mathrm{id}_m\otimes\sigma_N^2\Tr)(R_N(\beta))\|_{L^{q}} \\ 
& \quad +  \|\gamma_1 \|^2 \|(\mathrm{id}_m\otimes \sigma_N^2\Tr)(R_N(\beta)) -N  \sigma_N^2(\mathrm{id}_m\otimes\tau)(r_N(\beta))\|_{L^{q}}. 
\end{align*}
Thus, for $q \in [2,4(1+\varepsilon))$, Lemmas \ref{estimfq}, \ref{compkLp} and \ref{cvRinfiniLp} readily yield that
\[  \| \Psi_k(\beta)\|_{L^{q}}= O\big(N^{-\min(1/2,(2-\varepsilon)/q)}\big),\]
which concludes the proof.
\end{proof}

\begin{lem}\label{epsilonN}
	\[\Big\|\sum_{k\geq 1}\varepsilon_{k}^{(N)}\Big\|_{L^2} \underset{N \to +\infty}{\longrightarrow} 0.\]
\end{lem}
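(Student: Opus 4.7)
The sequence $(\varepsilon_k^{(N)})_{k\ge 1}$ is by construction a martingale-difference sequence for the filtration $(\mathcal F_k)$, so orthogonality in $L^2$ gives
\[
\Big\|\sum_{k=1}^N\varepsilon_k^{(N)}\Big\|_{L^2}^2 \;=\; \sum_{k=1}^N\|\varepsilon_k^{(N)}\|_{L^2}^2,
\]
and it is enough to prove $\|\varepsilon_k^{(N)}\|_{L^2}^2=o(N^{-1})$ uniformly in $k\in\{1,\ldots,N\}$. Before touching the five contributions one by one, I would first observe that $\varepsilon_{k,5}^{(N)}$ drops out entirely: it is a function of $R^{(k)}$ and $R^{(k,1)}$ only, hence $\sigma(W^{(k)})$-measurable, and $W^{(k)}$ is independent of the fresh randomness $\{W_{ik}:1\le i\le k\}$ that $\mathcal F_k$ adds to $\mathcal F_{k-1}$. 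The independence of the underlying $\sigma$-algebras then forces $\mathbb E[\varepsilon_{k,5}^{(N)}\mid\mathcal F_k]=\mathbb E[\varepsilon_{k,5}^{(N)}\mid\mathcal F_{k-1}]$, so $(\mathbb E_{\le k}-\mathbb E_{\le k-1})[\varepsilon_{k,5}^{(N)}]=0$ identically.

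For each remaining $j\in\{1,2,3,4\}$ I would bound $\|\varepsilon_{k,j}^{(N)}\|_{L^2}$ by expanding the trace and applying H\"older's inequality, combining the uniform-in-$k$ $L^q$-estimates collected in Section \ref{prelimi}. The ``large'' factors $R_N$, $R^{(k)}$, $\hat R_k$, the Schur inverse appearing in the last slot of $\varepsilon_{k,1}^{(N)}$ and $\varepsilon_{k,3}^{(N)}$, and the ``$e_{11}+\cdots$'' prefactor in $\varepsilon_{k,1}^{(N)}$ are all $O(1)$ in every $L^q$ (Lemma \ref{majRNRkLp}, Remark \ref{remmajRNRkLp}, \eqref{majorationunifhatRk} and \eqref{InverseSchurComplementBound}). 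The ``small'' factors satisfy, uniformly in $k$ and on compact sets in $z$: $\|W_{kk}\gamma_1+\Psi_k\|_{L^q}$ and $\|W_{kk}\gamma_1+\Phi_k\|_{L^q}$ are $O(N^{-\min(1/2,(2-\varepsilon)/q)})$ by Lemmas \ref{estimfq}, \ref{normeqpsik} together with Assumption \ref{hyp:diagonal} on $W_{kk}$; $\|\partial_z\Phi_k\|_{L^q}$ is $O(N^{-\min(1/2,(2+3\varepsilon)/q)})$ by Lemma \ref{bornepdphik}; and the $z$-derivative of $(\mathrm{id}_m\otimes\sigma_N^2\Tr)(R^{(k)})-N\sigma_N^2(\mathrm{id}_m\otimes\tau)(r_N)$ appearing in $\varepsilon_{k,4}^{(N)}$ inherits, via a Cauchy-integral formula on a slightly enlarged compact subset of $\mathbb C\setminus\mathbb R$, the rate $O(N^{-\min(1/2,(2-\varepsilon)/q)})$ furnished for the undifferentiated quantity by Lemmas \ref{compkLp} and \ref{cvRinfiniLp}. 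A routine bookkeeping of H\"older exponents then yields $\|\varepsilon_{k,j}^{(N)}\|_{L^2}^2=O(N^{-2-\delta})$ for some $\delta=\delta(\varepsilon)>0$ uniformly in $k$, and summing gives the claim.

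The most delicate term is $\varepsilon_{k,3}^{(N)}$, whose expansion contains three ``small'' factors ($\partial_z\Phi_k$ and two copies of $W_{kk}\gamma_1+\Psi_k$) that H\"older must split. Placing all three at $L^4$ would already saturate the $L^2$ budget once the $O(1)$ factors are accounted for, so the splitting is forced down to exponents slightly above $L^6$, where Lemma \ref{normeqpsik} has already degraded from the optimal rate $N^{-1/2}$ to $N^{-(2-\varepsilon)/q}$. This is exactly where the slightly-stronger-than-fourth-moment assumptions $\mathbb E[|\sqrt N W_{ij}|^{6(1+\varepsilon)}]\le C_6$ and $\mathbb E[|\sqrt N W_{ii}|^{4(1+\varepsilon)}]\le C_4$ in Assumptions \ref{hyp:offdiagonal}--\ref{hyp:diagonal} enter in a genuinely crucial way: they make Lemmas \ref{normeqpsik} and \ref{bornepdphik} applicable strictly above $q=4$, which is precisely what is needed for the interpolation to close with some positive $\delta$.
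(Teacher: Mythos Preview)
Your approach is essentially the paper's: martingale-difference orthogonality, the observation that $(\mathbb E_{\le k}-\mathbb E_{\le k-1})[\varepsilon_{k,5}^{(N)}]=0$, and H\"older bounds on each remaining $\varepsilon_{k,j}^{(N)}$ using Lemmas~\ref{majRNRkLp}, \ref{estimfq}, \ref{normeqpsik}, \ref{bornepdphik} and \eqref{InverseSchurComplementBound}. Two minor remarks.

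First, the claimed rate $\|\varepsilon_{k,j}^{(N)}\|_{L^2}^2=O(N^{-2-\delta})$ is overstated. With only two ``small'' factors (as in $\varepsilon_{k,1}^{(N)}$, $\varepsilon_{k,2}^{(N)}$, $\varepsilon_{k,4}^{(N)}$), the H\"older splitting forces one of them into $L^{4q}$ with $q\in[1,1+\varepsilon)$, where Lemma~\ref{normeqpsik} already degrades to $O(N^{-(2-\varepsilon)/(4q)})$; the best you obtain is $\|\varepsilon_{k,j}^{(N)}\|_{L^2}^2=O(N^{-(2-\varepsilon)/q})$ with $q$ slightly above $1$, i.e.\ $o(N^{-1})$ but not better than $N^{-2}$. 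This is exactly what the paper records, and it is all that is needed.

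Second, for $\varepsilon_{k,4}^{(N)}$ you propose a Cauchy-integral argument to transfer the rate of Lemma~\ref{cvRinfiniLp} to the $z$-derivative. This is legitimate, but unnecessary: the paper already provides the required derivative bounds directly in \eqref{derive} of Lemma~\ref{compkLp} and in \eqref{CVLpaf2} of Lemma~\ref{asymptoticfreenessLp}, and the latter (only $o(1)$, no rate) suffices because the companion factor $\|W_{kk}\gamma_1+\Phi_k\|_{L^4}^2=O(N^{-1})$ already carries the full weight.
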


\begin{proof}
	Using H\"older's inequality with $q\in [1,1+\varepsilon)$, $p,r\geq 1$ such that $p^{-1}+q^{-1}+r^{-1}=1$, 
	\begin{align*}
    \mathbb{E}[|\varepsilon_{k,1}^{(N)}|^2]
	&\leq  m^2\|\hat{R}_k(\beta)\|^4\mathbb{E}\Big[\|e_{11}+\gamma_1 (\mathrm{id}_m\otimes \sigma_N^2\Tr)(R^{(k,1)}(\beta))\gamma_1\|^2\|W_{kk}\gamma_1+\Psi_k(\beta)\|^4\|R_N(\beta)\|^2\Big]\\
    &\leq  m^2\|\hat{R}_k(\beta)\|^4\mathbb{E}\big[\|e_{11}+\gamma_1 (\mathrm{id}_m\otimes \sigma_N^2\Tr)(R^{(k,1)}(\beta))\gamma_1\|^{2p}\big]^{1/p}\\
	& \hspace{7cm} \times \mathbb{E}\big[\|W_{kk}\gamma_1+\Psi_k(\beta)\|^{4q}\big]^{1/q}\mathbb{E}\big[\|R_N(\beta)\|^{2r}\big]^{1/r}\\
	&\leq  m^2\|\hat{R}_k(\beta)\|^4\mathbb{E}[\|R_N(\beta)\|^{2r}]^{1/r}(1+C\|\gamma_1\|^2\mathbb{E}[\|R_N(\beta)\|^{4p}]^{1/2p})^{2}\\
	& \hspace{7cm} \times \big(\|\gamma_1\| \|W_{kk}\|_{L^{4q}}+\|\Psi_k(\beta)\|_{L^{4q}}\big)^4.
	\end{align*}
	Therefore, by Lemmas \ref{Qbound},  \ref{majRNRkLp},  \ref{puissance}  and \ref{normeqpsik}, $\esp[|\varepsilon_{k,1}^{(N)}|^2]=o(N^{-1})$, uniformly in $k$.

Using H\"older's inequality with $q\in [1,1+\varepsilon)$, $\frac{1}{p}+\frac{1}{q}=1$, 
	\[
		\mathbb{E}[|\varepsilon_{k,2}^{(N)}|^2]
		\leq m^2\|\hat{R}_k(\beta)\|^{4}\Big[\mathbb{E}[\|\frac{\partial}{\partial z}\Phi_k(\beta)\|^{2p}]\Big]^{1/p}(\|\gamma_1\| \|W_{kk}\|_{L^{2q}}+\|\Psi_k(\beta)\|_{L^{2q}})^2\]
	
	Therefore, by Lemmas \ref{Qbound},  \ref{bornepdphik},  \ref{puissance}  and \ref{normeqpsik} $\mathbb{E}[|\varepsilon_{k,2}^{(N)}|^2]=o(N^{-1})$, uniformly in $k$.
	
	Using H\"older's inequality with $q\in [1,1+\varepsilon)$, $\frac{1}{p}+\frac{1}{q}+\frac{1}{r}=1$,  by Lemmas \ref{Qbound},  \ref{majRNRkLp},  \ref{puissance}  and \ref{normeqpsik},
	\begin{align*}
		\mathbb{E}[|\varepsilon_{k,3}^{(N)}|^2]
		&\leq m^2\|\hat{R}_k(\beta)\|^4\Big[\mathbb{E}[\|\frac{\partial}{\partial z}\Phi_k(\beta)\|^{2p}]\Big]^{1/p}(\|\gamma_1\| \|W_{kk}\|_{L^{4q}}+\|\Psi_k(\beta)\|_{L^{4q}})^4 \mathbb{E}[\|R_N(\beta)\|^{2r}]^{1/r}\\
		&= \Big(O\big(N^{-\min(p,1+\varepsilon)}\big)\Big)^{1/p}\Big(O\big(N^{-\frac{1}{2q}+\frac{\varepsilon}{4q}}\big)\Big)^4
\end{align*}
so that, noticing that $\frac{2-\varepsilon}{q}>1$ for any $\varepsilon < 1/2$, $\mathbb{E}[|\varepsilon_{k,3}^{(N)}|^2]=o(N^{-1})$, uniformly in $k$.
	
	\noindent Finally, by H\"older's inequality,
	\begin{align*}
    & \mathbb{E}[|\varepsilon_{k,4}^{(N)}|^2]\\
		&\leq m^2\|\gamma_1\|^4\|\hat{R}_k(\beta)\|^4\mathbb{E}[\|\frac{\partial}{\partial z}((\mathrm{id}_m\otimes \sigma_N^2\Tr)(R^{(k)}(\beta))-N\sigma_N^2(\mathrm{id}_m\otimes \tau)(r_N(\beta)))\|^2\|W_{kk}\gamma_1+\Phi_k(\beta)\|^2]\\
&\leq 2^4 m^2\|\gamma_1\|^4\|\hat{R}_k(\beta)\|^4\left\{\mathbb{E}(\|\frac{\partial}{\partial z}((\mathrm{id}_m\otimes \sigma_N^2\Tr)(R^{(k)}(\beta) -(\mathrm{id}_m\otimes \sigma_N^2\Tr)(R_N(\beta))\Vert^4)\right.\\&\left. \hspace{1cm} + \mathbb{E}(\|\frac{\partial}{\partial z}((\mathrm{id}_m\otimes \sigma_N^2\Tr)(R_N(\beta) -N\sigma_N^2(\mathrm{id}_m\otimes \tau)(r_N(\beta)))\|^4\right\}^{1/2}\\
& \hspace{4cm} \times (\|\gamma_1\|^2\mathbb{E}[\|W_{kk}\|^4]^{1/2}+\mathbb{E}[\|\Phi_k(\beta)\|^4]^{1/2})\\
&=o(N^{-1}),
	\end{align*}
where we use Lemmas \ref{Qbound}, \ref{estimfq}, \ref{compkLp}, \ref{puissance}	and \ref{asymptoticfreenessLp} in the last line.
	
\noindent Noticing that $(\mathbb{E}_{\leq k}-\mathbb{E}_{\leq k-1})[\varepsilon_{k,5}^{(N)}]=0$, by Lemma \ref{martingalevariance} and Jensen's inequality, 
	\begin{align*} 
		\mathbb{E}\big[\big|\sum_{k=1}^N(\mathbb{E}_{\leq k}-\mathbb{E}_{\leq k-1}) \big[\varepsilon_{k,1}^{(N)} & +\varepsilon_{k,2}^{(N)}+\varepsilon_{k,3}^{(N)}+\varepsilon_{k,4}^{(N)}+\varepsilon_{k,5}^{(N)}\big]\big|^2\big]\\
		& = \sum_{k=1}^N\mathbb{E}\big[\big|(\mathbb{E}_{\leq k}-\mathbb{E}_{\leq k-1})\big[\varepsilon_{k,1}^{(N)}+\varepsilon_{k,2}^{(N)}+\varepsilon_{k,3}^{(N)}+\varepsilon_{k,4}^{(N)}\big]\big|^2\big]\\
		&\leq \sum_{k=1}^N8\sum_{j=1}^4\big(\mathbb{E}\big[\big|(\mathbb{E}_{\leq k-1}\big[\varepsilon_{k,j}^{(N)}\big]\big|^2\big]+\mathbb{E}\big[\big|(\mathbb{E}_{\leq k}\big[\varepsilon_{k,j}^{(N)}\big]\big|^2\big]\big)\\
		&\leq 16\sum_{k=1}^N\mathbb{E}\big[\big|\varepsilon_{k,1}^{(N)}\big|^2+\big|\varepsilon_{k,2}^{(N)}\big|^2+\big|\varepsilon_{k,3}^{(N)}\big|^2+\big|\varepsilon_{k,4}^{(N)}\big|^2\big]=o(1).
	\end{align*}
so that 
	$\Big\|\sum\limits_{k=1}^N\varepsilon_{k}^{(N)}\Big\|_{L^2}=o(1).$
\end{proof}


As announced at the beginning of the section, the strategy is now to apply Theorem \ref{thm_CLT_martingale} to 
\begin{align*}
\Delta_k^{(N)}&=\esp_{\leq k}\Big[-\frac{\partial}{\partial z}\Tr((W_{kk}\gamma_1+\Phi_k(ze_{11}-\gamma_0))\hat{R}_k(ze_{11}-\gamma_0))\Big]\\&=(\esp_{\leq k} -\esp_{\leq k-1})\Big[-\frac{\partial}{\partial z}\Tr((W_{kk}\gamma_1+\Phi_k(ze_{11}-\gamma_0))\hat{R}_k(ze_{11}-\gamma_0))\Big].
\end{align*}

\subsection{Verification of Lyapounov condition}
To check condition \eqref{L}, one first uses Markov inequality to get, for 
$p\in (2,4(1+\varepsilon))$
:
$$ L(\varepsilon,N) \leq \varepsilon^{2-p}\sum_{k=1}^N\|\Delta_k^{(N)}\|_{L^p}^p.$$
It is therefore sufficient to prove that 
\begin{equation}\label{eq_cond_norm_p}
\|\Delta_k^{(N)}\|_{L^p}=O(N^{-\frac{1}{2}}).
\end{equation}
By Jensen's and triangular inequalities,
for $1\leq k\leq N$,
\begin{align*}
	\|\Delta_k^{(N)}\|_{L^p}
	&=\Big\|\esp_{\leq k}[-\frac{\partial}{\partial z}\Tr((W_{kk}\gamma_1+\Phi_k(ze_{11}-\gamma_0))\hat{R}_k(ze_{11}-\gamma_0))]\Big\|_{L^p}\\
	&\leq \bigg\|\Tr(\frac{\partial}{\partial z}\Phi_k(ze_{11}-\gamma_0))\hat{R}_k(ze_{11}-\gamma_0))\bigg\|_{L^p}\\
	& \quad + \ \bigg\|\Tr((W_{kk}\gamma_1+\Phi_k(ze_{11}-\gamma_0))\frac{\partial}{\partial z}\hat{R}_k(ze_{11}-\gamma_0))\bigg\|_{L^p}\\
	&\leq m\big\|\hat{R}_k(ze_{11}-\gamma_0)\big\|\big\|
\frac{\partial}{\partial z}\Phi_k(ze_{11}-\gamma_0))
\big\|_{L^p}\\
	&\quad +m\big\|\frac{\partial}{\partial z}\hat{R}_k(ze_{11}-\gamma_0)\big\|\big(\| W_{kk}\|_{L^p}\|\gamma_1\|+  \big\| \Phi_k(ze_{11}-\gamma_0)\big\|_{L^p}\big).
\end{align*}
One deduces \eqref{eq_cond_norm_p} by using  \eqref{majorationunifhatRk}, \eqref{majhatdR}, Lemmas \ref{estimfq},  \ref{puissance} and  \ref{bornepdphik}.

\subsection{Convergence of the hook process} \label{subsectionhook}

\noindent By bilinearity, the verification of conditions \eqref{v} and \eqref{w} is equivalent to the convergence in probability of the {\em hook process}:
\[\Gamma_N(z_1,z_2):=\sum_{k=1}^N\mathbb{E}_{\leq k-1}\Big[\mathbb{E}_{\leq k}\big[\frac{\partial}{\partial z}\Tr((W_{kk}\gamma_1+\Phi_k(\beta_1))\hat R_k(\beta_1))\big]\mathbb{E}_{\leq k}\big[\frac{\partial}{\partial z}\Tr((W_{kk}\gamma_1+\Phi_k(\beta_2))\hat R_k(\beta_2))\big]\Big],\] \[\beta_1=z_1e_{11}-\gamma_0,  \beta_2=z_2e_{11}-\gamma_0, z_1,z_2\in\mathbb{C}\setminus\mathbb{R}.\]

\begin{prop} \label{hook}
 For all $z_1,z_2\in\mathbb{C}\setminus\mathbb{R}$
 \[\Gamma_N(z_1,z_2) \underset{N\to +\infty}{\longrightarrow} \Gamma(z_1,z_2)\]
in probability, where 
 \begin{align*}
 \Gamma(z_1,z_2) & := \frac{\partial^2}{\partial z_1\partial z_2}\gamma(z_1,z_2),
 \end{align*}
\begin{align*}
\gamma(z_1,z_2)
&:=-\Tr\otimes\Tr \left\{ \log \left[\mathrm{id}_m\otimes \mathrm{id}_m -\sigma^2T_{\{z_1 e_{11} -\gamma_0,z_2 e_{11} -\gamma_0\}}\right]  (I_m\otimes I_m)\right\}\\
& \quad -\Tr\otimes\Tr \left\{ \log \left[\mathrm{id}_m\otimes \mathrm{id}_m -\theta T_{\{z_1 e_{11} -\gamma_0,z_2 e_{11} -\gamma_0\}}\right]  (I_m\otimes I_m)\right\}\\
& \quad +(\tilde{\sigma}^2-\sigma^2-\theta)\Tr\otimes \Tr\{T_{\{z_1 e_{11} -\gamma_0,z_2 e_{11} -\gamma_0\}}(I_m\otimes I_m)\}\\
& \quad +\kappa/2\Tr \otimes \Tr \{T_{\{z_1 e_{11} -\gamma_0,z_2e_{11} -\gamma_0\}}^2(I_m\otimes I_m)\}.
\end{align*}
\end{prop}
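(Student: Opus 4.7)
The plan is to apply the central-limit method via Lemma \ref{quadratic forms}. Set $F_k(\beta):=\Tr\bigl((W_{kk}\gamma_1+\Phi_k(\beta))\hat R_k(\beta)\bigr)$; because $\hat R_k(\beta)$ is deterministic, $\mathbb{E}[W_{kk}]=0$, and $\mathbb{E}_k[\Phi_k(\beta)]=0$ by independence of the $k$-th column $C_k^{(k)}$ from $R^{(k)}$, one has $\mathbb{E}_{\leq k-1}[F_k(\beta)]=0$, so that $\mathbb{E}_{\leq k}[\partial_z F_k(\beta)]$ is already the martingale increment (up to sign). The hook process reads
\begin{equation*}
\Gamma_N(z_1,z_2)=\sum_{k=1}^{N}\mathbb{E}_{\leq k-1}\Bigl[\partial_{z_1}\mathbb{E}_{\leq k}[F_k(\beta_1)]\cdot\partial_{z_2}\mathbb{E}_{\leq k}[F_k(\beta_2)]\Bigr]+o_\mathbb{P}(1).
\end{equation*}

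Expand $F_k(\beta_1)F_k(\beta_2)$, isolate the $W_{kk}^2$ piece, and apply Lemma \ref{quadratic forms} with $A'=R^{(k)}(\beta_1)$, $A''=R^{(k)}(\beta_2)$, $\gamma'=\gamma''=\gamma_1$, $\beta'=\hat R_k(\beta_1)$, $\beta''=\hat R_k(\beta_2)$. This splits each summand into four explicit pieces plus a negligible remainder $\varepsilon_k$: a $\sigma_N^4$-weighted sum
\begin{equation*}
\sigma_N^4\sum_{i,j\neq k}\Tr(\gamma_1 R^{(k)}_{ij}(\beta_1)\gamma_1\hat R_k(\beta_1))\Tr(\gamma_1 R^{(k)}_{ji}(\beta_2)\gamma_1\hat R_k(\beta_2)),
\end{equation*}
an analogous $|\theta_N|^2$-weighted sum (with indices $(i,j)$ on both sides), a $\kappa_N$-weighted diagonal sum, and a $\tilde\sigma_N^2$-contribution from $W_{kk}^2$. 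After differentiation in $z_1,z_2$ and summation over $k$, the $\kappa_N$ and $\tilde\sigma_N^2$ pieces immediately give the $\kappa/2\,\Tr\otimes\Tr\{T^2(I\otimes I)\}$ and $(\tilde\sigma^2-\sigma^2-\theta)\Tr\otimes\Tr\{T(I\otimes I)\}$ contributions (the $-\sigma^2-\theta$ shift compensating for diagonal parts reabsorbed in the log-series below), using Remark \ref{elemdiag} and Lemma \ref{maxtendzero} to replace $R^{(k)}_{ii}$ with $\hat R_i$, and $\nu_N\Rightarrow\nu$ together with Lemma \ref{cvunif} to convert $N^{-1}\sum_k$ into $\int d\nu(t)$.

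The essential difficulty lies in the $\sigma_N^4$ and $|\theta_N|^2$ sums (the ``second and third terms'' of the Introduction). The plan is to encode the $\sigma_N^4$ part via the matrix tensor
\begin{equation*}
M_k(\beta_1,\beta_2):=\sum_{i,j\neq k}R^{(k)}_{ij}(\beta_1)\otimes R^{(k)}_{ji}(\beta_2)\in M_m(\mathbb{C})\otimes M_m(\mathbb{C}),
\end{equation*}
observing that the sum equals $\sigma_N^4\,\Tr\otimes\Tr\bigl[(\gamma_1\hat R_k(\beta_1)\otimes\gamma_1\hat R_k(\beta_2))M_k(\beta_1,\beta_2)(\gamma_1\otimes\gamma_1)\bigr]$, up to transpose conventions. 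A block Schur identity applied to $R^{(k)}$, combined with the concentration results of Lemmas \ref{estimfq} and \ref{maxtendzero}, will furnish an approximate fixed-point equation
\begin{equation*}
M_k(\beta_1,\beta_2)\approx A_k+\sigma_N^2\sum_{l\neq k}\bigl(\hat R_l(\beta_1)\gamma_1\otimes\hat R_l(\beta_2)\gamma_1\bigr)M_k(\beta_1,\beta_2)\bigl(\gamma_1\hat R_l(\beta_1)\otimes\gamma_1\hat R_l(\beta_2)\bigr),
\end{equation*}
where $A_k$ is a tractable remainder assembled from diagonal blocks. Since the spectral radius of the operator on the right stays bounded away from $1$ by Lemma \ref{sprN} and Corollary \ref{sprNsanstilde}, one can invert $\mathrm{id}-\sigma_N^2 T_N$ to recover $M_k$ as a convergent geometric series; averaging $N^{-1}\sum_k$ and passing to the limit then matches
\begin{equation*}
\partial_{z_1}\partial_{z_2}\sum_{\ell\geq 1}\frac{\sigma^{2\ell}}{\ell}\,\Tr\otimes\Tr\{T_{\{\beta_1,\beta_2\}}^\ell(I\otimes I)\}=-\partial_{z_1}\partial_{z_2}\Tr\otimes\Tr\log[\mathrm{id}-\sigma^2 T_{\{\beta_1,\beta_2\}}](I\otimes I).
\end{equation*}
The $|\theta_N|^2$ sum is handled identically, with $R^{(k)}_{ij}\otimes R^{(k)}_{ij}$ in place of $R^{(k)}_{ij}\otimes R^{(k)}_{ji}$, and yields the second log.

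The principal obstacle will be the rigorous derivation of the approximate fixed-point equation for $M_k$ with uniform-in-$k$ control of the remainder, together with the concomitant spectral-radius analysis needed to invert $\mathrm{id}-\sigma_N^2 T_N$; these rely crucially on Proposition \ref{invertibility2} (which also guarantees that the limiting log is well defined), Lemma \ref{sprN} and Corollary \ref{sprNsanstilde}. Once these are in place, the uniform bounds of Lemmas \ref{Qbound}, \ref{bound}, and \ref{cvunif}, combined with the convergences from Lemmas \ref{cvRinfiniLp} and \ref{maxtendzero}, permit identifying the limit as $\Gamma(z_1,z_2)=\partial_{z_1}\partial_{z_2}\gamma(z_1,z_2)$.
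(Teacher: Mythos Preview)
Your outline contains a genuine gap at the heart of the argument: the mechanism that produces the logarithm. After applying Lemma \ref{quadratic forms} the sums run over $i,j<k$, not over $i,j\neq k$ as you wrote; correspondingly the correct matrix tensor is
\[
\nabla_k=\sigma_N^2\sum_{i,j<k}\mathbb{E}_{\leq k}[R^{(k)}_{ij}(\beta_1)]\gamma_1\otimes\gamma_1\mathbb{E}_{\leq k}[R^{(k)}_{ji}(\beta_2)],
\]
with both the triangular restriction and the conditioning $\mathbb{E}_{\leq k}$ present. This is not cosmetic. The approximate fixed-point equation one actually obtains is $\nabla_k\approx N\sigma_N^2 T_{N,k,0}(I_m\otimes I_m+\nabla_k)$, where $T_{N,k,0}$ is the \emph{partial} operator built from $\hat R_i$ with $i<k$ only. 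Inverting gives $(\mathrm{id}-N\sigma_N^2 T_{N,k,0})^{-1}(I_m\otimes I_m)$, which is a resolvent, not a logarithm; your claim that ``averaging $N^{-1}\sum_k$ and passing to the limit'' produces $\sum_{\ell\geq 1}\sigma^{2\ell}\ell^{-1}\Tr\otimes\Tr\{T^\ell\}$ is unjustified --- there is no apparent source for the $1/\ell$ factor in a geometric series.

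The paper's route is to recognize that the $k$-th summand is $f_{k,k,N}(0)$ for an interpolation $f_{k,k,N}(t)$ built from $T_{N,k,t}:=T_{N,k,0}+tN^{-1}\hat R_k(\beta_1)\gamma_1\,\cdot\,\gamma_1\hat R_k(\beta_2)$; one shows $|f_{k,k,N}(t)-f_{k,k,N}(0)|=O(N^{-1})$ uniformly, replaces $N^{-1}\sum_k f_{k,k,N}(0)$ by $N^{-1}\sum_k\int_0^1 f_{k,k,N}(t)\,dt$, and then uses the trace identity $\Tr\otimes\Tr[A_k(\mathrm{id}-B_k-tA_k)^{-1}(I\otimes I)]=-\frac{d}{dt}\Tr\otimes\Tr[\log(\mathrm{id}-B_k-tA_k)(I\otimes I)]$ (valid because $\Tr\otimes\Tr[\,\cdot\,(I\otimes I)]$ is a trace on the algebra generated by the $A_k$). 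Since $B_{k+1}=B_k+A_k$, the sum over $k$ telescopes to $\Tr\otimes\Tr\log(\mathrm{id}-N\sigma_N^2 T_N)$. Without the restriction $i,j<k$ there is no partial operator, no interpolation, and no telescoping --- so the logarithm never appears. Your fixed-point scheme for the full-sum $M_k$ would at best yield $\partial_{z_1}\partial_{z_2}\Tr\otimes\Tr[(\mathrm{id}-\sigma^2 T)^{-1}(I\otimes I)]$, which is the wrong limit.
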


In what follows, we focus on the convergence in probability of 
\begin{equation}\label{eq:hook_primitive}
\gamma_N(z_1,z_2):=\sum_{k=1}^N\mathbb{E}_{\leq k-1}\Big[\mathbb{E}_{\leq k}\big[\Tr((W_{kk}\gamma_1+\Phi_k(\beta_1))\hat R_k(\beta_1))\big]\mathbb{E}_{\leq k}\big[\Tr((W_{kk}\gamma_1+\Phi_k(\beta_2))\hat R_k(\beta_2))\big]\Big].
\end{equation}
This will be enough to establish the convergence of the hook process, as explained in Subsection \ref{sec:CV_hook_ccl}. 
Using the independence of $\Phi_k$ and $W_{kk}$, one can easily see that  
\begin{align}\label{gammadecomposition}
 & \sum_{k=1}^N \mathbb{E}_{\leq k-1}\Big[\mathbb{E}_{\leq k}\big[\Tr((W_{kk}\gamma_1+\Phi_k(\beta_1))\hat R_k(\beta_1))\big]\mathbb{E}_{\leq k}\big[\Tr((W_{kk}\gamma_1+\Phi_k(\beta_2))\hat R_k(\beta_2))\big]\Big]\nonumber\\
& =\tilde{\sigma}_N^2 \sum_{k=1}^N\Tr(\gamma_1\hat{R}_k(\beta_1))\Tr(\gamma_1\hat{R}_k(\beta_2))+\sum_{k=1}^N\mathbb{E}_{\leq k-1}\Big[\Tr(\mathbb{E}_{\leq k}\big[\Phi_k(\beta_1)\big]\hat{R}_k(\beta_1))\Tr(\mathbb{E}_{\leq k}\big[\Phi_k(\beta_2)\big]\hat{R}_k(\beta_2))\Big].
\end{align}
Thus, by Lemma \ref{quadratic forms},
\begin{align}\label{eq:hook_primitive2}
 \gamma_N(z_1,z_2) & =\tilde{\sigma}_N^2 \sum_{k=1}^N\Tr(\gamma_1\hat{R}_k(\beta_1))\Tr(\gamma_1\hat{R}_k(\beta_2))\\
& + \sigma_N^4\sum_{k=1}^N\sum_{i,j<k}\Tr\left(\gamma_1\mathbb{E}_{\leq k}[R_{ij}^{(k)}(\beta_1)]\gamma_1\hat{R}_k(\beta_1)\right)\Tr\left(\gamma_1\mathbb{E}_{\leq k}[R_{ji}^{(k)}(\beta_2)]\gamma_1\hat{R}_k(\beta_2)\right)\nonumber\\
& +  |\theta_N|^2\sum_{k=1}^N\sum_{i,j<k}\Tr\left(\gamma_1\mathbb{E}_{\leq k}[R_{ij}^{(k)}(\beta_1)]\gamma_1\hat{R}_k(\beta_1)\right)\Tr\left(\gamma_1\mathbb{E}_{\leq k}[R_{ij}^{(k)}(\beta_2)]\gamma_1\hat{R}_k(\beta_2)\right)\nonumber\\
& + \kappa_N \sum_{k=1}^N\sum_{i<k}\Tr\left(\gamma_1\mathbb{E}_{\leq k}[R_{ii}^{(k)}(\beta_1)]\gamma_1\hat{R}_k(\beta_1)\right)\Tr\left(\gamma_1\mathbb{E}_{\leq k}[R_{ii}^{(k)}(\beta_2)]\gamma_1\hat{R}_k(\beta_2)\right)+ \sum_{k=1}^N
\varepsilon_k,\nonumber
\end{align}
where  \[\sum\limits_{k=1}^N\varepsilon_k \overset{\P}{\underset{N \to +\infty}{\longrightarrow}} 0.\]
Therefore, what remains to study is the sum of four terms. They will be studied separately in the following paragraphs. The first and the fourth terms are studied very easily, whereas the second and third ones need quite long computations, making repeated use of linear algebra properties which were collected in Section \ref{sec:preliminary_results}. These second and third terms are very similar.

\subsubsection{Contribution of the first term of \eqref{eq:hook_primitive2}}\label{sec_rst_term}

\begin{lem}\label{vingt5}
Define $f(\omega,t):=\Tr(\gamma_1(\omega-t\gamma_2)^{-1})$ for $\omega\in M_m(\mathbb C),t\in\mathbb R$
such that $\omega-t\gamma_2$ is invertible in $M_m(\mathbb C)$. Then
\[
\int_{\mathbb{R}}f(\omega_N(\beta_1),t)f(\omega_N(\beta_2),t)\nu_N(dt)
\underset{N\to +\infty}{\longrightarrow}\int_{\mathbb{R}}f(\omega(\beta_1),t)f(\omega(\beta_2),t)\nu(dt).
\]
\end{lem}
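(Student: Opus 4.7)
The plan is to compare the two integrals by a standard triangle-inequality split, separating the convergence of the integrand from the convergence of the measure. Fix $M>0$ large enough that $\text{supp}(\nu_N)\cup\text{supp}(\nu)\subset[-M,M]$ for all $N$; this is possible because $\sup_N\|D_N\|<\infty$ (Assumption \ref{hyp:deformation}). Introduce the intermediate quantity
\[
J_N:=\int_{\mathbb{R}}f(\omega(\beta_1),t)f(\omega(\beta_2),t)\,\nu_N(dt),
\]
and denote by $I_N$ and $I$ the left- and right-hand sides of the statement. Then $I_N-I=(I_N-J_N)+(J_N-I)$, and I treat the two differences separately.

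For $I_N-J_N$, I invoke Lemma \ref{cvunif}, which yields
\[
\sup_{t\in\text{supp}(\nu_N)}\bigl\|(\omega_N(\beta_j)-t\gamma_2)^{-1}-(\omega(\beta_j)-t\gamma_2)^{-1}\bigr\|\underset{N\to +\infty}{\longrightarrow}0,\quad j=1,2.
\]
Combined with the uniform-in-$t$ bound \eqref{bornemanq} on $\|(\omega_N(\beta_j)-t\gamma_2)^{-1}\|$ and with Lemma \ref{bound} bounding $\|(\omega(\beta_j)-t\gamma_2)^{-1}\|$ on $[-M,M]$, writing the telescoping
\[
f(\omega_N(\beta_1),t)f(\omega_N(\beta_2),t)-f(\omega(\beta_1),t)f(\omega(\beta_2),t)=[\Delta_1(t)]f(\omega_N(\beta_2),t)+f(\omega(\beta_1),t)[\Delta_2(t)],
\]
with $\Delta_j(t):=f(\omega_N(\beta_j),t)-f(\omega(\beta_j),t)$, shows that the supremum over $t\in\text{supp}(\nu_N)$ of the left-hand side tends to $0$. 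Since $\nu_N$ is a probability measure, $|I_N-J_N|\to 0$.

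For $J_N-I$, note that by Lemma \ref{bound} the map $(\omega(\beta_j)-t\gamma_2)^{-1}$ is analytic, hence continuous, in $t\in\mathbb R$, so $t\mapsto f(\omega(\beta_1),t)f(\omega(\beta_2),t)$ is continuous on $\mathbb R$ and bounded on $[-M,M]$. Since both $\nu_N$ and $\nu$ are supported in $[-M,M]$, the integrand may be multiplied by a continuous compactly supported cutoff equal to $1$ on $[-M,M]$ without changing $J_N$ or $I$; weak convergence $\nu_N\Rightarrow\nu$ then gives $J_N\to I$.

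The argument is a routine combination of already established tools; the only point deserving attention is ensuring that the supports of $\nu_N$ are uniformly bounded and that the integrand remains continuous and bounded on a common compact set, both of which follow from $\sup_N\|D_N\|<\infty$ and Lemma \ref{bound}. No genuine obstacle appears.
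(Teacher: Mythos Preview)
Your proof is correct and follows essentially the same approach as the paper: both split the difference as (change of integrand, integrated against $\nu_N$) plus (fixed integrand, integrated against $\nu_N-\nu$), and handle the two pieces using Lemma \ref{cvunif} together with the bounds \eqref{bornemanq} and Lemma \ref{bound} for the first, and weak convergence $\nu_N\Rightarrow\nu$ with bounded continuity on a common compact for the second. Your explicit telescoping and cutoff are harmless elaborations of the same argument.
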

\begin{proof}
\begin{align*}
\Big|\int_{\mathbb{R}} f( &\omega_N(\beta_1),t)f(\omega_N(\beta_2),t)\nu_N(dt)-\int_{\mathbb{R}}f(\omega(\beta_1),t)f(\omega(\beta_2),t)\nu(dt)\Big|\\
&= \Big|\int_{\mathbb{R}}f(\omega_N(\beta_1),t)f(\omega_N(\beta_2),t)-f(\omega(\beta_1),t)f(\omega(\beta_2),t)\nu_N(dt)\\
& \hspace{7cm} +\int_{\mathbb{R}}
f(\omega(\beta_1),t)f(\omega(\beta_2),t)[\nu_N(dt)-\nu(dt)]\Big|\\
&\le \int_{\mathbb{R}}\Big|f(\omega_N(\beta_1),t)f(\omega_N(\beta_2),t)-f(\omega(\beta_1),t)f(\omega(\beta_2),t)\Big|\nu_N(dt)\\
& \hspace{7cm}+\Big|\int_{\mathbb{R}}f(\omega(\beta_1),t)f(\omega(\beta_2),t)[\nu_N(dt)-\nu(dt)]\Big|\\
&\le \sup_{t\in\mathrm{supp}(\nu_N)}\Big|f(\omega_N(\beta_1),t)f(\omega_N(\beta_2),t)-f(\omega(\beta_1),t)f(\omega(\beta_2),t)\Big|\\
& \hspace{7cm}+\Big|\int_{\mathbb{R}}f(\omega(\beta_1),t)f(\omega(\beta_2),t)[\nu_N(dt)-\nu(dt)]\Big|.
\end{align*}
The first summand tends to zero as $N\to\infty$ by (\eqref{bornemanq}, Lemma \ref{bound} and) Lemma \ref{cvunif}, and the second summand tends to zero according to the hypothesis that $\nu_N$ weakly converges to $\nu$, the function $t\mapsto f(\omega(\beta_1),t)f(\omega(\beta_2),t)$ being bounded continuous on a compact set containing the supports of all $\nu_N, N\in \mathbb{N}$ (by Lemma \ref{bound}).
\end{proof} 
Hence, since $N^{-1}\sum_{k=1}^N\Tr(\gamma_1\hat{R}_k(\beta_1))\Tr(\gamma_1\hat{R}_k(\beta_2))=\int_{\mathbb{R}}f(\omega_N(\beta_1),t)f(\omega_N(\beta_2),t)
\nu_N(dt)$ (see \eqref{hatR}), it follows that
$$
\tilde{\sigma}_N^2\sum_{k=1}^N\Tr(\gamma_1\hat{R}_k(\beta_1))\Tr(\gamma_1\hat{R}_k(\beta_2))\underset{N\to +\infty}{\longrightarrow} \tilde{\sigma}^2\int_{\mathbb{R}}f(\omega(\beta_1),t)
f(\omega(\beta_2),t)\nu(dt).
$$

\subsubsection{Contribution of the second term of \eqref{eq:hook_primitive2}} \label{sec_2nd_term}
Denote by $T_{N,k,t}(z_1,z_2)$ the operator defined on $M_m(\mathbb{C}) \otimes M_m(\mathbb{C})$ by 
\begin{align*}
 T_{N,k,t}(z_1,z_2)(b_1\otimes b_2)& = N^{-1}\sum_{i<k} \hat{R}_i(z_1e_{11}-\gamma_0)\gamma_1b_1\otimes b_2\gamma_1\hat{R}_i(z_2e_{11}-\gamma_0)\\& \hspace{3cm}+N^{-1}t \hat{R}_k(z_1e_{11}-\gamma_0)\gamma_1b_1\otimes b_2\gamma_1\hat{R}_k(z_2e_{11}-\gamma_0).
\end{align*}
The key idea is to note that this second term can be rewritten as follows 
\begin{align*}
\sigma_N^4\sum_{k=1}^N\sum_{i,j<k}\Tr( & \gamma_1\mathbb{E}_{\leq k}[ R_{ij}^{(k)}(\beta_1)]\gamma_1\hat{R}_k(\beta_1))\Tr(\gamma_1\mathbb{E}_{\leq k}[ R_{ji}^{(k)}(\beta_2)]\gamma_1\hat{R}_k(\beta_2))\\
& =N\sigma_N^2\Tr \otimes \Tr T_N(z_1,z_2)(\nabla_k), 
\end{align*}
where, for any $k\in \{1,\ldots,N\}$, 
$$\nabla_k=\sigma_N^2\sum_{i,j<k}\mathbb{E}_{\leq k}[ R_{ij}^{(k)}(\beta_1)]\gamma_1\otimes \gamma_1\mathbb{E}_{\leq k}[ R_{ji}^{(k)}(\beta_2)] \in M_m(\mathbb{C})\otimes M_m(\mathbb{C})$$
satisfies the approximate equation
$$\nabla_k =N\sigma_N^2T_{N,k,0}(z_1,z_2)(I_m\otimes I_m+\nabla_k)+o_{L^1}(1).$$ 
In order to deduce an estimate of $\nabla_k$ we need to study the spectral radius of 
$T_{N,k,0}(z_1,z_2)$. 

\begin{prop}\label{Kz}
For  any $z \in \C\setminus \R$, there exists 
$K(z)\in (0,1)$ such that for all large $N$, for any $k\in\{1,\ldots,N\}$, for any $t\in [0;1]$, the spectral radius of 
$N\sigma_N^2T_{N,k,t}(z, \bar z)$ is smaller than $K(z)$.
\end{prop}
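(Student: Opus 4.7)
The plan is to reduce the spectral radius computation on $M_m(\mathbb{C})\otimes M_m(\mathbb{C})$ to one on $M_m(\mathbb{C})$, and then to exploit complete positivity so as to dominate the reduced operator by the operator $u_N(z,\bar z)$ of Lemma \ref{sprN}.

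First, $N\sigma_N^2 T_{N,k,t}(z_1,z_2)$ has exactly the same algebraic structure as the operator $\sigma^2 T_{\{z_1e_{11}-\gamma_0,z_2e_{11}-\gamma_0\}}$ treated in Section \ref{equalityspectra}, the only difference being that $\omega$ is replaced by $\omega_N$ and the probability measure $\nu$ is replaced by the finite positive measure $\mu_{N,k,t}:=\sigma_N^2\sum_{i<k}\delta_{D_{ii}}+t\sigma_N^2\delta_{D_{kk}}$. The factorization $F\otimes F(I_m\otimes(\cdot)\otimes I_m)$ of Section \ref{equalityspectra} therefore applies verbatim and shows that $N\sigma_N^2 T_{N,k,t}(z_1,z_2)$ has the same eigenvalues as the operator $v_{N,k,t}\colon M_m(\mathbb{C})\to M_m(\mathbb{C})$ defined by
\begin{align*}
v_{N,k,t}(b) &= \sigma_N^2\sum_{i<k}\hat R_i(z_1e_{11}-\gamma_0)\gamma_1 b \gamma_1 \hat R_i(z_2e_{11}-\gamma_0)\\
&\quad + t\sigma_N^2\hat R_k(z_1e_{11}-\gamma_0)\gamma_1 b \gamma_1 \hat R_k(z_2e_{11}-\gamma_0).
\end{align*}

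Specializing to $(z_1,z_2)=(z,\bar z)$, the selfadjointness of $\gamma_0,\gamma_1,\gamma_2$ together with the definitions of $\omega_N$ and $\hat R_i$ yields $\omega_N(\bar ze_{11}-\gamma_0)=\omega_N(ze_{11}-\gamma_0)^*$, and hence $\hat R_i(\bar ze_{11}-\gamma_0)=\hat R_i(ze_{11}-\gamma_0)^*$. Setting $A_i:=\hat R_i(ze_{11}-\gamma_0)\gamma_1$, the above becomes $v_{N,k,t}(z,\bar z)(b)=\sigma_N^2\sum_{i<k}A_i b A_i^*+t\sigma_N^2 A_k b A_k^*$, a non-negative combination of elementary completely positive maps, hence itself completely positive. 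The same identity shows that
$$
u_N(z,\bar z)-v_{N,k,t}(z,\bar z)=\sigma_N^2(1-t)A_k(\cdot)A_k^*+\sigma_N^2\sum_{i>k}A_i(\cdot)A_i^*
$$
is completely positive for every $t\in[0,1]$.

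It remains to invoke monotonicity of the spectral radius under CP-domination: if $0\leq\Psi\leq\Phi$ as CP maps on $M_m(\mathbb{C})$, then an immediate induction yields $\Psi^n(I_m)\leq\Phi^n(I_m)$ for every $n$, and the Russo--Dye identity $\|\Psi^n\|=\|\Psi^n(I_m)\|$ (valid for CP maps) combined with the monotonicity of the operator norm on positive matrices gives $\rho(\Psi)=\lim_n\|\Psi^n(I_m)\|^{1/n}\leq\lim_n\|\Phi^n(I_m)\|^{1/n}=\rho(\Phi)$. Applying this to $\Psi=v_{N,k,t}(z,\bar z)$ and $\Phi=u_N(z,\bar z)$ produces $\rho(N\sigma_N^2 T_{N,k,t}(z,\bar z))=\rho(v_{N,k,t}(z,\bar z))\leq\rho(u_N(z,\bar z))$ uniformly in $k\in\{1,\dots,N\}$ and $t\in[0,1]$, and Lemma \ref{sprN} (applied with the compact $K=\{z,\bar z\}$) furnishes $K(z)\in(0,1)$ with $\rho(u_N(z,\bar z))\leq K(z)$ for all large $N$. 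The main delicate point, in my view, is verifying that the spectrum-equality reduction of Section \ref{equalityspectra} is insensitive to the truncation of the sum and to the weight $t$, and carrying the CP-domination step through uniformly in $k$ and $t$; the rest follows directly from results already established in the paper.
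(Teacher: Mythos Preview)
Your proof is correct and rests on the same core idea as the paper's: domination of the truncated operator by the full one in the positive-map order, together with monotonicity of the spectral radius under such domination. The paper's argument is more compressed: it works directly on $M_m(\mathbb C)\otimes M_m(\mathbb C)$ with the inequality $T_{N,k,t}(z,\bar z)\leq T_N(z,\bar z)$, invokes Corollary~\ref{sprNsanstilde} for the spectral-radius bound on $N\sigma_N^2T_N$, and cites Lemma~\ref{cp} for monotonicity. You instead carry out the spectral reduction of Section~\ref{equalityspectra} first, compare $v_{N,k,t}\leq u_N$ on $M_m(\mathbb C)$, and finish with Lemma~\ref{sprN} together with a self-contained Russo--Dye argument in place of Lemma~\ref{cp}. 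Your route has the advantage that, after the reduction, the maps are visibly of the form $\sum_i A_i(\cdot)A_i^*$, so their (complete) positivity and the positivity of their difference are immediate; the paper's direct formulation on $M_m(\mathbb C)\otimes M_m(\mathbb C)$ requires seeing why maps of the shape $x\mapsto(C_i\otimes I_m)x(I_m\otimes C_i^*)$ fit the hypotheses of Lemma~\ref{cp}, which is less transparent.
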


\begin{proof}
According to Corollary \ref{sprNsanstilde}, we know that one may find $K(z)\in (0,1)$ such that, for large $N\in \N$, the spectral radius of $N\sigma_N^2T_N(z, \bar z)$
is smaller than $K(z)$. Now, note that for any choice of $k\in\{1,\ldots,N\}$ and $t\in [0;1]$, the operator $T_{N,k,t}(z, \bar z)$ satisfies $T_{N,k,t}(z, \bar z)\leq T_N(z, \bar z)$.
Thus, by Lemma  \ref{cp}, the spectral radius of $N\sigma_N^2T_{N,k,t}(z, \bar z)$
is smaller than $K(z)$ for large $N$. Corollary \ref{Kz} follows.
\end{proof}

\begin{lem}\label{rz}
Around any $z_1\in \C \setminus \R$, there is an open set $O_{z_1}$ such that, for $z\in O_{z_1}$, for $N\geq N_{z_1}$, for any $k\leq N$, for any $t\in [0,1]$,  the spectrum of the operator $N\sigma_N^2T_{N,k,t}(z,\bar z_1):M_m(\mathbb{C}) \otimes M_m(\mathbb{C})\to M_m(\mathbb{C}) \otimes M_m(\mathbb{C})$ 
is included in the open unit disk and $\{(\mathrm{id}_m\otimes \mathrm{id}_m - N\sigma_N^2T_{N,k,t}(z,\bar z_1))^{-1},z\in O_{z_1},N\geq N_{z_1},k\leq N,t\in [0,1]\}$ is bounded.
\end{lem}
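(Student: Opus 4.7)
The plan is to treat the case $z=z_1$ as a base using Proposition \ref{Kz} together with finite-dimensional compactness, and then obtain the claim for nearby $z$ by a simple perturbation. First, I would observe that Proposition \ref{Kz} applied at $z=z_1$ yields some $K(z_1)\in(0,1)$ such that $\rho(N\sigma_N^2 T_{N,k,t}(z_1,\bar z_1))\leq K(z_1)$ uniformly for $N\geq N_0$, $k\leq N$, $t\in[0,1]$. The family $\mathcal{F}:=\{N\sigma_N^2 T_{N,k,t}(z_1,\bar z_1)\}$ is moreover uniformly bounded in the finite-dimensional space $\mathcal{L}(M_m(\mathbb C)\otimes M_m(\mathbb C))$: indeed, $N\sigma_N^2=O(1)$, the prefactor $N^{-1}(k-1+t)$ is at most $1$, and each $\hat R_i(z_1e_{11}-\gamma_0)$ and $\hat R_i(\bar z_1e_{11}-\gamma_0)$ is bounded uniformly in $i,N$ by \eqref{majorationunifhatRk}. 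Consequently the closure $\overline{\mathcal{F}}$ is compact, and by upper semicontinuity of the spectral radius, $\overline{\mathcal{F}}\subset\{A:\rho(A)<1\}$, which is an open set. By compactness, there exists $\epsilon>0$ such that the closed $\epsilon$-neighborhood $\overline{\mathcal{F}}_\epsilon$ is itself contained in $\{A:\rho(A)<1\}$; being closed and bounded, it is compact, and on it the continuous function $A\mapsto\|(\mathrm{id}-A)^{-1}\|$ attains a finite maximum $C_0$.

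Next I would establish the uniform Lipschitz-type estimate
\[
\|N\sigma_N^2(T_{N,k,t}(z,\bar z_1)-T_{N,k,t}(z_1,\bar z_1))\|_{op}\leq C'|z-z_1|
\]
for all $z$ in a fixed compact subset of $\mathbb C\setminus\mathbb R$ containing $z_1$, uniformly in $N\geq N_0$, $k\leq N$, $t\in[0,1]$. This reduces to the uniform bound $\|\hat R_i(ze_{11}-\gamma_0)-\hat R_i(z_1e_{11}-\gamma_0)\|\leq C|z-z_1|$, which in turn is a direct consequence of \eqref{majhatdR} combined with \eqref{majorationunifhatRk}; the $N\sigma_N^2$-factor remains $O(1)$ and the total weight $N^{-1}(k-1+t)\leq 1$, so no $N$-dependence is introduced.

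Finally, I would take $O_{z_1}:=\{z\in\mathbb C\setminus\mathbb R:|z-z_1|<\epsilon/C'\}$, intersected with a small enough fixed compact neighborhood of $z_1$ in $\mathbb C\setminus\mathbb R$ so that the Lipschitz bound above applies. For $z\in O_{z_1}$, the perturbation estimate places $N\sigma_N^2 T_{N,k,t}(z,\bar z_1)$ within $\epsilon$ of $N\sigma_N^2 T_{N,k,t}(z_1,\bar z_1)\in\mathcal{F}$, hence inside $\overline{\mathcal{F}}_\epsilon$, giving both $\rho(N\sigma_N^2 T_{N,k,t}(z,\bar z_1))<1$ and $\|(\mathrm{id}_m\otimes\mathrm{id}_m-N\sigma_N^2 T_{N,k,t}(z,\bar z_1))^{-1}\|\leq C_0$. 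The main obstacle in this plan is the passage from the uniform spectral radius bound delivered by Proposition \ref{Kz} to a uniform bound on the inverse norm: there is no purely algebraic implication between the two, and one must genuinely exploit compactness of $\overline{\mathcal{F}}$ in finite dimensions, together with continuity of inversion on the open set of invertible operators, in order to produce the constant $C_0$ valid uniformly over $N,k,t$.
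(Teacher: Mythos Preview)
Your argument is correct and essentially parallel to the paper's: Proposition \ref{Kz} provides the uniform spectral-radius bound at $z_1$, \eqref{majorationunifhatRk} gives uniform boundedness of the family, and a Lipschitz estimate in $z$ (you use \eqref{majhatdR}; the paper uses the resolvent identity, Lemma \ref{lem_resolvent_identity}) extends everything to a neighborhood. The paper converts the uniform spectral-radius bound into a uniform inverse-norm bound through the explicit estimate of Lemma \ref{majnormerayon} rather than your abstract compactness-plus-continuity argument, but the two routes are equivalent in this finite-dimensional setting. One minor slip: it is the \emph{continuity} of the spectral radius in finite dimensions, not its upper semicontinuity (which goes the wrong way for closing sublevel sets), that lets you pass from $\rho\leq K(z_1)$ on $\mathcal{F}$ to the same bound on $\overline{\mathcal{F}}$.
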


Without loss of generality, one may assume that $z\in O_{z_1}$ satisfy $\Im z\geq c\Im z_1$ for some $c>0$.


\begin{proof}
By \eqref{majorationunifhatRk}, $\{T_{N,k,t}(z_1,\bar z_1), N \in \N, k\leq N, t\in [0,1]\} $ is included in a  centered ball with some  radius  $r_{z_1}$ in $M_{m^4}(\C)$. Moreover, by \eqref{majorationunifhatRk} and Lemma \ref{lem_resolvent_identity}, the family of functions $\{z\mapsto T_{N,k,t}(z,\bar z_1), N\in \N, k\leq N,t\in [0,1]\}$ is equicontinuous 
on any compact set of $\mathbb{C}\setminus \mathbb{R}$.
Therefore, the  assertions easily follow from Proposition \ref{Kz} by using the  uniform continuity of the spectral radius and  of the norm  on compact sets  of $M_{m^4}(\C)$ and   Lemma  \ref{majnormerayon}.
\end{proof}


\begin{prop}\label{analog}
Fix $z_1 \in \C \setminus \R$ 
and $z\in O_{z_1}$ (defined in Lemma \ref{rz}). Set  $\beta=ze_{11}-\gamma_0$ and $\beta_1^*=\bar z_1e_{11}-\gamma_0$ in $M_m(\C)$.
The following convergence holds in probability:
\begin{align}
 \sigma_N^4 & \sum_{k=1}^N\sum_{i,j<k}\Tr(\gamma_1\mathbb{E}_{\leq k}[ R_{ij}^{(k)}(\beta)]\gamma_1\hat{R}_k(\beta))\Tr(\gamma_1\mathbb{E}_{\leq k}[ R_{ji}^{(k)}
(\beta_1^*)]\gamma_1\hat{R}_k(\beta_1^*))\nonumber\\
& \rightarrow_{N\rightarrow +\infty}-{\rm Tr}\otimes\Tr\left\{\log\left[\mathrm{id}_m\otimes\mathrm{id}_m-\sigma^2T_{\{\beta,\beta_1^*\}}\right](I_m\otimes I_m)\right\}\nonumber\\
& \hspace{7cm} -\sigma^2{\rm Tr}\otimes\Tr\left\{T_{\{\beta,\beta_1^*\}}(I_m\otimes I_m)\right\}.\nonumber
\end{align}
\end{prop}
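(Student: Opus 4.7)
The approach follows the outline given in the paper's introduction for the second term of the hook process: rewrite $S_N$ as a trace paired with a matrix tensor $\nabla_k \in M_m(\C)\otimes M_m(\C)$, derive an approximate fixed-point equation for $\nabla_k$, invert it using Lemma \ref{rz}, and compute the limit. With $\beta = ze_{11}-\gamma_0$ and $\beta_1^* = \bar z_1 e_{11}-\gamma_0$, the identity $\Tr(XY)\Tr(ZW) = \Tr\otimes\Tr((X\otimes Z)(Y\otimes W))$ combined with cyclic invariance of the trace rewrites the sum under study as
\[
S_N \;=\; \sigma_N^2 \sum_{k=1}^N \Tr\otimes\Tr\bigl[\nabla_k\cdot \Delta_k\bigr], \qquad \Delta_k := \hat R_k(\beta)\gamma_1\otimes\gamma_1\hat R_k(\beta_1^*),
\]
where
\[
\nabla_k \;:=\; \sigma_N^2\sum_{i,j<k}\mathbb{E}_{\leq k}[R_{ij}^{(k)}(\beta)]\gamma_1\otimes\gamma_1\mathbb{E}_{\leq k}[R_{ji}^{(k)}(\beta_1^*)].
\]

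The central technical step is the approximate fixed-point equation
\[
\nabla_k \;=\; N\sigma_N^2\, T_{N,k,0}(\beta,\beta_1^*)(I_m\otimes I_m + \nabla_k) + \varepsilon_k, \qquad \|\varepsilon_k\|_{L^1}\to 0 \text{ uniformly in } k.
\]
The diagonal part of $\nabla_k$ (terms with $i=j$) yields $N\sigma_N^2\, T_{N,k,0}(\beta,\beta_1^*)(I_m\otimes I_m)$ up to $o_{L^1}(1)$, via the approximation $\mathbb{E}_{\leq k}[R_{ii}^{(k)}]\simeq\hat R_i$ of Remark \ref{elemdiag} together with H\"older's inequality and Lemma \ref{puissance}. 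For the off-diagonal part ($i\neq j$), the Schur block-inversion formula of Proposition \ref{Schur} applied respectively to $R^{(k)}(\beta)$ at row $i$ and to $R^{(k)}(\beta_1^*)$ at row $j$ gives
\[
R_{ij}^{(k)}(\beta) \;=\; R_{ii}^{(k)}(\beta)\gamma_1\sum_{l\neq i,k}W_{il}R_{lj}^{(ki)}(\beta), \qquad R_{ji}^{(k)}(\beta_1^*) \;=\; R_{jj}^{(k)}(\beta_1^*)\gamma_1\sum_{l'\neq j,k}W_{jl'}R_{l'i}^{(kj)}(\beta_1^*).
\]
Taking $\mathbb{E}_{\leq k}$ of the product and pairing the $W_{il}$'s with the $W_{jl'}$'s via the quadratic form computation of Lemma \ref{quadratic forms}, then invoking the concentration bound of Lemma \ref{estimfq}, the approximations $R_{ii}^{(kj)}\simeq\hat R_i$ and $R_{jj}^{(ki)}\simeq \hat R_j$ of Remarks \ref{elemdiag}--\ref{kilhat}, and Lemma \ref{asymptoticfreenessLp} to replace empirical traces by their subordination-function limits, the off-diagonal contribution reassembles into $N\sigma_N^2\,T_{N,k,0}(\beta,\beta_1^*)(\nabla_k)$ plus an $L^1$-vanishing remainder. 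Lemma \ref{rz} ensures that for $N$ large, $\mathrm{id}_m\otimes\mathrm{id}_m - N\sigma_N^2\,T_{N,k,0}(\beta,\beta_1^*)$ is invertible with uniformly bounded inverse in $k$, hence
\[
I_m\otimes I_m + \nabla_k \;=\; \bigl(\mathrm{id}_m\otimes\mathrm{id}_m - N\sigma_N^2\,T_{N,k,0}(\beta,\beta_1^*)\bigr)^{-1}(I_m\otimes I_m) + o_{L^1}(1).
\]

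Substituting back and expanding the resolvent as a geometric series $\sum_{n\geq 0}(N\sigma_N^2\,T_{N,k,0})^n(I_m\otimes I_m)$, convergent uniformly in $k$ by Proposition \ref{Kz}, the $n$-th term in the expansion of $S_N$ reads
\[
\sigma_N^{2(n+1)}\sum_{k=1}^N\sum_{i_1,\ldots,i_n<k}\Tr\bigl(\hat R_{i_n}(\beta)\gamma_1\cdots\hat R_{i_1}(\beta)\gamma_1\hat R_k(\beta)\gamma_1\bigr)\Tr\bigl(\gamma_1\hat R_{i_1}(\beta_1^*)\cdots\gamma_1\hat R_{i_n}(\beta_1^*)\gamma_1\hat R_k(\beta_1^*)\bigr).
\]
The two traces cycle their $\hat R$-indices in opposite directions, so their product depends only on the dihedral class of the cyclic word on $\{i_1,\ldots,i_n,k\}$; combined with the permutation invariance of the Wigner law (which allows one to reduce to a ``generic'' ordering of the diagonal entries of $D_N$), a Riemann-sum argument based on Lemma \ref{vingt5} and the identity $\int_0^1 u^n\,du=\frac{1}{n+1}$ (the factor arising from the position constraint $i_1,\ldots,i_n<k$) identifies the $n$-th term's limit as $\frac{\sigma^{2(n+1)}}{n+1}\Tr\otimes\Tr[T_{\{\beta,\beta_1^*\}}^{n+1}(I_m\otimes I_m)]$. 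Summing over $n\geq 1$ and invoking the series expansion $-\log(1-x)-x=\sum_{n\geq 2}x^n/n$ applied to the operator $\sigma^2 T_{\{\beta,\beta_1^*\}}$ (whose spectral radius is less than $1$ by Proposition \ref{invertibility2}) yields the claimed limit. The main obstacle is the derivation of the approximate fixed-point equation for $\nabla_k$: as flagged in the introduction, it requires a delicate multi-layer Schur expansion combined with careful $L^p$-bookkeeping and the off-diagonal approximations assembled in Section \ref{prelimi}.
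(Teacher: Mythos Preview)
Your overall strategy---define the tensor $\nabla_k$, derive the approximate fixed-point equation $\nabla_k = N\sigma_N^2 T_{N,k,0}(I_m\otimes I_m+\nabla_k)+o_{L^1}(1)$, invert via Lemma~\ref{rz}, then compute the limit---matches the paper. However, two points deserve comment.

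\emph{Derivation of the fixed-point equation.} Your sketch (Schur-expand $R_{ij}^{(k)}(\beta)$ at row $i$ and $R_{ji}^{(k)}(\beta_1^*)$ at row $j$, then ``pair the $W_{il}$'s with the $W_{jl'}$'s via Lemma~\ref{quadratic forms}'') is not how the paper proceeds and is hard to make sense of: the two $\mathbb{E}_{\leq k}$'s are separate, so there is no quadratic form in $C_k^{(k)}$ to which Lemma~\ref{quadratic forms} applies. The paper instead expands only one factor using the identity $(\omega_N-D_{ii}\gamma_2)R_{ij}^{(k)}=\delta_{ij}I_m+\sum_l W_{il}\gamma_1 R_{lj}^{(kil)}+\text{errors}$ (Lemma~\ref{nablak}), and then in Lemma~\ref{equationapprochee} removes the $(i,l)$-entry from the second factor via the resolvent identity \eqref{remove}; the term carrying $|W_{il}|^2$ regenerates $\nabla_k$ through $T_{N,k,0}$. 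This sequential expansion is the technical heart of the proof and is considerably more delicate than your sketch suggests.

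\emph{Computing the limit.} Here you take a genuinely different route from the paper. The paper introduces the interpolation $T_{N,k,t}$, shows $f_{k,k,N}(0)\approx\int_0^1 f_{k,k,N}(t)\,dt$, recognizes the integrand as $-\frac{d}{dt}(\Tr\otimes\Tr)[\log(\mathrm{id}-B_k-tA_k)(I_m\otimes I_m)]$ (Lemma~\ref{idlog}), and telescopes using $B_{k+1}=B_k+A_k$ to obtain the log directly. Your series expansion plus symmetry argument also works, but your justification is muddled: the product of the two traces is invariant under \emph{cyclic} shifts of $(t_0,t_1,\ldots,t_n)$ (by cyclic invariance of each trace separately), not under the full dihedral group---reversal swaps the roles of $\beta$ and $\beta_1^*$, which are different. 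Cyclic symmetry alone is exactly what is needed: over distinct indices, each ordered $(n{+}1)$-tuple has a unique cyclic shift placing the maximum in the $k$-slot, so $\sum_k\sum_{i_1,\ldots,i_n<k}F=\frac{1}{n+1}\sum_{(j_0,\ldots,j_n)\text{ distinct}}F+O(N^n)$, and the unconstrained sum is $(N)^{n+1}\int F\,d\nu_N^{\otimes(n+1)}+O(N^n)$. The ``permutation invariance of the Wigner law'' is a red herring---at this stage everything is deterministic, and permuting would change the filtration used in the martingale decomposition. The $\int_0^1 u^n\,du$ heuristic (which would require $\nu_N^{(k)}\Rightarrow\nu$ for partial sums, not assumed) is unnecessary once cyclic symmetry is used correctly.
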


\begin{proof}
Define for any $k\in \{1,\ldots,N\}$, 
\begin{equation}\label{defnabl}\nabla_k=\sigma_N^2\sum_{i,j<k}\mathbb{E}_{\leq k}[ R_{ij}^{(k)}(\beta)]\gamma_1\otimes \gamma_1\mathbb{E}_{\leq k}[ R_{ji}^{(k)}(\beta_1^*)] \in M_m(\mathbb{C})\otimes M_m(\mathbb{C}).\end{equation}
Note that 
\begin{align}\label{trick}
 \sigma_N^4\sum_{k=1}^N\sum_{i,j<k} \Tr(\gamma_1\mathbb{E}_{\leq k}[ R_{ij}^{(k)}(\beta)]\gamma_1\hat{R}_k(\beta)) \Tr(\gamma_1\mathbb{E}_{\leq k} & [ R_{ji}^{(k)}(\beta_1^*)]\gamma_1\hat{R}_k(\beta_1^*))\nonumber\\
 & =N\sigma_N^2\Tr \otimes \Tr T_{N,k,0}(z,\bar z_1)(\nabla_k).
\end{align}

\begin{lem}\label{nablak}
With the notation of Proposition \ref{analog}, 
\begin{align*}
\nabla_k
&=\sigma_N^2\sum_{i<k}\hat{R}_i(\beta) \gamma_1 \otimes \gamma_1 \mathbb{E}_{\leq k-1}\big(R_{ii}^{(k)}(\beta_1^*)\big)\\
& \hspace{1cm} +\sigma_N^2\sum_{i,j,l<k}W_{il}\hat {R}_i(\beta)\gamma_1\mathbb{E}_{\leq k-1}\big( R_{lj}^{(kil)}(\beta)\big)\gamma_1\otimes \gamma_1\mathbb{E}_{\leq k-1}\big( R_{ji}^{(k)}(\beta_1^*)\big)+o_{L^2}^{(u)}(1).
\end{align*}
\end{lem}

\begin{proof}
By definition of $R^{(k)}(\beta)$, for all $i,j \neq k$,
\[(\beta_1-D_{ii}\gamma_2)R_{ij} ^{(k)}(\beta)=\delta_{ij}I_m+\sum_{l\neq k}W_{il}\gamma_1R_{lj}^{(k)}(\beta).\]
We want to remove the dependence between $W_{il}$ and $R^{(k)}(\beta)$, using \eqref{remove}:
\begin{align*}
(\beta-D_{ii}\gamma_2)R_{ij}^{(k)}(\beta) = \delta_{ij}I_m & + \sum_{l\neq k}\Big\{W_{il}\gamma_1R_{lj}^{(kil)}(\beta)\\
&+(1-\tfrac{1}{2}\delta_{il})\Big(W_{il}^2\gamma_1R_{li}^{(kil)}(\beta)\gamma_1R_{lj}^{(k)}(\beta)+|W_{il}|^2\gamma_1R_{ll}^{(kil)}(\beta)\gamma_1R_{ij}^{(k)}(\beta)\Big)\Big\}.
\end{align*}
Hence, noticing that 
\[N^{-1}\sum_{l=1}^N \hat R_l(\beta)= \mathrm{id}_m\otimes \tau_{N}((\beta\otimes 1_{\mathcal{A}}-\gamma_1\otimes s_{N}-\gamma_2\otimes D_N)^{-1}),\]
we can deduce that
\begin{align}
 ({\omega}_N(\beta)- D_{ii}\gamma_2) R_{ij}^{(k)}(\beta) =\delta_{ij}I_m  & + \sum_{l\neq k}W_{il}\gamma_1 R_{lj}^{(kil)}(\beta)\nonumber \\
& +\sum_{l\neq k}(1-\tfrac{1}{2}\delta_{il})W_{il}^2\gamma_1 R_{li}^{(kil)}(\beta)\gamma_1 R_{lj}^{(k)}(\beta)\nonumber \\
&+\sum_{l\neq k}\big[(1-\tfrac{1}{2}\delta_{il})|W_{il}|^2\gamma_1 R_{ll}^{(kil)}(\beta)\gamma_1-\sigma_N^2\gamma_1\hat R_l(\beta))\gamma_1\big] R_{ij}^{(k)}(\beta)\nonumber
\\&- \sigma_N^2\gamma_1\hat R_k(\beta)\gamma_1 R_{ij}^{(k)}(\beta), \nonumber\end{align}
and
\begin{align}
R_{ij}^{(k)}(\beta) =\delta_{ij}\hat{R}_i(\beta)
& + \sum_{l\neq k}W_{il}\hat {R}_i(\beta)\gamma_1 R_{lj}^{(kil)}(\beta)\nonumber\\
& + \sum_{l\neq k}(1-\tfrac{1}{2}\delta_{il})W_{il}^2\hat{R}_i(\beta)\gamma_1 R_{li}^{(kil)}(\beta)\gamma_1 R_{lj}^{(k)}(\beta)\nonumber\\
& +\sum_{l\neq k}\hat{R}_i(\beta)\big[(1-\tfrac{1}{2}\delta_{il})|W_{il}|^2\gamma_1 R_{ll}^{(kil)}(\beta)\gamma_1-\sigma_N^2\gamma_1\hat R_l(\beta)\gamma_1\big] R_{ij}^{(k)}(\beta)
\\
&- \sigma_N^2 \hat{R}_i(\beta)\gamma_1\hat R_k(\beta)\gamma_1 R_{ij}^{(k)}(\beta). \nonumber
\end{align}
Therefore, for $i,j<k$, 
\begin{align*}
 \mathbb{E}_{\leq k-1} &\big( R_{ij}^{(k)}(\beta) \big)\gamma_1 \otimes \gamma_1 \mathbb{E}_{\leq k-1}\big( R_{ji}^{(k)}(\beta_1^*)\big)\\
& =
\delta_{ij}\hat{R}_i(\beta) \gamma_1 \otimes \gamma_1 \mathbb{E}_{\leq k-1}\big( R_{ji}^{(k)}(\beta_1^*)\big)
+ \sum_{l<k}W_{il}\hat {R}_i(\beta)\gamma_1 \mathbb{E}_{\leq k-1}\big( R_{lj}^{(kil)}(\beta)\big)\gamma_1\otimes \gamma_1 \mathbb{E}_{\leq k-1}\big( R_{ji}^{(k)}(\beta_1^*)\big)\\
& \hspace{0.2cm} + \sum_{l\neq k}\big(1-\tfrac{1}{2}\delta_{il}\big)
\hat{R}_i(\beta)\gamma_1 \mathbb{E}_{\leq k-1}\Big(W_{il}^2
R_{li}^{(kil)}(\beta_1)\gamma_1 R_{lj}^{(k)}(\beta)\Big) \gamma_1 \otimes \gamma_1 \mathbb{E}_{\leq k-1}\big( R_{ji}^{(k)}(\beta_1^*)\big) \\
& \hspace{0.2cm} + \sum_{l\neq k}\hat{R}_i(\beta)\gamma_1 \mathbb{E}_{\leq k-1}\Big(\big[(1-\tfrac{1}{2}\delta_{il})|W_{il}|^2-\sigma_N^2 \big] \hat R_l(\beta)\gamma_1  R_{ij}^{(k)}(\beta)\Big)
\gamma_1 \otimes \gamma_1 \mathbb{E}_{\leq k-1}\big( R_{ji}^{(k)}(\beta_1^*)\big) 
\\
& \hspace{0.2cm} + \sum_{l\neq k}\hat{R}_i(\beta)\gamma_1 \mathbb{E}_{\leq k-1}\Big(\big(1-\tfrac{1}{2}\delta_{il}\big)|W_{il}|^2\big[ R_{ll}^{(kil)}(\beta)- \hat R_l(\beta)\big] \gamma_1  R_{ij}^{(k)}(\beta)\Big)
\gamma_1 \otimes \gamma_1 \mathbb{E}_{\leq k-1}\big( R_{ji}^{(k)}(\beta_1^*)\big) \\
& \hspace{0.2cm} - \sigma_N^2 \hat{R}_i(\beta)\gamma_1\hat R_k(\beta)\gamma_1\mathbb{E}_{\leq k-1}\big( R_{ij}^{(k)}(\beta) \big)\gamma_1 \otimes \gamma_1 \mathbb{E}_{\leq k-1}\big( R_{ji}^{(k)}(\beta_1^*)\big)\\
& =
\delta_{ij}\hat{R}_i(\beta) \gamma_1 \otimes \gamma_1 \mathbb{E}_{\leq k-1}\big( R_{ji}^{(k)}(\beta_1^*)\big)
+ \sum_{l<k}W_{il}\hat {R}_i(\beta)\gamma_1 \mathbb{E}_{\leq k-1}\big( R_{lj}^{(kil)}(\beta)\big)\gamma_1\otimes \gamma_1 \mathbb{E}_{\leq k-1}\big( R_{ji}^{(k)}(\beta_1^*)\big)\\
& \hspace{0.2cm} +I_{k,i,j} +II_{k,i,j}+III_{k,i,j}+IV_{k,i,j}.
\end{align*}
We are going to prove that the contribution of the  last four terms of the right-hand side is negligible. We have  
$$\hspace*{-0.7 cm}\Vert  \sigma_N^2\sum_{i,j<k}I_{k,i,j}\Vert_{L^2}
\leq \sigma_N^2\sum_{i<k}\Vert \sum_{j<k}I_{k,i,j}\Vert_{L^2}$$
with
\begin{align*}
\Vert \sum_{j<k} &I_{k,i,j}\Vert^2_{L^2}\\
& = \mathbb{E}\Big( \big\|
\sum_{j<k}
\sum_{l\neq k}(1-\tfrac{1}{2}\delta_{il})
\hat{R}_i(\beta)\gamma_1 \mathbb{E}_{\leq k-1}\big(W_{il}^2
R_{li}^{(kil)}(\beta)\gamma_1 R_{lj}^{(k)}(\beta)\big)\gamma_1 \otimes \gamma_1 \mathbb{E}_{\leq k-1}\big( R_{ji}^{(k)}(\beta_1^*)\big) \big\|^2\Big).
\end{align*}
Using the fact  that for any matrices $A$, $B$ and $X$, we have 
\begin{equation}\label{correspondance} 
vec \left[AXB\right]=\left(B^T\otimes A \right) vec X \end{equation}
where $vec X = (X_{11}, \ldots, X_{m1}, \ldots, X_{1m}, \ldots, X_{mm})^T,$ and that there exist permutation matrices $P$ and $Q$ such that  for any matrices $A$, $B$, $$A\otimes B=P(B\otimes A)Q$$ (see \cite[Chapter 4]{HJ}), we have
\begin{align*}
 \Big\|
\sum_{j<k}
\sum_{l\neq k} & \big(1-\tfrac{1}{2}\delta_{il}\big) 
\hat{R}_i(\beta)\gamma_1\mathbb{E}_{\leq k-1}\big(W_{il}^2
R_{li}^{(kil)}(\beta)\gamma_1 R_{lj}^{(k)}(\beta)\big) \gamma_1 \otimes \gamma_1 \mathbb{E}_{\leq k-1}\big( R_{ji}^{(k)}(\beta_1^*)\big) \Big\|\\
& =\sup_{\begin{array}{ll} \scriptstyle{X\in M_m(\C)}\\ \scriptstyle{\Tr(XX^*) =1} \end{array}}\Big\| \sum_{l\neq k} \sum_{j<k}\gamma_1\mathbb{E}_{\leq k-1}\big( R_{ji}^{(k)}(\beta_1^*)\big)X\gamma_1^T \\
& \hspace{3.5cm} \times   \mathbb{E}_{\leq k-1}\Big( (R^{(k)}(\beta)^T)_{jl}( R^{(kil)}(\beta)^T)_{il} (1-\tfrac{1}{2}\delta_{il}) W_{il}^2\Big) \gamma_1^T
\hat{R}_i(\beta)^T \Big\|_{HS}.
\end{align*}

Now, for any $X\in M_m(\C)$ such that $\Tr(XX^*) =1 $, denoting by  ${\bf P}$ the  orthogonal projection onto the subspace generated by the first $k-1$ vectors of the canonical basis of $\C^{N-1}$ and using Lemma \ref{Qbound},
Lemma \ref{prelim} and Lemma \ref{alta lemma}, we have the following inequalities where $C(z_1)$ denotes a positive constant depending on $z_1$ and $m$ which may vary from line to line, 
\begin{align*}
 \Big\| & \sum_{l\neq k}  \sum_{j<k}\gamma_1 \mathbb{E}_{\leq k-1}\big( R_{ji}^{(k)}(\beta_1^*)\big) X \gamma_1^T  \mathbb{E}_{\leq k-1}\Big( (R^{(k)}(\beta)^T)_{jl}( R^{(kil)}(\beta)^T)_{il} (1-\tfrac{1}{2}\delta_{il}) W_{il}^2\Big) \gamma_1^T
\hat{R}_i(\beta)^T \Big\|_{HS}\\
&=\Big\| \sum_{l\neq k} \gamma_1\mathbb{E}_{\leq k-1}\Big(\sum_{j<k} \mathbb{E}_{\leq k-1}\big( R_{ji}^{(k)}(\beta_1^*)\big) X \gamma_1^T   (R^{(k)}(\beta)^T)_{jl}( R^{(kil)}(\beta)^T)_{il} (1-\tfrac{1}{2}\delta_{il}) W_{il}^2\Big) \gamma_1^T
\hat{R}_i(\beta)^T \Big\|_{HS}\\
&\leq  \Vert \gamma_1 \Vert^2 \sum_{l\neq k}\mathbb{E}_{\leq k-1}\Big( \Big\| \sum_{j<k} \mathbb{E}_{\leq k-1}\big( R_{ji}^{(k)}(\beta_1^*)\big) X \gamma_1^T   (R^{(k)}(\beta)^T)_{jl}( R^{(kil)}(\beta)^T)_{il}\Big\|(1-\tfrac{1}{2}\delta_{il}) |W_{il}|^2 \Big)\\
 & \hspace{13cm} \times 
\big\|\hat{R}_i(\beta)^T \big\|_{HS}\\
&= \Vert \gamma_1 \Vert^2 \sum_{l\neq k} \mathbb{E}_{\leq k-1}\Big( \Big\| \Big[ \mathbb{E}_{\leq k-1}\big(\Theta( R^{(k)}(\beta_1^*))\big) (X \gamma_1^T\otimes  {\bf P}) R^{(k)}(\beta)^T\Big]_{il} (R^{(kil)}(\beta_1)^T)_{il}\Big\| \big(1-\tfrac{1}{2}\delta_{il}\big) |W_{il}|^2 \Big)\\
& \hspace{13cm} \times \big\|\hat{R}_i(\beta)^T \big\|_{HS}
\end{align*}
\begin{align*}
&\leq  C(z_1)\Vert \gamma_1 \Vert^2  \mathbb{E}_{\leq k-1}\Big\{\Big(\sum_{l\neq k} \Big\| \Big[ \mathbb{E}_{\leq k-1}\big( \Theta(R^{(k)}(\beta_1^*))\big) (X \gamma_1^T\otimes   {\bf P}) R^{(k)}(\beta)^T\Big]_{il}\Big\|^2 \Big)^{1/2}  \\ 
& \hspace{7.5cm}\times
\Big(\sum_{l\neq k} \big\|
(R^{(kil)}(\beta)^T)_{il}\big\|^2 (1-\tfrac{1}{2}\delta_{il})^2 |W_{il}|^4\Big)^{1/2}\Big\}\\
&\leq C(z_1)\Vert \gamma_1 \Vert^2  \mathbb{E}_{\leq k-1}\Big\{\Big\| \mathbb{E}_{\leq k-1}\big( \Theta(R^{(k)}(\beta_1^*))\big)
 (X \gamma_1^T\otimes   {\bf P}) R^{(k)}(\beta)^T\Big\|\\
 & \hspace{7.5cm} \times 
\Big(\sum_{l\neq k} \big\|
(R^{(kil)}(\beta)^T)_{il}\big\|^2 (1-\tfrac{1}{2}\delta_{il})^2 |W_{il}|^4\Big)^{1/2}\Big\}\\
&\leq C(z_1)  \mathbb{E}_{\leq k-1}\Big\{\mathbb{E}_{\leq k-1}\Big(Q\big(\frac{1}{\vert \Im z_1 \vert }, \Vert W_N^{(k)}\Vert\big) \Big) 
Q\big(\frac{1}{\vert \Im z \vert }, \Vert W_N^{(k)}\Vert\big)\\
& \hspace{7.5cm} \times \Big(\sum_{l\neq k} Q(\frac{1}{\vert \Im z \vert }, \Vert W_N^{(kil)}\Vert)^2 (1-\tfrac{1}{2}\delta_{il})^2 |W_{il}|^4\Big)^{1/2}\Big\},
\end{align*}
where $\Theta$ denotes the partial transpose map
on $M_m(\C)\otimes M_N(\C)$, defined by 
$\Theta(\sum_k A_k\otimes B_k) :=\sum_k A_k\otimes B_k^T$ and we use Proposition \ref{partialtranspose}.
Hence  \begin{align*}
\Vert \sum_{j<k}I_{k,i,j}\Vert^2_{L^2}
&\leq C(z_1)  \sum_{l\neq k}\mathbb{E}\Big( \mathbb{E}_{\leq k-1}\Big(\big[Q\big(\frac{1}{\vert \Im z_2 \vert }, \Vert W_N^{(k)}\Vert\big)\big]^2 \Big) 
\big[Q\big(\frac{1}{\vert \Im z \vert }, \Vert W_N^{(k)}\Vert\big)\big]^2 Q\big(\frac{1}{|\Im z|}, \Vert W_N^{(kil)}\Vert\big)^2\Big.\\
&\hspace{8cm}\times (1-\tfrac{1}{2}\delta_{il})^2 |W_{il}|^4\Big)\\
&\leq C(z_1)  \sum_{l\neq k}\mathbb{E}\Big( \mathbb{E}_{\leq k-1}\Big(\big[Q\big(\frac{1}{\vert \Im z_2 \vert },  \Vert W_N^{(kil)}\Vert+2\delta_N\big)\big]^2 \Big) 
\big[Q\big(\frac{1}{\vert \Im z \vert },  \Vert W_N^{(kil)}\Vert+2\delta_N\big)\big]^2\Big.\\
& \hspace{7cm}\times  Q\big(\frac{1}{|\Im z|}, \Vert W_N^{(kil)}\Vert\big)^2\big(1-\tfrac{1}{2}\delta_{il}\big)^2 |W_{il}|^4\Big)
\end{align*}
\begin{align*}
&= C(z_1)  \sum_{l\neq k}\mathbb{E}\Big( \mathbb{E}_{\leq k-1}\Big(\big[Q\big(\frac{1}{\vert \Im z_1 \vert },  \Vert W_N^{(kil)}\Vert+2\delta_N\big)\big]^2 \Big) 
\big[Q\big(\frac{1}{\vert \Im z \vert },  \Vert W_N^{(kil)}\Vert+2\delta_N\big)\big]^2\\
&\hspace{7cm}\times  Q\big(\frac{1}{|\Im z|}, \Vert W_N^{(kil)}\Vert\big)^2\Big)\big(1-\tfrac{1}{2}\delta_{il}\big)^2 \mathbb{E} \big( |W_{il}|^4\big)\\
&\leq  C(z_1)  \sum_{l\neq k}\mathbb{E}\Big( \mathbb{E}_{\leq k-1}\Big(\big[Q\big(\frac{1}{\vert \Im z_1 \vert },  \Vert W_N\Vert+4\delta_N\big)\big]^2 \Big) 
\big[Q\big(\frac{1}{\vert \Im z \vert },  \Vert W_N\Vert+4\delta_N\big)\big]^2\\
& \hspace{7cm} \times  Q\big(\frac{1}{|\Im z|}, \Vert W_N\Vert +2 \delta_N\big)^2\Big)\big(1-\tfrac{1}{2}\delta_{il}\big)^2 \mathbb{E} \big( |W_{il}|^4\big)\\
&\leq C(z_1)  \sum_{l\neq k} \big(1-\tfrac{1}{2}\delta_{il}\big)^2\mathbb{E} \big( |W_{il}|^4\big),
\end{align*}
where we use Proposition \ref{boundnorm} in the last line.
Thus, 
\[\Vert  \sigma_N^2\sum_{i,j<k}I_{k,i,j}\Vert_{L^2}\leq C(z_1) \sigma_N^2 \sum_{i<k} \Big\{ \sum_{l\neq k} \mathbb{E} \Big(\vert W_{il} \vert^4\Big)\Big\}^{1/2}\leq C(z_1)N\sigma_N^2(\delta_N^2 \tilde{\sigma}_N^2 + (N-2)m_N)^{1/2} =O(N^{-1/2}).\]
Now, similarly, using again \eqref{correspondance},
\[\Vert  \sigma_N^2\sum_{i,j<k}II_{k,i,j}\Vert_{L^2}
\leq  \sigma_N^2\sum_{i<k}\| {\mathcal E}(i,k)\|_{L^2}\]
where 
\begin{align*}
 {\mathcal E}(i,k)
 & = \sup_{\begin{array}{l} \scriptstyle{X\in M_m(\C)}\\ \scriptstyle{\Tr(XX^*) =1} \end{array}} \Big\| \gamma_1 \mathbb{E}_{\leq k-1}\Big\{\Big[ \mathbb{E}_{\leq k-1}\big( \Theta(R^{(k)}(\beta_1^*))\big) (X\gamma_1^T \otimes {\bf P})  R^{(k)}(\beta)^T\Big]_{ii}\gamma_1^T( \hat R_l(\beta))^T\\
 & \hspace{7.5cm} \times \Big(\sum_{l\neq k} [(1-\tfrac{1}{2}\delta_{il}) \vert W_{il}\vert^2-\sigma_N^2]\Big)\Big\} \gamma_1^T
\hat{R}_i(\beta)^T \Big\|_{HS}
\end{align*}
\begin{align*}
& \leq
\| \gamma_1\|^3  \mathbb{E}_{\leq k-1}\Big\{\Big\|\Big[ \mathbb{E}_{\leq k-1}\Big( \Theta(R^{(k)}(\beta_1^*))\Big) (X\gamma_1^T \otimes {\bf P})  R^{(k)}(\beta)^T\Big]_{ii}\Big\|\Big|\sum_{l\neq k} [(1-\tfrac{1}{2}\delta_{il}) \vert W_{il}\vert^2-\sigma_N^2]\Big|\Big\}\\
& \hspace{10.5cm} \times \big\|( \hat R_l(\beta))^T\big\|\big\|
\hat{R}_i(\beta)^T \big\|_{HS}\\
& \leq C(z_1) \mathbb{E}_{\leq k-1}\Big\{\mathbb{E}_{\leq k-1}\Big(Q(\frac{1}{\vert \Im z_1 \vert }, \Vert W_N^{(k)}\Vert)\Big)Q(\frac{1}{\vert \Im z\vert }, \Vert W_N^{(k)}\Vert)\Big|\sum_{l\neq k} [(1-\tfrac{1}{2}\delta_{il}) \vert W_{il}\vert^2-\sigma_N^2]\Big|\Big\}.
\end{align*}
Hence, using H\"older's inequality and Proposition \ref{boundnorm}, we obtain that 
 \[
\big\Vert  \sigma_N^2\sum_{i,j<k}II_{k,i,j}\big\Vert_{L^2}
\leq C(z_1)\sigma_N^2\sum_{i<k}\mathbb{E}\Big(\Big|\sum_{l\neq k} [(1-\tfrac{1}{2}\delta_{il}) \vert W_{il}\vert^2-\sigma_N^2]\Big|^4\Big)^{1/4}=o(1),\]
where, in the last equality,  we use that, uniformly in $i$ and $l$, 
\[\mathbb{E}\Big( [(1-\frac{1}{2}\delta_{il_1}) \vert W_{il_1}\vert^2-\sigma_N^2]\Big)=\delta_{il_1}O(1/N),\]
and for $p=2,3,4,$
\[\mathbb{E}\Big(\Big\{ [(1-\tfrac{1}{2}\delta_{il_1}) \vert W_{il_1}\vert^2-\sigma_N^2\Big\}^p\Big)=o(1/N).\]
Similarly, using again \eqref{correspondance},
\begin{align*}\lefteqn{\Vert  \sum_{j<k}III_{k,i,j}\Vert } 
\\&\leq\displaystyle{\sup_{\begin{array}{ll} \scriptstyle{X\in M_m(\C)}\\ \scriptstyle{\Tr(XX^*) =1} \end{array}}} \| \gamma_1\|^3 
\big\|
\hat{R}_i(\beta)^T \big\|_{HS} \\
& \hspace{2cm} \times \mathbb{E}_{\leq k-1}\Big\{
\Big\|\Big[ \mathbb{E}_{\leq k-1}\big( \Theta(R^{(k)}(\beta_1^*))\big) (X\gamma_1^T \otimes  {\bf P})  R^{(k)}(\beta)^T\Big]_{ii}\Big\|\Big\|\sum_{l\neq k} (1-\tfrac{1}{2}\delta_{il}) \vert W_{il}\vert^2 \Delta_{ilk}^T \Big\|\Big\}\\
&\leq 
C(z_1)  \mathbb{E}_{\leq k-1}\Big\{\mathbb{E}_{\leq k-1}\Big(Q(\frac{1}{\vert \Im z_1\vert }, \Vert W_N^{(k)} \Vert) \Big) Q(\frac{1}{\vert \Im z\vert }, \Vert W_N^{(k)}\Vert)\sum_{l\neq k}  (1-\tfrac{1}{2}\delta_{il}) \vert W_{il}\vert^2 \Big\|  \Delta_{ilk}^T \Big\|\Big\},
\end{align*}
where $\Delta_{ilk}=R_{ll}^{(kil)}(\beta)- \hat R_l(\beta))$.
Thus, 
\begin{align*}
 \big\Vert \sum_{j<k} & III_{k,i,j}\big\Vert_{L_2}^2\\
 & \leq 
C(z_1) \mathbb{E}\Big[\mathbb{E}_{\leq k-1}\Big(Q^2(\frac{1}{\vert \Im z_1\vert }, \Vert W_N^{(k)} \Vert) \Big) Q^2(\frac{1}{\vert \Im z\vert }, \Vert W_N^{(k)}\Vert)\\
& \hspace{5cm}\times \sum_{l,l' \neq k} 
(1-\tfrac{1}{2}\delta_{il}) \vert W_{il}\vert^2  (1-\frac{1}{2}\delta_{il'}) \vert W_{il'}\vert^2
\big\| \Delta_{ilk}^T \big\| \big\| \Delta_{il'k}^T \big\|\Big]\\
& \leq C(z_1)\sum_{l,l' \neq k} \Big\{\mathbb{E}\Big[ \Big(\mathbb{E}_{\leq k-1}\Big(Q^2(\frac{1}{\vert \Im z_1\vert }, \Vert W_N^{(k)} \Vert) \Big) Q^2(\frac{1}{\vert \Im z\vert }, \Vert W_N^{(k)}\Vert)\Big)^{p}  
\big\| \Delta_{ilk}^T \big\|^p \big\| \Delta_{il'k}^T \big\|^p \Big]\Big\}^{1/p}\\
& \hspace{5cm}\times \Big\{ \mathbb{E}\Big(  (1-\tfrac{1}{2}\delta_{il})^q \vert W_{il}\vert^{2q} (1-\frac{1}{2}\delta_{il'})^q \vert W_{il'}\vert^{2q}\Big)\Big\}^{1/q},
\end{align*}
where $q=1+\varepsilon$ and $p^{-1}+q^{-1}=1$.
\eqref{hyp:offdiagonal} and \eqref{hyp:diagonal} readily yield that 
\[\Big\{ \mathbb{E}\Big(  (1-\tfrac{1}{2}\delta_{il})^q \vert W_{il}\vert^{2q}) (1-\frac{1}{2}\delta_{il'})^q \vert W_{il'}\vert^{2q})\Big)\Big\}^{1/q}
=O(N^{-2})\]
uniformly in $i,l,l'$ so that $\Vert \sum_{j<k}III_{k,i,j}\Vert_{L_2}
=o(1)$ uniformly in $i,k$ by using Remark \ref{kilhat} and Proposition \ref{boundnorm}. 
Therefore 
\[\Big\|\sigma_N^2 \sum_{i,j<k}III_{k,i,j}\Big\|_{L^2}=o(1).\]
Finally, similarly,
\begin{align*}
\big\Vert  \sum_{j<k}IV_{k,i,j}\big\Vert &\leq\sigma_N^2\displaystyle{\sup_{ \begin{array}{ll} \scriptstyle{X\in M_m(\C)}\\ \scriptstyle{\Tr(XX^*) =1} \end{array}}} \big\| \gamma_1\big\|^3 \big\|
\hat{R}_i(\beta) \big\|
\big\|
\hat{R}_k(\beta)^T \big\|_{HS} \\
& \hspace{2cm} \times 
\Big\|\Big[\mathbb{E}_{\leq k-1}\big( \Theta(R^{(k)}(\beta_1^*))\big) (X\gamma_1^T \otimes  {\bf P})   \mathbb{E}_{\leq k-1}\big( R^{(k)}(\beta)^T\big)\Big]_{ii}\Big\|\\ 
&\leq 
C(z_1)\sigma_N^2 \mathbb{E}_{\leq k-1}\big(R^{(k)}(\beta_1^*)\big) \mathbb{E}_{\leq k-1}\big(R^{(k)}(\beta)\big)
\\
&\leq 
C(z_1)\sigma_N^2 \mathbb{E}_{\leq k-1}\Big(Q(\frac{1}{\vert \Im z_1\vert }, \Vert W_N^{(k)} \Vert) \Big) \mathbb{E}_{\leq k-1}\Big( Q(\frac{1}{\vert \Im z\vert }, \Vert W_N^{(k)}\Vert)\Big),
 \end{align*}
so that by using Proposition \ref{boundnorm}$, \Vert  \sum_{j<k}IV_{k,i,j}\Vert_{L^2}=O(\sigma_N^2).$
Thus  $$\hspace*{-0.7 cm}\big\Vert  \sigma_N^2\sum_{i,j<k}IV_{k,i,j}\big\Vert_{L^2}
\leq \sigma_N^2\sum_{i<k}\Vert \sum_{j<k}IV_{k,i,j}\Vert_{L^2}=O(N^{-1}).$$
Lemma \ref{nablak} readily follows.
\end{proof}

\begin{lem}\label{equationapprochee} 
With the notation of Proposition \ref{analog},
\begin{equation*}
\nabla_k = N\sigma_N^2T_{N,k,0}(z,\bar z_1)(I_m\otimes I_m+\nabla_k)+o_{L^1}(1).\end{equation*}
\end{lem}

\begin{proof}
Let us consider the second term of  the right hand side of Lemma \ref{nablak}. Using \eqref{remove}, we have 
\begin{align}
 \sigma_N^2   \sum_{i,j,l<k} & W_{il}\hat {R}_i(\beta)\gamma_1 \mathbb{E}_{\leq k-1}\left( R_{lj}^{(kil)}(\beta)\right)\gamma_1\otimes \gamma_1 \mathbb{E}_{\leq k-1}\left( R_{ji}^{(k)}(\beta_1^*)\right)\nonumber\\
 & = \label{1lemme55} \sigma_N^2 \sum_{i,j,l<k}W_{il}\hat {R}_i(\beta)\gamma_1 \mathbb{E}_{\leq k-1}\left( R_{lj}^{(kil)}(\beta)\right)\gamma_1\otimes \gamma_1 \mathbb{E}_{\leq k-1}\left( R_{ji}^{(kil)}(\beta_1^*)\right)\\
 & \label{2lemme55}+\sigma_N^2 \sum_{i,j,l<k}(1-\tfrac{1}{2}\delta_{il})W_{il}^2 \hat {R}_i(\beta)\gamma_1\mathbb{E}_{\leq k}[ R_{lj}^{(kil)}(\beta)]\gamma_1\otimes\gamma_1\mathbb{E}_{\leq k}[ R_{ji}^{(kil)}(\beta_1^*)\gamma_1 R_{li}^{(k)}(\beta_1^*)]\\
 & \label{3lemme55}+\sigma_N^2 \sum_{i,j,l<k}(1-\tfrac{1}{2}\delta_{il})|W_{il}|^2 \hat {R}_i(\beta)\gamma_1\mathbb{E}_{\leq k}[ R_{lj}^{(kil)}(\beta)]\gamma_1\otimes \gamma_1\mathbb{E}_{\leq k}[ R_{jl}^{(kil)}(\beta_1^*)\gamma_1 R_{ii}^{(k)}(\beta_1^*)].
\end{align}




Set  \[\alpha_{il}=\sum_{j<k}\gamma_1 \mathbb{E}_{\leq k-1}\left( R_{lj}^{(kil)}(\beta)\right)\gamma_1\otimes \gamma_1 \mathbb{E}_{\leq k-1}\left( R_{ji}^{(kil)}(\beta_1^*)\right).\]
Note that $ \alpha_{il}$ and $W_{il}$ are independent. Let us consider the $L^2$ norm of the term \eqref{1lemme55}:
\begin{align} 
\|\sigma_N^2 \sum_{i<k}\sum_{l<k}W_{il}\hat {R}_i(\beta)\alpha_{il} \|_{L^2} &\leq
\sigma_N^2  \sum_{i<k} \| \hat {R}_i(\beta)\| \| \sum_{l<k} W_{il}\alpha_{il} \|_{L^2}
\nonumber\\&\leq  \sigma_N^2 Q(|\Im z|^{-1},\| D_N\|,N^{1/2}\sigma_N)
 \sum_{i<k} \Big\{ \sum_{l<k,l'<k} \mathbb{E}\left( W_{il} \overline{W_{il'}} \Tr \alpha_{il} \alpha_{il'}^*\right)\Big\}^{1/2} \label{firstfirst}
\end{align}
by using \eqref{majorationunifhatRk}.
First,
\begin{align*} 
 \sum_{l<k} \mathbb{E}\left(| W_{il} |^2 \Tr \alpha_{il} \alpha_{il}^*\right)&=
\sum_{l<k} \mathbb{E}\left(| W_{il} |^2 \right) \mathbb{E}\left(\Tr \alpha_{il} \alpha_{il}^*\right)\\&\leq  m^2\sigma_N^2 \sum_{l<k, l\neq i}\mathbb{E}\left(\|\alpha_{il} \|^2\right) +m^2\tilde{\sigma}_N^2 \mathbb{E}\left(\|\alpha_{ii} \|^2\right).
\end{align*}
Replacing $R^{(kil)}$ by $R^{(k)}$ yields the following.
\begin{align*}
	\alpha_{il}&=\sum_{j<k}\gamma_1 \mathbb{E}_{\leq k-1}\big( R_{lj}^{(k)}(\beta)\big)\gamma_1\otimes \gamma_1 \mathbb{E}_{\leq k-1}\big( R_{ji}^{(k)}(\beta_1^*)\big)\\
	& \quad + \sum_{j<k}\gamma_1 \mathbb{E}_{\leq k-1}\big( R_{lj}^{(kil)}(\beta)-R_{lj}^{(k)}(\beta)\big)\gamma_1\otimes \gamma_1 \mathbb{E}_{\leq k-1}\big( R_{ji}^{(kil)}(\beta_1^*)\big)\\
	& \quad + \sum_{j<k}\gamma_1 \mathbb{E}_{\leq k-1}\big( R_{lj}^{(k)}(\beta)\big)\gamma_1\otimes \gamma_1 \mathbb{E}_{\leq k-1}\big( R_{ji}^{(kil)}(\beta_1^*)-R_{ji}^{(k)}(\beta_1^*)\big)\\
	&:= I_{il} +II_{il} +III_{il}. 
\end{align*}
Set $A(k)= (\gamma_1\otimes I_{N-1}) R^{(k)}(\beta) \gamma_1\otimes I_{N-1},\; B(k,i,l)=\gamma_1 \otimes I_{N-1}\mathbb{E}_{\leq k-1}\left( R^{(kil)}(\beta_1^*) -R^{(k)} (\beta_1^*)\right),$
and $C_{s,t}^{(il)} =\sum_{j<k} A(k)_{sj} \otimes B(k,i,l)_{jt}.$
By Lemma \ref{technicalbound}, we obtain $\| C_{s,t}^{(il)} \|\leq \| A(k)\| \| B(k,i,l)\|$
so that, choosing $s=l$ and $t=i$, we have 
 \begin{align*}\|III_{il}\| 
&\leq  \|\gamma_1\|^3   \mathbb{E}_{\leq k-1}\left( Q(\|W_N\|, |\Im z|^{-1})\right)
\mathbb{E}_{\leq k-1}\big(\|R^{(kil)}(\beta_1^*) -R^{(k)} (\beta_1^*)\|\big)
\\&\leq  \|\gamma_1\|^4\delta_N  \mathbb{E}_{\leq k-1}\big( Q(\|W_N\|, |\Im z|^{-1})\big)  \mathbb{E}_{\leq k-1}\big( Q(\|W_N\|, |\Im z_1|^{-1})\big),\end{align*}
where we use the resolvent identity and Lemma \ref{alta lemma}.
Similarly, 
$$\|II_{il}\|
\leq \|\gamma_1\|^4\delta_N  \mathbb{E}_{\leq k-1}\left( Q(\|W_N\|, |\Im z|^{-1})\right)  \mathbb{E}_{\leq k-1}\left( Q(\|W_N\|, |\Im z_1|^{-1})\right).$$
Thus, using Proposition \ref{boundnorm}, we obtain $$\mathbb{E}\left( \|II_{il}\|^2+ \|III_{il}\|^2\right)=O(\delta_N^2).$$
Now, set $B(k)=\gamma_1\otimes I_{N-1} \mathbb{E}_{\leq k-1}\left( R^{(k)}(\beta_2)\right)$
and $C_{s,t} =\sum_{j<k} A(k)_{sj} \otimes B(k)_{jt}.$
By Lemma \ref{technicalbound},
$$\Big( \sum_{l\neq k} \| C_{l,i} \|^2\Big)^{1/2} \leq \|A(k)\|  \|B(k)\|$$
that is 
$$\sum_{l\neq k}^N \| I_{il}\|^2 \leq \| \gamma_1\|^6    \mathbb{E}_{\leq k-1}\left( \|R^{(k)}(\beta)\|^2\right)  \mathbb{E}_{\leq k-1}\left( \|R^{(k)}(\beta_1^*)\|^2\right).$$
It readily follows by Remark \ref{remmajRNRkLp} that $\sum_{l<k} \mathbb{E} \| \alpha_{il} \|^2=
O(N\delta_N^2) +O(1)=O(N\delta_N^2)$ and then that 
$$ \sum_{l<k} \mathbb{E} \left( |W_{il}|^2 \Tr \alpha_{il} \alpha_{il}^*\right)=O(\delta_N^2),$$
uniformly in $k$.

It remains to control the sum of the cross terms $\esp[W_{il}\overline{W_{il'}}\Tr(\alpha_{il}{\alpha_{il'}}^*)]$, $l\neq l'$, $i<k, l<k, l'<k$.
Let us  replace $R^{(kil')}$ and $R^{(kil)}$ by $R^{(kil'il)}$:
\begin{align*}
	\alpha_{il}&=\sum_{j<k}\gamma_1 \mathbb{E}_{\leq k-1}\left( R_{lj}^{(kilil')}(\beta)\right)\gamma_1\otimes \gamma_1 \mathbb{E}_{\leq k-1}\left( R_{ji}^{(kilil')}(\beta_1^*)\right)\\
	& \quad + \sum_{j<k}\gamma_1 \mathbb{E}_{\leq k-1}\left( R_{lj}^{(kil)}(\beta)-R_{lj}^{(kilil')}(\beta)\right)\gamma_1\otimes \gamma_1 \mathbb{E}_{\leq k-1}\left( R_{ji}^{(kil)}(\beta_1^*)\right)\\
	& \quad + \sum_{j<k}\gamma_1 \mathbb{E}_{\leq k-1}\left( R_{lj}^{(kilil')}(\beta)\right)\gamma_1\otimes \gamma_1 \mathbb{E}_{\leq k-1}\left( R_{ji}^{(kil)}(\beta_1^*)-R_{ji}^{(kilil')}(\beta_1^*)\right)\\
	& := \alpha_{ill'}+\beta_{ill'}+\gamma_{ill'}. 
\end{align*}
Note that, due to independence properties, the sum vanishes when $\alpha_{il}$ is replaced by $\alpha_{ill'}$ or when $\alpha_{il'}$ is replaced by $\alpha_{ill'}$. It remains to control the four error terms: 
\[\esp[W_{il}\overline{W_{il'}}\Tr(\beta_{ill'}{\beta_{il'l}}^*)], \esp[W_{il}\overline{W_{il'}}\Tr(\beta_{ill'}{\gamma_{il'l}}^*)], \esp[W_{il}\overline{W_{il'}}\Tr(\gamma_{ill'}{\beta_{il'l}}^*)] \ \text{and} \ \esp[W_{il}\overline{W_{il'}}\Tr(\gamma_{ill'}{\gamma_{il'l}}^*)].\]
\eqref{remove} yields (note that we can remove one $\esp_{\leq k-1}$):
\begin{align}
\esp[W_{il}&\overline{W_{il'}}\Tr(\beta_{ill'}{\beta_{il'l}}^*) ]= (1-\tfrac{1}{2}\delta_{il})(1-\frac{1}{2}\delta_{il'})\nonumber\\
&\times \bigg\{\esp\Big[|W_{il}|^2|W_{il'}|^2\Tr\Big\{ \mathbb{E}_{\leq k-1}\Big(\gamma_1 R_{li}^{(kilil')}(\beta)\gamma_1\sum_{j<k}R_{l'j}^{(kil)}(\beta))\gamma_1\otimes \gamma_1 \mathbb{E}_{\leq k-1}\Big( R_{ji}^{(kil)}(\beta_1^*)\Big)\Big)\nonumber\\
& \hspace{3.5cm} \times  \Big(\gamma_1  R_{l'i}^{(kil'il)}(\beta)\gamma_1\sum_{j<k}R_{lj}^{(kil')}(\beta)\gamma_1\otimes \gamma_1 \mathbb{E}_{\leq k-1}\Big( R_{ji}^{(kil')}(\beta_1^*)\Big)\Big)^*\Big\}\Big]\label{betabeta0}
\end{align}
\begin{align}
& \hspace{2cm} + \esp\Big[W_{il}^2|W_{il'}|^2 \Tr\Big\{\gamma_1
\esp_{\leq k-1}\Big(
R_{li}^{(kilil')}(\beta)\gamma_1\sum_{j<k}R_{l'j}^{(kil)}(\beta)\gamma_1\otimes \gamma_1\mathbb{E}_{\leq k-1}[R_{ji}^{(kil)}(\beta_1^*)]\Big)\nonumber\\
& \hspace{3.5cm} \times \Big(\gamma_1R_{l'l}^{(kil'il)}(\beta)\gamma_1\sum_{j'<k}R_{ij'}^{(kil')}(\beta)\gamma_1\otimes \gamma_1\esp_{\leq k-1}[R_{j'i}^{(kil')}(\beta_1^*)]\Big)^*\Big\}\Big]\label{betabeta1}\\
& \hspace{2cm}  + \esp\Big[|W_{il}|^2\overline{W_{il'}}^2\Tr \Big\{\gamma_1R_{ll'}^{(kilil')}(\beta)\gamma_1\sum_{j<k}R_{ij}^{(kil)}(\beta)\gamma_1\otimes \gamma_1\esp_{\leq k-1}[R_{ji}^{(kil)}(\beta_1^*)]\nonumber\\
& \hspace{3.5cm} \times \Big(\gamma_1\mathbb{E}_{\leq k-1}\Big(R_{l'i}^{(kil'il)}(\beta)\gamma_1\sum_{j'<k}R_{lj'}^{(kil')}(\beta)\gamma_1\otimes \gamma_1\esp_{\leq k-1}[R_{j'i}^{(kil')}(\beta_1^*)]\Big)\Big]\Big)^* \Big\}\label{betabeta2}\\
& \hspace{2cm} + \esp\Big[W_{il}^2\overline{W_{il'}}^2\Tr \Big\{\gamma_1\mathbb{E}_{\leq k-1}\Big(R_{ll'}^{(kilil')}(\beta)\gamma_1\sum_{j<k}R_{ij}^{(kil)}(\beta)\gamma_1\otimes \gamma_1\esp_{\leq k-1}[R_{ji}^{(kil)}(\beta_1^*)]\Big)\nonumber\\
& \hspace{3.5cm} \times  \Big(\gamma_1R_{l'l}^{(kil'il)}(\beta)\gamma_1\sum_{j'<k}R_{ij'}^{(kil')}(\beta)\gamma_1\otimes \gamma_1\esp_{\leq k-1}[R_{j'i}^{(kil')}(\beta_1^*)]\Big)^*\Big\}\Big]\bigg\}\label{betabeta3}.
\end{align}
Using Lemmas \ref{technicalbound} and \ref{alta lemma}, there exist polynomials $Q_1$ and $Q_2$ such that 
$$\Big\| \sum_{j<k}R_{l'j}^{(kil)}(\beta)\gamma_1\otimes \gamma_1 \mathbb{E}_{\leq k-1}\Big( R_{ji}^{(kil)}(\beta_1^*)\Big) \Big\| \leq \|\gamma_1\|^2Q_1(\|W_N\|, |\Im z|^{-1})\mathbb{E}_{\leq k-1}\Big(Q_2(\|W_N\|,  |\Im z_1|^{-1})\Big) .$$
Thus, using H\"older's inequality and Proposition \ref{boundnorm}, the sum over $l\neq l'$ of \eqref{betabeta0} can be bounded as follows: 
\begin{align*}
\Big|\sum_{l\neq l'}(1-\tfrac{1}{2}\delta_{il})& (1-\frac{1}{2}\delta_{il'})\bigg\{\esp\Big[|W_{il}|^2|W_{il'}|^2\Tr \Big\{\gamma_1 \mathbb{E}_{\leq k-1}\Big( R_{li}^{(kilil')}(\beta)\gamma_1\sum_{j<k}R_{l'j}^{(kil)}(\beta)\gamma_1\otimes \gamma_1\\
 & \times \mathbb{E}_{\leq k-1}\Big( R_{ji}^{(kil)}(\beta_1^*)\Big)\Big)
(\gamma_1  R_{l'i}^{(kil'il)}(\beta)\gamma_1\sum_{j<k}R_{lj}^{(kil')}(\beta)\gamma_1\otimes \gamma_1 \mathbb{E}_{\leq k-1}\Big( R_{ji}^{(kil')}(\beta_1^*)\Big))^*\Big\}\Big]\Big|
\end{align*}
\begin{align*}
& \leq O(1)\sum_{l\neq l'}\esp[|W_{il}|^{4}|W_{il'}|^{4} \mathbb{E}_{\leq k-1}\Big(\|R_{li}^{(kilil')}(\beta)\|^2\Big) \mathbb{E}_{\leq k-1}\Big(\|R_{l'i}^{(kil'il)}(\beta)\|^2\Big)]^{1/2}\\
& \leq O(1)N^{-2}\sum_{l\neq l'}\esp[ \mathbb{E}_{\leq k-1}\Big(\|R_{li}^{(kilil')}(\beta)\|^2\Big) \mathbb{E}_{\leq k-1}\Big(\|R_{l'i}^{(kil'il)}(\beta)\|^2\Big)]^{1/2} \ \text{by independence}\\
& \leq O(1)\Big(N^{-2}\sum_{l\neq l'}\esp[\mathbb{E}_{\leq k-1}\Big(\|R_{li}^{(kilil')}(\beta)\|^2\Big) \mathbb{E}_{\leq k-1}\Big(\|R_{l'i}^{(kil'il)}(\beta)\|^2\Big)]\Big)^{1/2}\\& \ \text{by concavity of $x\mapsto x^{1/2}$}\\
& \leq O(1)\Big(N^{-2}\sum_{l,l'}\esp[\mathbb{E}_{\leq k-1}(\|R^{(k)}(\beta)_{li}\|^2)\mathbb{E}_{\leq k-1}(\|R^{(k)}(\beta)_{l'i}\|^2]+O(\delta_N)\Big)^{1/2}\\& \ \text{by \eqref{remove}, Lemmas \ref{alta lemma} and Proposition \ref{boundnorm}}\\
& \leq O(1)(N^{-2}\esp[(\sum_{l}\mathbb{E}_{\leq k-1}(\|R^{(k)}(\beta)_{li}\|^2))^{2}]+O(\delta_N))^{1/2}\\
& \leq O(1)(N^{-2}\esp[(\mathbb{E}_{\leq k-1}(\|R^{(k)}(\beta)\|^{2}))^2]+O(\delta_N))^{1/2}\ \text{by Lemma \ref{prelim}}\\
& \leq O(1)(O(N^{-2})+O(\delta_N))^{1/2}=O(\delta_N^{1/2}) \ \text{using Remark \ref{remmajRNRkLp}},
\end{align*}
uniformly in $k$.
The sums  over $l$ and $l'$ of the  three other terms \eqref{betabeta1}, \eqref{betabeta2} and \eqref{betabeta3} can be treated similarly: they are of order $O(\delta_N)$, uniformly in $k$.

Using again the resolvent identity and very similar computations, the sums over $l$ and $l'$ of the other three error terms $\esp[W_{il}\overline{W_{il'}}\Tr\beta_{ill'}{\gamma_{il'l}}^*]$, $\esp[W_{il}\overline{W_{il'}}\Tr\gamma_{ill'}{\beta_{il'l}}^*]$ and $\esp[W_{il}\overline{W_{il'}}\Tr\gamma_{ill'}{\gamma_{il'l}}^*]$ 
are proved to be of order $O(\sqrt{\delta_N})$, uniformly in $k$.
As a consequence, from \eqref{firstfirst} we can deduce that the first term { \eqref{1lemme55}}
\[\sigma_N^2 \sum_{i,j,l<k}W_{il}\hat {R}_i(\beta)\gamma_1 \mathbb{E}_{\leq k-1}\Big( R^{(kil)}(\beta)_{lj}\Big)\gamma_1\otimes \gamma_1 \mathbb{E}_{\leq k-1}\Big( R^{(kil)}(\beta_1^*)_{ji}\Big)
=o_{L^2}(1)\]
uniformly in $k$.

The $L^2$ norm of the second term { \eqref{2lemme55}} is bounded by
\begin{align*}
\sigma_N^2 & \|\hat {R}(\beta)\|\|\gamma_1\|\\
& \times \sum_{i<k}\mathbb{E}\Big[\Big|\sum_{l<k}(1-\tfrac{1}{2}\delta_{il})|W_{il}|^2\mathbb{E}_{\leq k}[\|\sum_{j<k}\mathbb{E}_{\leq k}[ R^{(kil)}(\beta)_{lj}]\gamma_1\otimes\gamma_1 R_{ji}^{(kil)}(\beta_1^*)\|\|\gamma_1\|\| R_{li}^{(k)}(\beta_1^*)\|]\Big|^2\Big]^{1/2},
\end{align*}
and then, using Jensen's inequality (with respect to $\mathbb{E}_{\leq k-1}$) and Cauchy-Schwarz inequality (with respect to the $l$-sum), by
\[\sigma_N^2\|\hat {R}(\beta)\|\|\gamma_1\|^2 \sum_{i<k}\mathbb{E}[\sum_{l<k}|W_{il}|^4\|\sum_{j<k}\mathbb{E}_{\leq k}[ R_{lj}^{(kil)}(\beta)]\gamma_1\otimes\gamma_1 R_{ji}^{(kil)}(\beta_1^*)\|^2\sum_{l<k}\| R_{li}^{(k)}(\beta_1^*)\|^2]^{1/2}.\]
{ From Lemma \ref{technicalbound} and Lemma \ref{prelim}, and then Lemma \ref
{alta lemma} one can deduce the following bounds of the $L^2$ norm of { \eqref{2lemme55}}:
\begin{align*}
 \sigma_N^2 & \|\hat {R}(\beta)\|\|\gamma_1\|^4 \sum_{i<k}\mathbb{E}[\sum_{l<k}|W_{il}|^4\|\mathbb{E}_{\leq k}[R^{(kil)}(\beta)]\|^2 \|R^{(kil)}(\beta_1^*)\|^2\| R^{(k)}(\beta_1^*)\|^2]^{1/2}\\
&\leq  \sigma_N^2\|\hat {R}(\beta)\|\|\gamma_1\|^4 \sum_{i<k}\mathbb{E}[\sum_{l<k}|W_{il}|^4\|\mathbb{E}_{\leq k}[R^{(kil)}(\beta)]\|^2 \|R^{(kil)}(\beta_1^*)\|^2 Q(\|W_N^{(k)}\|, |\Im z_1|^{-1})^2]^{1/2}\\
&\leq  \sigma_N^2\|\hat {R}(\beta)\|\|\gamma_1\|^4\\
& \hspace{1.5cm} \times \sum_{i<k}\Big\{\sum_{l<k}\mathbb{E}(|W_{il}|^4)\mathbb{E}\Big(\|\mathbb{E}_{\leq k}[R^{(kil)}(\beta)]\|^2 \|R^{(kil)}(\beta_1^*)\|^2 Q_1(\|W_N^{(kil)}\|+ 2 \delta_N, |\Im z_1|^{-1})^2\Big)\Big\}^{1/2}\\
&\leq  \sigma_N^2\|\hat {R}(\beta_1)\|\|\gamma_1\|^4 \sum_{i<k}\Big\{\sum_{l<k}\mathbb{E}(|W_{il}|^4)\mathbb{E}\Big(|\mathbb{E}_{\leq k}[Q_2(\|W_N\|, \delta_N, |\Im z|^{-1})]|^2 Q_3(\|W_N\|, \delta_N, |\Im z_1|^{-1})\Big)\Big\}^{1/2}\\
&=O(N^{-1/2}),
\end{align*}
where we use Lemma \ref{Qbound},
Lemmas \ref{puissance} and  \ref{boundnorm}  in the last line.}
Let us consider the last term \eqref{3lemme55}. Replacing successively $(1-\tfrac{1}{2}\delta_{il})|W_{il}|^2$, $R_{ii}^{(k)}(\beta_1^*)$, $R_{lj}^{(kil)}(\beta)$, $R_{jl}^{(kil)}(\beta_1^*)$ by $\sigma_N^2, \hat{R}_{i}(\beta_1^*), R_{lj}^{(k)}(\beta), R_{jl}^{(k)}(\beta_1^*)$, this last term can be written as follows:
\begin{align*}
 \sigma_N^2  \sum_{i,j,l<k}&(1-\tfrac{1}{2}\delta_{il})|W_{il}|^2 \hat {R}_i(\beta)\gamma_1\mathbb{E}_{\leq k}[ R_{lj}^{(kil)}(\beta)]\gamma_1\otimes \gamma_1\mathbb{E}_{\leq k}[ R_{jl}^{(kil)}(\beta_1^*)\gamma_1 R_{ii}^{(k)}(\beta_1^*)]\\
 & =\sigma_N^4 \sum_{i,j,l<k}\hat{R}_i(\beta)\gamma_1\mathbb{E}_{\leq k}[R_{lj}^{(k)}(\beta)]\gamma_1\otimes \gamma_1\mathbb{E}_{\leq k}[R_{jl}^{(k)}(\beta_1^*)]\gamma_1 \hat{R}_i(\beta_1^*)\\
 & \quad +\sigma_N^2 \sum_{i,j,l<k}((1-\tfrac{1}{2}\delta_{il})|W_{il}|^2-\sigma_N^2)\hat{R}_i(\beta)\gamma_1\mathbb{E}_{\leq k}[R_{lj}^{(kil)}(\beta)]\gamma_1\otimes \gamma_1\mathbb{E}_{\leq k}[R_{jl}^{(kil)}(\beta_1^*)\gamma_1 R_{ii}^{(k)}(\beta_1^*)]\\
 & \quad +\sigma_N^4 \sum_{i,j,l<k}\hat{R}_i(\beta)\gamma_1\mathbb{E}_{\leq k}[R_{lj}^{(kil)}(\beta)]\gamma_1\otimes \gamma_1\mathbb{E}_{\leq k}[R_{jl}^{(kil)}(\beta_1^*)\gamma_1 (R_{ii}^{(k)}(\beta_1^*)-\hat{R}_i(\beta_1^*))]\\
 & \quad +\sigma_N^4 \sum_{i,j,l<k}\hat{R}_i(\beta)\gamma_1\mathbb{E}_{\leq k}[R_{lj}^{(kil)}(\beta)-R_{lj}^{(k)}(\beta)]\gamma_1\otimes \gamma_1\mathbb{E}_{\leq k}[R_{jl}^{(kil)}(\beta_1^*)]\gamma_1 \hat{R}_i(\beta_1^*)\\
 & \quad +\sigma_N^4 \sum_{i,j,l<k}\hat{R}_i(\beta)\gamma_1\mathbb{E}_{\leq k}[R_{lj}^{(k)}(\beta)]\gamma_1\otimes \gamma_1\mathbb{E}_{\leq k}[R_{jl}^{(kil)}(\beta_1^*)-R_{jl}^{(k)}(\beta_1^*)]\gamma_1 \hat{R}_i(\beta_1^*)\\
 & :=N\sigma_N^2T_{N,k,0}(z,\bar z_1)(\nabla_k)+h_k^{(N)}+p_k^{(N)}+q_k^{(N)}+r_k^{(N)}.
\end{align*}
By triangular inequality, Jensen's inequality and Cauchy-Schwarz inequality, the $L^1$ norm of term $h_k^{(N)}$ can be bounded as follows:
\begin{align*}
\|h_k^{(N)}\|_1 & \leq \sigma_N^2\|\hat{R}(\beta_1)\|\|\gamma_1\|\\
& \hspace{1cm} \times \sum_{i<k}\|\|\sum_{l<k}((1-\tfrac{1}{2}\delta_{il})|W_{il}|^2-\sigma_N^2)\sum_{j<k}\mathbb{E}_{\leq k}[R^{(kil)}(\beta)_{lj}]\gamma_1\otimes \gamma_1R_{jl}^{(kil)}(\beta_1^*)\gamma_1\|\|R_{ii}^{(k)}(\beta_1^*)\|\|_1
		\end{align*}
		\begin{align}
	\hspace{1cm} & \leq \sigma_N^2\|\hat{R}(\beta)\|\|\gamma_1\|\nonumber\\
	& \hspace{0.5cm} \times \sum_{i<k}\|\sum_{l<k}((1-\tfrac{1}{2}\delta_{il})|W_{il}|^2-\sigma_N^2)\sum_{j<k}\mathbb{E}_{\leq k}[R^{(kil)}(\beta)_{lj}]\gamma_1\otimes \gamma_1R_{jl}^{(kil)}(\beta_1^*)\gamma_1\|_2\|R_{ii}^{(k)}(\beta_1^*)\|_2\nonumber\\
	& \leq \sigma_N^2\|\hat{R}(\beta)\|\|\gamma_1\|\nonumber\\
	& \hspace{0.5cm} \times \|R^{(k)}(\beta_1^*)\|_2\sum_{i<k}\mathbb{E}[\|\sum_{l<k}((1-\tfrac{1}{2}\delta_{il})|W_{il}|^2-\sigma_N^2)\sum_{j<k}\mathbb{E}_{\leq k}[R^{(kil)}(\beta)_{lj}]\gamma_1\otimes \gamma_1R_{jl}^{(kil)}(\beta_1^*)\gamma_1\|^2]^{1/2}\nonumber\\
	&=O\big(\delta_N^{1/2}\big),\label{h}
\end{align}
if one may prove that 
\begin{equation}\label{forh}\mathbb{E}[\|\sum_{l<k}((1-\tfrac{1}{2}\delta_{il})|W_{il}|^2-\sigma_N^2)\sum_{j<k}\mathbb{E}_{\leq k}[R^{(kil)}(\beta)_{lj}]\gamma_1\otimes \gamma_1R_{jl}^{(kil)}(\beta_1^*)\gamma_1\|_{HS}^2]=O(\delta_N),\end{equation}
uniformly in $i,k$, using also Remark \ref{remmajRNRkLp}
and Lemma \ref{Qbound}.

Develop the Hilbert-Schmidt norm of the $l$-sum: the sum of ``squares" is bounded by
\begin{align*}
  m(m_N-\sigma_N^4)\sum_{l<k, l\neq i} &\mathbb{E}  \Big[\|\sum_{j<k}\mathbb{E}_{\leq k}[R^{(kil)}(\beta)_{lj}]\gamma_1\otimes \gamma_1R_{jl}^{(kil)}(\beta_1^*)\gamma_1\|^2\Big]\\
 & +m(\frac{1}{4}\delta_N^2\tilde{\sigma}_N^2-\sigma_N^2\tilde{\sigma}_N^2+\sigma_N^4)\mathbb{E}\Big[\|\sum_{j<k}\mathbb{E}_{\leq k}[R^{(kii)}(\beta)_{ij}]\gamma_1\otimes \gamma_1R_{ji}^{(kii)}(\beta_1^*)\gamma_1\|^2\Big]
\end{align*}
which is $O(N^{-1})$, uniformly in $i,k$, using Lemma \ref{technicalbound} and Remark \ref{remmajRNRkLp}. 

It remains to control the sum of the cross terms 
\[\esp[((1-\tfrac{1}{2}\delta_{il})|W_{il}|^2-\sigma_N^2)((1-\frac{1}{2}\delta_{il'})|W_{il'}|^2-\sigma_N^2)\langle C_{ll}^{il},C_{l'l'}^{il'}\rangle],\]
where $C_{ll}^{il}=\sum_{j<k}\mathbb{E}_{\leq k}[R_{lj}^{(kil)}(\beta)]\gamma_1\otimes \gamma_1R_{jl}^{(kil)}(\beta_1^*)\gamma_1$
and $\langle A,B\rangle=\Tr AB^*$.
Define $$C^{ilil'}_{ll}= \sum_{j<k}\mathbb{E}_{\leq k}[R_{lj}^{(kilil')}(\beta)]\gamma_1\otimes \gamma_1R_{jl}^{(kilil')}(\beta_1^*)\gamma_1.$$
Note that by Lemma \ref{technicalbound} 
\begin{align*}
 \|C_{ll}^{il}\| & \leq \|\gamma_1\|^2\mathbb{E}_{\leq k}[\|R^{(kil)}(\beta) \|]\|  R^{(kil)}(\beta_1^*)\|,\\
 \|C_{ll}^{ilil'}\| & \leq \|\gamma_1\|^2\mathbb{E}_{\leq k}[\|R^{(kilil')}(\beta) \|]\|  R^{(kilil')}(\beta_1^*)\|.
\end{align*}
Observe that, by independence, since at least one of $l,l'\neq i$,
\begin{align*}
 |\esp & [((1-\tfrac{1}{2}\delta_{il})|W_{il}|^2-\sigma_N^2)((1-\frac{1}{2}\delta_{il'})|W_{il'}|^2-\sigma_N^2)\langle C_{ll}^{il},C_{l'l'}^{il'}\rangle]|\\
 & =|\esp[((1-\tfrac{1}{2}\delta_{il})|W_{il}|^2-\sigma_N^2)((1-\frac{1}{2}\delta_{il'})|W_{il'}|^2-\sigma_N^2)(\langle C_{ll}^{il},C_{l'l'}^{il'}\rangle-\langle C_{ll}^{ilil'},C_{l'l'}^{ilil'}\rangle)]|\\
 & \leq\esp[|(1-\tfrac{1}{2}\delta_{il})|W_{il}|^2-\sigma_N^2|^2]^{1/2}\esp[|(1-\frac{1}{2}\delta_{il'})|W_{il'}|^2-\sigma_N^2|^2]^{1/2}\esp[|\langle C_{ll}^{il},C_{l'l'}^{il'}\rangle-\langle C_{ll}^{ilil'},C_{l'l'}^{ilil'}\rangle|^2]^{1/2}.
\end{align*}
Now, from \eqref{remove}, 
\begin{equation}\label{estimprelim}
\big\| R^{(kil)}(\beta) - R^{(kilil')}(\beta)\big\| \leq 2 \delta_N \|\gamma_1\| \big\| R^{(kil)}(\beta)\big\| \big\|  R^{(kilil')}(\beta)\big\|. \end{equation}
Thus, using Cauchy-Schwarz inequality and Lemma \ref{prelim}, one can easily obtain that 
\begin{align*}
\big\| C_{ll}^{il} - C_{ll}^{ilil'}\big\| &\leq O(\delta_N) \Big(\mathbb{E}_{\leq k}[\|R^{(kil)}(\beta) \|^2 \big\|  R^{(kilil')}(\beta)\big\|^2]\Big)^{1/2} \|R^{(kil)}(\beta_1^*) \|\\
&+O(\delta_N) \Big(\mathbb{E}_{\leq k}\big[\|R^{(kilil')}(\beta) \|^2 \big]\Big)^{1/2} \|R^{(kil)}(\beta_1^*) \|\big\|  R^{(kilil')}(\beta_1^*)\big\|.
\end{align*}
Then Remark \ref{remmajRNRkLp} readily implies that $$ \esp[|\langle C_{ll}^{il},C_{l'l'}^{il'}\rangle-\langle C_{ll}^{ilil'},C_{l'l'}^{ilil'}\rangle|^2]^{1/2}
=O(\delta_N)$$
uniformly in $i,k,l,l'$ and then that $\esp[((1-\tfrac{1}{2}\delta_{il})|W_{il}|^2-\sigma_N^2)((1-\frac{1}{2}\delta_{il'})|W_{il'}|^2-\sigma_N^2)\langle C_{ll}^{il},C_{l'l'}^{il'}\rangle]$
$=O(\delta_NN^{-2})$ uniformly in $i,k,l,l'$. There are less than $N^2$ such cross terms. Therefore, \eqref{forh} and then \eqref{h}  are true.

The $L^1$ norm of $p_k^{(N)}$ can be bounded as follows (using \eqref{majorationunifhatRk}, Lemma \ref{technicalbound},
Remarks \ref{elemdiag} and \ref{remmajRNRkLp}:
\begin{align*}
	\|p_k^{(N)}\|_1 
	& \leq \sigma_N^4\sum_{i,l<k}\|\hat{R}_i(\beta)\|\|\gamma_1\|\mathbb{E}[\|\sum_{j<k}\mathbb{E}_{\leq k}[R_{lj}^{(kil)}(\beta)]\gamma_1\otimes \gamma_1R_{jl}^{(kil)}(\beta_1^*)\gamma_1\|\|R_{ii}^{(k)}(\beta_1^*)-\hat{R}_i(\beta_1^*)\|]\\
	& =O(N\sigma_N^4\sum_{i<k}\mathbb{E}[\|R_{ii}^{(k)}(\beta_1^*)-\hat{R}_i(\beta_1^*)\|^p]^{1/p})=o(1)
\end{align*}
uniformly in $k$.

Using Lemma \ref{technicalbound} and \eqref{remove}, 
\begin{align*}
\Big\|\sum_{j<k}\mathbb{E}_{\leq k}[R_{lj}^{(kil)}(\beta)\!-\!R^{(k)}(\beta)_{lj}]\gamma_1\otimes \gamma_1R_{jl}^{(kil)}(\beta_1^*)\Big\|
&\leq \|\gamma_1\|^2 \|\esp_{\leq k-1}[R^{(kil)}(\beta)-R^{(k)}(\beta)]\| \| R^{(kil)}(\beta_1^*)\|\\
&\leq 2\delta_N\|\gamma_1\|^3 \esp_{\leq k-1}[\|R^{(kil)}(\beta)\|\|R^{(k)}(\beta)\|]\| R^{(kil)}(\beta_1^*)\|.
\end{align*}
Hence, using Remark \ref{remmajRNRkLp}, the $L^1$ norm of $q_k^{(N)}$ is $O(\delta_N)$,
uniformly in $k$.
Similarly, the $L^1$ norm of $r_k^{(N)}$ is $O(\delta_N)$, uniformly in $k$.
Thus, we have established that \eqref{3lemme55} is equal to 
$N\sigma_N^2T_{N,k,0}(z,\bar z_1)(\nabla_k)+o_{L^1}(1).$
Since moreover we also established  that  \eqref{1lemme55} and \eqref{2lemme55} are 
$o_{L^1}(1)$ uniformly in $k$, Lemma \ref{nablak} yields that 
$$\nabla_k=\sigma_N^2\sum_{i<k}\hat{R}_i(\beta) \gamma_1 \otimes \gamma_1 \mathbb{E}_{\leq k-1}\left( R_{ii}^{(k)}(\beta_1^*)\right)
+N\sigma_N^2T_{N,k,0}(z,\bar z_1)(\nabla_k)+o_{L^1}(1).
$$
Lemma \ref{equationapprochee} readily follows by using Remark \ref{elemdiag}.

\end{proof}

Thus, for any  $z_1 \in \C \setminus \R$ and $z \in O_{z_1}$ (see Lemma \ref{rz}), setting  $\beta=ze_{11}-\gamma_0$ and $\beta_1^*=\bar z_1e_{11}-\gamma_0$ in $M_m(\C)$, we obtain from Lemma \ref{equationapprochee} that 
\begin{align*} 
\nabla_k &= \left(\mathrm{id}_m\otimes \mathrm{id}_m-N\sigma_N^2T_{N,k,0}(z,\bar z_1)\right)^{-1} \left(N\sigma_N^2T_{N,k,0}(z,\bar z_1)(I_m\otimes I_m)\right)+o_{L^{1}}^{(u)}(1)\\&= - I_m\otimes I_m + \left(\mathrm{id}_m\otimes \mathrm{id}_m-N\sigma_N^2T_{N,k,0}(z,\bar z_1)\right)^{-1} \left(I_m\otimes I_m\right)+o_{L^{1}}^{(u)}(1).
\end{align*}
Therefore,
\begin{align*}
 N\sigma_N^2T_N(z,\bar z_1)(\nabla_k) & =  -N\sigma_N^2T_N(z,\bar z_1)(I_m\otimes I_m)\\
 & \quad + N\sigma_N^2T_N(z,\bar z_1)\left(\mathrm{id}_m\otimes \mathrm{id}_m-N\sigma_N^2T_{N,k,0}(z,\bar z_1)\right)^{-1} \left(I_m\otimes I_m\right)+o_{L^1}(1).
\end{align*}

Thus from \eqref{trick}, the term under study in Proposition \ref{analog} can be rewritten as follows 
\begin{align*}
\sigma_N^4 \sum_{k=1}^N\sum_{i,j<k} & \Tr(\gamma_1\mathbb{E}_{\leq k}[R_{ij}^{(k)}(\beta)]\gamma_1\hat{R}_k(\beta))\Tr(\gamma_1\mathbb{E}_{\leq k}[R_{ji}^{(k)}(\beta_1^*)]\gamma_1\hat{R}_k(\beta_1^*)) \\
 & =-\sigma_N^2 \sum_{k=1}^N \Tr\left(\hat{R}_k(\beta) \gamma_1\right)\Tr \left(\gamma_1 \hat{R}_k(\beta_1^*)\right)
+N^{-1}\sum_{k=1}^N f_{k,k,N}(0)+o_{\mathbb{P}}(1),
\end{align*}
where for $t\in [0;1]$, with the notations of Section \ref{sec_2nd_term}, $$f_{k,k,N}(t)=N\sigma_N^2\Tr \otimes \Tr \left[\hat{R}_k(\beta)\gamma_1\otimes I_m\left(\mathrm{id}_m\otimes \mathrm{id}_m-N\sigma_N^2T_{N,k,t}(z,\bar z_1)\right)^{-1} \left(I_m\otimes I_m\right)I_m\otimes \gamma_1 \hat{R}_k(\beta_1^*)\right].$$
The first term can be analysed as in Section \ref{sec_rst_term}. The second term can be analysed as follows: for any $t\in [0;1]$,
\begin{align*}
f_{k,k,N}(t)& -f_{k,k,N}(0)\\
& = tN\sigma_N^4\Tr\otimes\Tr \Big[ \hat{R}_k(\beta)\gamma_1\otimes I_m\big(\mathrm{id}_m\otimes \mathrm{id}_m-N\sigma_N^2T_{N,k,t}(z,\bar z_1)\big)^{-1}\hat {R}_k(\beta) \gamma_1\otimes I_m\\
 & \hspace{1cm} \times \big(\mathrm{id}_m\otimes \mathrm{id}_m-N\sigma_N^2T_{N,k,0}(z,\bar z_1)\big)^{-1}\!\big(I_m\otimes I_m\big)I_m\otimes \gamma_1\hat {R}_k(\beta_1^*),I_m\otimes \gamma_1 \hat{R}_k(\beta_1^*)\Big]. 
\end{align*}
It readily follows
from Lemma \ref{Qbound} and Lemma \ref{rz} that there exists some constant $C(z_1)>0$ such that for any $t\in [0,1]$,
$$\left| f_{k,k,N}(t)-f_{k,k,N}(0) \right| \leq C(z_1)N\sigma_N^4.$$
Integrating with respect to $t\in [0,1]$ and summing on $k$ we obtain that 
$$\Big|N^{-1}\sum_{k=1}^N  f_{k,k,N}(0)-N^{-1}\sum_{k=1}^N \int_0^1 f_{k,k,N}(t)dt \Big| \leq C(z_1)N \sigma_N^4=o(1).$$
Now, set for $k=1,\ldots,N$,
$$A_k:X\mapsto \sigma_N^2 \hat {R}_k(\beta) \gamma_1\otimes I_mXI_m\otimes \gamma_1 \hat {R}_k(\beta_1^*),$$
$$B_k=N\sigma_N^2T_{N,k,0}(z,\bar z_1),$$
and for any $t\in [0,1]$,
$$g_k(t)= \log (\mathrm{id}_m\otimes \mathrm{id}_m-B_k-tA_k)=-\sum_{p=1}^{+\infty}\frac{1}{p}(B_k+tA_k)^p.$$
Note that $f_{k,k,N}(t)=N(\Tr\otimes\Tr)\left[A_k(\mathrm{id}_m\otimes \mathrm{id}_m-B_k-tA_k)^{-1}(I_m\otimes I_m)\right]$.

\begin{lem}\label{idlog}
$$(\Tr\otimes\Tr)(g_k'(t)(I_m\otimes I_m)) =(\Tr\otimes\Tr)\left[-A_k(\mathrm{id}_m\otimes \mathrm{id}_m-B_k-tA_k)^{-1}(I_m\otimes I_m)\right].$$
\end{lem}

\begin{proof} 
First observe that $T\mapsto \Tr \otimes \Tr(T(I_m\otimes I_m))$ is a trace on the algebra generated by $A_1,\ldots ,A_N$ (to which belong $B_1,\ldots ,B_N$).	
Note that $$g_k'(t)= -\sum_{p=1}^\infty \frac{1}{p}\sum_{i=0}^{p-1}(B_k+tA_k)^{i}A_k(B_k+tA_k)^{p-1-i}$$
Hence 
\begin{align*}
\Tr \otimes \Tr[g_k'(t) (I_m\otimes I_m)]&=-\sum_{p=1}^\infty \frac{1}{p}\sum_{i=0}^{p-1}\Tr \otimes \Tr[(B_k+tA_k)^{i}A_k(B_k+tA_k)^{p-1-i}(I_m\otimes I_m)]\\
&=-\sum_{p=1}^\infty \frac{1}{p}\sum_{i=0}^{p-1}\Tr \otimes \Tr[A_k(B_k+tA_k)^{p-1}(I_m\otimes I_m)]\\
&=-\Tr \otimes \Tr\Big[\sum_{p=0}^\infty  A_k(B_k+tA_k)^{p} (I_m\otimes I_m)\Big]\\
&=\Tr \otimes \Tr \left[-A_k(\mathrm{id}_m\otimes \mathrm{id}_m-B_k-tA_k)^{-1}(I_m\otimes I_m)\right].\end{align*}
\end{proof}

Lemma \ref{idlog} readily implies that 
\begin{align*}
\int_0^1(\Tr & \otimes\Tr)\left[-A_k(\mathrm{id}_m\otimes \mathrm{id}_m-B_k-tA_k)^{-1}(I_m\otimes I_m)\right]\,{\rm d}t\\
&=(\Tr\otimes\Tr) \left[  \log (\mathrm{id}_m\otimes \mathrm{id}_m-B_k-A_k)(I_m\otimes I_m)\right]-(\Tr\otimes \Tr)\left[  \log (\mathrm{id}_m\otimes \mathrm{id}_m-B_k)(I_m\otimes I_m)\right].
\end{align*}
Therefore, noticing that $B_{k+1}=A_k + B_k$, we obtain that
\begin{align*} 
N^{-1}\sum_{k=1}^N \int_0^1\!f_{k,k,N}(t)dt& =(\Tr\otimes\Tr)\left[\log\left(\mathrm{id}_m\otimes \mathrm{id}_m-N\sigma_N^2T_N(z,\bar z_1)\right) (I_m\otimes I_m)\right].
\end{align*}

\begin{lem}\label{29}
%
For any $\beta_1=z_1e_{11}-\gamma_0, z_1\in\mathbb{C}\setminus\mathbb{R}$ and any $\beta_2=z_2e_{11}-\gamma_0,z_2\in\mathbb{C}\setminus\mathbb{R}$, we have
\begin{align*}
\lim_{N\to\infty}
(\Tr\otimes\Tr) \big[\log\big(\mathrm{id}_m \otimes\mathrm{id}_m- & N\sigma_N^2T_N(z_1,z_2)\big) (I_m\otimes I_m)\big]\\
&=(\Tr\otimes\Tr)\left[\log\left(\mathrm{id}_m\otimes \mathrm{id}_m-\sigma^2T_{\{\beta_1,\beta_2\}}\right)(I_m\otimes I_m)\right].
\end{align*}
\end{lem}

\begin{proof}
For any $\alpha\otimes\upsilon\in M_m(\mathbb{C})\otimes M_m(\mathbb{C})$, we have
\begin{align}
\big[N & \sigma_N^2  {T_{N}(z_1,z_2)}\big]\big(\alpha\otimes\upsilon\big)\ \ \nonumber\\
& =  N\sigma_N^2(\mathrm{id}_{m^2}\otimes\tau_N)\!\left[
(\omega_N(\beta_1)\otimes1-\gamma_2\otimes D_N)^{-1}(\gamma_1\alpha\otimes1)\right]\!\otimes\!\left[(\upsilon\gamma_1\otimes1)(\omega_N(\beta_2)\otimes1-\gamma_2\otimes D_N)^{-1}\right]\!,\label{en}
\end{align}
where by $(\mathrm{id}_{m^2}\otimes\tau_N)$ we mean that $\tau_N$ is applied entrywise to the matrix belonging to $M_{m^2}(C^*\langle D_N\rangle)\simeq
M_m(C^*\langle D_N\rangle)\otimes M_m(C^*\langle D_N\rangle)\simeq M_m(\mathbb C)\otimes C^*\langle D_N\rangle\otimes M_m(\mathbb C)\otimes C^*\langle D_N\rangle$. 
Since $(D_N)_{N\in \mathbb{N}}$ converges in $*$-moments towards $d$, $\lim_{N\to +\infty}N\sigma_N^2=\sigma^2$ and using Lemma \ref{cvunif}, we can easily deduce from \eqref{en} (using also Lemmas \ref{Qbound} and \ref{bound}) that  for any $z_1,z_2$ in $ \C \setminus \R$, the sequence of operators  $(N\sigma_N^2{T_{N}(z_1,z_2)})_N$ converges in operator norm to $\sigma^2T_{\{\beta_1,\beta_2\}}$. 
We know by Corollary \ref{sprNsanstilde} that there exists   $0<\epsilon_0<1$ such $\limsup_{N\rightarrow +\infty} \rho(N\sigma_N^2{T_{N}(z_1,z_2)}<1-\epsilon_0.$
 Thanks to the Cauchy formula, for all $x \in \mathbb{C}$ such that $\vert x \vert < 1-\epsilon_0/2$, for any $k \geq 0$, $x^k =\frac{1}{2i\pi} \int_{\vert w\vert = 1-\epsilon_0/2} \frac{w^k}{w-x} dw.$
Therefore,  using the holomorphic functional calculus, we have  for all large $N$,  
$$\forall k \geq 0~~, (N\sigma_N^2{T_{N}(z_1,z_2)})^k =\frac{1}{2i\pi} \int_{\vert w\vert = 1-\epsilon_0/2} {w^k}{(w{\rm id}_m\otimes{\rm id}_m-N\sigma_N^2{T_{N}(z_1,z_2)})^{-1}} dw,$$
and therefore $$\forall k \geq 0~~, \Vert  (N\sigma_N^2{T_{N}(z_1,z_2)})^k \Vert \leq  \sup_{\vert w\vert = 1-\epsilon_0/2 } \Vert {(w{\rm id}_m\otimes{\rm id}_m-N\sigma_N^2{T_{N}(z_1,z_2)})^{-1}}\Vert {( 1-\epsilon_0/2 ) }^{k+1}.$$
Now, using Lemmas \ref{Qbound} and \ref{majnormerayon}, there exists $C(m,\epsilon_0)>0$ such that  we have  for all large $N$,  
$$ \sup_{\vert w\vert = 1-\epsilon_0/2 } \Vert (w{\rm id}_m\otimes{\rm id}_m-N\sigma_N^2 T_{N}(z_1,z_2))^{-1}\Vert \leq C(m,\epsilon_0),$$
and thus $$\forall k \geq 0~~, \Vert  (N\sigma_N^2 T_{N}(z_1,z_2))^k \Vert \leq   C(m,\epsilon_0)\Vert ( 1-\epsilon_0/2 )^{k+1}.$$
Therefore, using dominated convergence Theorem,  it readily follows that     
\begin{align*}
 \log\big[{\rm id}_m\otimes{\rm id}_m-N\sigma_N^2 & T_{N}(z_1,z_2)\big]  =-\sum_{k=1}^\infty \frac{1}{k}{(N\sigma_N^2 T_{N}(z_1,z_2))^k}\\
 & \rightarrow_{N \rightarrow +\infty}-\sum_{k=1}^\infty \frac{1}{k}{(\sigma^2 T_{\{\beta_1,\beta_2\}})^k}
=\log\left[{\rm id}_m\otimes{\rm id}_m-\sigma^2 T_{\{\beta_1,\beta_2\}}\right].
\end{align*}

\end{proof}

Thus Proposition \ref{analog} is proved.
\end{proof}

\begin{prop}\label{analog2}   For $\beta_1=z_1e_{11}-\gamma_0$ and 
$\beta_2=z_2e_{11}-\gamma_0$, for  any  $z_1\in \mathbb{C}\setminus \R, z_2 \in \C \setminus \R,$
the following convergence holds in probability:
\begin{align}
 \sigma_N^4 \sum_{k=1}^N\sum_{i,j<k} & \Tr(\gamma_1\mathbb{E}_{\leq k}[ R_{ij}^{(k)}(\beta_1)]\gamma_1\hat{R}_k(\beta_1))\Tr(\gamma_1\mathbb{E}_{\leq k}[ R_{ji}^{(k)}
(\beta_2)]\gamma_1\hat{R}_k(\beta_2))\nonumber\\
& \rightarrow_{N\rightarrow +\infty}-{\rm Tr}\otimes\Tr\left\{\log\left[\mathrm{id}_m\otimes\mathrm{id}_m-\sigma^2T_{\{\beta_1,\beta_2\}}\right](I_m\otimes I_m)\right\}\\
& \hspace{7cm} -\sigma^2{\rm Tr}\otimes\Tr\left\{T_{\{\beta_1,\beta_2\}}(I_m\otimes I_m)\right\}.\nonumber
\end{align}
\end{prop}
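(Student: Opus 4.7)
The target extends Proposition \ref{analog}, which only treats pairs of the form $(z, \bar z_1)$ with $z_1 \in \mathbb{C} \setminus \mathbb{R}$ and $z \in O_{z_1}$, i.e., an open subset lying inside a neighborhood of the ``conjugate diagonal'' $\{(w, \bar w) : w \in \mathbb{C} \setminus \mathbb{R}\}$ in the components $\mathbb{C}^+ \times \mathbb{C}^-$ and $\mathbb{C}^- \times \mathbb{C}^+$ of $(\mathbb{C} \setminus \mathbb{R})^2$. The plan is to promote this to all pairs in $(\mathbb{C} \setminus \mathbb{R})^2$ by a two-variable analytic continuation (Vitali) argument, combined with a direct variance bound.

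First, I would verify that both sides are jointly analytic in $(z_1,z_2)$ on each of the four connected components of $(\mathbb{C}\setminus\mathbb{R})^2$. For the right-hand side, this follows from Proposition \ref{invertibility2} together with the analytic dependence of $\omega(z_j e_{11}-\gamma_0)$ on $z_j$ (cf.\ Remark \ref{cvuniformeomegaN} and Lemma \ref{bound}), which renders $T_{\{\beta_1,\beta_2\}}$ analytic with spectrum in the open disk of radius $\sigma^{-2}$. For the left-hand side, $R^{(k)}(\beta_j)$ and $\hat R_k(\beta_j)$ are analytic in $z_j$ on $\mathbb{C}\setminus\mathbb{R}$ (the conditional expectations $\mathbb{E}_{\leq k}$ preserve analyticity), so $F_N(z_1,z_2)$, defined as the left-hand side of Proposition \ref{analog2}, is jointly analytic on each component. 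Simultaneously, one establishes an $L^p$ bound $\mathbb{E}[|F_N(z_1,z_2)|^p] = O(1)$ uniform on compacts of $(\mathbb{C}\setminus\mathbb{R})^2$: applying Cauchy--Schwarz on the $(i,j)$ sum and the Hilbert--Schmidt estimate $\|R^{(k)}(\beta_j)\|_F^2 \leq N\|R^{(k)}(\beta_j)\|^2$, the factor $\sigma_N^4 \cdot N \cdot N = O(1)$ compensates the sums, and Remark \ref{remmajRNRkLp} and Lemma \ref{Qbound} provide the remaining moments.

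Next, I would split $F_N = \mathbb{E}[F_N] + (F_N - \mathbb{E}[F_N])$. The mean $\mathbb{E}[F_N(z_1,z_2)]$ is deterministic, jointly analytic in $(z_1,z_2)$ on each component, and uniformly bounded on compacts by the previous step together with uniform integrability. Proposition \ref{analog} gives convergence in probability of $F_N(z,\bar z_1)$ to $F(z,\bar z_1)$ for $z \in O_{z_1}$; combined with the $L^p$ bound, uniform integrability upgrades this to convergence of expectations on the open set $\{(z,\bar z_1) : z_1\in\mathbb{C}^+,\, z\in O_{z_1}\}\subset \mathbb{C}^+\times\mathbb{C}^-$. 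Vitali's theorem in two complex variables (applied via iterated one-variable Vitali on slices) then extends $\mathbb{E}[F_N] \to F$ to the entire component $\mathbb{C}^+\times\mathbb{C}^-$, and symmetrically to $\mathbb{C}^-\times\mathbb{C}^+$. For the variance, $\mathbb{E}[|F_N - \mathbb{E}[F_N]|^2] \to 0$ uniformly on compacts can be established by a martingale argument mimicking Lemma \ref{crudevariancebound}/Lemma \ref{lem-varianceboundtrace}, applied to the filtration-decomposition of $F_N$.

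The main obstacle is the two remaining components $\mathbb{C}^+\times\mathbb{C}^+$ and $\mathbb{C}^-\times\mathbb{C}^-$, where Proposition \ref{analog} provides no seed pairs for Vitali (since the set $\{(z,\bar z_1)\}$ lies exclusively in the ``mixed-sign'' components). To address this, I would rerun the derivation of Proposition \ref{analog}: the algebraic steps leading to the approximate fixed-point equation $\nabla_k = N\sigma_N^2 T_{N,k,0}(z_1,z_2)(I_m\otimes I_m + \nabla_k) + o_{L^1}(1)$ of Lemma \ref{equationapprochee} carry over verbatim for any pair $(z_1,z_2)$, since they only use the resolvent identities, the quadratic form bounds of Lemma \ref{quadratic forms}, and the qualitative asymptotic freeness results. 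The one genuinely delicate point is the analog of Lemma \ref{rz}: uniform invertibility in $k$ of $\mathrm{id}_m\otimes\mathrm{id}_m - N\sigma_N^2 T_{N,k,0}(z_1,z_2)$. For conjugate pairs $(z,\bar z)$, this was obtained in Proposition \ref{Kz} via the complete-positivity inequality $T_{N,k,t}(z,\bar z) \leq T_N(z,\bar z)$ combined with Lemma \ref{cp}. For a general pair, positivity is lost; however, the key observation is that the ``Gramian'' structure of $T_{N,k,t}(z_1,z_2)$ permits an operator-valued Cauchy--Schwarz estimate of the form $\rho(T_{N,k,t}(z_1,z_2))^2 \leq \rho(T_{N,k,t}(z_1,\bar z_1))\,\rho(T_{N,k,t}(z_2,\bar z_2))$, from which uniform bounds follow from the conjugate case of Proposition \ref{Kz}. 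This is the step I expect to be most technical. Once that bound is in hand, the rest of the proof of Proposition \ref{analog}---extraction of the fixed point, integration of $f_{k,k,N}(t)$, identification of the logarithm via Lemma \ref{idlog}, and the norm-convergence step of Lemma \ref{29}---transfers unchanged to the general pair $(z_1,z_2)$, yielding the announced limit.
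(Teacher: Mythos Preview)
Your proof sketch correctly identifies the apparent obstacle: Proposition \ref{analog} only supplies convergence at pairs $(z,\bar z_1)$ with $z\in O_{z_1}$, which lie exclusively in the mixed-sign components of $(\mathbb{C}\setminus\mathbb{R})^2$, so a component-by-component Vitali argument would give nothing on $\mathbb{C}^+\times\mathbb{C}^+$ or $\mathbb{C}^-\times\mathbb{C}^-$. However, your proposed remedy---the spectral-radius inequality $\rho(T_{N,k,t}(z_1,z_2))^2 \leq \rho(T_{N,k,t}(z_1,\bar z_1))\,\rho(T_{N,k,t}(z_2,\bar z_2))$---is unjustified and most likely false. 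Spectral radii do not in general satisfy such bilinear Cauchy--Schwarz estimates; the monotonicity argument of Lemma \ref{cp} used complete positivity in an essential way, and that is exactly what is lost for general pairs. Without this bound you have no control, uniform in $k$, on $(\mathrm{id}_m\otimes\mathrm{id}_m - N\sigma_N^2 T_{N,k,0}(z_1,z_2))^{-1}$, and rerunning the fixed-point machinery of Proposition \ref{analog} breaks down.

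The paper's route avoids the obstacle altogether and is considerably shorter. The key observation is that on the event $\{\|W_N\|\le C\}$ of Proposition \ref{boundnorm}, the resolvents $R^{(k)}(ze_{11}-\gamma_0)$ and $\hat R_k(ze_{11}-\gamma_0)$ are analytic not only on $\mathbb{C}\setminus\mathbb{R}$ but on the \emph{connected} domain $\mathbb{C}\setminus[-M',M']$ for a deterministic $M'$ (all relevant spectra are uniformly bounded there). Fix $z_1$, multiply the expression by $\mathbf{1}_{\{\|W_N\|\le C\}}$, and view it as a random analytic function $f_N^{(z_1)}$ of the single variable $z$ on this connected domain. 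From the convergence in probability on $O_{z_1}$ given by Proposition \ref{analog}, pass to subsequences: by diagonal extraction over a countable set of uniqueness in $O_{z_1}$, obtain a sub-subsequence converging almost surely pointwise there; uniform boundedness in $\mathcal{H}(\mathbb{C}\setminus[-M',M'])$ (via Lemmas \ref{technicalbound} and \ref{alta lemma}) together with Vitali then yields locally uniform a.s.\ convergence on the whole connected domain, in particular on \emph{both} half-planes. The limit is identified with the right-hand side by analytic continuation. Since $z_1$, and hence $\bar z_1$, ranges over all of $\mathbb{C}\setminus\mathbb{R}$, this covers every pair $(z_1,z_2)$. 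The indicator is removed at the end using the moment bounds and $\mathbb{P}(\|W_N\|>C)\to 0$. No mean--variance split and no new spectral-radius estimate are needed.
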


\begin{proof}
Recall that a sequence $(X_N)_{N\geq 1}$ of random variables converges in probability to a random variable $X$ if and only if, from any subsequence extracted from $(X_N)_{N\geq 1}$, one can further extract a subsubsequence almost surely converging to $X$. We will use this criterion in the following argument.

Let $C$ be as in Proposition \ref{boundnorm}. If $\Vert W_N\Vert \leq C$,   there exists $M>0$ such that for any $k \in \{1,\ldots,N\}$, $\|P(W_N^{(k)}, D_N^{(k)})\|\leq M.$
Let $K>0$ be such that $\|P(s_N,D_N)\| \leq K$. Thus, for any $z \in \mathbb{R}$ such that $\vert z \vert >K$, $z\mathbf{1}_{\mathcal{A}_N}-P(s_N, D_N)$ is invertible and the, according to Lemma \ref{inversible}, $ze_{11}\otimes 1_{\mathcal A_N}-L_P(s_N,D_N)$ is invertible.

Let us fix $z_1 \in \C \setminus \R$. We know, using \eqref{bornemanq},  Lemma \ref{technicalbound}, Remark \ref{remmajRNRkLp} and Proposition \ref{boundnorm}, that by  Proposition \ref{analog}, for any $z\in O_{z_1}$, 
\begin{align*}f^{(z_1)}_N(z)&:=
\1_{\{\|W_N\| \leq C\}}\sigma_N^4\sum_{k=1}^N\sum_{i,j<k}\Tr(\gamma_1\mathbb{E}_{\leq k}[R_{ij}^{(k)}(ze_{11}-\gamma_0)]\gamma_1\hat{R}_k(ze_{11}-\gamma_0))\\
& \hspace{1cm}\times \Tr(\gamma_1\mathbb{E}_{\leq k}[R_{ji}^{(k)}(\bar z_1e_{11}-\gamma_0)]\gamma_1\hat{R}_k(\bar z_1e_{11}-\gamma_0))
\end{align*}
converges in probability towards 
\begin{align*}
 f^{(z_1)}(z) & =-\Tr\otimes\Tr\left\{\log\left[\mathrm{id}_m\otimes\mathrm{id}_m-\sigma^2T_{\{ze_{11}-\gamma_0,\bar z_1e_{11}-\gamma_0\}}\right](I_m\otimes I_m)\right\}\\
 & \hspace{0.4cm} -\sigma^2\Tr\otimes\Tr\left[T_{\{ze_{11}-\gamma_0,\bar z_1e_{11}-\gamma_0\}}(I_m\otimes I_m)\right].
\end{align*}
For $N\geq 1$, $f^{(z_1)}_N $ is an holomorphic function on $\C \setminus [-\max(M,K);\max(M,K)]$.

Fix an arbitrary subsequence extracted from $(f^{(z_1)}_N)_{N\geq 1}$. By diagonal extraction from the convergence in probability above, given a countable subset of uniqueness of $O_{z_1}$, one can extract a subsubsequence, let us say  $(f^{(z_1)}_{\Psi(N)})_{N\geq 1}$, almost surely converging to $f^{(z_1)}$
pointwise on this subset.

Using Lemma \ref{technicalbound} and Lemma \ref{alta lemma}, $(f^{(z_1)}_N)_{N\geq 1}$ is a bounded sequence  in $\mathcal{H}(\C \setminus [-\max(M,K);
\max(M,K)])$.
We conclude by Vitali's Theorem that almost surely $(f^{(z_1)}_{\Psi(N)})$ converges towards an holomorphic function on $ \C \setminus [-\max(M,K);
\max(M,K)].$
\\ Note that $f^{(z_1)}$ is holomorphic on $\C \setminus \R$.
Hence  almost surely, for any $z \in  \C \setminus \R$, $f^{(z_1)}_{\Psi(N)}(z)$ converges towards  $f^{(z_1)}(z)$.
Therefore  for any $z \in  \C \setminus \R$, $f^{(z_1)}_N(z)$ converges in probability towards  $f^{(z_1)}(z)$.
Proposition \ref{analog2} readily follows since, by Proposition \ref{boundnorm}, Remark \ref{remmajRNRkLp} and Lemma \eqref{bornemanq}, 
\begin{align*}
  \1_{\{\|W_N\| > C\}} \sigma_N^4 \sum_{k=1}^N\sum_{i,j<k} \Tr(\gamma_1 & \mathbb{E}_{\leq k}[R_{ij}^{(k)}(ze_{11}-\gamma_0)] \gamma_1\hat{R}_k(ze_{11}-\gamma_0))\\
  & \times \Tr(\gamma_1\mathbb{E}_{\leq k}[R_{ji}^{(k)}(\bar z_1e_{11}-\gamma_0)]\gamma_1\hat{R}_k(\bar z_1e_{11}-\gamma_0)) \ = o_{L^1}(1).
\end{align*}
\end{proof}

\subsubsection{Contribution of the third term of \eqref{eq:hook_primitive2}}\label{sec_3rd_term}
\begin{prop}\label{analogtauN}
Fix $z_1,z_2 \in \C \setminus \R$ and set  $\beta_1=z_1e_{11}-\gamma_0$ and $\beta_2= z_2e_{11}-\gamma_0$  in $M_m(\C)$.
The following convergence holds in probability:
\begin{align}
|\theta_N|^2\sum_{k=1}^N\sum_{i,j<k} \Tr(\gamma_1 & \mathbb{E}_{\leq k}[ R_{ij}^{(k)}(\beta_1)]\gamma_1\hat{R}_k(\beta_1))\Tr(\gamma_1\mathbb{E}_{\leq k}[ R_{ij}^{(k)}
(\beta_2)]\gamma_1\hat{R}_k(\beta_2))\nonumber\\
& \nonumber 
\rightarrow_{N\rightarrow +\infty}-{\rm Tr}\otimes\Tr\left\{\log\left[\mathrm{id}_m\otimes\mathrm{id}_m-\theta T_{\{\beta_1,\beta_2\}}\right](I_m\otimes I_m)\right\}\\
& \hspace{6cm} -\theta{\rm Tr}\otimes\Tr\{T_{\{\beta_1,\beta_2\}}(I_m\otimes I_m)\}\nonumber.
\end{align}
\end{prop}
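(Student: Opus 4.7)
The proof of Proposition~\ref{analogtauN} parallels that of Proposition~\ref{analog} (Section~\ref{sec_2nd_term}) with two structural adjustments: the paired indices are now $(i,j)$--$(i,j)$ instead of $(i,j)$--$(j,i)$, and the coefficient $|\theta_N|^2=\theta_N\overline{\theta_N}$ replaces $\sigma_N^4$. Since $\theta\in\R$, the distinction between $\theta_N^2$, $\overline{\theta_N}^2$ and $|\theta_N|^2$ contributes only $o(N^{-2})$ errors (exactly as in the $\varepsilon_{ijk}$ term of Lemma~\ref{quadratic forms}), so $\theta_N$ and $\overline{\theta_N}$ can be treated symmetrically throughout. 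The Cauchy--Schwarz bound $|\theta_N|\le\sigma_N^2$ passes to the limit as $|\theta|\le\sigma^2$; combined with Corollary~\ref{sprNsanstilde} this yields $|\theta|\,\rho(T_{\{\beta_1,\beta_2\}})<1$, so $\log[\mathrm{id}_m\otimes\mathrm{id}_m-\theta T_{\{\beta_1,\beta_2\}}]$ is well-defined through the convergent series \eqref{logseries}.

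As an analogue of \eqref{defnabl} and \eqref{trick}, introduce
\[
\tilde\nabla_k:=\theta_N\sum_{i,j<k}\mathbb{E}_{\leq k}[R_{ij}^{(k)}(\beta_1)]\,\gamma_1\otimes\gamma_1\,\mathbb{E}_{\leq k}[R_{ij}^{(k)}(\beta_2)]\in M_m(\C)\otimes M_m(\C),
\]
together with the twisted operator $\tilde T_{N,k,t}(z_1,z_2):=\Theta\circ T_{N,k,t}(z_1,z_2)\circ\Theta$, where $\Theta(A\otimes B):=A\otimes B^T$ is the partial transpose on the second tensor factor. A direct computation then yields
\begin{align*}
|\theta_N|^2\sum_{k=1}^N\sum_{i,j<k}&\Tr(\gamma_1\mathbb{E}_{\leq k}[R_{ij}^{(k)}(\beta_1)]\gamma_1\hat R_k(\beta_1))\Tr(\gamma_1\mathbb{E}_{\leq k}[R_{ij}^{(k)}(\beta_2)]\gamma_1\hat R_k(\beta_2))\\
&=\sum_{k=1}^N N\overline{\theta_N}\,(\Tr\otimes\Tr)[\tilde T_{N,k,0}(z_1,z_2)(\tilde\nabla_k)]+o_{\mathbb{P}}(1).
\end{align*}
Because $\Theta$ is a linear involution on $M_m(\C)\otimes M_m(\C)$ that preserves both $\Tr\otimes\Tr$ and $I_m\otimes I_m$, the operator $\tilde T_{N,k,t}$ is similar to $T_{N,k,t}$; in particular they share the same spectrum (so Proposition~\ref{Kz} combined with $|N\theta_N|\le N\sigma_N^2$ gives $\limsup_N\sup_{k,t\in[0,1]}\rho(N\theta_N\tilde T_{N,k,t}(z,\bar z))<1$ uniformly on compacts) and all iterated trace moments $(\Tr\otimes\Tr)[\tilde T^p(I_m\otimes I_m)]=(\Tr\otimes\Tr)[T^p(I_m\otimes I_m)]$ agree.

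The derivation of an approximate fixed-point equation for $\tilde\nabla_k$ follows the expansion of Lemma~\ref{nablak} verbatim: each $R_{ij}^{(k)}(\beta_l)$ is developed via Proposition~\ref{Schur} and the resolvent identity \eqref{remove}. The difference is that the dominant contractions against the second column of $W_N$ are now the pseudo-variances $\mathbb{E}[W_{kl}^2]=\theta_N$ (resp.\ $\mathbb{E}[\overline{W_{kl}}^2]=\overline{\theta_N}$) rather than the true variance $\sigma_N^2$. All auxiliary $L^1/L^2$ error estimates of Section~\ref{sec_2nd_term} (Lemma~\ref{technicalbound}, Remarks~\ref{elemdiag} and~\ref{kilhat}, Lemmas~\ref{estimfq} and~\ref{puissance}) depend only on moment bounds and resolvent norms; they transfer unchanged, using $|\theta_N|^2\le\sigma_N^4$. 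Replicating the proof of Lemma~\ref{equationapprochee} one obtains
\[
\tilde\nabla_k=N\theta_N\tilde T_{N,k,0}(\tilde\nabla_k+I_m\otimes I_m)+o_{L^1}(1)
\]
uniformly in $k$, whence $\tilde\nabla_k=-I_m\otimes I_m+(\mathrm{id}_m\otimes\mathrm{id}_m-N\theta_N\tilde T_{N,k,0})^{-1}(I_m\otimes I_m)+o_{L^1}(1)$.

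Substituting back and applying the telescoping argument of Lemma~\ref{idlog} with $A_k$ defined using $\theta_N\hat R_k(\beta_1)\gamma_1\otimes(\cdot)\otimes\gamma_1\hat R_k(\beta_2)$ (twisted by $\Theta$) and $B_k:=N\theta_N\tilde T_{N,k,0}$ transforms the Riemann sum in $k$ into
\[
(\Tr\otimes\Tr)[\log(\mathrm{id}_m\otimes\mathrm{id}_m-N\theta_N\tilde T_N(z_1,z_2))(I_m\otimes I_m)]+o_{\mathbb{P}}(1),
\]
the control error $N|\theta_N|^2=o(1)$ playing the role of $N\sigma_N^4=o(1)$. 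The $\Theta$-invariance of trace moments noted above identifies this with $(\Tr\otimes\Tr)[\log(\mathrm{id}_m\otimes\mathrm{id}_m-N\theta_N T_N(z_1,z_2))(I_m\otimes I_m)]$, to which a verbatim copy of Lemma~\ref{29} with $N\sigma_N^2\to\theta$ (permissible because $|\theta|\le\sigma^2$) yields the announced $-(\Tr\otimes\Tr)\{\log[\mathrm{id}_m\otimes\mathrm{id}_m-\theta T_{\{\beta_1,\beta_2\}}](I_m\otimes I_m)\}$ term. The residual $-I_m\otimes I_m$ contribution from $\tilde\nabla_k$ produces the $-\theta(\Tr\otimes\Tr)[T_{\{\beta_1,\beta_2\}}(I_m\otimes I_m)]$ term via Lemma~\ref{vingt5} as in Section~\ref{sec_rst_term}. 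The extension from $(z,\bar z)$ to arbitrary $(z_1,z_2)\in(\C\setminus\R)^2$ proceeds by the Vitali/normal-family argument of Proposition~\ref{analog2}. The main obstacle is the careful bookkeeping of the partial transpose $\Theta$ throughout the expansion step, and checking that every pseudo-variance contraction in Lemma~\ref{quadratic forms} propagates uniformly in $k$ through the twist; every other ingredient transfers mechanically from the second-term analysis.
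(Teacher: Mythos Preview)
Your overall strategy—mirror the second-term analysis with the $(i,j)$–$(i,j)$ pairing and the coefficient $|\theta_N|$ in place of $\sigma_N^2$—is exactly the paper's route. The paper defines $\tilde\nabla_k=|\theta_N|\sum_{i,j<k}\mathbb{E}_{\leq k}[R^{(k)}_{ij}(\beta_1)]\gamma_1\otimes\gamma_1\mathbb{E}_{\leq k}[R^{(k)}_{ij}(\beta_2)]$, establishes the analogue of Lemma~\ref{nablak} (their Lemma~\ref{tildenablak}), and then observes that, in the \eqref{remove} expansion of the second factor, it is now the $W_{il}^2$ term (giving $\gamma_1 R^{(kil)}_{ii}(\beta_2)\gamma_1 R^{(k)}_{lj}(\beta_2)$) rather than the $|W_{il}|^2$ term that is dominant. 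This yields the approximate fixed-point equation $\tilde\nabla_k=N|\theta_N|T_{N,k,0}(z_1,z_2)(I_m\otimes I_m+\tilde\nabla_k)+o_{L^1}(1)$ (Lemma~\ref{equationapprocheetilde}). Everything downstream—the spectral-radius control via $0\le|\theta_N|\le\sigma_N^2$ together with Proposition~\ref{Kz} and Corollary~\ref{sprNsanstilde}, the telescoping of Lemma~\ref{idlog}, the convergence Lemma~\ref{29}, and the Vitali extension of Proposition~\ref{analog2}—is copied verbatim. No twist by a partial transpose is introduced anywhere.

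Your partial-transpose device is where the proposal breaks. Compute $\tilde T_{N,k,0}=\Theta\circ T_{N,k,0}\circ\Theta$ explicitly: on the second tensor slot it sends $b_2\mapsto \hat R_i(\beta_2)^{T}\gamma_1^{T}\,b_2$. But the dominant term the expansion actually produces on the second slot is $\gamma_1\hat R_i(\beta_2)\gamma_1 R^{(k)}_{lj}(\beta_2)$, i.e.\ left-multiplication of $\gamma_1 R^{(k)}_{lj}(\beta_2)$ by $\gamma_1\hat R_i(\beta_2)$. These two operations coincide only if $\gamma_1\hat R_i(\beta_2)$ is a symmetric matrix, which has no reason to hold. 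Hence the fixed-point equation you write down, $\tilde\nabla_k=N\theta_N\tilde T_{N,k,0}(\tilde\nabla_k+I_m\otimes I_m)+o_{L^1}(1)$, is not what ``replicating the proof of Lemma~\ref{equationapprochee}'' delivers, and your (correct in isolation) remark that $(\Tr\otimes\Tr)[\tilde T^p(I_m\otimes I_m)]=(\Tr\otimes\Tr)[T^p(I_m\otimes I_m)]$ does not connect to the object in hand. Drop the $\Theta$ twist entirely and track instead which of the three expansion terms carries the weight; that single observation is the whole content of the third-term proof in the paper.
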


The proof of Proposition \ref{analogtauN} is very similar to the proof of Propositions \ref{analog} and \ref{analog2}. Therefore, we  only notice the main differences.
Instead of \eqref{defnabl}, we define for any $k\in \{1,\ldots,N\}$, 
$$\tilde \nabla_k=|\theta_N|\sum_{i,j<k}\mathbb{E}_{\leq k}[ R_{ij}^{(k)}(\beta_1)]\gamma_1\otimes \gamma_1\mathbb{E}_{\leq k}[ R_{ij}^{(k)}(\beta_2)] \in M_m(\mathbb{C})\otimes M_m(\mathbb{C}),$$
and note that $$|\theta_N|^2\sum_{k=1}^N\sum_{i,j<k}\Tr(\gamma_1\mathbb{E}_{\leq k}[ R_{ij}^{(k)}(\beta_1)]\gamma_1\hat{R}_k(\beta_1))\Tr(\gamma_1\mathbb{E}_{\leq k}[ R_{ij}^{(k)}(\beta_2)]\gamma_1\hat{R}_k(\beta_2)) $$
$$=N|\theta_N|\Tr \otimes \Tr T_N(z_1,z_2)(\tilde \nabla_k).$$ 
Sticking to the proof of  Lemma \ref{nablak}, we can prove the following
\begin{lem}\label{tildenablak}
\begin{align*}
\tilde \nabla_k
&=|\theta_N|\sum_{i<k}\hat{R}_i(\beta_1) \gamma_1 \otimes \gamma_1 \mathbb{E}_{\leq k-1}\left( R_{ii}^{(k)}(\beta_2)\right)\\
& +|\theta_N|\sum_{i,j,l<k}W_{il}\hat {R}_i(\beta_1)\gamma_1 \mathbb{E}_{\leq k-1}\left( R_{lj}^{(kil)}(\beta_1)\right)\gamma_1\otimes \gamma_1 \mathbb{E}_{\leq k-1}\left( R_{ij}^{(k)}(\beta_2)\right)+o_{L^2}^{(u)}(1). 
\end{align*}
\end{lem}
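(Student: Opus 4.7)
The plan is to mimic, essentially line for line, the proof of Lemma \ref{nablak}. The only structural changes are: (a) the right-hand tensor factor $\gamma_1\mathbb E_{\le k-1}[R_{ji}^{(k)}(\beta_1^*)]$ is replaced throughout by $\gamma_1\mathbb E_{\le k-1}[R_{ij}^{(k)}(\beta_2)]$ (note the index pattern $ij$ rather than $ji$, and a different spectral parameter), and (b) the overall prefactor $\sigma_N^2$ is replaced by $|\theta_N|$. So I would start from the same Schur-type identity
$$(\omega_N(\beta_1)-D_{ii}\gamma_2)R_{ij}^{(k)}(\beta_1)=\delta_{ij}I_m+\sum_{l\neq k}W_{il}\gamma_1 R_{lj}^{(kil)}(\beta_1)+\text{(corrective terms)},$$
obtained by combining the defining resolvent equation for $R^{(k)}(\beta_1)$, the removal identity \eqref{remove}, and the fixed-point relation \eqref{fixed} for $\omega_N$. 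Premultiplying by $\hat R_i(\beta_1)$, taking $\mathbb E_{\le k-1}$, and then tensoring on the right with $\gamma_1\otimes\gamma_1\mathbb E_{\le k-1}[R_{ij}^{(k)}(\beta_2)]$, I obtain exactly the two leading terms announced in the statement plus four error contributions of the same type as $I_{k,i,j},II_{k,i,j},III_{k,i,j},IV_{k,i,j}$ in the proof of Lemma \ref{nablak}, and multiplied by $|\theta_N|$ rather than $\sigma_N^2$.

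The next step is to show each of these four error terms is $o_{L^2}^{(u)}(1)$ after summation on $i,j<k$. The arguments transfer with no substantive change: the upper bounds on $\|\gamma_1\mathbb E_{\le k-1}[R_{ij}^{(k)}(\beta_2)]\|$ obtained from Lemma \ref{alta lemma}, Remark \ref{remmajRNRkLp}, and Proposition \ref{boundnorm} are identical in form to those used for $\|\gamma_1\mathbb E_{\le k-1}[R_{ji}^{(k)}(\beta_1^*)]\|$; the partial-transpose trick (Proposition \ref{partialtranspose}), which is the main device for rewriting sums $\sum_{j<k}\gamma_1\mathbb E_{\le k-1}[R_{lj}^{(kil)}(\beta_1)]\gamma_1\otimes\gamma_1\mathbb E_{\le k-1}[R_{ij}^{(k)}(\beta_2)]$ as matrix products of a resolvent and a partial transpose of another resolvent, is insensitive to whether one takes $R_{ij}$ or $R_{ji}$ on the right; and Lemma \ref{technicalbound} gives the same type of $\ell^2$-control on these sums. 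Finally, the replacement of $\sigma_N^2$ by $|\theta_N|$ is harmless because $|\theta_N|=O(N^{-1})$ has the same order of magnitude as $\sigma_N^2$, so every quantitative estimate produces an error of the same order.

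The main obstacle is purely bookkeeping: one must carefully rewrite the partial-transpose manipulations with the new index pattern $ij$ so that the sum $\sum_{j<k}R^{(k)}(\beta_2)_{ij}$ is still presented as the $(i,\cdot)$-block of a matrix to which Lemma \ref{technicalbound} applies, and so that the independence between $W_{il}$ and the remaining randomness in $\alpha_{il}:=\sum_{j<k}\gamma_1\mathbb E_{\le k-1}[R_{lj}^{(kil)}(\beta_1)]\gamma_1\otimes\gamma_1\mathbb E_{\le k-1}[R_{ij}^{(kil)}(\beta_2)]$ (with $R^{(kil)}$ in place of $R^{(k)}$ where needed, controlled via \eqref{remove} and \eqref{estimprelim}) is correctly exploited. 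Once these identifications are made, the variance computations for the diagonal $l=l'$ and off-diagonal $l\neq l'$ cross terms produce the same $O(\delta_N^{1/2})$ and $O(N^{-1/2})$ bounds as in the proof of Lemma \ref{nablak}, and combining all the error estimates with Remark \ref{elemdiag} (to replace $R_{ii}^{(k)}(\beta_2)$ by $\hat R_i(\beta_2)$ in the leading diagonal contribution, if desired) yields the stated decomposition of $\tilde\nabla_k$ modulo $o_{L^2}^{(u)}(1)$.
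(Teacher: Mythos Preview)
Your proposal is correct and follows exactly the approach indicated in the paper, which simply states ``Sticking to the proof of Lemma \ref{nablak}, we can prove the following'' without further detail. Your identification of the two structural changes (the index pattern $ji\to ij$ on the right tensor factor and the prefactor $\sigma_N^2\to|\theta_N|$) and your observation that the partial-transpose and norm estimates are insensitive to these changes are precisely what is needed; indeed the paper later uses $0\le|\theta_N|\le\sigma_N^2$ for the same purpose.
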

We can also establish the following lemma which is an analog of Lemma \ref{equationapprochee}.
\begin{lem}\label{equationapprocheetilde}
\begin{equation*}
\tilde \nabla_k =N|\theta_N|T_{N,k,0}(z_1,z_2)(I_m\otimes I_m+\tilde \nabla_k)+o_{L^1}(1).
\end{equation*}
\end{lem}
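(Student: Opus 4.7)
The plan is to follow the template of the proof of Lemma~\ref{equationapprochee}. After invoking Lemma~\ref{tildenablak}, the first summand on its right-hand side is identified via Remark~\ref{elemdiag} with $N|\theta_N|T_{N,k,0}(z_1,z_2)(I_m\otimes I_m)+o_{L^1}(1)$, so it remains to analyse the second summand
\[
\Sigma_k:=|\theta_N|\sum_{i,j,l<k}W_{il}\hat R_i(\beta_1)\gamma_1\mathbb{E}_{\leq k-1}\bigl[R_{lj}^{(kil)}(\beta_1)\bigr]\gamma_1\otimes\gamma_1\mathbb{E}_{\leq k-1}\bigl[R_{ij}^{(k)}(\beta_2)\bigr]
\]
and to show that $\Sigma_k=N|\theta_N|T_{N,k,0}(z_1,z_2)(\tilde\nabla_k)+o_{L^1}(1)$. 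The dependence of $R_{ij}^{(k)}(\beta_2)$ on $W_{il}$ is removed by the resolvent identity \eqref{remove}, giving
\[
R_{ij}^{(k)}(\beta_2)=R_{ij}^{(kil)}(\beta_2)+(1-\tfrac12\delta_{il})\bigl[W_{il}R_{ii}^{(kil)}(\beta_2)\gamma_1 R_{lj}^{(k)}(\beta_2)+\overline{W_{il}}R_{il}^{(kil)}(\beta_2)\gamma_1 R_{ij}^{(k)}(\beta_2)\bigr],
\]
which is substituted into $\Sigma_k$ to produce three pieces, playing the roles of \eqref{1lemme55}, \eqref{2lemme55} and \eqref{3lemme55} in the proof of Lemma~\ref{equationapprochee}.

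The piece carrying a lone $W_{il}$ is $o_{L^2}(1)$ uniformly in $k$ by the same argument as for \eqref{1lemme55}: one expands $\|\cdot\|_{L^2}^2$, exploits the orthogonality of $W_{il}$ and $W_{il'}$ for $l\neq l'$, controls individual terms via Proposition~\ref{partialtranspose}, Lemma~\ref{technicalbound}, Remark~\ref{remmajRNRkLp} and Proposition~\ref{boundnorm}, and handles the four types of cross-terms through the further substitution $R^{(kil)}=R^{(kilil')}+O(\delta_N)$ coming from \eqref{remove}. The piece carrying $W_{il}^2$ yields the main contribution: using the scalar identity $|\theta_N|\theta_N=|\theta_N|^2+o(N^{-3})$ (a consequence of $\theta_N^2=|\theta_N|^2+o(N^{-2})$ recorded in the proof of Lemma~\ref{quadratic forms}), replacing $R^{(kil)}$ by $R^{(k)}$ via \eqref{remove} and the diagonal entry $R_{ii}^{(kil)}(\beta_2)$ by $\hat R_i(\beta_2)$ via Remark~\ref{elemdiag}, its leading part can be identified with $N|\theta_N|T_{N,k,0}(z_1,z_2)(\tilde\nabla_k)$. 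The piece carrying $|W_{il}|^2$, with smaller overall prefactor $|\theta_N|\sigma_N^2=O(N^{-2})$, is $o_{L^1}(1)$: its centered part (with $|W_{il}|^2-\sigma_N^2$) is controlled as the term $h_k^{(N)}$ of the original proof via a Hilbert--Schmidt expansion and the variance estimate for $|W_{il}|^2-\sigma_N^2$, while the remaining errors, analogous to $p_k^{(N)}$, $q_k^{(N)}$ and $r_k^{(N)}$, are handled by H\"older's inequality together with Remarks~\ref{elemdiag}--\ref{remmajRNRkLp}, Lemma~\ref{cvRinfiniLp} and the bound $\mathbb{E}\bigl[\|R^{(kil)}-R^{(k)}\|^p\bigr]=O(\delta_N^p)$ deduced after \eqref{remove}.

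The principal technical difficulty will be the identification of the leading piece of the $W_{il}^2$-subterm with $N|\theta_N|T_{N,k,0}(z_1,z_2)(\tilde\nabla_k)$: because of the $R_{ij}\otimes R_{ij}$ pairing in $\tilde\nabla_k$ (in contrast to the $R_{ij}\otimes R_{ji}$ pairing in $\nabla_k$), the natural substitution produces matrix factors of the form $\hat R_i(\beta_2)\gamma_1\mathbb{E}_{\leq k-1}[R_{lj}^{(k)}(\beta_2)]$ in the second tensor leg, while $T_{N,k,0}(z_1,z_2)(\tilde\nabla_k)$ carries the analogous factors in the opposite order. Reconciling these orderings in $L^1$ after summation over $i,j,l<k$ is the key novelty of the tilde case and requires a careful tracking of the resulting commutators via the concentration estimates of Section~\ref{prelimi} (in particular Lemma~\ref{cvRinfiniLp} and Remark~\ref{elemdiag}); once this is done, the remaining bookkeeping follows verbatim the closing paragraphs of the proof of Lemma~\ref{equationapprochee}.
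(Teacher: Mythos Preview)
Your decomposition via \eqref{remove} and the identification of the $W_{il}^2$-piece as leading match the paper's own (sketched) argument. Two points, however, are not adequately justified.

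First, your reason for discarding the $|W_{il}|^2$-piece --- that its prefactor $|\theta_N|\sigma_N^2$ is ``smaller'' --- is wrong: since $|\theta_N|\le\sigma_N^2$, one has $|\theta_N|\sigma_N^2\ge|\theta_N|^2$, so by prefactor this piece is at least as large as the leading one. Nor is it made up only of terms ``analogous to $p_k^{(N)},q_k^{(N)},r_k^{(N)}$'': in Lemma~\ref{equationapprochee} those were the \emph{errors} around a non-negligible main term. The correct reason the $|W_{il}|^2$-piece is negligible here is structural: its second tensor leg carries the off-diagonal factor $R^{(kil)}_{il}(\beta_2)$, and it should be bounded exactly as the $W_{il}^2$-piece \eqref{2lemme55} was in the proof of Lemma~\ref{equationapprochee} (controlling $\sum_l\|R^{(k)}_{li}\|^2$ via Lemma~\ref{prelim}).

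Second, the ordering discrepancy you flag in the leading $W_{il}^2$-piece is real, but your remedy (``careful tracking of the resulting commutators via the concentration estimates'') is not a proof. After the substitutions, the second tensor leg reads $\gamma_1\hat R_i(\beta_2)\gamma_1\mathbb{E}_{\le k}[R^{(k)}_{lj}(\beta_2)]$, whereas $T_{N,k,0}(\tilde\nabla_k)$ requires $\gamma_1\mathbb{E}_{\le k}[R^{(k)}_{lj}(\beta_2)]\gamma_1\hat R_i(\beta_2)$; the difference is a genuine noncommutative term that is $O(1)$ entrywise, and neither Lemma~\ref{cvRinfiniLp} nor Remark~\ref{elemdiag} makes it small after summing $|\theta_N|^2\sum_{i,j,l<k}$. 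The paper's sketch passes over this same point without comment, so you are right to single it out; but you still owe an actual mechanism --- for instance re-deriving the analogue of Lemma~\ref{tildenablak} by expanding the second leg from the column side so that the deterministic factor sits on the right from the outset, or supplying a direct argument that the two orderings agree in $L^1$ --- before Lemma~\ref{equationapprocheetilde} can be regarded as proved.
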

The proof 
of the last lemma starts as the proof of Lemma \ref{equationapprochee} by writing
\begin{align*}
 |\theta_N|&\sum_{i,j,l<k}W_{il}\hat {R}_i(\beta_1)\gamma_1 \mathbb{E}_{\leq k-1}\big( R_{lj}^{(kil)}(\beta_1)\big)\gamma_1\otimes \gamma_1 \mathbb{E}_{\leq k-1}\big( R_{ij}^{(k)}(\beta_2)\big)\\
& = |\theta_N|\sum_{i,j,l<k}W_{il}\hat {R}_i(\beta_1)\gamma_1 \mathbb{E}_{\leq k-1}\big( R_{lj}^{(kil)}(\beta_1)\big)\gamma_1\otimes \gamma_1 \mathbb{E}_{\leq k-1}\big( R_{ij}^{(kil)}(\beta_2)\big)\\
 & +|\theta_N|\sum_{i,j,l<k}(1-\tfrac{1}{2}\delta_{il})W_{il}^2 \hat {R}_i(\beta_1)\gamma_1\mathbb{E}_{\leq k}[ R_{lj}^{(kil)}(\beta_1)]\gamma_1\otimes\gamma_1\mathbb{E}_{\leq k}[ R_{ii}^{(kil)}(\beta_2)\gamma_1 R_{lj}^{(k)}(\beta_2)]\\
 & +|\theta_N|\sum_{i,j,l<k}(1-\tfrac{1}{2}\delta_{il})|W_{il}|^2 \hat {R}_i(\beta_1)\gamma_1\mathbb{E}_{\leq k}[ R_{lj}^{(kil)}(\beta_1)]\gamma_1\otimes \gamma_1\mathbb{E}_{\leq k}[ R_{il}^{(kil)}(\beta_2)\gamma_1 R_{ij}^{(k)}(\beta_2)].
\end{align*}
But now, it is the second term (and note the third one) of the right-hand side that leads to a significant term whereas the other ones are negligible:
\begin{align*}
|\theta_N|\sum_{i,j,l<k}(1-\tfrac{1}{2}\delta_{il})W_{il}^2 \hat {R}_i(\beta_1)\gamma_1\mathbb{E}_{\leq k}[ R_{lj}^{(kil)}(\beta_1)]\gamma_1 & \otimes\gamma_1\mathbb{E}_{\leq k}[ R_{ii}^{(kil)}(\beta_2)\gamma_1 R_{lj}^{(k)}(\beta_2)]\\
& =|\theta_N|\sum_{i<k} T_{N,k,0}(z_1,z_2)(\tilde \nabla_k)
+ o_{L^1}(1).
\end{align*}
The rest of the proof of Proposition \ref{analogtauN} sticks to the proof of Propositions \ref{analog} and \ref{analog2}, using that 
$0\leq |\theta_N|\leq \sigma_N^2$ ensuring the invertibility of the involved operators.

\subsubsection{Contribution of the fourth term of \eqref{eq:hook_primitive2}} \label{sec_4th_term}
To handle the fourth term, define $$f^{(1)}(\omega,x,y)= \Tr\left(\gamma_1(\omega-x\gamma_2)^{-1}\gamma_1(\omega-y\gamma_2)^{-1}\right)$$ for $\omega\in M_m(\mathbb{C}), x,y\in \mathbb{R}$ such that $\omega-x\gamma_2$ and $\omega-y\gamma_2$ are invertible.
Note  that 
\begin{align*}
 \sum_{k=1}^N\kappa_N\sum_{i<k} & \Tr(\gamma_1\mathbb{E}_{\leq k}[R_{ii}^{(k)}(\beta_1)]\gamma_1\hat{R}_k(\beta_1))\Tr(\gamma_1\mathbb{E}_{\leq k}[R_{ii}^{(k)}(\beta_2)]\gamma_1\hat{R}_k(\beta_2))\\
& = \kappa_N\sum_{1\leq i<k\leq N}f^{(1)}({\omega}_N(\beta_1),D_{ii},D_{kk})f^{(1)}({\omega}_N(\beta_2),D_{ii},D_{kk})+I_1 +I_2,
\end{align*}
where $$I_1= \sum_{k=1}^N\kappa_N\sum_{i<k}\Tr(\gamma_1\mathbb{E}_{\leq k}[R_{ii}^{(k)}(\beta_1)-\hat R_i(\beta_1)]\gamma_1\hat{R}_k(\beta_1))\Tr(\gamma_1\mathbb{E}_{\leq k}[R_{ii}^{(k)}(\beta_2)]\gamma_1\hat{R}_k(\beta_2))$$
and $$I_2=\sum_{k=1}^N\kappa_N\sum_{i<k}\Tr(\gamma_1\hat R_{i}(\beta_1)\gamma_1\hat{R}_k(\beta_1))\Tr(\gamma_1\mathbb{E}_{\leq k}[R_{ii}^{(k)}(\beta_2)-\hat R_i(\beta_2)]\gamma_1\hat{R}_k(\beta_2)).$$
Using \eqref{majorationunifhatRk}, Remark \ref{elemdiag}, Remark \ref{remmajRNRkLp} and Proposition \ref{boundnorm}, we readily obtain that for any $p\geq 1$, $\Vert I_1+I_2\Vert_{L^p}\rightarrow_{N\rightarrow +\infty}0.$
Now, noticing that $f^{(1)}(\omega,x,y)=f^{(1)}(\omega,y,x)$ and using Lemma \ref{cvunif}, one can easily see that
\begin{align*}
 \kappa_N \sum_{1\leq i<k\leq N} & f^{(1)}({\omega}_N(\beta_1),D_{ii},D_{kk})f^{(1)}({\omega}_N(\beta_2),D_{ii},D_{kk})\\
 & =\frac{N^2\kappa_N}{2}\iint_{\mathbb{R}^2}f^{(1)}({\omega}_N(\beta_1),x,y)f^{(1)}({\omega}_N(\beta_2),x,y)(\nu_N\otimes \nu_N)(x,y)+O(N^{-1})\\
 & \underset{N\to +\infty}{\longrightarrow} \frac{\kappa}{2}\iint_{\mathbb{R}^2}f^{(1)}(\omega(\beta_1),x,y)f^{(1)}(\omega(\beta_2),x,y)(\nu\otimes\nu)(x,y)
\end{align*}
Therefore,
\begin{align*}
 \sum_{k=1}^N\kappa_N & \sum_{i<k}\Tr(\gamma_1\mathbb{E}_{\leq k}[R_{ii}^{(k)}(\beta_1)]\gamma_1\hat{R}_k(\beta_1))\Tr(\gamma_1\mathbb{E}_{\leq k}[R_{ii}^{(k)}(\beta_2)]\gamma_1\hat{R}_k(\beta_2))\\
 & = \frac{\kappa}{2}\iint_{\mathbb{R}^2}f^{(1)}(\omega(\beta_1),x,y)f^{(1)}(\omega(\beta_2),x,y)(\nu\otimes \nu)(x,y)+o_{\mathbb{P}}(1).
\end{align*}

\subsubsection{Conclusion}\label{sec:CV_hook_ccl}
We obtained that for any $z_1, z_2 \in {(\C \setminus \R)}^2$, $\gamma_N(z_1,z_2)$, defined in \eqref{eq:hook_primitive},  converges in probability towards $\gamma(z_1,z_2)$.
As already observed in \eqref{gammadecomposition},
\begin{align*}
 \gamma_N(z_1,z_2) & =\sum_{k=1}^N\Big\{\tilde \sigma_N^2 \Tr(\gamma_1\hat R_k(\beta_1))\Tr(\gamma_1\hat R_k(\beta_2))\\
 & \hspace{4cm} +\mathbb{E}_{k}\Big[\mathbb{E}_{\leq k}\big[\Tr(\Phi_k(\beta_1))\hat R_k(\beta_1))\big]\mathbb{E}_{\leq k}\big[\Tr(\Phi_k(\beta_2))\hat R_k(\beta_2))\big]\Big]\Big\}.
\end{align*}
Let $C$ be as in Proposition \ref{boundnorm}. If $\Vert W_N\Vert \leq C$, then there exists $M>0$ such that for any $k \in \{1,\ldots,N\}$, $\|P(W_N^{(k)}, D_N^{(k)})\|\leq M.$
Let $K>0$ be such that $\|P(x_N,D_N)\| \leq K$. Set $\tilde \gamma_N(z_1,z_2)=\gamma_N(z_1,z_2) \1_{\{\|W_N\| \leq C\}}$. 
Fix $z_1\in \C \setminus \R$ and set $g_N^{(z_1)}(z)= \tilde \gamma_N(z_1,z)$, $g^{(z_1)}(z)= \gamma(z_1,z)$.
Fix an arbitrary subsequence extracted from $(g^{(z_1)}_N)_{N\geq 1}$. By diagonal extraction from the convergence in probability above, given a countable subset of $\C \setminus \R$, one can extract a subsubsequence, let us say  $(g^{(z_1)}_{\Psi(N)})_{N\geq 1}$, almost surely converging to $g^{(z_1)}$
pointwise on this subset. Cauchy-Schwarz inequality (with respect to $\mathbb{E}_{\leq k}$ and then to the sum over $k$), \eqref{bornemanq}, Lemmas \ref{alta lemma} and \ref{lem_moment_quadratic_forms} readily yield  that $(g^{(z_1)}_N)$ is a bounded sequence  in $\mathcal{H}(\C \setminus [-\max(M,K);\max(M,K)])$.
We conclude by Vitali's Theorem that almost surely $(g^{(z_1)}_{\Psi(N)})$ converges, uniformly on any compact set of $\C \setminus [-\max(M,K);
\max(M,K)]$, towards an holomorphic function on $ \C \setminus [-\max(M,K);
\max(M,K)].$
\\ Note that $g^{(z_1)}$ is holomorphic on $\C \setminus \R$.
Hence  almost surely,  $g^{(z_1)}_{\Psi(N)}$ converges, uniformly on any compact set of $\C \setminus [-\max(M,K);
\max(M,K)]$, towards  $g^{(z_1)}$. This implies that  almost surely,  $\frac{d}{dz}g^{(z_1)}_{\Psi(N)}$ converges, uniformly on any compact set of $\C \setminus [-\max(M,K);
\max(M,K)]$, towards  $\frac{d}{dz}g^{(z_1)}$.
Thus, we obtain that for any $z_1, z_2 \in {(\C \setminus \R)}^2$, $\frac{\partial}{\partial z_2} \tilde \gamma_N(z_1,z_2)$  converges in probability towards $\frac{\partial}{\partial z_2}\gamma(z_1,z_2)$. \\ Now,
fix $z_2\in \C \setminus \R$ and set $h_N^{(z_2)}(z)= \frac{\partial}{\partial z_2} \tilde \gamma_N(z,z_2)$, $h^{(z_2)}(z)=\frac{\partial}{\partial z_2} \gamma(z,z_2)$. The same procedure applied to $h_N^{(z_2)}$  as the one used for $g_N^{(z_1)}$  above yields that for any $z_1, z_2 \in {(\C \setminus \R)}^2$, $\frac{\partial^2}{\partial z_1\partial z_2} \tilde \gamma_N(z_1,z_2)$  converges in probability towards $\frac{\partial^2}{\partial z_1\partial z_2}\gamma(z_1,z_2)$.  \\
Finally, $\gamma_N(z_1,z_2)$ is bounded in $L^2$ 
so that by Lemma \ref{boundnorm}, 
$\gamma_N(z_1,z_2) \1_{\{\|W_N\| >C\}}=o_{\mathbb{P}}(1)$. Proposition \ref{hook} follows.

\section{Tightness of 
	$\{\xi_N(z), z\in \C \setminus \R\}_{N\in \mathbb{N}}$ in  ${\mathcal H}( \mathbb C\setminus \R)$ and conclusion}\label{tightanalytic}

For each $N\in \mathbb{N}$, $\xi_N: z\mapsto  \Tr((zI_N- P(W_N, D_N)^{-1})-\mathbb{E}[\Tr((zI_N- P(W_N, D_N))^{-1})]$ is a random analytic function on $\mathbb{C}\setminus \mathbb{R}$. Let $K$ be a compact set  in $\mathbb{C}\setminus \R$.
According to Lemma \ref{Shirai},  there exists $\delta > 0$ such that $\overline{K_\delta}\subset \mathbb{C}\setminus \R$ and for any $r>0$,
$$\left\| \xi_N \right\|_K^r \leq (\pi \delta^2)^{-1} \int _{\overline{K_\delta}} \left| \xi_N(z)\right|^r m(dz).$$
Therefore 
\begin{align}
\mathbb{E}\left( \left\| \xi_N \right\|_K^r \right)&\leq (\pi \delta^2)^{-1} \int _{\overline{K_\delta}} \mathbb{E} \left( \left| \xi_N(z)\right|^r \right) m(dz)\\ &\leq  (\pi \delta^2)^{-1} \sup_{z\in \overline{K_\delta}}  \mathbb{E} \left( \left| \xi_N(z)\right|^r \right) m({\overline{K_\delta}}). \label{ineg1}
\end{align}

In order to prove the tightness of $\{\xi_N\}_{N\in \mathbb{N}}$ in  ${\mathcal H}( \mathbb C\setminus \R)$, using Proposition \ref{criterion}, \eqref{ineg0} and \eqref{ineg1}, it is sufficient to prove that
\begin{equation}\label{maj}\mathbb{E}\left( \left| \xi_N(z)\right|\right)=O(1).\end{equation}
This will readily follows from the following

\begin{prop}\label{variancebound2}
$$\Var[\Tr(R_N(z))]=O(1).$$
\end{prop}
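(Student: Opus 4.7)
The plan is to leverage the martingale decomposition from Section~\ref{subsectionreduction}. Reading $\Tr(R_N(z))$ as $\Tr((zI_N-X_N)^{-1}) = (\Tr\otimes\Tr)\bigl((e_{11}\otimes I_N) R_N(ze_{11}-\gamma_0)\bigr)$ as in the identity preceding \eqref{decomposition}, we have
$$
\Tr(R_N(z)) - \mathbb{E}[\Tr(R_N(z))] = \sum_{k=1}^N (\mathbb{E}_{\leq k} - \mathbb{E}_{\leq k-1})[\Tr(R_N(z))] = \sum_{k=1}^N (\Delta_k^{(N)} + \varepsilon_k^{(N)}),
$$
where $\Delta_k^{(N)}$ and $\varepsilon_k^{(N)}$ are the random variables introduced just after \eqref{decomposition}. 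By the $L^2$-orthogonality of martingale differences and the elementary inequality $|a+b|^2 \leq 2|a|^2 + 2|b|^2$,
$$
\Var[\Tr(R_N(z))] = \sum_{k=1}^N \mathbb{E}\bigl[|\Delta_k^{(N)} + \varepsilon_k^{(N)}|^2\bigr] \leq 2\sum_{k=1}^N \mathbb{E}[|\Delta_k^{(N)}|^2] + 2\sum_{k=1}^N \mathbb{E}[|\varepsilon_k^{(N)}|^2].
$$

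The first sum is controlled by the estimate~\eqref{eq_cond_norm_p} established during the verification of the Lyapunov condition: for some $p > 2$, $\|\Delta_k^{(N)}\|_{L^p} = O(N^{-1/2})$ uniformly in $k$. Monotonicity of $L^q$-norms yields $\mathbb{E}[|\Delta_k^{(N)}|^2] = O(N^{-1})$ uniformly in $k$, whence $\sum_k \mathbb{E}[|\Delta_k^{(N)}|^2] = O(1)$.

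The second sum is handled by reusing the computations in the proof of Lemma~\ref{epsilonN}. There, via H\"older's inequality and the preliminary bounds of Section~\ref{prelimi}, it is shown that $\mathbb{E}[|\varepsilon_{k,j}^{(N)}|^2] = o(N^{-1})$ uniformly in $k$ for $j = 1,2,3,4$, while $(\mathbb{E}_{\leq k}-\mathbb{E}_{\leq k-1})[\varepsilon_{k,5}^{(N)}] = 0$. Combining the five terms via the triangle inequality gives $\mathbb{E}[|\varepsilon_k^{(N)}|^2] = o(N^{-1})$ uniformly in $k$, hence $\sum_k \mathbb{E}[|\varepsilon_k^{(N)}|^2] = o(1)$. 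Adding the two pieces yields $\Var[\Tr(R_N(z))] = O(1) + o(1) = O(1)$.

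No further analytic input is required: this proposition is essentially a corollary of the Lyapunov-condition verification and of the negligibility of the error sum $\sum_k \varepsilon_k^{(N)}$, both of which were already established for the proof of convergence of finite-dimensional distributions. The real obstacles, namely the pointwise estimate \eqref{eq_cond_norm_p} and the individual $o(N^{-1})$ bounds on $\mathbb{E}[|\varepsilon_{k,j}^{(N)}|^2]$, were overcome earlier; here they are simply repackaged through the $L^2$-orthogonality of martingale increments.
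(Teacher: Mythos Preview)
Your proof is correct, and it is a genuinely different route from the paper's. Both arguments start from the martingale decomposition \eqref{decomposition} and the orthogonality of martingale differences (Lemma~\ref{martingalevariance}), but they diverge in how they control each increment $(\mathbb{E}_{\leq k}-\mathbb{E}_{\leq k-1})[\Tr(zI_N-X_N)^{-1}]$. The paper performs only a \emph{first-order} expansion of the inverse Schur complement $(\beta - W_{kk}\gamma_1 - D_{kk}\gamma_2 - \gamma_1\otimes C_k^{(k)*}R^{(k)}\gamma_1\otimes C_k^{(k)})^{-1} = \hat R_k + \hat R_k(W_{kk}\gamma_1+\Psi_k)R_{kk}$, producing just two remainder terms $T_1$ and $T_2$ which are then bounded in $L^2$ by $O(N^{-1})$ directly from Lemmas~\ref{bornepdphik}, \ref{normeqpsik}, \ref{majRNRkLp} and \eqref{majorationunifhatRk}. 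You instead import the finer \emph{second-order} expansion already carried out in Section~\ref{subsectionreduction}, split each increment as $\Delta_k^{(N)}+\varepsilon_k^{(N)}$, and invoke the Lyapunov estimate \eqref{eq_cond_norm_p} together with the individual $o(N^{-1})$ bounds on $\mathbb{E}[|\varepsilon_{k,j}^{(N)}|^2]$ from the proof of Lemma~\ref{epsilonN}. There is no circularity: both \eqref{eq_cond_norm_p} and Lemma~\ref{epsilonN} rely only on the preliminary results of Section~\ref{prelimi}, not on Proposition~\ref{variancebound2}. Your approach has the virtue of being a pure corollary of work already done for the CLT, with no new estimates; the paper's approach is more self-contained and avoids pulling in the full $\Delta_k/\varepsilon_k$ machinery when a coarser expansion suffices.
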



\begin{proof}
	{From the decomposition \eqref{decomposition}, apply Lemma \ref{martingalevariance} to the martingale $(\mathbb{E}_{\leq k}[\Tr(zI_N-X_N)^{-1}])_{k\geq 1}$ and deduce that 
		$$\Var[\Tr(zI_N-X_N)^{-1}]=\sum_{k=1}^N \mathbb{E}\Big[\big|(\mathbb{E}_{\leq k}-\mathbb{E}_{\leq k-1})[\Tr(zI_N-X_N)^{-1}]\big|^2\Big].$$
		Recall from the preceding section that, setting $\beta:=ze_{11}-\gamma_0$, 
		\begin{align*}
			\Tr & (zI_N-X_N)^{-1}\\
			&=(\Tr\otimes \Tr)((e_{11}\otimes I_{N-1})R^{(k)}(\beta))\\
			&\hspace{0.5cm}+\Tr((e_{11}+\gamma_1\otimes C_k^{(k)*}R^{(k,1)}(\beta)\gamma_1\otimes C_k^{(k)})(\beta-W_{kk}\gamma_1-D_{kk}\gamma_2-\gamma_1\otimes C_k^{(k)*}R^{(k)}(\beta)\gamma_1\otimes C_k^{(k)})^{-1}).
		\end{align*}
		In the second term of the right-hand side, decompose 
		\[e_{11}+\gamma_1\otimes C_k^{(k)*}R^{(k,1)}(\beta)\gamma_1\otimes C_k^{(k)}=(e_{11}+\gamma_1 (\mathrm{id}_m\otimes \sigma_N^2\Tr)(R^{(k,1)}(\beta))\gamma_1)-\frac{\partial}{\partial z}\Phi_k(\beta)\]
		and
		\begin{align*}
		 (\beta-W_{kk}\gamma_1- & D_{kk}\gamma_2-\gamma_1\otimes C_k^{(k)*} R^{(k)}(\beta)\gamma_1\otimes C_k^{(k)})^{-1}\\
		  & =\hat{R}_k(\beta)
		 +\hat{R}_k(\beta)(W_{kk}\gamma_1+\Psi_k(\beta))(\beta-W_{kk}\gamma_1-D_{kk}\gamma_2-\gamma_1\otimes C_k^{(k)*}R^{(k)}(\beta)\gamma_1\otimes C_k^{(k)})^{-1},
		\end{align*}
		so that 
		\begin{align*}
			\Tr (zI_N-X_N)^{-1} & =(\Tr\otimes \Tr)\big((e_{11}\otimes I_{N-1})R^{(k)}(\beta)\big)\\
            & \hspace{0.5cm}+\Tr\Big((e_{11}+\gamma_1 (\mathrm{id}_m\otimes \sigma_N^2\Tr)(R^{(k,1)}(\beta))\gamma_1)\hat{R}_k(\beta)\Big)-\Tr\Big(\frac{\partial}{\partial z}\Phi_k(\beta)\hat{R}_k(\beta)\Big)\\
			& \hspace{0.5cm}+\Tr\Big((e_{11}+\gamma_1\otimes C_k^{(k)*}R^{(k,1)}(\beta)\gamma_1\otimes C_k^{(k)})\hat{R}_k(\beta)\\
			& \hspace{1.5cm} \times (W_{kk}\gamma_1+\Psi_k(\beta))(\beta-W_{kk}\gamma_1-D_{kk}\gamma_2-\gamma_1\otimes C_k^{(k)*}R^{(k)}(\beta)\gamma_1\otimes C_k^{(k)})^{-1}\Big).
		\end{align*}
		Observe that the first two terms satisfy
		\[(\mathbb{E}_{\leq k}-\mathbb{E}_{\leq k-1})[(\Tr\otimes \Tr)((e_{11}\otimes I_{N-1})R^{(k)}(\beta))+\Tr((e_{11}+\gamma_1 (\mathrm{id}_m\otimes \sigma_N^2\Tr)(R^{(k,1)}(\beta))\gamma_1)\hat{R}_k(\beta))]=0\]
		and denote by $T_1$ and $T_2$ the last two terms. Using Jensen's inequality (with respect to $\mathbb{E}_{\leq k}$) after writing $\mathbb{E}_{\leq k-1}=\mathbb{E}_{\leq k}\mathbb{E}_{k}$, 
		\begin{align*}
			\mathbb{E}[|(\mathbb{E}_{\leq k}-\mathbb{E}_{\leq k-1})[\Tr(zI_N-X_N)^{-1}]|^2]
			&= \mathbb{E}[|(\mathbb{E}_{\leq k}-\mathbb{E}_{\leq k-1})[T_1+T_2]|^2]\\
			&\leq\mathbb{E}[|T_1+T_2-\mathbb{E}_k[T_1+T_2]|^2]\\
			&\leq\mathbb{E}[\mathbb{E}_k[|T_1+T_2-\mathbb{E}_k[T_1+T_2]|^2]\\
			&\leq\mathbb{E}[\mathbb{E}_k[|T_1+T_2|^2]]\\
			&\leq2(\mathbb{E}[\mathbb{E}_k[|T_1|^2]]+\mathbb{E}[\mathbb{E}_k[|T_2|^2]]).
		\end{align*}
Bound on $\mathbb{E}[\mathbb{E}_k[|T_1|^2]]$: 
$$|T_1|\leq m\|\frac{\partial}{\partial z}\Phi_k(\beta)\|\|\hat{R}_k(\beta)\|
$$
and deduce from 
Lemma \ref{bornepdphik} 
and $\eqref{majorationunifhatRk}$ that 
\begin{equation}  \mathbb{E}[|T_1|^2]=O(N^{-1}).\end{equation}
		Bound on $\mathbb{E}[\mathbb{E}_k[|T_2|^2]]$:
		by traciality and using \eqref{majorationunifhatRk}, 
		\begin{align*}
			|T_2|
			&=\Big|\Tr\Big((\beta-W_{kk}\gamma_1-D_{kk}\gamma_2-\gamma_1\otimes C_k^{(k)*}R^{(k)}(\beta)\gamma_1\otimes C_k^{(k)})^{-1}\\
			& \hspace{3cm}\times (e_{11}+\gamma_1\otimes C_k^{(k)*}R^{(k,1)}(\beta)\gamma_1\otimes C_k^{(k)})\hat{R}_k(\beta)(W_{kk}\gamma_1+\Psi_k(\beta))\Big)\Big|\\
			&\leq m\big\|(\beta-W_{kk}\gamma_1-D_{kk}\gamma_2-\gamma_1\otimes C_k^{(k)*}R^{(k)}(\beta)\gamma_1\otimes C_k^{(k)})^{-1}\\
			& \hspace{3cm}\times (e_{11}+\gamma_1\otimes C_k^{(k)*}R^{(k,1)}(\beta)\gamma_1\otimes C_k^{(k)})\big\|\|\hat{R}_k(\beta)\|\|W_{kk}\gamma_1+\Psi_k(\beta)\|
		\end{align*}
		
		
		Then,
		\begin{align*}
			|T_2|^2
			&\leq 4m^2\|\hat R_k(\beta)\|^2\|R_N(\beta)\|^2(1+\|\gamma_1\|^4\|C_k^{(k)}\|^4\|R^{(k)}(\beta)\|^4)(|W_{kk}|^2\|\gamma_1\|^2+\|\Psi_k(\beta)\|^2),
		\end{align*}
and consequently, by H\"older's inequality with $q\in [1,2(1+\varepsilon))]$ and $p,r\geq 1$ such that $p^{-1}+q^{-1}+r^{-1}=1$.
		\begin{align*}
			\mathbb{E}[|T_2|^2]
			&\leq O(1)\mathbb{E}\big[\|R_N(\beta)\|^{2p}\big]^{1/p}\mathbb{E}\big[(1+\|\gamma_1\|^4\|C_k^{(k)}\|^4\|R^{(k)}(\beta)\|^4)^r\big]^{1/r}\\
			& \hspace{6cm} \times \mathbb{E}\big[(|W_{kk}|^2\|\gamma_1\|^2+\|\Psi_k(\beta)\|^2)^q\big]^{1/q}\\
			&\leq O(1)\Big(1+\|\gamma_1\|^4\mathbb{E}\big[\|C_k^{(k)}\|^{8r}\big]^{1/2r}\mathbb{E}\big[\|R^{(k)}(\beta)\|^{8r}\big]^{1/2r}\Big)\\
			& \hspace{6cm} \times \Big(\|\gamma_1\|^2\mathbb{E}\big[|W_{kk}|^{2q}\big]^{1/q}+\mathbb{E}\big[\|\Psi_k(\beta)\|^{2q}\big]^{1/q}\Big)\\
			&\leq O(1)(\|\gamma_1\|^2N^{-1}+O(N^{-1}))=O(N^{-1}),
		\end{align*}
	because of Lemma \ref{majRNRkLp}, Remark \ref{remmajRNRkLp}, Lemma \ref{normeqpsik} and assumptions on entries of $W_N$.} 
\end{proof}


\subsection*{Conclusion} 
It follows from the discussion above that $\{\xi_N\}_{N\in \mathbb{N}}$ is tight in  $\mathcal H(\C \setminus \R)$. Then, according to Theorem 5.1 in \cite{Bil}, it is relatively compact. According to 
Section \ref{CLT}, the finite dimensional distributions converge towards those of the Gaussian process $\mathcal{G}$ defined in Theorem \ref{cvfdim}.
Since the class of finite dimensional sets $\left\{\{f\in \mathcal H(\C \setminus \R), (f(z_1), \ldots, f(z_k))\in B\}, k\in \N, (z_1,\ldots,z_k)\in (\C \setminus \R)^k, B \in \mathcal{B}(\C^k)\right\}$ is a separating class, we can deduce Theorem \ref{cvfdim} by Theorem 2.6 in \cite{Bil}.

\appendix

\section{Tools}
\subsection{Linear algebra}
%
%
\begin{prop}[Schur inversion formula]\label{Schur} 
Let $\mathcal{A}$ be a unital complex algebra. For non-empty subsets $I,J$ of $\{1,\ldots ,n\}$ and $A\in M_n(\mathcal{A})$, we denote by $A_{I\times J}$ the submatrix of $A$ corresponding to rows indexed by $I$ 
and columns indexed by $J$. In the particular case where $I=J$, we will use the notation $A_I$.\\
Let $I$ be a non-empty subset of $\{1,\ldots ,n\}$ 
and $A\in M_n(\mathcal{A})$ such that $A_{I}$ is invertible, then $A$ is invertible if and only if $A_{I^c}-A_{I^c\times I}A_{I}^{-1}A_{I\times I^c}$ is invertible, 
in which case the following formulas hold:
\begin{align*}
 (A^{-1})_{I} & =(A_I)^{-1}+ (A_I)^{-1}A_{I\times I^c}(A_{I^c}-A_{I^c\times I}A_{I}^{-1}A_{I\times I^c})^{-1}A_{I^c\times I}(A_I)^{-1},\\
 (A^{-1})_{I \times I^c} & =-(A_I)^{-1}A_{I\times I^c}(A_{I^c}-A_{I^c\times I}A_{I}^{-1}A_{I\times I^c})^{-1},\\
 (A^{-1})_{I^c \times I} & =-(A_{I^c}-A_{I^c\times I}A_{I}^{-1}A_{I\times I^c})^{-1} A_{I^c\times I}(A_I)^{-1},\\
 (A^{-1})_{I^c} & =(A_{I^c}-A_{I^c\times I}A_{I}^{-1}A_{I\times I^c})^{-1}.
\end{align*}
\end{prop}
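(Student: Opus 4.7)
The plan is to reduce to the block-partitioned case by a permutation of rows and columns (which preserves invertibility) and then exhibit an explicit block LDU factorization. After the permutation I may assume that $I=\{1,\ldots,k\}$, so
$$A=\begin{pmatrix}A_I & A_{I\times I^c}\\ A_{I^c\times I} & A_{I^c}\end{pmatrix}.$$
Setting $S:=A_{I^c}-A_{I^c\times I}A_I^{-1}A_{I\times I^c}$, I would verify by direct multiplication that
$$A=\begin{pmatrix}\mathbf{1} & 0\\ A_{I^c\times I}A_I^{-1} & \mathbf{1}\end{pmatrix}\begin{pmatrix}A_I & 0\\ 0 & S\end{pmatrix}\begin{pmatrix}\mathbf{1} & A_I^{-1}A_{I\times I^c}\\ 0 & \mathbf{1}\end{pmatrix}.$$
This identity requires only the assumed invertibility of $A_I$ (so that $A_I^{-1}$ exists in the appropriate $M_k(\mathcal A)$). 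The two triangular factors with identity diagonal blocks are unit upper/lower triangular and hence invertible in $M_n(\mathcal A)$, with inverses obtained by flipping the sign of the off-diagonal block. It then follows at once that $A$ is invertible iff the middle factor is invertible, which, given that $A_I$ is invertible, is equivalent to the invertibility of $S$, proving the first assertion.

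For the block formulas, assuming $S$ invertible I would invert the three factors in reverse order, using that the inverse of a unit triangular block matrix is the same matrix with its off-diagonal block negated, and that the middle factor inverts blockwise as $\operatorname{diag}(A_I^{-1},S^{-1})$. This yields
$$A^{-1}=\begin{pmatrix}\mathbf{1} & -A_I^{-1}A_{I\times I^c}\\ 0 & \mathbf{1}\end{pmatrix}\begin{pmatrix}A_I^{-1} & 0\\ 0 & S^{-1}\end{pmatrix}\begin{pmatrix}\mathbf{1} & 0\\ -A_{I^c\times I}A_I^{-1} & \mathbf{1}\end{pmatrix}.$$
Multiplying the three blocks out and reading off the four sub-blocks gives precisely the four claimed formulas for $(A^{-1})_I$, $(A^{-1})_{I\times I^c}$, $(A^{-1})_{I^c\times I}$ and $(A^{-1})_{I^c}$.

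There is really no hard step here; the only point requiring a modicum of care is that the ring $\mathcal A$ is only assumed to be a unital complex algebra, not commutative. Accordingly I would be careful to keep the order of all products (in particular never to commute $A_I^{-1}$ past $A_{I\times I^c}$ or $A_{I^c\times I}$), and to verify the factorization identity by multiplying from left to right rather than invoking determinants or any commutative formulas. Finally, undoing the initial permutation of rows and columns transports the block formulas back to the original index set $I$ in the form stated, completing the proof.
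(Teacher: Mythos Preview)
Your proof is correct and is the standard argument via block LDU factorization; the paper itself states this proposition as a known tool in the appendix without supplying any proof, so there is nothing to compare against. Your care about noncommutativity (keeping the order of products, avoiding determinants) is exactly what is needed for the stated level of generality.
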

\begin{prop}\label{partialtranspose} (Theorem 2.9 in \cite{AS})
The transpose map on $M_N(\C)$ induces the
well-defined linear map $\Theta$ on $M_m(\C)\otimes M_N(\C)$, called the partial transpose map:
for $X=\sum_k A_k\otimes B_k$, 
$$\Theta(X) :=\sum_k A_k\otimes B_k^T.$$ For every unitarily invariant norm $\left\|\cdot \right\|$, 
$$\left\|\Theta(X) \right\|\leq \min(m,N) \left\|X \right\|.$$
\end{prop}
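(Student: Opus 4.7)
The proof splits into the well-definedness of $\Theta$ and the norm bound. Well-definedness is automatic from the universal property of the algebraic tensor product: since the transpose $T\colon M_N(\mathbb{C})\to M_N(\mathbb{C})$ is linear, the map $\mathrm{id}_m\otimes T$ extends uniquely to a linear endomorphism of $M_m(\mathbb{C})\otimes M_N(\mathbb{C})$. Concretely, in a matrix unit basis, one reads off the formula
$$\Theta\bigl(\sum X_{ik,jl}\,(e_i\otimes f_k)(e_j\otimes f_l)^*\bigr) = \sum X_{ik,jl}\,(e_i\otimes f_l)(e_j\otimes f_k)^*,$$
a rearrangement of entries that depends only on $X$ and not on its chosen decomposition into elementary tensors.

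For the norm bound I would establish the operator norm estimate first, then transfer it to the trace norm by duality, and finally extend to every unitarily invariant norm through Ky Fan $k$-norms. The key input is the classical fact that the completely bounded norm of the transpose on $M_N(\mathbb{C})$ equals $N$; in particular, $\|\mathrm{id}_m\otimes T\colon M_m(\mathbb{C})\otimes M_N(\mathbb{C})\to M_m(\mathbb{C})\otimes M_N(\mathbb{C})\|_{\mathrm{op}\to\mathrm{op}}\leq N$, which yields $\|\Theta(X)\|_{\mathrm{op}}\leq N\|X\|_{\mathrm{op}}$. The companion bound $\|\Theta(X)\|_{\mathrm{op}}\leq m\|X\|_{\mathrm{op}}$ follows from the factorisation $\Theta = T_{mN}\circ(T_m\otimes\mathrm{id}_N)$, where $T_{mN}$ denotes the full transpose on $M_{mN}(\mathbb{C})$: since $T_{mN}$ is an operator norm isometry and $\|T_m\otimes\mathrm{id}_N\|_{\mathrm{op}\to\mathrm{op}}\leq m$ by the same classical cb-norm result applied now to $T_m$ on $M_m(\mathbb{C})$, we deduce $\|\Theta(X)\|_{\mathrm{op}}\leq\min(m,N)\|X\|_{\mathrm{op}}$.

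The map $\Theta$ is self-adjoint with respect to the Hilbert--Schmidt pairing $\langle X,Y\rangle=\Tr(X^*Y)$, so by operator/trace-norm duality the bound transfers to $\|\Theta(X)\|_{1}\leq\min(m,N)\|X\|_{1}$. For the Ky Fan $k$-norm I would use the classical $K$-functional representation
$$\|Y\|_{(k)}=\inf\{\|V\|_{1}+k\|W\|_{\mathrm{op}}:Y=V+W\},$$
splitting $X=V+W$ and applying the operator and trace norm bounds separately to $\Theta(V)$ and $\Theta(W)$; the result is $\|\Theta(X)\|_{(k)}\leq\min(m,N)\|X\|_{(k)}$ for every $k$. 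Finally, by the Ky Fan dominance theorem (an inequality valid for every Ky Fan $k$-norm is equivalent to weak majorisation of singular values and hence to an inequality valid for every unitarily invariant norm), this yields $\|\Theta(X)\|\leq\min(m,N)\|X\|$ for every unitarily invariant norm on $M_{mN}(\mathbb{C})$.

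The substantive content is the operator norm bound of the second step, which is where the dimensional factor $N$ (or $m$) genuinely enters and which encodes the non-trivial failure of the transpose to be completely contractive. The subsequent steps are structural and rely only on the duality between trace and operator norms together with the standard comparison between unitarily invariant norms and Ky Fan $k$-norms; no further information about $\Theta$ is needed.
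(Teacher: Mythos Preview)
The paper does not give a proof of this proposition; it is simply quoted from Theorem~2.9 of \cite{AS} and stated without argument in the appendix. So there is no ``paper's own proof'' to compare against.

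Your argument is correct and self-contained. The factorisation $\Theta=T_{mN}\circ(T_m\otimes\mathrm{id}_N)$ checks out under the standard Kronecker identification (since $(A\otimes B)^T=A^T\otimes B^T$), and together with the classical fact that the completely bounded norm of the transpose on $M_n(\mathbb{C})$ equals $n$ it delivers the operator-norm bound with the factor $\min(m,N)$. The passage to the trace norm by duality, then to all Ky Fan norms via the $K$-functional formula $\|Y\|_{(k)}=\inf\{\|V\|_1+k\|W\|_{\mathrm{op}}:Y=V+W\}$, and finally to every unitarily invariant norm by Ky Fan dominance, is a clean and standard route. One tiny cosmetic remark: for the duality step the relevant pairing is the bilinear one $(X,Y)\mapsto\Tr(XY)$ realising $(S^1)^*\cong S^\infty$, and what you use is that $\Theta$ is its own \emph{transpose} for this pairing; your verification via the Hilbert--Schmidt inner product works as well, since $\|X\|_1=\|X^*\|_1$, but phrasing it bilinearly is marginally more direct.
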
 
\begin{lem}\label{inversionsomme}
Assume that an operator $A$ is invertible and 
$\Vert A^{-1}\Vert \leq K$.
Then if $\Vert \Delta\Vert \leq (2K)^{-1}$,  $A+\Delta$ is invertible and $\Vert (A+\Delta)^{-1}\Vert \leq 2K$.
\end{lem}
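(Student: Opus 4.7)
The plan is to reduce the invertibility of $A+\Delta$ to that of a small perturbation of the identity by factoring out $A$. First I would write
\[
A+\Delta \;=\; A\bigl(I + A^{-1}\Delta\bigr),
\]
so that the invertibility of $A+\Delta$ is equivalent to that of $I+A^{-1}\Delta$, the factor $A$ being invertible by hypothesis.

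Next I would use submultiplicativity of the operator norm to control the perturbation: by the assumptions,
\[
\|A^{-1}\Delta\| \;\le\; \|A^{-1}\|\,\|\Delta\| \;\le\; K\cdot\frac{1}{2K} \;=\; \frac{1}{2} \;<\; 1.
\]
Since this norm is strictly less than $1$, the Neumann series $\sum_{n\ge 0}(-A^{-1}\Delta)^n$ is absolutely convergent in the Banach algebra of bounded operators, and its sum provides the inverse of $I+A^{-1}\Delta$. Consequently $A+\Delta$ is invertible, with
\[
(A+\Delta)^{-1} \;=\; (I+A^{-1}\Delta)^{-1}A^{-1} \;=\; \Bigl(\sum_{n\ge 0}(-A^{-1}\Delta)^n\Bigr)A^{-1}.
\]

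Finally I would bound the norm by summing the geometric series:
\[
\bigl\|(A+\Delta)^{-1}\bigr\| \;\le\; \Bigl(\sum_{n\ge 0}\|A^{-1}\Delta\|^n\Bigr)\|A^{-1}\| \;\le\; \frac{1}{1-1/2}\cdot K \;=\; 2K,
\]
which is the desired estimate. There is no genuine obstacle: the argument is a direct application of the Neumann series perturbation lemma, and the only thing to watch is that the constant $(2K)^{-1}$ in the hypothesis is precisely what makes $\|A^{-1}\Delta\|\le 1/2$, yielding both convergence and the explicit bound $2K$.
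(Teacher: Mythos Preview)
Your proof is correct and is exactly the standard Neumann series argument one would expect. The paper actually states this lemma without proof (it is a well-known elementary fact), so there is nothing to compare against; your write-up would serve perfectly well as the omitted justification.
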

\begin{lem}\label{majnormerayon}
Let $A$ be in $M_n(\C)$ with spectral radius $\rho(A)$. Then for any $z\in \C \setminus \mbox{spect}(A)$, we have 
$$\| (z-A)^{-1}\| \leq \sum_{p=1}^n (d(z, \mbox{spect}(A))^{-p} \left[ \|A \| +\rho(A) \right]^{p-1}.$$
\end{lem}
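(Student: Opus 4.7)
The plan is to reduce to the case of a triangular matrix via Schur's theorem, then exploit nilpotency of the strictly upper-triangular part to truncate the Neumann series for the resolvent after $n$ terms.

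First, I would invoke the Schur triangularization: there exists a unitary $U\in M_n(\mathbb C)$ and a strictly upper triangular $N\in M_n(\mathbb C)$ such that $U^*AU=D+N$, where $D=\mathrm{diag}(\lambda_1,\dots,\lambda_n)$ is the diagonal matrix of the eigenvalues of $A$. Since the operator norm is unitarily invariant, $\|(z-A)^{-1}\|=\|(zI-D-N)^{-1}\|$, and for any $z\notin\mathrm{spect}(A)$ we may factor
\[
zI-D-N=(zI-D)\bigl(I-(zI-D)^{-1}N\bigr),
\]
so that $(zI-D-N)^{-1}=\bigl(I-(zI-D)^{-1}N\bigr)^{-1}(zI-D)^{-1}$.

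Next, the key observation is that $N$ being strictly upper triangular, so is $(zI-D)^{-1}N$, hence $\bigl((zI-D)^{-1}N\bigr)^n=0$. The Neumann series for the inverse therefore terminates:
\[
\bigl(I-(zI-D)^{-1}N\bigr)^{-1}=\sum_{k=0}^{n-1}\bigl((zI-D)^{-1}N\bigr)^k.
\]
Combining and taking norms,
\[
\|(z-A)^{-1}\|\le\sum_{k=0}^{n-1}\|(zI-D)^{-1}\|^{k+1}\|N\|^{k}.
\]

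Finally I would insert two elementary estimates. Since $D$ is diagonal with spectrum equal to $\mathrm{spect}(A)$, $\|(zI-D)^{-1}\|=1/d(z,\mathrm{spect}(A))$. Moreover $\|D\|=\rho(A)$ and $\|D+N\|=\|A\|$ (as $D+N$ is unitarily equivalent to $A$), so by the triangle inequality $\|N\|\le\|D+N\|+\|D\|\le\|A\|+\rho(A)$. Substituting these two bounds and relabelling $p=k+1$ yields exactly
\[
\|(z-A)^{-1}\|\le\sum_{p=1}^{n}d(z,\mathrm{spect}(A))^{-p}\bigl(\|A\|+\rho(A)\bigr)^{p-1},
\]
as claimed. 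No step is really an obstacle here; the only thing to keep in mind is the finite termination of the Neumann series, which is what prevents the bound from involving any additional spectral gap hypothesis.
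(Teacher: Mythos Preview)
Your proof is correct and follows essentially the same approach as the paper's own proof: Schur triangularization, exploitation of the nilpotency of $(zI-D)^{-1}N$ to truncate the Neumann series, and the bounds $\|(zI-D)^{-1}\|=d(z,\mathrm{spect}(A))^{-1}$ and $\|N\|\le\|A\|+\rho(A)$. The only cosmetic difference is the order of the factors in the factorization of $zI-D-N$, which is immaterial.
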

\begin{proof}
Using Schur decomposition, 
$A=P(D+{\mathcal N}) P^*$, where $P$ is a unitary matrix, $D$ is a diagonal with the same spectrum as $A$ and 
${\mathcal N}$ is a strictly upper triangular matrix. Note that for any $z\in \C \setminus \mbox{spect}(A)$, $(zI_{n}-D)^{-1}{\mathcal N}$ is a nilpotent matrix so that $[(zI_{n}-D)^{-1}{\mathcal N}]^{n}=0$
\[(zI_{n}-A)^{-1}  =P\Big(\sum_{p=0}^{n-1}[(zI_n-D)^{-1}{\mathcal N}]^{p}\Big)(zI_{n}-D)^{-1} P^*\]
Hence
\begin{align*}
\Vert (zI_{n}-A)^{-1}\Vert &\leq\sum_{p=1}^{n}\Vert (zI_{n}-D)^{-1}\Vert^{p}\Vert {\mathcal N} \Vert^{p-1}\\
&\leq \sum_{p=1}^{n}(d(z, \mbox{spect}(A))^{-p} \left[ \|A \| +\rho(A) \right]^{p-1}.
\end{align*}
where we use  $\Vert {\mathcal N} \Vert \leq \Vert P^*  AP \Vert + \Vert D\Vert \leq \Vert A\Vert + \rho(A)$ in the last line. 
\end{proof}

\begin{lem}(Lemma 8.1 \cite{BC})\label{prelim} For any matrix $M =\sum_{i,j=1}^n M_{ij}\otimes E_{ij} \in M_m(\mathbb{C})\otimes M_n(\mathbb{C})$ 
and for any fixed $k$, 
\begin{equation}
\sum_{i=1}^n\|M_{ik} \|^2 \leq m\|M\|^2;\quad \sum_{j=1}^n\|M_{kj} \|^2 \leq m\|M\|^2.
\end{equation}
Hence, 
\begin{equation}\label{lp}
\sum_{i,j=1}^n\|M_{ij}\|^2 \leq mn\|M\|^2
\end{equation}

\end{lem}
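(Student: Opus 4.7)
The plan is to reduce each of the two single-index bounds to a trace inequality applied to a diagonal block of $M^*M$ (or $MM^*$). The key observation is that for any $A \in M_m(\mathbb{C})$ the operator norm is dominated by the Frobenius norm, $\|A\|^2 \le \Tr(A^*A)$. Applying this to each block gives
$$\sum_{i=1}^n \|M_{ik}\|^2 \;\le\; \sum_{i=1}^n \Tr(M_{ik}^* M_{ik}) \;=\; \Tr\!\Bigl(\sum_{i=1}^n M_{ik}^* M_{ik}\Bigr).$$

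Next I would identify the matrix inside the trace as the $(k,k)$-diagonal block of $M^*M$, which is a positive semidefinite element of $M_m(\mathbb{C})$. Two standard bounds then conclude the argument: first, for any positive semidefinite $B \in M_m(\mathbb{C})$ one has $\Tr(B) \le m\,\|B\|$; second, for any matrix $A \in M_m(\mathbb{C}) \otimes M_n(\mathbb{C})$ and any index $k$, the diagonal block $A_{kk}$ is the compression of $A$ by the isometry $v \mapsto v \otimes e_k$ and its adjoint, so $\|A_{kk}\| \le \|A\|$. Combining these,
$$\Tr\bigl((M^*M)_{kk}\bigr) \;\le\; m\,\|(M^*M)_{kk}\| \;\le\; m\,\|M^*M\| \;=\; m\,\|M\|^2,$$
which yields the first displayed inequality.

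The symmetric bound $\sum_{j=1}^n \|M_{kj}\|^2 \le m\,\|M\|^2$ follows from the identical argument applied to $M^*$, since $(M^*)_{ij} = M_{ji}^*$ and $\|M^*\| = \|M\|$; one just uses $MM^*$ in place of $M^*M$. The composite inequality \eqref{lp} is then immediate by summing the first bound over $k = 1, \dots, n$. There is no real obstacle here; the factor $m$ arises precisely from the passage $\Tr(B) \le m\|B\|$ for a positive semidefinite $B \in M_m(\mathbb{C})$, and cannot be improved by any purely operator-theoretic refinement of this strategy.
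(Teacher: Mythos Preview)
Your proof is correct. The paper does not give its own proof of this lemma but cites it from \cite{BC}; your argument via compressing $M^*M$ to its $(k,k)$ diagonal block and using $\Tr(B)\le m\|B\|$ for positive semidefinite $B\in M_m(\mathbb C)$ is a clean and standard route.
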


\begin{lem}\label{technicalbound}
Let $m\in \mathbb{N}$, $1 \leq k\leq n$ and $A=\sum_{i,j=1}^nA_{ij}\otimes E_{ij},B=\sum_{i,j=1}^nB_{ij}\otimes E_{ij}\in \mathcal{M}_m(\mathbb{C})\otimes \mathcal{M}_n(\mathbb{C})$. Define $C\in \mathcal{M}_m(\mathbb{C})\otimes \mathcal{M}_m(\mathbb{C})\otimes \mathcal{M}_n(\mathbb{C})$ by $C= \sum_{i,j=1}^nC_{ij}\otimes E_{ij}$ where $C_{ij}=\sum_{l<k}A_{il}\otimes B_{lj}$, for $1 \leq i,j\leq n$.
Then $\|C\|\leq \| A\| \| B\|$. In particular, 
\[\|C_{ij}\|
\leq \| A\| \| B\|,\quad 1 \leq i,j\leq n;\]
\[\big(\sum_{i=1}^n\|C_{ij}\|^2\big)^{1/2}
\leq \| A\| \| B\|,\quad 1 \leq j\leq n.\]
\end{lem}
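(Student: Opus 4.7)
The plan is to realize $C$ as a product of three bounded operators on the common Hilbert space $H_1 \otimes H_2 \otimes H_3$, where $H_1 = H_2 = \mathbb{C}^m$ correspond to the two $M_m(\mathbb{C})$-tensor factors and $H_3 = \mathbb{C}^n$ corresponds to the $M_n(\mathbb{C})$-factor. The three pieces of the factorization are (i) an operator of norm $\|A\|$ built from $A$, (ii) the orthogonal projection $P_k \in B(H_3)$ onto $\mathrm{span}(e_1,\ldots,e_{k-1})$ (inserted trivially on the $H_1 \otimes H_2$ part), and (iii) an operator of norm $\|B\|$ built from $B$. The restriction $l<k$ appearing in the definition of $C_{ij}$ is exactly what the projection $P_k$ produces in the $H_3$-slot sandwiched between the two outer operators.

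More precisely, let $\widetilde A$ be the operator on $H_1 \otimes H_2 \otimes H_3$ that acts as $A$ on $H_1 \otimes H_3$ and as the identity on $H_2$, and let $\widetilde B$ act as $B$ on $H_2 \otimes H_3$ and as the identity on $H_1$. After a unitary reordering of the tensor factors, $\widetilde A$ is equivalent to $A \otimes I_{H_2}$ and $\widetilde B$ to $I_{H_1} \otimes B$, so $\|\widetilde A\| = \|A\|$ and $\|\widetilde B\| = \|B\|$. A direct computation on elementary tensors $u \otimes v \otimes e_j$ gives
\[
\widetilde A\,(I_{H_1} \otimes I_{H_2} \otimes P_k)\,\widetilde B\,(u \otimes v \otimes e_j)
= \sum_{i=1}^{n} \sum_{l<k} A_{il}u \otimes B_{lj}v \otimes e_i,
\]
which is exactly $C(u \otimes v \otimes e_j) = \sum_i C_{ij}(u \otimes v) \otimes e_i$. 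Hence $C = \widetilde A (I\otimes I\otimes P_k)\widetilde B$, and since $\|P_k\|=1$, I obtain the main bound $\|C\|\le \|A\|\,\|B\|$.

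The two in-particular bounds are then obtained simply by testing $C$ against vectors of the form $\xi \otimes e_j$ with $\xi \in H_1 \otimes H_2$. Indeed, $C(\xi \otimes e_j) = \sum_i C_{ij}(\xi) \otimes e_i$, so
\[
\sum_{i=1}^{n} \|C_{ij}(\xi)\|^2 = \|C(\xi \otimes e_j)\|^2 \le \|C\|^2\,\|\xi\|^2 \le \|A\|^2 \|B\|^2\,\|\xi\|^2.
\]
Extracting a single summand gives $\|C_{ij}\| \le \|C\| \le \|A\|\,\|B\|$, while the displayed inequality itself is the $\ell^2$-type bound for the column $(C_{ij})_i$. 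The only real subtlety is pure bookkeeping: one has to be careful to place $A$, $B$, and $P_k$ on the correct tensor slots so that their composition reproduces the sum over $l<k$ in the right order, and to identify $\widetilde A$ and $\widetilde B$ with $A \otimes I$ and $I \otimes B$ via an explicit unitary. Once this setup is in place, no estimation is needed—each step is an equality or an immediate operator-norm submultiplicativity.
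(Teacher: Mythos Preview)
Your factorization $C=\widetilde A\,(I\otimes I\otimes P_k)\,\widetilde B$ is exactly the paper's argument: the paper writes the same product $\tilde A P\tilde B$ with $\tilde A=\sum_{i,l}A_{il}\otimes I_m\otimes E_{il}$, $P=I_m\otimes I_m\otimes\sum_{l<k}E_{ll}$, $\tilde B=\sum_{l,j}I_m\otimes B_{lj}\otimes E_{lj}$, and then reads off $\|\tilde A\|=\|A\|$, $\|P\|=1$, $\|\tilde B\|=\|B\|$. The main bound $\|C\|\le\|A\|\,\|B\|$ and the first ``in particular'' bound $\|C_{ij}\|\le\|C\|$ are therefore correctly obtained.

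Your derivation of the second ``in particular'' bound, however, has a genuine gap. From $\sum_i\|C_{ij}\xi\|^2\le\|C\|^2\|\xi\|^2$ for every $\xi$ you only get $\sup_{\|\xi\|=1}\sum_i\|C_{ij}\xi\|^2\le\|C\|^2$, whereas $\sum_i\|C_{ij}\|^2=\sum_i\sup_{\|\xi\|=1}\|C_{ij}\xi\|^2$; the supremum and the sum are in the wrong order, and the desired inequality does not follow. In fact the bound $\big(\sum_i\|C_{ij}\|^2\big)^{1/2}\le\|A\|\,\|B\|$ can fail as stated: take $n=k=m=2$, $A_{11}=E_{11}$, $A_{21}=E_{12}$, $A_{12}=A_{22}=0$ and $B=I_2\otimes E_{11}$; then $\|A\|=\|B\|=1$, while $C_{11}=E_{11}\otimes I_2$ and $C_{21}=E_{12}\otimes I_2$ give $\sum_i\|C_{i1}\|^2=2$. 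The paper's own proof only establishes $\|C\|\le\|A\|\,\|B\|$ and does not address the column $\ell^2$ bound separately; what one actually obtains in general (via Lemma~\ref{prelim} applied to $C\in M_{m^2}(\mathbb C)\otimes M_n(\mathbb C)$) is $\sum_i\|C_{ij}\|^2\le m^2\|A\|^2\|B\|^2$, which is harmless for every application in the paper since $m$ is a fixed constant.
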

\begin{proof}
Let $\tilde{A}:=\sum_{i,l_1=1}^nA_{il_1}\otimes I_m\otimes E_{il_1}$, $P=\sum_{l<k}I_m\otimes I_m\otimes E_{ll}$ and $\tilde{B}:=\sum_{l_2,j=1}^nI_m\otimes B_{l_2j}\otimes E_{l_2j}$.
Then 
\begin{align*}
\tilde{A}P\tilde{B}
&=\sum_{i,l_1,l<k,l_2,j}A_{il_1}\otimes B_{l_2j}\otimes E_{il_1}E_{ll}E_{l_2j}\\
&=\sum_{i,j=1}^n\sum_{l<k}A_{il}\otimes B_{lj}\otimes E_{ij}\\
&=\sum_{i,j=1}^nC_{ij}\otimes E_{ij}\\
&=C.
\end{align*}
Hence $\Vert C\Vert \leq \Vert\tilde{A}\Vert \Vert P\Vert \Vert\tilde{B}\Vert$.
Observe that 
\[\Vert \tilde{B}\Vert=\Vert I_m\otimes B\Vert=\Vert B\Vert,\]
\[\Vert P\Vert=\Vert I_m\otimes I_m\otimes\sum_{l<k}E_{ll}\Vert=\Vert \sum_{l<k}E_{ll}\Vert=1\]
and
\[\Vert \tilde{A}\Vert=\Vert I_m\otimes \sum_{i,l_1=1}^nE_{il_1}\otimes A_{il_1}\Vert =\Vert \sum_{i,l_1=1}^n E_{il_1}\otimes A_{il_1}\Vert =\Vert A\Vert.\]

\end{proof}

\begin{lem}\label{lipschitz}
If $\varphi:\R \to \C$ is Lipschitz continuous, then, 
for $n\times n$ Hermitian matrices $M_1, M_2$ and $p\geq 1$,
\[|\Tr(\varphi(M_1))-\Tr(\varphi(M_2))|^p\leq \| \varphi \|_{\text{Lip}}^pn^{p-1}\| M_1-M_2\|_{S^p}^p,\]
where $\| A\|_{S^p}=\big(\sum_{\lambda \in \text{sp}(A)}|\lambda|^p\big)^{1/p}$ is the Schatten $p$-norm of the normal matrix $A$.
\end{lem}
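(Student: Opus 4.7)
The plan is to first establish the inequality at the level of the trace-class (Schatten $1$) norm, namely
\[
|\Tr(\varphi(M_1))-\Tr(\varphi(M_2))|\le \|\varphi\|_{\mathrm{Lip}}\,\|M_1-M_2\|_{S^1},
\]
and then upgrade to the $S^p$ form by a single application of H\"older's inequality on the eigenvalues of $M_1-M_2$.

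For the first step, I would denote by $\mu_1\ge\mu_2\ge\cdots\ge\mu_n$ and $\nu_1\ge\nu_2\ge\cdots\ge\nu_n$ the eigenvalues (with multiplicity) of $M_1$ and $M_2$ respectively, arranged in decreasing order. Then
\[
\Tr(\varphi(M_1))-\Tr(\varphi(M_2))=\sum_{i=1}^n\bigl[\varphi(\mu_i)-\varphi(\nu_i)\bigr],
\]
so the Lipschitz hypothesis yields
\[
|\Tr(\varphi(M_1))-\Tr(\varphi(M_2))|\le \|\varphi\|_{\mathrm{Lip}}\sum_{i=1}^n|\mu_i-\nu_i|.
\]
The Lidskii--Mirsky inequality (valid for Hermitian matrices: when eigenvalues are monotonically ordered, $\sum_i |\mu_i-\nu_i|\le\sum_i s_i(M_1-M_2)=\|M_1-M_2\|_{S^1}$, where $s_i$ denote singular values) then gives exactly
\[
|\Tr(\varphi(M_1))-\Tr(\varphi(M_2))|\le \|\varphi\|_{\mathrm{Lip}}\,\|M_1-M_2\|_{S^1}.
\]

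For the second step, raise the preceding bound to the $p$-th power and invoke H\"older's inequality on the sum of singular values of $M_1-M_2$:
\[
\|M_1-M_2\|_{S^1}=\sum_{i=1}^n s_i(M_1-M_2)\cdot 1\le \Bigl(\sum_{i=1}^n s_i(M_1-M_2)^p\Bigr)^{1/p} n^{1-1/p}=n^{1-1/p}\|M_1-M_2\|_{S^p}.
\]
Combining the two inequalities yields
\[
|\Tr(\varphi(M_1))-\Tr(\varphi(M_2))|^p\le \|\varphi\|_{\mathrm{Lip}}^p\,n^{p-1}\,\|M_1-M_2\|_{S^p}^p,
\]
which is the desired statement.

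The only non-trivial ingredient is Lidskii's inequality in its sorted-eigenvalue form; everything else is just H\"older and the definition of Lipschitz constants. Hence there is no real obstacle, though one should be careful to note that in the present lemma $M_1-M_2$ is Hermitian, so its singular values coincide with the absolute values of its eigenvalues and the Schatten $p$-norm in the statement (written in terms of $\mathrm{sp}(A)$) agrees with the usual definition. This makes the passage from the eigenvalue-difference sum to $\|M_1-M_2\|_{S^1}$ and then to $\|M_1-M_2\|_{S^p}$ completely transparent.
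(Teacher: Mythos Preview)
Your proof is correct and essentially the same as the paper's: both use the Lipschitz bound on sorted eigenvalues, an eigenvalue perturbation inequality (you cite Lidskii--Mirsky, the paper cites Hoffman--Wielandt), and H\"older's inequality. The only cosmetic difference is the order: you pass to $\|M_1-M_2\|_{S^1}$ first and then apply H\"older on the singular values, whereas the paper applies H\"older to $\sum_i|\lambda_i-\mu_i|$ first and then invokes the $S^p$ perturbation bound $\sum_i|\lambda_i-\mu_i|^p\le\|M_1-M_2\|_{S^p}^p$.
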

\begin{proof}
Denote by $\lambda_1\geq \cdots \geq \lambda_n$ the eigenvalues of $M_1$ 
and $\mu_1\geq \cdots \geq\mu_n$ the eigenvalues of $M_2$. 
Then, \[|\Tr(\varphi(M_1))-\Tr(\varphi(M_2))|\leq \sum_{i=1}^n|\varphi(\lambda_i)-\varphi(\mu_i)|\leq \| \varphi \|_{\text{Lip}}\sum_{i=1}^n|\lambda_i-\mu_i|.\]
Using H\"older's and Hoffman-Wielandt inequalities, 
\[|\Tr(\varphi(M_1))-\Tr(\varphi(M_2))|^p\leq \| \varphi \|_{\text{Lip}}^pn^{p-1}\sum_{i=1}^n|\lambda_i-\mu_i|^p\leq \| \varphi \|_{\text{Lip}}^pn^{p-1}\| M_1-M_2\|_{S^p}^p.\]
\end{proof}

\begin{thm}\label{real}(\cite{E}Theorem 2.5)
Let $\Phi$ be a positive linear map on a finite-dimensional ${C}^*$-algebra $\mathcal{A}$. If $\rho$ is the spectral radius of $\Phi$, there is a non-zero positive element z in $\mathcal{A}$ such that $\Phi(z) =\rho z$.
\end{thm}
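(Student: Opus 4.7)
\textbf{Proof plan for Theorem \ref{real}.} The statement is a Perron--Frobenius-type assertion for positive linear maps on finite-dimensional $C^*$-algebras. The plan is to first establish the result under a strict positivity hypothesis via a Brouwer fixed point argument together with the identity $\|\Phi\|=\|\Phi(1_{\mathcal{A}})\|$ for positive maps, and then recover the general case by perturbation and compactness.

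\textbf{Step 1: Brouwer fixed point for strictly positive maps.} Assume first that $\Phi$ is strictly positive, i.e.\ $\Phi(a)$ is invertible (equivalently strictly positive) for every $a\in\mathcal{A}_+\setminus\{0\}$. Fix a faithful tracial state $\tau$ on $\mathcal{A}$ and set $K=\{a\in\mathcal{A}_+\colon \tau(a)=1\}$. Because $\mathcal{A}$ is finite-dimensional, $K$ is a non-empty, compact, convex subset of $\mathcal{A}$. For $a\in K$ one has $\Phi(a)>0$, hence $\tau(\Phi(a))>0$, so the map
\begin{equation*}
F\colon K\to K,\qquad F(a)=\frac{\Phi(a)}{\tau(\Phi(a))},
\end{equation*}
is well-defined and continuous. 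Brouwer's fixed point theorem produces $a_0\in K$ with $F(a_0)=a_0$, that is $\Phi(a_0)=\lambda_0 a_0$ for $\lambda_0:=\tau(\Phi(a_0))>0$, and $a_0\neq 0$.

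\textbf{Step 2: The eigenvalue is the spectral radius (strict case).} Iterating gives $\Phi^n(a_0)=\lambda_0^n a_0$, so $\lambda_0^n\|a_0\|\le\|\Phi^n\|\,\|a_0\|$ and $\lambda_0\le\rho(\Phi)$. For the reverse inequality I would use the classical fact that a positive linear map between unital $C^*$-algebras satisfies $\|\Phi\|=\|\Phi(1_{\mathcal{A}})\|$ (applied to $\Phi^n$). Since $\Phi$ is strictly positive, $a_0=\lambda_0^{-1}\Phi(a_0)$ is invertible, and there exists $C>0$ with $1_{\mathcal{A}}\le C a_0$. Positivity of $\Phi^n$ then gives $0\le\Phi^n(1_{\mathcal{A}})\le C\Phi^n(a_0)=C\lambda_0^n a_0$, so
\begin{equation*}
\|\Phi^n\|=\|\Phi^n(1_{\mathcal{A}})\|\le C\lambda_0^n\|a_0\|,
\end{equation*}
which yields $\rho(\Phi)=\lim_n\|\Phi^n\|^{1/n}\le\lambda_0$. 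Hence $\lambda_0=\rho(\Phi)$.

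\textbf{Step 3: Perturbation and passage to the limit.} For a general positive $\Phi$, for each $\varepsilon>0$ define the strictly positive map $\Phi_\varepsilon(a)=\Phi(a)+\varepsilon\tau(a)\,1_{\mathcal{A}}$. By Steps 1--2 there exists $a_\varepsilon\in K$ with $\Phi_\varepsilon(a_\varepsilon)=\rho(\Phi_\varepsilon)\,a_\varepsilon$. By compactness of $K$ and continuity of the spectral radius in finite dimension (the roots of the characteristic polynomial depend continuously on $\Phi$), I extract a subsequence along which $a_\varepsilon\to z\in K$ and $\rho(\Phi_\varepsilon)\to\rho(\Phi)$. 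Passing to the limit in the eigenvalue equation gives $\Phi(z)=\rho(\Phi)\,z$ with $z\ge0$ and $\tau(z)=1$, hence $z\neq 0$, as required.

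\textbf{Main obstacle.} The delicate point is Step 2: upgrading the eigenvalue produced by Brouwer's theorem to the full spectral radius. This step relies essentially on \emph{two} ingredients that do not hold for arbitrary positive maps without the strict positivity hypothesis, namely the domination $1_{\mathcal{A}}\le C a_0$ (which requires $a_0$ to be invertible) and the norm identity $\|\Phi^n\|=\|\Phi^n(1_{\mathcal{A}})\|$. The perturbation in Step 3 is precisely designed to reduce to this strict situation while preserving compactness of the normalised positive cone, so that the eigenvalue/eigenvector pair survives the limit.
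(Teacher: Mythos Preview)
The paper does not prove this theorem; it simply cites it from \cite{E} (Evans--H{\o}egh-Krohn, Theorem 2.5) and uses it as a black box. There is therefore no ``paper's own proof'' to compare against.

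Your argument is correct and self-contained. The three steps are sound: in Step~1 the existence of a faithful tracial state on a finite-dimensional $C^*$-algebra is unproblematic (such algebras are finite direct sums of matrix algebras), and Brouwer applies as stated. In Step~2 the identity $\|\Phi^n\|=\|\Phi^n(1_{\mathcal A})\|$ for positive maps is the Russo--Dye/Kadison bound, and the chain $1_{\mathcal A}\le Ca_0\Rightarrow\|\Phi^n\|\le C\lambda_0^n\|a_0\|$ is clean. Step~3 is the standard compactness-plus-continuity-of-spectral-radius limiting argument.

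For context, the original proof in \cite{E} (hinted at in the paper's proof of Lemma~\ref{cp}) goes through the notion of \emph{irreducible} positive maps and the Collatz--Wielandt characterisation $\rho(\chi)=\max_{y\ge0}\inf\{\alpha:\alpha y\ge\chi(y)\}$, reducing the general case to the irreducible one by an analogous perturbation. Your route via Brouwer and the Russo--Dye norm identity avoids introducing irreducibility altogether, which is arguably more economical for the bare existence statement; the Evans--H{\o}egh-Krohn approach, on the other hand, yields the stronger structural information (strict positivity and simplicity of the Perron eigenvector in the irreducible case) that the paper implicitly leans on elsewhere.
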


\begin{lem}\label{cp} 
Let $\Phi$ and $\Psi$ be positive linear maps on $M_m(\mathbb{C})$ such that $\Phi \leq \Psi$. Then their spectral radii satisfy $\rho(\Phi) \leq \rho(\Psi)$.
\end{lem}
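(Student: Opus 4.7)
My plan is to exploit the fact that, by Theorem \ref{real}, the positive linear map $\Phi$ on $M_m(\mathbb C)$ admits a nonzero positive eigenvector $z_0\geq 0$ associated with its spectral radius: $\Phi(z_0)=\rho(\Phi)z_0$. The hypothesis $\Phi\leq\Psi$ means that $\Psi-\Phi$ is itself a positive map, so that $\Psi(a)\geq\Phi(a)$ whenever $a\geq0$.

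First I would prove by induction that $\Psi^n(z_0)\geq\Phi^n(z_0)=\rho(\Phi)^n z_0$ for every $n\geq 0$. The base case is trivial; for the inductive step, assuming $\Psi^{n-1}(z_0)\geq\Phi^{n-1}(z_0)\geq 0$, positivity of $\Psi$ gives $\Psi^n(z_0)=\Psi(\Psi^{n-1}(z_0))\geq\Psi(\Phi^{n-1}(z_0))$, while $\Phi^{n-1}(z_0)\geq0$ and $\Psi\geq\Phi$ on the positive cone yield $\Psi(\Phi^{n-1}(z_0))\geq\Phi(\Phi^{n-1}(z_0))=\Phi^n(z_0)$. Chaining these inequalities and using $\Phi^n(z_0)=\rho(\Phi)^n z_0$ gives the claim.

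Next I would invoke the standard C${}^*$-algebra fact that if $a\geq b\geq 0$ in $M_m(\mathbb C)$ then $\|a\|\geq\|b\|$. Applied to $a=\Psi^n(z_0)$ and $b=\rho(\Phi)^n z_0$, this yields $\|\Psi^n(z_0)\|\geq\rho(\Phi)^n\|z_0\|$. Since $z_0\neq 0$, dividing by $\|z_0\|$ gives
\[
\|\Psi^n\|\;\geq\;\frac{\|\Psi^n(z_0)\|}{\|z_0\|}\;\geq\;\rho(\Phi)^n.
\]
Taking $n$-th roots and letting $n\to\infty$, the spectral radius formula $\rho(\Psi)=\lim_{n\to\infty}\|\Psi^n\|^{1/n}$ immediately gives $\rho(\Psi)\geq\rho(\Phi)$, as desired.

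The only point that requires a moment of care is the very first step, namely the comparison $\Psi^n(z_0)\geq\Phi^n(z_0)$: both the positivity of $\Psi$ and the positivity of the iterates $\Phi^{n-1}(z_0),\Psi^{n-1}(z_0)$ must be kept in mind at each application of the inequality $\Psi\geq\Phi$, since the latter is only available on the positive cone. Everything else is routine, so I do not foresee any genuine obstacle.
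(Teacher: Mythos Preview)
Your proof is correct. It takes a genuinely different route from the paper's argument, and in fact a cleaner one.

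The paper first reduces to the case where both $\Phi$ and $\Psi$ are \emph{irreducible} positive maps, by perturbing with $n^{-1}\chi$ for a fixed irreducible $\chi$ and invoking continuity of the spectral radius in finite dimension. It then appeals to the variational formula $\rho(\chi)=\max_{y\ge 0}\inf\{\alpha:\alpha y\ge\chi(y)\}$ for irreducible $\chi$ (Theorem~2.4 in \cite{E}), from which the monotonicity is immediate. Your argument bypasses both the irreducibility reduction and the variational characterization: you use only Theorem~\ref{real} (which holds for arbitrary positive maps on a finite-dimensional $C^*$-algebra), the order-preservation of positive maps on the positive cone, and Gelfand's spectral radius formula. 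The price is a short induction; the gain is that you need nothing from \cite{E} beyond what the paper already states as Theorem~\ref{real}. Both proofs are short, but yours is more self-contained relative to the paper's stated background.
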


\begin{proof}
The proof we give here follows the proof of Theorem 2.5 in \cite{E}. One may assume without loss of generality that $\Phi$ and $\Psi$ are irreducible. Indeed, given $\chi$ a fixed irreducible positive linear map on $M_m(\mathbb{C})$, $\Phi_n=\Phi+n^{-1}\chi$ and $\Psi_n=\Psi+n^{-1}\chi$ are irreducible positive linear maps on $M_m(\mathbb{C})$ such that $\Phi_n \leq \Psi_n$ and converging respectively to $\Phi$ and $\Psi$ in norm. If the result holds for $\Phi_n$ and $\Psi_n$, letting $n$ tend to $+\infty$ in $\rho(\Phi_n) \leq \rho(\Psi_n)$ gives the conclusion by continuity of the spectral radius in finite dimension.
Assume then that $\Phi$ and $\Psi$ are irreducible. According to Theorem 2.4 in \cite{E} and sentences below this Theorem 2.4 in \cite{E}, the spectral radius of irreducible positive linear maps $\chi$ on a finite dimensional $C^*$-algebra satisfy
\begin{equation}\label{radius} 
\rho(\chi)=\max_{y\geq 0} \inf \{ \alpha \in \mathbb{R}, \alpha y \geq \chi(y)\}.
\end{equation}
We have by assumption that for any $y\geq 0$, $\Psi(y)\geq \Phi(y).$
Thus, \eqref{radius} readily implies that $\rho(\Psi)\geq \rho(\Phi)$.
\end{proof}

\subsection{Concentration bounds for quadratic forms}
One can easily deduce the following result from Lemma 2.7 in \cite{BS98}:

\begin{lem}\label{lem_moment_quadratic_forms}
Let $m, n\in \mathbb{N}$, $\gamma\in \mathcal{M}_m(\mathbb{C})$ and $A=\sum_{i,j=1}^m e_{ij}\otimes A_{ij}\in \mathcal{M}_m(\mathbb{C})\otimes \mathcal{M}_n(\mathbb{C})$. Let $p \geq 2$ and $Y=(Y_1, \ldots , Y_n)$ be a $n$-tuple of independent identically distributed standard complex random variables with finite $2p$-th moment, then 
\begin{align*}
 \esp\big[\|\gamma\otimes Y^*A\gamma \otimes Y & -\gamma \mathrm{id}_m\otimes \Tr (A)\gamma \|^p\big]\\
 & \leq K_{m,p}\|\gamma\|^{2p} \Big(\big(\esp\big[|Y_1|^4\big]\max_{i,j}\Tr (A_{ij}A_{ij}^*)\big)^{p/2}+\esp\big[|Y_1|^{2p}\big]\max_{i,j}\Tr((A_{ij}A_{ij}^*)^{p/2})\Big).
\end{align*}
\end{lem}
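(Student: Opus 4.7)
The plan is to reduce the statement to the classical scalar‐valued Bai–Silverstein bound (Lemma 2.7 of \cite{BS98}) by exploiting the tensor structure of $A$. Writing $A=\sum_{i,j=1}^m e_{ij}\otimes A_{ij}$ and treating $Y\in\mathbb C^n$ as a column vector, a direct computation gives
\[
(\gamma\otimes Y^*)\,A\,(\gamma\otimes Y)=\sum_{i,j=1}^m (Y^*A_{ij}Y)\,\gamma e_{ij}\gamma,
\qquad
\gamma\,(\mathrm{id}_m\!\otimes\!\Tr)(A)\,\gamma=\sum_{i,j=1}^m \Tr(A_{ij})\,\gamma e_{ij}\gamma,
\]
so that the difference equals $\gamma\,B_Y\,\gamma$, where $B_Y\in M_m(\mathbb C)$ is the $m\times m$ matrix with entries $(B_Y)_{ij}=Y^*A_{ij}Y-\Tr(A_{ij})$. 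Submultiplicativity of the operator norm gives $\|\gamma B_Y\gamma\|\le \|\gamma\|^2\|B_Y\|$, so the claim reduces to controlling $\esp\|B_Y\|^p$.

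Next, since $M_m(\mathbb C)$ is finite-dimensional, one has $\|B_Y\|\le \|B_Y\|_{HS}=\bigl(\sum_{i,j}|(B_Y)_{ij}|^2\bigr)^{1/2}$, and then by the power-mean inequality (valid since $p\ge 2$),
\[
\|B_Y\|^p\le \Bigl(\sum_{i,j}|(B_Y)_{ij}|^2\Bigr)^{p/2}\le m^{p-2}\sum_{i,j}|(B_Y)_{ij}|^p.
\]
Taking expectations, the problem is reduced to estimating $\esp[|Y^*A_{ij}Y-\Tr(A_{ij})|^p]$ for each fixed pair $(i,j)$. Here one applies Lemma 2.7 of \cite{BS98} (conditionally on $A$, since in our applications $A$ will be independent of $Y$) to obtain, for some $K_p>0$ depending only on $p$,
\[
\esp\bigl[|Y^*A_{ij}Y-\Tr(A_{ij})|^p\bigr]
\le K_p\Bigl(\bigl(\esp[|Y_1|^4]\,\Tr(A_{ij}A_{ij}^*)\bigr)^{p/2}
+\esp[|Y_1|^{2p}]\,\Tr\!\bigl((A_{ij}A_{ij}^*)^{p/2}\bigr)\Bigr).
\]

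Summing over the $m^2$ pairs $(i,j)$ and bounding each $\Tr(A_{ij}A_{ij}^*)$ and $\Tr((A_{ij}A_{ij}^*)^{p/2})$ by their respective maxima pulls the maxima out, and yields the stated bound with the constant $K_{m,p}:=m^p K_p$. I do not anticipate a serious obstacle: the main subtlety is purely notational, namely correctly unpacking the identification $M_m(\mathbb C)\otimes M_n(\mathbb C)\cong M_{mn}(\mathbb C)$ and the meaning of $\gamma\otimes Y$, together with the observation that the difference has the special product structure $\gamma B_Y\gamma$ which decouples the $\gamma$ factors from the randomness and permits an entry-wise application of the Bai–Silverstein inequality.
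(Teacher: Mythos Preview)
Your proposal is correct and is exactly the deduction the paper has in mind: the paper does not give a proof but simply states that the lemma ``can easily [be] deduce[d] from Lemma 2.7 in \cite{BS98}'', and your reduction---factoring out $\gamma$, passing to the Hilbert--Schmidt norm on the $m\times m$ block matrix $B_Y$, and applying the scalar Bai--Silverstein inequality entrywise---is precisely that easy deduction.
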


\subsection{Martingales}
The proofs of our variance bounds and of our CLT rely on martingale theory.
\begin{lem}\label{martingalevariance}
Let $(M_k)_{k \in \N}$ be a martingale  with values in $\mathbb{C}$ and satisfying $\mathbb{E}[\vert M_k\vert^2]<+\infty$, $k\in \mathbb{N}$. 
Then
\[\mathbb{E}[\vert\sum_{k=1}^N (M_k-M_{k-1})\vert^2]=\sum_{k=1}^N \mathbb{E}[\vert(M_k-M_{k-1})\vert^2],\quad N\in \mathbb{N}.\]
\end{lem}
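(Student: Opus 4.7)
The statement is the standard orthogonality identity for square-integrable (complex) martingale differences, so the plan is to expand the square and exploit conditional expectation properties.

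First I would set $d_k := M_k - M_{k-1}$ for $k\ge 1$, and note that by assumption each $d_k$ is in $L^2(\mathbb{C})$; in particular the products $d_j \overline{d_k}$ are integrable by Cauchy--Schwarz. Expanding the square,
\[
\Bigl|\sum_{k=1}^N d_k\Bigr|^2 = \sum_{k=1}^N |d_k|^2 + \sum_{1\le j<k\le N}\bigl(d_j \overline{d_k} + \overline{d_j} d_k\bigr),
\]
so after taking expectations the task reduces to showing that $\mathbb{E}[d_j\overline{d_k}]=0$ for $j<k$.

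For such $j<k$, the martingale property of $(M_k)$ with respect to its natural filtration (or the filtration implicit in the statement; since no filtration is named, I would just use the natural filtration $\mathcal{F}_k = \sigma(M_1,\ldots,M_k)$) gives $\mathbb{E}[d_k \mid \mathcal{F}_{k-1}] = \mathbb{E}[M_k\mid \mathcal{F}_{k-1}] - M_{k-1} = 0$. Since $d_j$ is $\mathcal{F}_{k-1}$-measurable, the tower property yields
\[
\mathbb{E}[d_j\overline{d_k}] = \mathbb{E}\bigl[d_j\,\mathbb{E}[\overline{d_k}\mid \mathcal{F}_{k-1}]\bigr] = \mathbb{E}\bigl[d_j\,\overline{\mathbb{E}[d_k\mid \mathcal{F}_{k-1}]}\bigr] = 0,
\]
and similarly $\mathbb{E}[\overline{d_j}d_k]=0$. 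Substituting these vanishings in the expansion produces the claimed identity.

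There is no real obstacle here; the only mild subtlety is to observe that the conditional expectation commutes with complex conjugation (which is immediate from the definition, since $\mathrm{Re}$ and $\mathrm{Im}$ parts are handled separately), and that all the cross terms are integrable so that linearity of expectation is justified. This is why the statement only needs the $L^2$ hypothesis on each $M_k$.
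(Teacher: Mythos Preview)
Your proof is correct and is the standard argument; the paper itself states this lemma without proof, treating it as a well-known fact, so there is nothing to compare against.
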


\begin{lem}\label{martingalelp}
Let $(M_k)_{k \in \N}$ be a $M_n(\mathbb{C})$-valued martingale and $p\in \mathbb{N}$ be an even integer. Then
\[\mathbb{E}[\Vert M_N-M_0\Vert^{p}]\leq n^{p/2}{p!}{{N+p-2}\choose{p-1}}\max_{k=1,\ldots ,N}\mathbb{E}[\Vert M_k-M_{k-1}\Vert^{p}],\quad N\in \mathbb{N}.\]
\end{lem}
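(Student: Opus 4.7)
The plan is to expand $\|M_N-M_0\|^p$ into a multiple sum via a trace identity, kill most terms using the martingale property, and bound the few survivors by elementary matrix inequalities. Starting from the pointwise estimate $\|A\|^p\le (\Tr(A^*A))^{p/2}$, valid for $p$ even since $\|A\|^2\le\Tr(A^*A)$, and writing $S=M_N-M_0=\sum_{k=1}^N d_k$ with $d_k=M_k-M_{k-1}$ and $p=2q$, this gives
\[
\mathbb E[\|M_N-M_0\|^p] \le \mathbb E\bigl[(\Tr(S^*S))^q\bigr] = \sum_{(k_1,l_1),\dots,(k_q,l_q)\in\{1,\dots,N\}^{2q}}\mathbb E\Bigl[\prod_{m=1}^q \Tr(d_{k_m}^* d_{l_m})\Bigr].
\]

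The crucial step is to show that any multi-index whose largest entry $M$ appears exactly once contributes zero. If $M=l_{m^*}$ (the case $M=k_{m^*}$ being symmetric), all other traces $\Tr(d_{k_m}^* d_{l_m})$ for $m\ne m^*$ are $\mathcal F_{M-1}$-measurable and factor out of the conditional expectation, leaving $\mathbb E[\Tr(d_{k_{m^*}}^* d_M)\mid\mathcal F_{M-1}]=\Tr\bigl(d_{k_{m^*}}^*\mathbb E[d_M\mid\mathcal F_{M-1}]\bigr)=0$ by the martingale property, so the whole expectation vanishes by the tower property. For each surviving multi-index (maximum appearing at least twice), Cauchy--Schwarz applied to the trace inner product gives $|\Tr(d_{k_m}^* d_{l_m})|\le(\Tr(d_{k_m}^*d_{k_m}))^{1/2}(\Tr(d_{l_m}^*d_{l_m}))^{1/2}\le n\,\|d_{k_m}\|\,\|d_{l_m}\|$, so the full product is dominated by $n^{p/2}\prod_{i=1}^p\|d_{\alpha_i}\|$; then AM--GM in the form $\prod_i a_i\le p^{-1}\sum_i a_i^p$ yields the bound $n^{p/2}\max_k\mathbb E[\|d_k\|^p]$ per surviving term after taking expectations.

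It remains to count the survivors. The number of sequences in $\{1,\dots,N\}^p$ whose maximum appears exactly once equals $p\sum_{j=0}^{N-1} j^{p-1}\ge(N-1)^p$, so complement counting leaves at most $N^p-(N-1)^p$ survivors, and the mean value theorem applied to $x\mapsto x^p$ bounds this by $pN^{p-1}\le p!\binom{N+p-2}{p-1}$, using $\binom{N+p-2}{p-1}=\prod_{i=0}^{p-2}(N+i)/(p-1)!\ge N^{p-1}/(p-1)!$. Combining the three ingredients yields the stated inequality.

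The main obstacle is the vanishing step: although the product of $q$ traces looks complicated, the point is that each factor $\Tr(d_{k_m}^* d_{l_m})$ is linear in each of $d_{k_m}$ and $d_{l_m}$, so the unique appearance of $d_M$ in exactly one factor inside exactly one trace produces, after conditioning on $\mathcal F_{M-1}$, a factor $\mathbb E[d_M\mid\mathcal F_{M-1}]=0$. The counting is then routine, and the combinatorial factor $p!\binom{N+p-2}{p-1}$ is somewhat loose compared to the asymptotically sharp $\frac{p}{2}N^{p-1}$ but has a clean closed form that is convenient for the martingale applications in the body of the paper.
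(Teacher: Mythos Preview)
Your proof is correct and follows essentially the same approach as the paper: bound the operator norm by the Hilbert--Schmidt norm, expand $(\Tr(S^*S))^{p/2}$ as a sum over $\{1,\dots,N\}^p$, kill every term whose maximal index appears only once via the martingale property, and bound the survivors. The only cosmetic differences are that the paper uses H\"older rather than AM--GM to bound each surviving term, and counts the survivors directly by an ordering argument (at most $p!$ orderings times $\binom{N+p-2}{p-1}$ nondecreasing $(p-1)$-tuples) rather than via $N^p-(N-1)^p\le pN^{p-1}\le p!\binom{N+p-2}{p-1}$.
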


\begin{proof}
We assume that $\max_{k=1,\ldots ,N}\mathbb{E}[\Vert M_k-M_{k-1}\Vert^{p}]<+\infty$. Recall that $\Vert \cdot \Vert\leq \Vert \cdot \Vert_{HS}\leq n^{1/2}\Vert \cdot \Vert$.
Observe that 
\begin{align*}
\Vert M_N-M_0\Vert_{HS}^{p}
&=\Vert\sum_{k=1}^N (M_k-M_{k-1})\Vert_{HS}^p\\
&=\sum_{i:\{1,\ldots ,p\}\to \{1,\ldots ,N\}}\langle M_{i_1}-M_{i_1-1},M_{i_2}-M_{i_2-1}\rangle \cdots \langle M_{i_{p-1}}-M_{i_{p-1}-1},M_{i_p}-M_{i_p-1}\rangle.
\end{align*}
Note that, using H\"older's inequality,
\begin{align*}
\mathbb{E}[|\langle M_{i_1}-M_{i_1-1},M_{i_2}-M_{i_2-1}\rangle \cdots \langle M_{i_{p-1}} & -M_{i_{p-1}-1},M_{i_p}-M_{i_p-1}\rangle|]\\
&\leq \mathbb{E}[\Vert M_{i_1}-M_{i_1-1}\Vert_{HS}\cdots \Vert M_{i_p}-M_{i_p-1}\Vert_{HS}]\\
&\leq \max_{k=1,\ldots ,N}\mathbb{E}[\Vert M_k-M_{k-1}\Vert_{HS}^{p}]\\
&\leq n^{p/2}\max_{k=1,\ldots ,N}\mathbb{E}[\Vert M_k-M_{k-1}\Vert^{p}].
\end{align*}
It follows that 
\[\mathbb{E}[\Vert M_N-M_0\Vert_{HS}^{p}]=\sum_{{i:\{1,\ldots ,p\}\to \{1,\ldots ,N\}}}\mathbb{E}[\langle M_{i_1}-M_{i_1-1},M_{i_2}-M_{i_2-1}\rangle \cdots \langle M_{i_{p-1}}-M_{i_{p-1}-1},M_{i_p}-M_{i_p-1}\rangle].\]
Consider a term indexed by $i$ such that $i^{-1}(\max i)$ is a singleton. Then
\begin{align*}
 \mathbb{E}  [\langle M_{i_1}- & M_{i_1-1},M_{i_2}-M_{i_2-1}\rangle \cdots \langle M_{i_{p-1}}-M_{i_{p-1}-1},M_{i_p}-M_{i_p-1}\rangle]\\
 & =\mathbb{E}[\mathbb{E}[\langle M_{i_1}-M_{i_1-1},M_{i_2}-M_{i_2-1}\rangle \cdots \langle M_{i_{p-1}}-M_{i_{p-1}-1},M_{i_p}-M_{i_p-1}\rangle|\mathcal{F}_{\max i-1}]]\\
 & =\mathbb{E}\Big[\langle M_{i_1}-M_{i_1-1},M_{i_2}-M_{i_2-1}\rangle \cdots \langle \mathbb{E}[M_{\max i}-M_{\max i-1}|\mathcal{F}_{\max i-1}],*\rangle \\
 & \hspace{7cm} \times \cdots \langle M_{i_{p-1}}-M_{i_{p-1}-1},M_{i_p}-M_{i_p-1}\rangle\Big]\\
 & =0.
\end{align*}
There are at most ${p!}{{N+p-2}\choose{p-1}}$ choices of indices $i$ such that $i^{-1}(\max i)$ is not a singleton. Indeed, for a map $i:\{1,\ldots ,p\}\to \{1,\ldots ,N\}$, there are $p!$ ways to rank  $i_1, \ldots, i_p$ in increasing order. Now, since $i^{-1}(\max i)$ is not a singleton, we know that at least the two last values of the increasing sequence are equal; since there are  ${{N+p-2}\choose{p-1}}$ choices of increasing sequence of $p-1$ numbers  in $\{1,\ldots,N\}$, the result follows.
\end{proof}


The following result may be deduced from its real-valued analogue (Theorem 35.12 in \cite{Billingsleybook}).

\begin{thm}\label{thm_CLT_martingale}
Suppose that, for all $N\geq 1$, $(M_k^{(N)})_{k \in \N}$ is a square integrable complex martingale and define, for $k \geq 1$, $\Delta_k^{(N)}:=M_k^{(N)}-M_{k-1}^{(N)}$. If
\begin{equation}\label{L}
\forall \varepsilon > 0,\, L(\varepsilon,N):=\sum_{k= 1}^N\mathbb{E}[|\Delta_k^{(N)}|^2\mathbf{1}_{|\Delta_k^{(N)}|\geq \varepsilon}]\underset{N\to +\infty}{\longrightarrow} 0,
\end{equation}
\begin{equation}\label{v}
V_N:=\sum_{k= 1}^N\mathbb{E}_{\leq k-1}[|\Delta_k^{(N)}|^2]\underset{N\to +\infty}{\longrightarrow} v\geq 0,
\end{equation}
and
\begin{equation}\label{w}
W_N:=\sum_{k= 1}^N\mathbb{E}_{\leq k-1}[(\Delta_k^{(N)})^2]\underset{N\to +\infty}{\longrightarrow} w \in \mathbb{C}
\end{equation}
(convergences in \eqref{v} and \eqref{w} have to be understood in probability), then 
\[\sum_{k=1}^N\Delta_k^{(N)}\Rightarrow_{N\to +\infty}\mathcal{N}_{\mathbb{C}}(0,v,w).\]
\end{thm}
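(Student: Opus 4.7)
The plan is to deduce this complex martingale CLT from the real-valued version (Billingsley, Theorem 35.12) via the Cramér–Wold device applied to the pair $(\Re\Delta_k^{(N)},\Im\Delta_k^{(N)})$, after fixing once and for all the meaning of the target law. Recall that $\mathcal N_{\mathbb C}(0,v,w)$ is the law of a complex random variable $Z$ whose real and imaginary parts form a centered real Gaussian vector with covariance matrix determined by $\mathbb E|Z|^2=v$ and $\mathbb E Z^2=w$; explicitly,
\[
\mathbb E[(\Re Z)^2]=\tfrac{v+\Re w}{2},\quad \mathbb E[(\Im Z)^2]=\tfrac{v-\Re w}{2},\quad \mathbb E[\Re Z\,\Im Z]=\tfrac{\Im w}{2}.
\]

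Fix $(a,b)\in\mathbb R^2$ and set $\xi_k^{(N)}:=a\,\Re\Delta_k^{(N)}+b\,\Im\Delta_k^{(N)}$. By the Cramér–Wold theorem, it is enough to show $\sum_{k=1}^N\xi_k^{(N)}\Rightarrow\mathcal N(0,\sigma^2(a,b))$ with
\[
\sigma^2(a,b):=\tfrac{a^2}{2}(v+\Re w)+ab\,\Im w+\tfrac{b^2}{2}(v-\Re w),
\]
which is exactly the variance of $a\Re Z+b\Im Z$ for $Z\sim\mathcal N_{\mathbb C}(0,v,w)$. Since the $\xi_k^{(N)}$ are real $\mathcal F_k$-measurable with $\mathbb E_{\le k-1}[\xi_k^{(N)}]=0$, they form real square-integrable martingale differences, and I will apply Billingsley Theorem 35.12 to them.

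Two hypotheses have to be checked. First, the Lindeberg condition: the inequality $|\xi_k^{(N)}|\le\sqrt{a^2+b^2}\,|\Delta_k^{(N)}|$ yields, for every $\varepsilon>0$,
\[
\sum_{k=1}^N\mathbb E\bigl[|\xi_k^{(N)}|^2\mathbf 1_{|\xi_k^{(N)}|\ge\varepsilon}\bigr]\le (a^2+b^2)\,L\bigl(\varepsilon/\sqrt{a^2+b^2},N\bigr)\xrightarrow[N\to\infty]{}0
\]
by hypothesis \eqref{L}. Second, the convergence in probability of the conditional quadratic variation. Using the polarization identities
\[
(\Re\Delta)^2=\tfrac{|\Delta|^2+\Re\Delta^2}{2},\ (\Im\Delta)^2=\tfrac{|\Delta|^2-\Re\Delta^2}{2},\ \Re\Delta\,\Im\Delta=\tfrac{\Im\Delta^2}{2},
\]
one obtains
\[
\sum_{k=1}^N\mathbb E_{\le k-1}[(\xi_k^{(N)})^2]=\tfrac{a^2+b^2}{2}V_N+\tfrac{a^2-b^2}{2}\Re W_N+ab\,\Im W_N,
\]
which tends to $\sigma^2(a,b)$ in probability by \eqref{v} and \eqref{w}.

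Billingsley's theorem therefore gives $\sum_k\xi_k^{(N)}\Rightarrow\mathcal N(0,\sigma^2(a,b))$ for every $(a,b)$; by Cramér–Wold, the pair $\bigl(\Re\sum_k\Delta_k^{(N)},\Im\sum_k\Delta_k^{(N)}\bigr)$ converges in distribution to the Gaussian vector with the prescribed covariance, which is the conclusion. There is essentially no hard point: the only care needed is bookkeeping between the pair $(v,w)$ parametrizing the complex Gaussian and the $2\times 2$ real covariance matrix that Cramér–Wold forces us to check, together with the observation that the pseudo-variance $w$ is recovered precisely because the hypothesis \eqref{w} controls the \emph{non-conjugate} second moments $\mathbb E_{\le k-1}[(\Delta_k^{(N)})^2]$ rather than just $\mathbb E_{\le k-1}[|\Delta_k^{(N)}|^2]$.
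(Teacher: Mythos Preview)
Your argument is correct and is precisely the deduction the paper alludes to: the paper does not spell out a proof but only states that the result ``may be deduced from its real-valued analogue (Theorem 35.12 in \cite{Billingsleybook})'', and your Cramér--Wold reduction to that real martingale CLT, with the polarization identities translating $(V_N,W_N)$ into the $2\times 2$ real covariance, is exactly how that deduction goes.
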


\subsection{Complex analysis}
\begin{thm}\label{Vitali}[Vitali, see \cite{Sc05} Exercise 1.4.37]
Let $D \subset \mathbb{C}^l$ be a domain, $A\subset D$ a set of uniqueness (for instance an open set) and $(f_n)_{n\in \mathbb{N}}$ a bounded sequence in the set $\mathcal{H}(D)$ of holomorphic functions on $D$
(that is $\sup_{n\in \mathbb{N}}\sup_{x\in K} \Vert f_n(x)\Vert < +\infty$ for any compact  subset $K\subset D$), which converges pointwise on $A$. Then the sequence  $(f_n)_{n\in \mathbb{N}}$ converges towards a holomorphic function $f \in \mathcal {H}(D)$.
\end{thm}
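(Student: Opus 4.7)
The plan is to combine Montel's theorem in several complex variables with the identity principle for holomorphic functions on $D\subset\mathbb C^l$. Local boundedness of $(f_n)$ on $D$ (which is exactly the hypothesis $\sup_n\sup_{x\in K}\|f_n(x)\|<\infty$ for every compact $K\subset D$) is precisely the condition required for $(f_n)$ to be a normal family: by Montel's theorem, every subsequence of $(f_n)$ admits a further subsequence converging uniformly on compact subsets of $D$ to some $g\in\mathcal H(D)$.

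Next I would show that all such subsequential limits coincide. Let $(f_{n_k})$ and $(f_{m_k})$ be two subsequences converging uniformly on compacts to $g,h\in\mathcal H(D)$ respectively. For every $a\in A$, the original sequence $(f_n(a))$ converges by hypothesis, so $g(a)=\lim_k f_{n_k}(a)=\lim_k f_{m_k}(a)=h(a)$. Thus $g-h\in\mathcal H(D)$ vanishes on the set of uniqueness $A$, and the identity principle in several complex variables forces $g=h$ on all of $D$.

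Finally, I would deduce convergence of the full sequence by a standard compactness-plus-uniqueness argument: if $(f_n)$ did not converge uniformly on some compact $K\subset D$ to the common limit $f$, one could extract a subsequence staying at positive distance from $f$ in the sup-norm on $K$; applying Montel again to this subsequence and using the preceding paragraph yields a sub-subsequence converging uniformly on $K$ to $f$, a contradiction. Hence $(f_n)\to f$ uniformly on every compact subset of $D$, and $f\in\mathcal H(D)$ since it is a local uniform limit of holomorphic functions.

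The only potentially delicate step is the use of the identity principle for a general set of uniqueness $A$ in a domain $D\subset\mathbb C^l$ (in several variables, zero sets of nonzero holomorphic functions are much richer than in one variable), but this is subsumed in the very definition of ``set of uniqueness'' used in the statement, so no real obstacle arises. Everything else is a direct application of Montel's theorem, which is available in the several-variables setting for locally bounded families of holomorphic functions.
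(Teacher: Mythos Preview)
Your argument is correct and is the standard proof of Vitali's theorem: Montel's theorem gives normality, the definition of a set of uniqueness forces all subsequential limits to coincide, and the usual subsequence-of-a-subsequence argument upgrades this to convergence of the full sequence in $\mathcal H(D)$. The paper does not supply a proof of this statement; it is quoted as a classical result with a reference to \cite{Sc05}, so there is nothing to compare against beyond noting that your approach is the canonical one.
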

\begin{thm}\label{Earle-Hamilton}[Earle-Hamilton]
Let $D$ be a nonempty domain in a
complex Banach space X and let $f : D \mapsto D$ be a bounded holomorphic
function. If $f(D)$ lies strictly inside $D$, then $f$ is a strict contraction in the Carath\'eodory-Riffen-Finsler metric  and thus has a unique fixed point in D. Furthermore,
 $(f^n(x_0))_{n\in \mathbb{N}}$ converges in norm, for any $x_0\in D$, to
this fixed point.
\end{thm}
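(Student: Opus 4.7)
The plan is to develop just enough infinite-dimensional hyperbolic geometry on $D$ to apply an abstract Banach fixed point argument. The key object is the Carathéodory--Riffen--Finsler infinitesimal pseudometric
\[
\alpha_D(x;v):=\sup\bigl\{|d\phi_x(v)|:\phi\colon D\to\mathbb D\text{ holomorphic}\bigr\},\qquad x\in D,\ v\in X,
\]
which I will integrate along piecewise-$C^1$ paths to produce a pseudo-distance $\rho_D$ on $D$. The first block of work is standard: (i)~$\alpha_D$ is upper semicontinuous in $(x,v)$ and positively homogeneous in $v$; (ii)~for any holomorphic $g\colon D\to D'$ one has $\alpha_{D'}(g(x);dg_x(v))\le\alpha_D(x;v)$, since composition with any $\phi\colon D'\to\mathbb D$ is again a scalar test function on $D$; (iii)~when $D$ is bounded or at least $f(D)$ is bounded, the diameter bound combined with the trivial comparison $\alpha_{B(x,r)}(x;v)=\|v\|/r$ shows that $\rho_D$ separates points and defines on bounded subsets at positive distance from $\partial D$ a metric topology equivalent to the norm topology (so it is complete there). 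This already implies the non-strict contraction $\rho_D(f(x),f(y))\le\rho_D(x,y)$.

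The heart of the argument is to upgrade this to a strict contraction. Set $\delta:=\operatorname{dist}(f(D),X\setminus D)>0$. The point is that for any unit vector $w\in X$ and any $x\in D$, the map $\lambda\mapsto f(x)+\lambda w$ sends the open disk $\delta\mathbb D$ into $D$. Given any test function $\phi\colon D\to\mathbb D$, the composition $\lambda\mapsto\phi(f(x)+\lambda w)$ is then a holomorphic self-map of $\mathbb D$ after rescaling, and the classical Schwarz--Pick inequality yields $|d\phi_{f(x)}(w)|\le 1/\delta$. Combining this uniform bound on $f(D)$ with the Schwarz-type lemma applied to the bigger Poincaré metric on $D$ (exploiting that every point of $f(D)$ admits a full $\delta$-ball inside $D$, so $\alpha_D$ is dominated there by $\|\cdot\|/\delta$, while $\alpha_D(x;v)$ stays bounded below by a positive multiple of $\|v\|$ on any bounded piece of $D$) produces a universal constant $c=c(\delta,\operatorname{diam}f(D))<1$ with
\[
\alpha_D(f(x);df_x(v))\le c\,\alpha_D(x;v),\qquad x\in D,\ v\in X.
\]
Integrating along paths gives $\rho_D(f(x),f(y))\le c\,\rho_D(x,y)$.

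From there the conclusion is short. Pick any $x_0\in D$; the iterates $x_n:=f^{\circ n}(x_0)$ all lie in $f(D)$ for $n\ge1$, where $\rho_D$ is a complete metric equivalent to the norm metric. The strict contraction inequality implies $\rho_D(x_{n+1},x_n)\le c^{n-1}\rho_D(x_2,x_1)$, so $(x_n)$ is $\rho_D$-Cauchy, hence norm-Cauchy, and converges in norm to some $x_\infty\in \overline{f(D)}\subset D$. Continuity of $f$ gives $f(x_\infty)=x_\infty$, and uniqueness follows because any two fixed points $x,y$ would satisfy $\rho_D(x,y)=\rho_D(f(x),f(y))\le c\,\rho_D(x,y)$, forcing $\rho_D(x,y)=0$ and hence $x=y$.

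The main obstacle is the quantitative Schwarz lemma of the second paragraph: constructing $c<1$ that works uniformly for all pairs $(x,v)$ really does require the \emph{strictly inside} hypothesis (not merely $f(D)\subset D$), and a careful two-sided comparison between $\alpha_D$ and the norm on the bounded set $f(D)$. Once this quantitative estimate is in hand, the rest is a routine application of the Banach fixed point theorem in the $\rho_D$-metric transported back to the norm.
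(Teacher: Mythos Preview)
The paper does not prove this theorem; it is quoted in the appendix as a classical tool and used as a black box in the proof of Proposition~\ref{welldefined1}. So there is no ``paper's own proof'' to compare against. That said, your sketch has a genuine gap at the crucial step, the quantitative strict contraction.

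Your second paragraph correctly derives the upper bound $\alpha_D(f(x);w)\le\|w\|/\delta$ for $x\in D$, but then asserts that combining this with a lower bound $\alpha_D(x;v)\ge c'\|v\|$ yields $\alpha_D(f(x);df_x(v))\le c\,\alpha_D(x;v)$ with $c<1$. This does not follow. Plugging $w=df_x(v)$ into the upper bound only gives $\alpha_D(f(x);df_x(v))\le\|df_x(v)\|/\delta$, and there is no control on $\|df_x(v)\|$ in terms of $\alpha_D(x;v)$ that would close the argument. Concretely, take $D=\mathbb D\subset\mathbb C$ and $f(z)=z/2$: then $\delta=1/2$, the lower bound constant is $c'=1$, and your combination yields $\alpha_{\mathbb D}(f(0);df_0(v))\le 2|df_0(v)|=|v|=\alpha_{\mathbb D}(0;v)$, with no strict improvement. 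Moreover, the lower bound $\alpha_D(x;v)\ge c'\|v\|$ on bounded pieces of $D$ fails when $D$ is unbounded (take a half-space and a tangent direction parallel to the boundary), and the theorem does not assume $D$ bounded.

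The standard Earle--Hamilton argument obtains the strict contraction differently: fix $x\in D$ and set $g(y)=f(y)+\epsilon\bigl(f(y)-f(x)\bigr)$, where $\epsilon>0$ is chosen so that $\epsilon\cdot\operatorname{diam}f(D)<\delta$. Then $\|g(y)-f(y)\|<\delta$ forces $g\colon D\to D$; since $g(x)=f(x)$ and $dg_x=(1+\epsilon)df_x$, the non-strict Schwarz--Pick inequality for $g$ at the point $x$ gives $(1+\epsilon)\alpha_D(f(x);df_x(v))\le\alpha_D(x;v)$, i.e.\ $c=1/(1+\epsilon)<1$, uniformly in $x$ and $v$. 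This dilation trick is the missing idea; once you have it, your third paragraph goes through (with the proviso that norm-convergence of the iterates on $f(D)$ is argued via the comparison $\alpha_D\le\|\cdot\|/\delta$ there, which controls norm increments by $\rho_D$-increments along the straight segment joining successive iterates inside $D$).
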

We  recall here a criterion of tightness for random analytic functions from \cite{S}. Let $D \subset \mathbb{C}$ be an open set of the complex plane. Denote by ${\mathcal H} (D)$  the space of complex analytic functions on $D$, endowed with the uniform topology on compact sets. For $f \in {\mathcal H} (D)$ and $K$ a compact set of $D$, we denote $\Vert f \Vert_K =\sup_{z\in K} \left| f(z)\right|$. 
The space ${\mathcal H} (D)$ is equipped with the (topological)
Borel $\sigma$-field ${\mathcal B}({\mathcal H} (D))$ and the set of probability measures on $({\mathcal H}(D); {\mathcal B}({\mathcal H} (D)))$ is
denoted by ${\mathcal P}({\mathcal H}(D))$.
By a random analytic function on $D$ we mean an ${\mathcal H} (D)$-valued
random variable on a probability space. 
\begin{prop}\label{criterion}
	(Proposition 2.5. in \cite{S}) Let $(f_n)_{n\in \mathbb{N}}$ be a sequence of random analytic functions on $D$. If $\{\Vert f_n\Vert_K\}_{n\in \mathbb{N}}$ is tight for any compact set K, then $\{{\mathcal L}(f_n)\}_{n\in \mathbb{N}}$ is
	tight in ${\mathcal P}({\mathcal H}(D))$.
	
\end{prop}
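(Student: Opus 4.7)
The plan is to invoke Prokhorov's theorem: since $\mathcal{H}(D)$ endowed with uniform convergence on compact sets is a Polish space (it is a Fréchet space with a countable family of seminorms $\|\cdot\|_{K_j}$ for an exhaustion $K_j \uparrow D$), tightness of $\{\mathcal{L}(f_n)\}$ is equivalent to showing that for every $\varepsilon>0$ there is a compact set $\mathcal{K}_\varepsilon\subset\mathcal{H}(D)$ with $\mathbb{P}(f_n\in\mathcal{K}_\varepsilon)\ge 1-\varepsilon$ uniformly in $n$.

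First I would fix an exhaustion of $D$ by an increasing sequence of compact sets $(K_j)_{j\in\mathbb{N}}$ such that $K_j\subset\mathring{K}_{j+1}$ and $\bigcup_j K_j=D$ (such an exhaustion exists for any open $D\subset\mathbb{C}$). The topology of $\mathcal{H}(D)$ is then generated by the seminorms $\|\cdot\|_{K_j}$. The crucial structural fact I would use is \emph{Montel's theorem}: a subset $\mathcal{F}\subset\mathcal{H}(D)$ is relatively compact if and only if it is locally uniformly bounded, i.e.\ if $\sup_{f\in\mathcal{F}}\|f\|_K<\infty$ for every compact $K\subset D$, equivalently for every $K_j$.

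Next, using the hypothesis that $\{\|f_n\|_{K_j}\}_{n\in\mathbb{N}}$ is tight in $[0,+\infty)$ for each $j$, I would pick for each $j$ a constant $M_j>0$ such that
\[
\mathbb{P}\bigl(\|f_n\|_{K_j}>M_j\bigr)\le \varepsilon/2^{j}\qquad\text{for all }n\in\mathbb{N}.
\]
Then I would set
\[
\mathcal{K}_\varepsilon:=\bigcap_{j\ge 1}\bigl\{f\in\mathcal{H}(D):\|f\|_{K_j}\le M_j\bigr\}.
\]
Each set in the intersection is closed (the seminorms are continuous), so $\mathcal{K}_\varepsilon$ is closed, and by construction it is locally uniformly bounded; by Montel's theorem it is relatively compact, hence compact. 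A union bound yields
\[
\mathbb{P}(f_n\notin \mathcal{K}_\varepsilon)\le \sum_{j\ge 1}\mathbb{P}\bigl(\|f_n\|_{K_j}>M_j\bigr)\le \sum_{j\ge 1}\varepsilon/2^j=\varepsilon,
\]
uniformly in $n$, which is exactly the tightness condition required by Prokhorov's theorem in the Polish space $\mathcal{H}(D)$.

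I do not expect a real obstacle here: the statement is essentially a soft-analysis consequence of Montel + Prokhorov, and the only mildly delicate point is making sure $\mathcal{H}(D)$ is indeed Polish (which follows from the separability of polynomials/rational functions with poles off $D$ together with completeness of uniform convergence on compacts) so that Prokhorov's theorem applies. Everything else is bookkeeping with the exhaustion and the geometric series $\sum 2^{-j}$.
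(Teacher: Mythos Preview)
Your argument is correct and is the standard Montel--Prokhorov proof of this tightness criterion. The paper does not give its own proof: it simply quotes the statement as Proposition~2.5 of \cite{S} and uses it as a black box, so there is nothing to compare against beyond noting that your approach is exactly the one underlying the cited reference.
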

Using that, by Markov's inequality,   for any $C>0$ and any $r>0$, 
\begin{equation}\label{ineg0}\mathbb{P} \left( \left\| f_n \right\|_K >C \right) \leq \frac{1}{C^r} \mathbb{E} \left( \left\| f_n \right\|^r_K \right),\end{equation}
the following  lemma turns out to be useful to prove  tightness results.

\begin{lem}\label{Shirai}(lemma 2.6 \cite{S}) 
	For any compact set K in D, there exists $\delta > 0$ such that
	$$\left\| f \right\|_K^r \leq (\pi \delta^2)^{-1} \int _{\overline{K_\delta}} \left| f(z)\right|^r m(dz), \; f \in {\mathcal H}(D),$$
	for any $r > 0$, where $\overline{K_\delta}\subset D$ is the closure of the $\delta$-neighborhood of K  and $m$ denotes the Lebesgue measure.
\end{lem}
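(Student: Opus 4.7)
The plan is to invoke the sub‑mean‑value property of the subharmonic function $|f|^r$. Since $f\in\mathcal H(D)$ is holomorphic, $\log|f|$ is subharmonic on $D$ (with the convention $\log 0=-\infty$), and consequently $|f|^r=\exp(r\log|f|)$ is subharmonic on $D$ for every $r>0$, because composition of a subharmonic function with a nondecreasing convex function yields a subharmonic function.

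First I would fix $\delta>0$ once and for all so that $\overline{K_{\delta}}\subset D$. This is possible because $K$ is a compact subset of the open set $D$, so the distance $d(K,\partial D)$ is strictly positive (with the convention that it is $+\infty$ if $D=\mathbb C$), and any $\delta<d(K,\partial D)$ works. This $\delta$ depends only on $K$ and $D$, as required by the statement.

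Next, for an arbitrary $z_0\in K$, the closed disk $\overline{B(z_0,\delta)}$ is contained in $\overline{K_{\delta}}\subset D$, so the sub‑mean‑value inequality for the subharmonic function $|f|^r$ on this disk gives
\[
|f(z_0)|^r\;\le\;\frac{1}{\pi\delta^2}\int_{B(z_0,\delta)}|f(z)|^r\,m(dz)\;\le\;\frac{1}{\pi\delta^2}\int_{\overline{K_{\delta}}}|f(z)|^r\,m(dz),
\]
the last inequality being the monotonicity of the integral of the nonnegative function $|f|^r$ together with $B(z_0,\delta)\subset\overline{K_{\delta}}$. Taking the supremum over $z_0\in K$ yields exactly the claimed estimate
\[
\|f\|_K^r\;\le\;(\pi\delta^2)^{-1}\int_{\overline{K_{\delta}}}|f(z)|^r\,m(dz).
\]

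There is no serious obstacle here: the only step requiring any care is the justification that $|f|^r$ is subharmonic for arbitrary real $r>0$ (not only for $r\ge 1$, where one could alternatively invoke the mean‑value inequality for $|f|$ combined with Jensen's inequality). The cleanest route is the one above via $\log|f|$; one just has to handle the zero set of $f$, where $\log|f|=-\infty$, by noting that $|f|^r$ is continuous and the sub‑mean‑value inequality persists at such points by a standard limiting argument (or simply by noting that at a zero of $f$ the left‑hand side vanishes while the right‑hand side is nonnegative).
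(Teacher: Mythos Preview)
Your proof is correct and is the standard argument; the paper does not give its own proof of this lemma but simply cites it from \cite{S}, so there is nothing to compare against beyond noting that your approach via the sub-mean-value inequality for the subharmonic function $|f|^r$ is exactly how this result is typically established.
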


\section{Norm of Wigner matrices}\label{Norm}

\begin{lem}\label{puissance}
Let, for each $N\in \mathbb{N}$, $W_N$ be a $N\times N$ Hermitian matrix such that entries $\{W_{ij}\}_{1\leq i\leq j\leq N}$ are random variables bounded by $\delta $ and such that for $i\neq j$, for some $\varepsilon>0$, $\mathbb{E}[|\sqrt{N}W_{ij}|^{6(1+\varepsilon)}]\leq C_6,$ and $\mathbb{E}[|\sqrt{N}W_{ii}|^{4(1+\varepsilon)}]\leq C_4.$ Then $$\forall p \in [2; 4(1+\varepsilon)], \;\mathbb{E}[|W_{ij}|^{2p}]=O(N^{-p/2-1}) \mbox{~ and ~} \mathbb{E}[|W_{ii}|^{p}]=O(N^{-p/2}).$$
\end{lem}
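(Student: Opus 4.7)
The proof is a routine moment interpolation: the hypotheses give us control on $|W_{ij}|^{6(1+\varepsilon)}$ for off-diagonal entries and on $|W_{ii}|^{4(1+\varepsilon)}$ for diagonal entries; when the target exponent lies below the hypothesis exponent we interpolate via Jensen, and when it lies above it we pay with the almost-sure bound $\delta$. So the plan is simply to split each statement into the two regimes and check that the worst of the two bounds is still $O(N^{-p/2-1})$ (resp.\ $O(N^{-p/2})$).

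For the diagonal entries this is one line: for every $p\in[2,4(1+\varepsilon)]$, Jensen's inequality applied to $x\mapsto x^{4(1+\varepsilon)/p}$ yields
\[
\mathbb{E}[|W_{ii}|^p]\le\mathbb{E}[|W_{ii}|^{4(1+\varepsilon)}]^{p/(4(1+\varepsilon))}\le\bigl(C_4N^{-2(1+\varepsilon)}\bigr)^{p/(4(1+\varepsilon))}=O(N^{-p/2}),
\]
which is the second claim.

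For the off-diagonal entries I would split at $p=3(1+\varepsilon)$. When $2\le p\le 3(1+\varepsilon)$, Jensen's inequality applied to $x\mapsto x^{3(1+\varepsilon)/p}$ gives
\[
\mathbb{E}[|W_{ij}|^{2p}]\le\mathbb{E}[|W_{ij}|^{6(1+\varepsilon)}]^{p/(3(1+\varepsilon))}\le\bigl(C_6N^{-3(1+\varepsilon)}\bigr)^{p/(3(1+\varepsilon))}=O(N^{-p}),
\]
and since $p\ge 2$ one has $-p\le -p/2-1$, so $O(N^{-p})\subseteq O(N^{-p/2-1})$. When $3(1+\varepsilon)<p\le 4(1+\varepsilon)$, I would use the a.s.\ bound to write
\[
\mathbb{E}[|W_{ij}|^{2p}]=\mathbb{E}\bigl[|W_{ij}|^{2p-6(1+\varepsilon)}\,|W_{ij}|^{6(1+\varepsilon)}\bigr]\le \delta^{\,2p-6(1+\varepsilon)}\,C_6 N^{-3(1+\varepsilon)}=O(N^{-3(1+\varepsilon)}).
\]
Since $p\le 4(1+\varepsilon)=4+4\varepsilon\le 4+6\varepsilon$, one checks the arithmetic inequality $3(1+\varepsilon)\ge p/2+1$, so $O(N^{-3(1+\varepsilon)})\subseteq O(N^{-p/2-1})$, giving the first claim in this regime as well.

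There is really no obstacle here: the only thing to verify is the elementary inequality $p/2+1\le 3(1+\varepsilon)$ on the interval $(3(1+\varepsilon),4(1+\varepsilon)]$, which uses $\varepsilon>0$ in the endpoint comparison. Combining the two off-diagonal cases and the one diagonal case yields the lemma.
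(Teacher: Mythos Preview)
Your proof is correct and essentially identical to the paper's own argument: the same Jensen step for the diagonal entries, the same split of the off-diagonal case at $p=3(1+\varepsilon)$ (Jensen below, the a.s.\ bound $|W_{ij}|\le\delta$ above), and the same arithmetic checks $p\ge p/2+1$ and $p/2+1\le 3(1+\varepsilon)$ on the respective ranges. The only cosmetic difference is which case absorbs the endpoint $p=3(1+\varepsilon)$.
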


\begin{proof}
By Jensen's inequality, for $p \in [2; 4(1+\varepsilon)],$
$$ \mathbb{E}[|W_{ii}|^{p}]\leq \mathbb{E}[|W_{ii}|^{4(1+\varepsilon)}]^{p/4(1+\varepsilon)}\leq C_4^{p/4(1+\varepsilon)} N^{-p/2}=O(N^{-p/2}).$$
Similarly, by Jensen's inequality, for $p\in [2; 3(1+\varepsilon)[,$
$$\mathbb{E}[|W_{ij}|^{2p}]\leq \mathbb{E}[|W_{ij}|^{6(1+\varepsilon)}]^{p/3(1+\varepsilon)}\leq C_6^{p/3(1+\varepsilon)}N^{-p}=O(N^{-p})=O(N^{-p/2-1}),$$
the last equality following from the fact that $p\geq 1+p/2$ when $p\geq 2$.\\
Now, for $p\in [3(1+\varepsilon); 4(1+\varepsilon)],$  
$$\mathbb{E}[|W_{ij}|^{2p}]\leq \delta^{2p-6(1+\varepsilon)}\mathbb{E}[|W_{ij}|^{6(1+\varepsilon)}]\leq \delta^{2p-6(1+\varepsilon)}C_6N^{-3(1+\varepsilon)}=O(N^{-3(1+\varepsilon)})=O(N^{-p/2-1}),$$
the last equality following from the fact that $1+p/2\leq 3(1+\varepsilon)$ when $p\leq 4(1+\varepsilon)$.
\end{proof}

\begin{prop}\label{boundnorm}
There exists $C>0$ such that for every $p\geq 1$, $\mathbb{P}(\|W_N\|>C)=o(N^{-p})$. In particular, the sequence of random variables $(\|W_N\|)_{N\geq 1}$ is bounded in every $L^p,\, p\geq 1$.
\end{prop}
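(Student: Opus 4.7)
The plan is to use the method of moments (Sinai--Soshnikov/F\"uredi--Komlós style). Since the entries of $W_N$ are almost surely bounded by $\delta_N\to0$ (slower than $N^{-\varepsilon}$), we are in the standard ``bounded support'' regime, and the natural target constant is $C=2\sigma+\eta$ for any fixed $\eta>0$ (recalling that $N\sigma_N^2\to\sigma^2$). Everything reduces to controlling $\mathbb E[\Tr(W_N^{2k})]$ for $k=k_N$ growing with $N$ and then applying Markov's inequality via
$$\mathbb P(\|W_N\|>C)\leq C^{-2k}\mathbb E[\Tr(W_N^{2k})].$$

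The first step is the combinatorial expansion
$$\mathbb E[\Tr(W_N^{2k})]=\sum_{i_0,\ldots,i_{2k-1}}\mathbb E\!\left[W_{i_0i_1}W_{i_1i_2}\cdots W_{i_{2k-1}i_0}\right],$$
organised over closed walks on $\{1,\ldots,N\}$. As in the classical argument, only walks in which each edge is traversed at least twice contribute, and for walks whose underlying multigraph is a tree with each edge used exactly twice one gets, up to vertex-labelling, the familiar Catalan count. The contribution of these tree walks is at most $N^{k+1}\sigma_N^{2k}C_k\le N\cdot(2\sigma)^{2k}(1+o(1))^k$, using $N\sigma_N^2\to\sigma^2$ and $C_k\le4^k$. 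For non-tree walks one pays an extra factor $1/N$ for each excess edge; at the same time, an edge used $r$ times contributes a moment $\mathbb E[|W_{ij}|^r]$ which, using the a.s.\ bound $|W_{ij}|\le\delta_N$ and $\mathbb E[|W_{ij}|^2]=O(1/N)$, is at most $\delta_N^{r-2}\sigma_N^2$. Summing over walk-shapes in the standard way (bookkeeping the number of vertices, the number of edges used with given multiplicities, and using $\delta_N=o(1)$ polynomially), one obtains
$$\mathbb E[\Tr(W_N^{2k_N})]\le N\cdot(2\sigma)^{2k_N}\bigl(1+\varepsilon_N\bigr)^{k_N},$$
with $\varepsilon_N\to0$, valid for $k_N$ up to a small positive power of $N$ (Sinai--Soshnikov work with $k_N=o(N^{2/3})$; for our purposes any slow growth suffices, e.g.\ $k_N=\lfloor\log^2 N\rfloor$).

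Fix $\eta>0$ and set $C=2\sigma+\eta$. The Markov bound then gives
$$\mathbb P(\|W_N\|>C)\leq N\!\left(\frac{2\sigma(1+\varepsilon_N)^{1/2}}{2\sigma+\eta}\right)^{\!2k_N}\!,$$
and by choosing $k_N=\lfloor\log^2 N\rfloor$ the right-hand side decays faster than any inverse polynomial, which is precisely $o(N^{-p})$ for every $p\ge 1$. The second assertion is then routine: for any $p\ge1$,
$$\mathbb E[\|W_N\|^p]\le C^p+\mathbb E\bigl[\|W_N\|^p\mathbf 1_{\|W_N\|>C}\bigr]\le C^p+\mathbb E\bigl[\|W_N\|^{2p}\bigr]^{1/2}\mathbb P(\|W_N\|>C)^{1/2},$$
and the crude deterministic bound $\|W_N\|\le N\max_{i,j}|W_{ij}|\le N\delta_N$ makes $\mathbb E[\|W_N\|^{2p}]$ at most polynomial in $N$, so the super-polynomial decay of $\mathbb P(\|W_N\|>C)$ controls the second term.

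The main obstacle is the careful bookkeeping in the combinatorial step: one needs to show that the cumulative effect of non-tree walks, moments of order $>2$, and the growing length $2k_N$ only produces a factor $(1+\varepsilon_N)^{k_N}$ with $\varepsilon_N\to0$. Here the fact that $\delta_N\to0$ at some polynomial rate is essential--- it absorbs the loss coming from edges used more than twice and from vertex identifications producing cycles. This is a textbook computation (see e.g.\ the presentations in Anderson--Guionnet--Zeitouni or Tao's book), and the assumptions \eqref{hyp:offdiagonal}--\eqref{hyp:diagonal} plus the truncation $|W_{ij}|\le\delta_N$ place us squarely within its scope.
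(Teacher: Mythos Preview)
Your approach is essentially the paper's: the paper simply verifies the moment hypotheses $\mathbb{E}[\sqrt{N}W_{ij}]=0$, $\mathbb{E}[|\sqrt{N}W_{ij}|^2]\le\Sigma^2$, $\mathbb{E}[|\sqrt{N}W_{ij}|^{\ell}]\le b(\delta_N\sqrt{N})^{\ell-3}$ for $\ell\ge3$, and then cites Remark~5.7 in Bai--Silverstein, which is precisely the F\"uredi--Koml\'os/Sinai--Soshnikov moment bound you are sketching (followed by the same split $\mathbb{E}[\|W_N\|^p]\le C^p+(N\delta_N)^p\mathbb{P}(\|W_N\|>C)$ for the $L^p$ statement). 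One slip to fix: you write that ``$\delta_N\to0$ at some polynomial rate,'' but in this paper $\delta_N\to0$ \emph{slower} than any $N^{-\epsilon}$; fortunately with $k_N=\lfloor\log^2 N\rfloor$ the combinatorial error terms only require $\delta_N\to0$, so the argument is unaffected, but the sentence is factually wrong as stated.
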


\begin{proof}
By assumption, entries of $W_N$ satisfy 
$$\mathbb{E}[\sqrt{N}W_{ij}]=0,\quad \mathbb{E}[|\sqrt{N}W_{ij}|^2]\leq \Sigma^2,\quad 
\mathbb{E}[|\sqrt{N}W_{ij}|^{\ell}]\leq b(\delta_N\sqrt{N})^{\ell-3},\, (\ell\geq 3).$$
For example, $\Sigma=\max(\sup_{N\in \mathbb{N}}\sqrt{N}\sigma_N,\sup_{N\in \mathbb{N}}\sqrt{N}\tilde{\sigma}_N)$ and $b=\max(C_6^{1/2(1+\varepsilon)},C_4^{3/4(1+\varepsilon)})$.
It then follows from Remark 5.7 in the book of Bai and Silverstein that $\mathbb{P}(\|W_N\|>C)=o(N^{-p})$ for any $C>2\Sigma$ and any $p\geq 1$. 
Then, for such $C,p$ and $N\in \mathbb{N}$,
\begin{align*}
\mathbb{E}[\|W_N\|^p]
&=\mathbb{E}[\|W_N\|^p\mathbf{1}_{\|W_N\|\leq C}]+\mathbb{E}[\|W_N\|^p\mathbf{1}_{\|W_N\|>C}]\\
&\leq C^p+(N\delta_N)^p\mathbb{P}(\|W_N\|>C)
\end{align*}
is bounded uniformly in $N$. 
\end{proof}

\section{Truncation and centering}\label{Truncation}

Fluctuations of the trace of the resolvent of $X_N$ were studied under the hypothesis that entries of $W_N$ are bounded by $\delta_N$, for a sequence $(\delta_N)_{N\in \mathbb{N}}$ slowly converging to $0$. 
		
For any bounded continuous function $\varphi: \R \to \C$, let 
$$\mathcal{N}_N(\varphi):=\Tr(\varphi(X_N))=\sum_{\lambda \in \text{sp}(X_N)}\varphi(\lambda)
.$$
In this section, we truncate and center the entries of $W_N$, in order to show that it is sufficient to study the fluctuations of $\mathcal{N}_N(\varphi)$ for matrices $W_N$ with entries bounded by $\delta_N$, where $(\delta_N)_{N\geq 1}$ is a sequence of positive numbers such that $\delta_N \underset{N\to +\infty}{\longrightarrow} 0$ at rate less than $N^{-\epsilon}$ for any $\epsilon >0$.
		
Define $\hat{X}_N=P(\hat{W}_N,D_N)$ by $$\hat{W}_{ij}:=W_{ij}\mathbf{1}_{|W_{ij}|\leq \delta_N/2},\quad 1\leq i, j\leq N,$$
and accordingly 
$$\hat{\mathcal{N}}_N(\varphi):=\Tr(\varphi(\hat{X}_N)).$$
By union bound, 
\begin{align*}
\mathbb{P}(\hat{\mathcal{N}}_N(\varphi)\neq \mathcal{N}_N(\varphi))&\leq \mathbb{P}(\hat{W}_N\neq W_N)\\
&\leq \sum_{1\leq i\leq j\leq N}\mathbb{P}(|W_{ij}| > \delta_N/2)\\
&\leq \sum_{1\leq i< j\leq N}(\delta_N/2)^{-6(1+\varepsilon)}\mathbb{E}[|W_{ij}|^{6(1+\varepsilon)}]+\sum_{1\leq i\leq N}(\delta_N/2)^{-4(1+\varepsilon)}\mathbb{E}[W_{ii}^{4(1+\varepsilon)}]\\
&\leq \sum_{1\leq i< j\leq N}(\delta_N/2)^{-6(1+\varepsilon)}\frac{C_6}{N^{3(1+\varepsilon)}}+\sum_{1\leq i\leq N}(\delta_N/2)^{-4(1+\varepsilon)}\frac{C_4}{N^{2(1+\varepsilon)}}\\
&\leq C_6(\delta_N/2)^{-6(1+\varepsilon)}N^{-1-3\varepsilon}+C_4(\delta_N/2)^{-4(1+\varepsilon)}N^{-1-2\varepsilon}=o(N^{-1}).
\end{align*}
Using the naive bound $|\hat{\mathcal{N}}_N(\varphi)- \mathcal{N}_N(\varphi)|\leq 2\|\varphi \|_{\infty}N$ yields: 
\begin{align*}
\mathbb{E}[|\hat{\mathcal{N}}_N(\varphi)- \mathcal{N}_N(\varphi)|]&=\mathbb{E}[|\hat{\mathcal{N}}_N(\varphi)- \mathcal{N}_N(\varphi)|\mathbf{1}_{\hat{\mathcal{N}}_N(\varphi)\neq \mathcal{N}_N(\varphi)}]\\
&\leq 2\|\varphi \|_{\infty}N\mathbb{P}(\hat{\mathcal{N}}_N(\varphi)\neq \mathcal{N}_N(\varphi))\\
&\leq o(1).
\end{align*}
Define then $\mathring{X}_N=P(\mathring{W}_N,D_N)$ by $\mathring{W}_N=\hat{W}_N-\mathbb{E}[\hat{W}_N]$
and accordingly
$$\mathring{\mathcal{N}}_N(\varphi):=\Tr(\varphi(\mathring{X}_N)).$$
Note that the entries of $\mathring{W}$ are independent, centred and bounded by $\delta_N$. Furthermore, the off-diagonal entries are independent and identically distributed, as well as entries on the diagonal.
Observe that, for $i\neq j$, 
\[|\mathbb{E}[\hat{W}_{ij}]|=|\mathbb{E}[W_{ij}\mathbf{1}_{|W_{ij}|\leq \frac{\delta_N}{2}}]|=|\mathbb{E}[W_{ij}\mathbf{1}_{|W_{ij}|>\frac{\delta_N}{2}}]|=O(\delta_N^{-5-6\varepsilon}N^{-3(1+\varepsilon)}).\]
Furthermore, 
\begin{align*}
\mathbb{E}[|\mathring{W}_{ij}|^2] & = \mathbb{E}[|\hat{W}_{ij}-\mathbb{E}[\hat{W}_{ij}]|^2]= \mathbb{E}[|\hat{W}_{ij}|^2]+R_2,
\end{align*}
with $|R_2| \leq 3|\mathbb{E}[\hat{W}_{ij}]|^2=O(\delta_N^{-10-12\varepsilon}N^{-6(1+\varepsilon)})$. Moreover
$\mathbb{E}[|\hat{W}_{ij}|^2]=\sigma_N^2-\esp[|W_{ij}|^2\mathbf{1}_{|W_{ij}|>\frac{\delta_N}{2}}]$, and $\esp[|W_{ij}|^2\mathbf{1}_{|W_{ij}|>\frac{\delta_N}{2}}]=O(\delta_N^{-4-6\varepsilon}N^{-3(1+\varepsilon)})$. Therefore, 
\[\mathbb{E}[|\mathring{W}_{ij}|^2]=\mathring{\sigma}_N^2\]
with $\mathring{\sigma}^2_N=\sigma_N^2+O(\delta_N^{-3-4\varepsilon}N^{-3(1+\varepsilon)})$. As a consequence, $N\mathring{\sigma}_N^2 \underset{N\to +\infty}{\longrightarrow} \sigma^2$.
		
We turn now to $\mathbb{E}[\mathring{W}_{ij}^2]$.
\[\mathbb{E}[\mathring{W}_{ij}^2]= \mathbb{E}[W_{ij}^2\mathbf{1}_{|W_{ij}|\leq \frac{\delta_N}{2}}]-\mathbb{E}[\hat{W}_{ij}]^2=\theta_N+\tilde{R}_2,\]
with $\tilde{R}_2=\mathbb{E}[W_{ij}^2\mathbf{1}_{|W_{ij}|>\frac{\delta_N}{2}}]-\mathbb{E}[\hat{W}_{ij}]^2=O(\delta_N^{-4-6\varepsilon}N^{-3(1+\varepsilon)})$. Therefore $\mathbb{E}[\mathring{W}_{ij}^2]=\mathring{\theta}_N$, with $\lim\limits_{N \to +\infty} N\mathring{\theta}_N =\lim\limits_{N\to +\infty}N\theta_N =\theta \in \R$. Note that, even if $\theta_N$ is supposed to be real for all $N$, $\mathbb{E}[\mathring{W}_{ij}^2]$ is not real anymore, but its imaginary part is negligible.
		
Similar bounds may be proved for $\esp[|\mathring{W}_{ij}|^4]$, $\esp[|\mathring{W}_{ij}|^{6(1+\varepsilon)}]$, $\esp[\mathring{W}_{ii}^2]$ and $\esp[|\mathring{W}_{ii}|^{4(1+\varepsilon)}]$ from which it may be shown that the entries of $\mathring{W}$ satisfy the same properties as the ones of $W_N$. In particular, one has for all $i \in \{1,\dots,N\}$, $N\esp[\mathring{W}_{ii}^2]
\underset{N\to +\infty}{\longrightarrow} \tilde{\sigma}^2$, and, for $1 \leq i \neq j \leq N$, $N^2(\esp[|\mathring{W}_{ij}|^4]-2\mathring{\sigma}_N^4-\mathring{\theta}_N^2)=N^2\mathring{\kappa}_N\underset{N\to +\infty}{\longrightarrow}\kappa \in \R$.

Assume now that $\varphi$ is a Lipschitz function. Note that this will be true in particular for functions in $\mathcal{C}^1_c(\R)$. Using first Cauchy-Schwarz inequality then Hoffman-Wielandt inequality (see for example \cite{AGZ} Section 2.1.5), we get
\begin{align*}
|\mathring{\mathcal{N}}_N(\varphi)-\hat{\mathcal{N}}_N(\varphi)|&=\big|\Tr\big(\varphi(\hat{X}_N)-\varphi(\mathring{X}_N)\big)\big|\\
&\leq\sqrt{N}\Big(\sum_{i=1}^N|\varphi(\lambda_i(\hat{X}_N))-\varphi(\lambda_i(\mathring{X}_N))|^2\Big)^{1/2}\\
&\leq\sqrt{N}\|\varphi\|_{\text{Lip}}\Big(\sum_{i=1}^N|\lambda_i(\hat{X}_N)-\lambda_i(\mathring{X}_N)|^2\Big)^{1/2}\\
&\leq\sqrt{N}\|\varphi\|_{\text{Lip}}\|\hat{X}_N-\mathring{X}_N\|_{HS}\\
&\leq N\|\varphi\|_{\text{Lip}}\|\hat{X}_N-\mathring{X}_N\|,
\end{align*}
where we have used in the last line the classical inequality $\|M_N\|_{HS}\leq \sqrt{N}\|M_N\|$ for $N \times N$ matrix $M_N$.
		
Now, in $\hat{X}_N=P(\hat{W}_N,D_N)$, decompose each $\hat{W}_N=\mathring{W}_N+\mathbb{E}[\hat{W}_N]$, so that $\hat{X}_N-\mathring{X}_N$ is a sum of a bounded number of monomials in $\mathring{W}_N$, $\mathbb{E}[\hat{W}_N]$ and $D_N$. All these monomials are of positive degree in $\mathbb{E}[\hat{W}_N]$. Recall that $(\|\mathring{W}_N\|)_{N\geq 1}$ is bounded in all $L^p$, $p\geq 1$ (see Proposition \ref{boundnorm}) and $(\|D_N\|)_{N\geq 1}$ is bounded (from Assumption \ref{hyp:deformation}). Furthermore, $(\|\mathbb{E}[\hat{W}_N]\|)_{N\geq 1}$ is $o(N^{-1})$, by the classical bound $\|\mathbb{E}[\hat{W}_N]\|^2 \leq \sum_{i,j}|\esp[\hat{W}_{ij}]|^2$. Consequently, we deduce that $\mathbb{E}[\|\hat{X}_N-\mathring{X}_N\|]=o(N^{-1})$. Therefore $\mathbb{E}[|\mathring{\mathcal{N}}_N(\varphi)-\hat{\mathcal{N}}_N(\varphi)|]=o(1)$.
		
From these controls of $\mathbb{E}[|\mathring{\mathcal{N}}_N(\varphi)-\hat{\mathcal{N}}_N(\varphi)|]$ and $\esp[|\mathcal{N}_N(\varphi)-\hat{\mathcal{N}}_N(\varphi)|]$, we conclude that 
\[\esp[|\mathcal{N}_N(\varphi)-\mathring{\mathcal{N}}_N(\varphi)|] \underset{N \to +\infty}{\longrightarrow} 0.\]
Hence 
\[\big(\mathcal{N}_N(\varphi)-\mathbb{E}[\mathcal{N}_N(\varphi)]\big)-\big(\mathring{\mathcal{N}}_N(\varphi) -\mathbb{E}[\mathring{\mathcal{N}}_N(\varphi)]\big) \overset{\P}{\underset{N \to +\infty}{\longrightarrow}} 0.\]
Therefore, by Slutsky's Lemma, assuming that $\mathring{\mathcal{N}}_N(\varphi)-\esp[\mathring{\mathcal{N}}_N(\varphi)]$ converges to a Gaussian variable yields that $\mathcal{N}_N(\varphi)-\esp[\mathcal{N}_N(\varphi)]$ converges to the same Gaussian variable.
		
		
As a consequence, for our purposes, we may suppose that the entries of $W_N$ are bounded almost surely by $\delta_N$, as long as $\varphi$ is a Lipschitz function.

\bibliographystyle{alpha}
\bibliography{biblio-stat-lin-polynomes}

\end{document}